\definecolor{g1}{RGB}{72,100,94}
\definecolor{g2}{RGB}{154,175,140}
\definecolor{g3}{RGB}{199,222,163}
\definecolor{b1}{RGB}{64,100,132}
\definecolor{b2}{RGB}{98,151,179}
\definecolor{b3}{RGB}{157,198,239}
\definecolor{br1}{RGB}{154,82,73}
\definecolor{br2}{RGB}{204,149,116}
\definecolor{br3}{RGB}{230,181,124}
\definecolor{r1}{RGB}{156,49,66}
\definecolor{r2}{RGB}{205,101,57}
\definecolor{r3}{RGB}{232,188,107}
\pgfplotsset{compat=1.18}
\newcommand*\linenomathpatch[1]{%
\expandafter\pretocmd\csname #1\endcsname {\linenomath}{}{}%
\expandafter\pretocmd\csname #1*\endcsname{\linenomath}{}{}%
\expandafter\apptocmd\csname end#1\endcsname {\endlinenomath}{}{}%
\expandafter\apptocmd\csname end#1*\endcsname{\endlinenomath}{}{}%
}
\newcommand*\linenomathpatchAMS[1]{%
\expandafter\pretocmd\csname #1\endcsname {\linenomathAMS}{}{}%
\expandafter\pretocmd\csname #1*\endcsname{\linenomathAMS}{}{}%
\expandafter\apptocmd\csname end#1\endcsname {\endlinenomath}{}{}%
\expandafter\apptocmd\csname end#1*\endcsname{\endlinenomath}{}{}%
}
\let\linenomathAMS\linenomathWithnumbers
\patchcmd\linenomathAMS{\advance\postdisplaypenalty\linenopenalty}{}{}{}
\let\linenomathAMS\linenomathNonumbers
\theoremstyle{plain}
\newtheorem{theorem}{Theorem}[section]
\crefname{theorem}{Theorem}{Theorems}
\newtheorem{proposition}[theorem]{Proposition}
\crefname{proposition}{Proposition}{Propositions}
\newtheorem{corollary}[theorem]{Corollary}
\crefname{corollary}{Corollary}{Corollaries}
\newtheorem{lemma}[theorem]{Lemma}
\crefname{lemma}{Lemma}{Lemmas}
\newtheorem{conjecture}[theorem]{Conjecture}
\crefname{conjecture}{Conjecture}{Conjectures}
\crefname{problem}{Problem}{Problem}
\newtheorem{claim}[theorem]{Claim}
\crefname{claim}{Claim}{Claims}
\newtheorem{observation}[theorem]{Observation}
\crefname{observation}{Observation}{Observations}
\crefname{setup}{Setup}{Setups}
\newtheorem{fact}[theorem]{Fact}
\crefname{fact}{Fact}{Facts}
\crefname{algorithm}{Algorithm}{Algorithms}
\crefname{remark}{Remark}{Remarks}
\crefname{example}{Example}{Examples}
\theoremstyle{definition}
\newtheorem{definition}[theorem]{Definition}
\crefname{definition}{Definition}{Definitions}
\newtheorem{construction}[theorem]{Construction}
\crefname{construction}{Construction}{Constructions}
\crefname{question}{Question}{Questions}
\crefname{tab}{Table}{Tables}
\numberwithin{equation}{section}
\crefname{section}{Section}{Sections}
\crefname{appendix}{Appendix}{Appendix}
\newcommand{\rf}[1]{\cref{#1} (\nameref*{#1})}
\newenvironment{proofclaim}[1][Proof of the claim]{\begin{proof}[#1]}{\end{proof}}
\newcommand*{\dittoclosing}{------}
\newcommand{\bvec}[1]{\mathbf{#1}}
\def\COMMENT#1{}
\newcommand{\vecb}{\mathbf}
\newcommand{\es}{\emptyset}
\newcommand{\eps}{\varepsilon}
\renewcommand{\rho}{\varrho}
\newcommand{\sm}{\setminus}
\renewcommand{\subset}{\subseteq}
\newcommand{\NATS}{\mathbb{N}}
\newcommand{\INTS}{\mathbb{Z}}
\newcommand{\REALS}{\mathbb{R}}
\DeclareMathOperator{\Hom}{{Hom}}
\renewcommand{\hom}[2]{\Hom(#1;#2)} 
\newcommand{\Exp}{\mathbb{E}}
\newcommand{\vn}{\mathbf{1}}
\DeclareMathOperator{\cover}{{cov}}
\DeclareMathOperator{\res}{{res}}
\newcommand\restr[2]{{
	\left.\kern-\nulldelimiterspace 
	#1 
	\vphantom{\big|} 
	\right|_{#2} 
}}
\newcommand{\cB}{\mathcal{B}}
\newcommand{\cD}{\mathcal{D}}
\newcommand{\cF}{\mathcal{F}}
\newcommand{\cG}{\mathcal{G}}
\newcommand{\cL}{\mathcal{L}}
\newcommand{\cM}{\mathcal{M}}
\newcommand{\cS}{\mathcal{S}}
\newcommand{\cU}{\mathcal{U}}
\newcommand{\cV}{\mathcal{V}}
\newcommand{\cW}{\mathcal{W}}
\let\th\relax
\let\div\relax
\let\P\relax
\DeclareMathOperator{\P}{\mathsf{P}} 
\DeclareMathOperator{\cQ}{\mathsf{Q}} 
\DeclareMathOperator{\til}{\mathsf{Til}} 
\DeclareMathOperator{\mat}{\mathsf{Mat}}
\DeclareMathOperator{\cov}{\mathsf{Cov}}
\DeclareMathOperator{\spa}{\mathsf{Spa}}
\DeclareMathOperator{\div}{\mathsf{Div}}
\newcommand{\SpaF}[1]{\mathsf{\spa}_{#1}}
\DeclareMathOperator{\Deg}{\mathsf{MinDeg}}
\newcommand{\DegF}[2]{\mathsf{\Deg}_{#1,\,#2}}
\DeclareMathOperator{\Del}{\mathsf{Del}}
\DeclareMathOperator{\otil}{\mathsf{O-Til}}
\DeclareMathOperator{\ocov}{\mathsf{O-Cov}}
\DeclareMathOperator{\ospa}{\mathsf{O-Spa}}
\DeclareMathOperator{\odiv}{\mathsf{O-Div}}
\DeclareMathOperator{\Ord}{\mathsf{Ord}}
\DeclareMathOperator{\rtil}{\mathsf{T-Til}}
\DeclareMathOperator{\rcov}{\mathsf{T-Cov}}
\DeclareMathOperator{\rspa}{\mathsf{T-Spa}}
\DeclareMathOperator{\rdiv}{\mathsf{T-Div}}
\DeclareMathOperator{\rmix}{\mathsf{C-Cov}}
\DeclareMathOperator{\pdiv}{\mathsf{P-Div}}
\DeclareMathOperator{\th}{\delta}
\DeclareMathOperator{\con}{\mathsf{Con}}
\DeclareMathOperator{\crit}{cr}
\DeclareMathOperator{\multi}{m}
\DeclareMathOperator{\den}{\mathsf{UniDen}}
\newcommand{\DenF}[2]{\mathsf{\den}_{#1,\, #2}}
\DeclareMathOperator{\dist}{dist}
\newcommand{\PG}[3]{{P^{(#3)}}(#1;#2)}
\newcommand{\rdeg}{\overline{\deg}}
\title{Tiling dense hypergraphs}
\author[R.~Lang]{Richard Lang}
\address[R.~Lang]{
	Departament de Matemàtiques,
	Universitat Politècnica de Catalunya,
	Barcelona, Spain and
	Centre de Recerca Matemàtica, Barcelona, Spain
}
\begin{document}

\begin{abstract}
	The goal in the perfect tiling problem is to cover the vertices of a hypergraph~$G$ with pairwise vertex-disjoint copies of a hypergraph $F$.
	Prior work has identified three necessary conditions for perfect tilings, which correspond to barriers in space, divisibility and covering.
	It is natural to ask for which families of hypergraphs these conditions are also asymptotically sufficient.
	
	Our main result confirms this for all families that are approximately closed under subsampling.
	Among others, this includes families described by minimum degrees and uniform density, which have been studied extensively in this area.
	For instance, we characterise the minimum $d$-degree threshold for perfect $F$-tilings in terms of the thresholds to overcome the space, divisibility and covering barriers:
	\begin{equation*}
		\th_d^{}(\til_F) = \max \left\{	\th_d^{}(\SpaF{F}),\,	\th_d^{}(\div_F),\,	\th_d^{}(\cov_F) \right\}\,.
	\end{equation*}
	As an application, we recover and extend a series of well-known results for perfect tilings in hypergraphs and related settings involving vertex-orderings and transversal structures.
\end{abstract}

\maketitle
\vspace{-0.8cm}

\section{Introduction} \label{sec:introduction}

A basic question in combinatorics is whether a discrete object on a ground set of vertices contains a particular substructure that spans all vertices.
The corresponding decision problems are often computationally intractable, which means that we do not have a `simple' characterisation of the appearance of these substructures.
In extremal combinatorics, research has therefore focused on identifying easily verifiable sufficient conditions, a classic example being minimum degrees in the graph setting.
Over the past decades, a comprehensive literature has evolved around these problems, yet many questions remain wide open~\cite{Kee18,KO09a,SS19,Zha16}.

More recently, efforts have increasingly been dedicated to formulating an axiomatic approach.
The idea is to identify relaxations of the original problem, which describe natural obstacles to a solution.
One then aims to show that robustly solving the relaxations leads to a solution of the original problem.
Important milestones in this direction are due to Keevash and Mycroft~\cite{KM15}, Han~\cite{Han21} (perfect matchings) and Keevash~\cite{Kee14} (designs) in the hypergraph setting.
For graphs, analogous results have been obtained by Kühn, Osthus and Treglown~\cite{KOT2010} (Hamilton cycles), Knox and Treglown~\cite{KT13}, Lang and Sanhueza-Matamala~\cite{LS23} (easily separable graphs) as well as Hurley, Joos and Lang~\cite{HJL25} (perfect mixed tilings).

We continue this line of investigation by introducing a framework for perfect tilings in dense hypergraphs.
Prior research has identified three natural barriers that prevent perfect tilings in this setting, which correspond to obstructions in space, divisibility and covering.
Freschi and Treglown~\cite{FT22} raised the `meta question' of whether these three barriers already include all relevant obstacles.
We confirm this for families of hypergraphs that are approximately closed under subsampling.
Our main contribution, \cref{thm:framework}, states that any hypergraph which robustly overcomes each of the above obstructions must already contain a perfect tiling.
As an application, we recover the central results for perfect tilings under minimum degree conditions in graphs~\cite{KO09,Myc16} and hypergraphs~\cite{HZZ17,Myc16} as well as recent breakthroughs in the ordered setting~\cite{BLT22,FT22}, the quasirandom setting~\cite{DHS+22} and the transversal setting~\cite{CHW+23,MMP22}.

\subsection*{Threshold decompositions}

The implications of our work are best illustrated by the example of minimum degree thresholds.
To formalise this, we introduce some terminology.
A \emph{$k$-uniform hypergraph} (or \emph{$k$-graph} for short) $G$ consists of vertices~$V(G)$ and edges $E(G)$, where each edge $e$ is a set of $k$ vertices.
We often identify $G$ with its edges, writing $e \in G$.
A \emph{perfect matching} of $G$ is a partition of its vertex set into edges.
This is generalised to tilings by replacing the edges with copies of a fixed graph.
More precisely, for a $k$-graph $F$ (called \emph{tile}), a \emph{perfect $F$-tiling} of $G$ is a set of copies of $F$ in $G$ such that every vertex of $G$ is covered exactly once.
Our building blocks for perfect $F$-tilings are the \emph{homomorphisms} from $F$ to~$G$, denoted by \gls{hom}, which are the functions $\phi\colon  V(F) \to V(G)$ that map edges of $F$ to edges of $G$.
A weighting $w\colon \hom{F}{G} \to \REALS$ is called a \emph{perfect $\Hom(F)$-tiling} of $G$ if $\sum_{\phi \in \hom{F}{G}} w(\phi) \cdot |\phi^{-1}(v)| = 1$ for every $v \in V(G)$.
We sometimes refer to a perfect $F$-tiling as \emph{binary}, since it is equivalent to a perfect $\Hom(F)$-tiling whose image is binary.
Denote by \gls{til-hypergraph} the set of hypergraphs with perfect $F$-tilings.
Next, we introduce the aforementioned necessary conditions for perfect tilings.

\subsubsection*{Space} 

We call a perfect $\Hom(F)$-tiling \emph{fractional} if its image is restricted to the unit interval.
Denote by \gls{spa-hypergraph} the set of hypergraphs with perfect fractional $\Hom(F)$-tilings.
A \emph{space barrier} is any hypergraph outside of $\spa_F$.
For instance, we obtain a space barrier in  the $2$-uniform matching setting by taking a  clique on $n$ vertices and removing all edges inside a set $S$ of size above $n/2$. 
Since every vertex in $S$ requires on average one vertex outside of $S$ to be covered, there is not enough space for a perfect fractional matching.
The study of the relation between fractional and integral matchings dates back to the work of Lov\'asz~\cite{Lov74} and Füredi~\cite{Fur81}.
In the minimum degree setting for matchings, the space barrier appears in the work of Rödl, Ruciński and Szemerédi~\cite{RRS09b} as well as Keevash and Mycroft~\cite{KM15}.

\subsubsection*{Divisibility} 

A perfect $\Hom(F)$-tiling is \emph{integral} if its image is restricted to the integers.
Denote by \gls{div-hypergraph} the set of hypergraphs with perfect integral $\Hom(F)$-tilings.
Hypergraphs not covered by $\div_F$ are called \emph{divisibility barriers}.
For example, the disjoint union of two odd cliques is a divisibility barrier in the $2$-uniform matching setting due to the parity obstruction.
Divisibility barriers can be traced back to designs~\cite{GJ73,Wil73}, which can be viewed as perfect matchings in sparse hypergraphs.
In the dense setting, the obstacle was studied systematically for matchings by Keevash and Mycroft~\cite{KM15} in terms of lattice completeness (see \cref{sec:lattice-completeness}).

\subsubsection*{Covering} 

A final obstruction is derived from the trivial fact that in a perfect $F$-tiling every vertex is on a copy of $F$.
Motivated by this, we say that a $k$-graph $G$ is \emph{$\Hom(F)$-covered} if for every vertex $v \in V(G)$, there is a homomorphism $\phi \in \hom{F}{G}$ with $|\phi^{-1}(v)| =1$.
Denote by \gls{cov-hypergraph} the set of $\Hom(F)$-covered hypergraphs.
A hypergraph outside of $\cov_F$ is called a \emph{covering barrier}.
The covering barrier has recently emerged as a third obstacle in the study of various tiling problems~\cite{FT22,HZZ17,MMP22}.
{Somewhat surprisingly, this simple requirement is sometimes the bottleneck, see for instance~\cref{thm:k-partite} or the discussion in~\cref{sec:transversal}}.

\subsubsection*{Thresholds} 

Turning to degree conditions, for $0 \leq d < k$, we define the \emph{minimum $d$-degree} of a $k$-graph~$G$, written \gls{mindeg}, as the largest~$\ell$ such that every $d$-set in $V(G)$ is contained in at least~$\ell$ edges.
Note that for $d=0$, this is just the number of edges in $G$.
For $d\geq 1$ and a $k$-graph~$F$ of order $m=v(F)$, let \gls{md-threshold-tiling} be the infimum over all $\delta \in [0,1]$ such that for all $\mu>0$ and large enough~$n$ divisible by $m$, every $n$-vertex $k$-graph~$G$ with $\delta_d(G) \geq (\delta +\mu) \tbinom{n-d}{k-d}$ admits a perfect $F$-tiling.
Analogously, we define the thresholds \gls{md-threshold-frac-tiling}, \gls{md-threshold-int-tiling} and \gls{md-threshold-cover} for admitting perfect fractional and integral $\Hom(F)$-tilings as well as being $\Hom(F)$-covered, respectively.

\subsubsection*{Decomposition} 

Clearly, $\th_d^{}(\til_F)$ is bounded from below by $\th_d^{}(\spa_F)$, $\th_d^{}(\div_F)$ and $\th_d^{}(\cov_F)$.
As a corollary of our main result, we find that one of these inequalities is always tight.
In other words, all relevant obstacles are already captured by the space, divisibility and covering barriers.

\begin{theorem}\label{thm:minimum-degree-thresholds}
	For every $k$-graph $F$ and $1\leq d < k$, we have
	\begin{equation*}
		\th_d^{}(\til_F) = \max \big\{	\th_d^{}(\SpaF{F}),\,	\th_d^{}(\div_F),\,	\th_d^{}(\cov_F) \big\}\,.
	\end{equation*}
\end{theorem}

Analogous decompositions of the minimum degree thresholds have been obtained by Glock, Kühn, Lo, Montgomery and Osthus~\cite{GLM+19}, (very recently) Delcourt, Henderson, Lesgourgues and Postle~\cite{DHLP25} and Henderson and Postle~\cite{HP25} for designs as well as Han and Treglown~\cite{HT20a} for algorithmic aspects of tilings.
We discuss the consequences of \cref{thm:minimum-degree-thresholds} and past work on perfect tilings in more detail in \cref{sec:applications}.

\subsection*{Overview}

{The remainder of the paper is organised into two parts.
In the first part, we present our general framework.
The main result (\cref{thm:framework}) together with two variants concerning exceptional vertices (\cref{thm:framework-exceptional})  and stability (\cref{cor:stability}) is formulated in \cref{sec:main-results}.
Their proofs can be found in \cref{sec:proof-main-result}.
The second part is dedicated to a series of applications.
In each setting, we determine the barriers to perfect tilings (Table~\ref{tab:barriers}) and then continue to derive concrete outcomes (Table~\ref{tab:recovering}), recovering much of the past work.
The details of these applications are discussed in \cref{sec:applications}, and their proofs can be found in \cref{sec:dirac-hyper-proofs,sec:transversal-proof,sec:ordered-graphs-proofs,sec:quasirandomness-proofs}, respectively.
We conclude the paper with a discussion of open problems in \cref{sec:conclusion}.
The appendix contains a collection of technical results that have been included for the sake of completeness.}

\begin{table}[ht]
	\centering
	\caption{Identifying barriers to perfect tilings}
	\label{tab:barriers}
	\renewcommand{\arraystretch}{1.4}
	\begin{tabularx}{\textwidth}{
			>{}l       
			X                   
			l                   
		}
		\toprule
		Result &   Setting   & Proof \\
		\midrule
		\cref{thm:minimum-degree-thresholds} & $k$-uniform min. $d$-degree thresholds for perfect tilings  (standard) & \cref{sec:main-results} \\
		\cref{thm:transversal-tiling}   &  \dittoclosing\, for transversal  perfect tilings  & \cref{sec:transversal-proof} \\ 
		\cref{cor:threshold-decomposition-ordered}    &  \dittoclosing\, for  vertex-ordered perfect tilings &  \cref{sec:ordered-graphs-proofs}  \\ 
		\cref{cor:uniformly-dense}    & uniformly dense $k$-graphs & \cref{sec:quasirandomness-proofs}\\ 
		\bottomrule
	\end{tabularx}
\end{table}

\begin{table}[ht]
	\centering
	\caption{Applications}
	\label{tab:recovering}
	\renewcommand{\arraystretch}{1.4}
	\begin{tabularx}{\textwidth}{
			>{}l       
			l                   
			X                   
			l                   
		}
		\toprule
		Result & Setting & Recovers/extends work of & Proof \\
		\midrule
		\cref{pro:komlos-kuhn-osthus-thresholds} &    standard  & Kühn--Osthus~\cite{KO09} &  \cref{sec:dirac-hyper-proofs} \\
		\cref{pro:mycroft-thresholds} &        & Mycroft~\cite{Myc16} &    \\
		\cref{thm:k-partite} &        & Han, Zang and Zhao~\cite{HZZ17} &   \\
		\cref{thm:montgomery-muyesser-pehova} &  transversal    & Montgomery, M{\"u}yesser and Pehova~\cite{MMP22} &  \cref{sec:transversal-proof} \\ 
		\cref{thm:cheng-han-wang-wang} &        & Cheng, Han, Wang and Wang~\cite{CHW+23} &    \\ 
		\cref{pro:thresholds-ordered} &   vertex-ordered   & Freschi and Treglown~\cite{FT22}&  \cref{sec:ordered-graphs-proofs}   \\ 
		\cref{lem:quasirandom-divisibility} & quasirandom    & Ding, Han, Sun, Wang and Zhou~\cite{DHS+22} & \cref{sec:quasirandomness-proofs} \\ 
		\bottomrule
	\end{tabularx}
\end{table}

\section{A general framework}\label{sec:main-results}
 
Given $k$-graphs~$F$ and $G$, we aim to find a perfect $F$-tiling of $G$ using the homomorphisms from~$F$ to~$G$ as building blocks.
The set $\hom{F}{G}$ can be viewed as a directed hypergraph~$H$ on $V(G)$, where each homomorphism is encoded as a tuple of vertices.
(The tuples may have repeated vertices.)
Roughly speaking, $H$ tracks the positions of admissible tiles, which for our purposes is the relevant information.
Our main result is therefore formulated in the language of directed hypergraphs.
From a theoretical perspective, this gives a clear view on the general assumptions that lead to perfect tilings.
In practice, it allows us to address various combinatorial settings without mentioning their idiosyncratic features (such as orders, colours and so on).
This includes our applications in \cref{sec:applications}, but also potential future work.

\subsection*{Directed hypergraphs}\label{sec:directed-hypergraphs}

For a set $V$ and a tuple $e \in V^m$, we write $\gls{multi}={|\{i \colon e(i) = v\}|}$ for the \emph{multiplicity} of $v$ in $e$.
A tuple is \emph{{injective}} if it contains no repeated vertices.
An \emph{${m}$-uniform directed hypergraph} (\emph{${m}$-digraph} for short) $H$ is a pair consisting of vertices~$V(H)$ and edges $E(H) \subset {V(H)}^m$.
So edges are allowed to contain repeated vertices, which is non-standard but essential to encode (non-injective) homomorphisms.
We often identify $H$ with its edge set~$E(H)$, writing $e \in H$ instead of $e \in E(H)$.
Unless specified otherwise, we assume that $m\geq 2$.
We write $v(H)=|V(H)|$ for the \emph{order} of $H$ and $e(H) = |E(H)|$ for its \emph{size}.
For convenience, we refer to {(directed) subhypergraphs} of $H$ simply as \emph{subgraphs}.
Given $S \subset V(H)$, we write $H[S]$ for the $m$-digraph \emph{induced by $S$}, which contains all edges of $H$ with entries in~$S$.
We denote by $H-S$  the subgraph of $H$ induced by $V(H) \sm S$.

A \emph{matching} ${\cM \subset H}$ is a subgraph of {injective} edges such that no vertex is covered twice.
Moreover,~$\cM$ is \emph{perfect} if every vertex of $H$ is covered.
We write \gls{mat-digraph} for the set of uniform digraphs with a perfect matching.
We can formulate tiling problems in terms of matchings in digraphs, which is formalised as follows for the standard setting (uniform hypergraphs).

\begin{definition}[Standard setting]\label{def:hom-graph}
	For $k$-graphs $F$ and $G$ with $V(F)=\{1,\dots,m\}$, let \gls{HFG-standard} be the $m$-digraph on vertex set $V(G)$ with an edge $(\phi(1),\dots,\phi(m))$ for every $\phi \in \hom{F}{G}$.
\end{definition}

\COMMENT{We remark that the labelling of $F$ can be taken implicitly in most applications.}
Note that an {injective} edge in $H(F;G)$ corresponds to a copy of $F$ in $G$.
So a perfect $F$-tiling in $G$ is equivalent to a perfect matching in $H(F;G)$.
One can define analogous digraphs for other combinatorial settings such as ordered hypergraphs or transversal problems (see \cref{sec:applications}).

\subsection*{Fissility}\label{sec:fissility}

We now turn to the notion of fissility, which allows us to find {injective} edges in (directed) blow-ups.
To motivate this concept, consider a $k$-graph $G$ that contains a (complete) blow-up of a homomorphic image of a $k$-graph $F$ with  large enough cluster sizes.
In this case, the blow-up and thus $G$ trivially contain a copy of $F$.
For instance, if $F$ is a $6$-cycle and $G$ is a graph containing the blow-up of an edge with clusters of size $3$, then $G$ contains a copy of~$F$.
This allows us to `split' vertices that were `merged' under a homomorphism.
The digraph $H(F;G)$ inherits this property, resulting in an injective edge (copy of $F$) hosted by every large enough blow-up of an edge (homomorphic image of $F$).
This is an important property for our purposes, and we will call abstract digraphs satisfying it `fissile'.
Let us formalise this discussion as follows.

Consider an $m$-digraph $R$ and a family of pairwise disjoint non-empty sets $\cV=\{V_x\}_{x \in V(R)}$.
For an edge $f \in R$, an $m$-tuple $e\in (\bigcup \cV)^m$ is called \emph{$f$-preserving}, if $e(i) \in V_{f(i)}$ for every $1 \leq i \leq m$.
Moreover, $e$ is \emph{strictly $f$-preserving} if in addition $e(i) = e(j)$ whenever $f(i) = f(j)$.
Note that a strictly $f$-preserving edge $e$ is {injective} if and only if $f$ is {injective}.
The \emph{blow-up} \gls{blow-up-digraph} with \emph{reduced graph} $R$ and \emph{clusters} $\cV$ is the $m$-digraph on $\bigcup \cV$, which contains for every edge $f \in R$, all strictly $f$-preserving $m$-edges.
We remark that if $\cV'$ is obtained from $\cV$ by removing the vertices of an edge $e \in R(\cV)$ and $\cV'$ has no empty clusters, then $R(\cV')$ is still a blow-up.
Moreover, $R(\cV)$ contains an {injective} edge if and only if $R$ does.
We call an edge $f \in R$ \emph{ample} if $|V_{f(i)}| \geq \multi(f(i),f) = |\{j \colon f(j) = f(i)\}|$ for each $1 \leq i \leq m$, which is a trivial requirement for the existence of an $f$-preserving {injective} edge hosted by a digraph $H$ containing $R(\cV)$.

\begin{definition}[Fissility]\label{def:fissility}
	An $m$-digraph $H$ is called \emph{fissile} if for every blow-up $R(\cV) \subset H$ and ample $f \in R$, there is an $f$-preserving {injective} edge in $H$.
\end{definition}

Note that in particular $H$ is fissile if all its edges are {injective}.
Since fissility is a somewhat abstract concept, we illustrate it with the example of the standard setting.

\begin{observation}\label{obs:hom-graph-fissile}
	Let $F$ and $G$ be $k$-graphs with $V(F) = \{1,\dots,m\}$.
	Then $H(F;G)$ is fissile.
\end{observation}

\begin{proof}
	Set $H = H(F;G)$, and consider a blow-up $R(\cV) \subset H$ with a family $\cV=\{V_x\}_{x \in V(R)}$ and an ample $m$-edge $f \in R$.
	We show that $H$ contains an $f$-preserving {injective} edge.
	Let $e$ be a strictly $f$-preserving edge in $R(\cV)$.
	So $e$ corresponds to a homomorphism $\phi_e \in \hom{F}{G}$ by definition of $H$.
	Let $\phi \in \hom{F}{R}$ be the corresponding homomorphism, which maps each vertex $v \in V(F)$ to the vertex $x \in V(R)$ for which $\phi_e(v) \in V_x$.
	
	Now consider an edge $v_1\cdots v_k \in F$, and let $\phi(v_i) = x_i$ for $1 \leq i \leq k$.
	Note that $x_1,\dots,x_k$ are pairwise distinct.
	We claim that every set $\{w_1,\dots,w_k\}$ with $w_i \in V_{x_i}$ is an edge in $G$.
	To see this, define a function $\phi' \colon V(F) \to V(G)$ by setting $\phi'(v) = w_i$ if $\phi_e(v) \in V_{x_i}$ and $\phi'(v) = \phi_e(v)$ otherwise.
	Note that $\phi'$ is in fact a homomorphism, since its corresponding $m$-tuple is strictly $f$-preserving in $R$ and $R(\cV) \subset H$.
	Since homomorphisms map edges to edges, it follows that $w_1 \cdots w_k \in G$ as claimed.
	
	Since $f$ is ample, we  can thus select for each $x = f(j)$ with $1 \leq j \leq m$, a number of $|\phi^{-1}(x)| = \multi(x,f)  \leq |V_{x}|$ vertices in each cluster $V_{x}$ to find a copy of $F$ in $G$ on exactly these vertices.
	This results in an $f$-preserving {injective} edge in $H$.
\end{proof}

\subsection*{Obstacles to perfect matchings}\label{sec:necessary-conditons}

Given these preliminaries, we shall focus on perfect matchings in {fissile} digraphs.
In the following, we recover the obstacles and underlying relaxations of \cref{sec:introduction} in this setting.
Consider an $m$-digraph $H$ with a weight function $w \colon H \rightarrow \REALS$ such that for all $v\in V(H)$, we have $\sum_{e \in H} w(e) \multi(v,e) = 1$.
Note that $w$ is equivalent to a perfect matching if its image is binary.
We call $w$ a \emph{perfect fractional  matching} if its image is in the unit interval.
Moreover, $w$  is an \emph{integral perfect matching} if its image is in the integers.
We say that an $m$-digraph $H$ is \emph{covered} if for every vertex $v \in V(H)$, there is an $m$-edge $e \in H$ in which~$v$ has multiplicity one.
Note that $H$ is covered if it has a perfect matching, since {injective} edges do not have repeated entries. 
Denote by \gls{spa-digraph} and \gls{div-digraph} the sets of digraphs which have perfect fractional and integral matchings, respectively.
Let \gls{cov-digraph} be the set of digraphs that are covered.
We can relate these digraph properties to the standard setting (\cref{def:hom-graph}) as follows.

\begin{observation}\label{obs:digraph-property-equivalence}
	For a $k$-graph $F$, a $k$-graph $G$ satisfies $\til_F$, $\spa_F$, $\div_F$ and $\cov_F$ if and only if $H(F;G)$ satisfies $\mat$, $\spa$, $\div$ and $\cov$, respectively.
\end{observation}

A \emph{space}, \emph{divisibility} or \emph{covering barrier} is any digraph not satisfying $\SpaF{}$, $\div$ or $\cov$, respectively.
We emphasise once more the necessity of each of these properties.

\begin{observation}\label{obs:matching-to-space-divisibility-cover}
	If a digraph $H$ satisfies $\mat$, then it also satisfies $\spa$, $\div$ and $\cov$. 
\end{observation}

\subsection*{Robustness}\label{sec:robustness}

To state our main result, we introduce a notion of robustly satisfying a digraph property.
This is formalised in terms of an auxiliary hypergraph, which encodes the positions where a given digraph inherits the property.

\begin{definition}[Property graph]\label{def:property-graph}
	For an $m$-digraph $H$ and a family of $m$-digraphs $\P$, the \emph{property graph}, denoted by \gls{property-graph}, is the $s$-graph on vertex set $V(H)$ with an edge $S \subset V(H)$ whenever the induced subgraph $H[S]$ \emph{satisfies}~$\P$, that is $H[S] \in \P$.
\end{definition}

A digraph $H$ robustly satisfies a property $\P$ if the property graph $\PG{H}{\P}{s}$ is locally dense:

\begin{definition}[Robustness]\label{def:robustness-detailed}
	For a family of $m$-digraphs $\P$, an $n$-vertex $m$-digraph $H$ satisfies \emph{$(\delta,r,s)$-robustly} $\P$ if the minimum $r$-degree of the property $s$-graph $\PG{H}{\P}{s}$ is at least $\delta  \tbinom{n-r}{s-r}$.
	Moreover, we just write \emph{$s$-robustly} in the case when $\delta = 1-1/s^2$ and $r=1$.
\end{definition}

{We note that the value $1-1/s^2$ guarantees the existence of a perfect matching in the property $s$-graph $P$  (\cref{thm:DH81}).}
Better bounds are available, but not required in our context, since in practice $P$ has a much larger minimum degree anyway due to the inheritance principle (see below).

\subsection*{Sufficient conditions}\label{sec:sufficient-matching}

Our main result inverts the implication of \cref{obs:matching-to-space-divisibility-cover}.
It states that every directed hypergraph which robustly overcomes the space, divisibility and covering barriers has a perfect matching:

\begin{theorem}[Main result]\label{thm:framework}
	For all $2\leq m \leq s$, there is $n_0 \geq s$ such that every fissile $m$-digraph $H$ on $n \geq n_0$ divisible by $m$ vertices that $s$-robustly satisfies $\spa \cap \div \cap \cov$ has a perfect matching.
\end{theorem}

The proof of this result is given in \cref{sec:proof-main-result}.
In  the rest of this section, we discuss applications, related work and variants involving exceptional vertices and stability.

\subsection*{Example application}\label{sec:example-application}

To see how \cref{thm:framework} is used, consider a digraph family $\P$ with the ambition to show that its elements $H$ contain perfect matchings, that is $\P \subset \mat$.
Suppose that~$\P$ is approximately closed under subsampling, a phenomenon that is captured by an {Inheritance Lemma} (such as \cref{lem:inheritance-minimum-degree,lem:inheritance-uniformly-dense}).
Informally, this translates to the property $s$-graph $\PG{H}{\tilde \P}{s}$ being locally dense for a slightly perturbed property $\tilde \P = \{\tilde R\colon R \in \P\}$.
Thanks to \cref{thm:framework}, the task of showing that $\P \subset \mat$ is then reduced to showing that $\tilde \P$ satisfies $\spa$, $\div$ and $\cov$, which is a considerably simpler task.

More concretely, let us illustrate a typical application of \cref{thm:framework} by deriving \cref{thm:minimum-degree-thresholds}.
The \emph{relative minimum $d$-degree} of an $n$-vertex $k$-graph $G$ is $\delta_d(G)/\binom{n-d}{k-d}$.
For $\delta \in [0,1]$, let \gls{min-deg} be the union of all $k$-graphs with $k>d$ and relative minimum $d$-degree at least $\delta$.
It is not hard to see that minimum degree conditions are approximately closed under subsampling.

\begin{lemma}[Inheritance Lemma for minimum degree]\label{lem:inheritance-minimum-degree}
	For $0 \leq d \leq k-1$, $q \geq 1$ and $\mu > 0$, there is~$s_0=s_0(k,q,\mu)$ such that for every $s\geq s_0$ the following holds.
	Let $G$ be a $k$-graph on $n\geq s$ vertices with $\delta_d(G) \geq (\delta + \mu) \tbinom{n-d}{k-d}$ where $\delta \in [0,1]$.
	Then the property $s$-graph $P =\PG{G}{\DegF{d}{\delta+\mu/2}}{s}$ satisfies $\delta_{q}(P) \geq  (1-e^{-\sqrt{s}}    )  \tbinom{n-q}{s-q}$.
\end{lemma}

The proof of \cref{lem:inheritance-minimum-degree} follows from a standard concentration analysis and can be found in \cref{sec:inheritance-lemma-minimum-degree}.
Now we are ready to derive the threshold decomposition from \cref{thm:framework}.

\begin{proof}[Proof of \cref{thm:minimum-degree-thresholds}]
	Given $\mu > 0$ and $V(F) = [m]$, choose $s_0$ as in \cref{lem:inheritance-minimum-degree} (applied with $q=1$) and $s\geq s_0$ large enough such that $e^{- \sqrt{s}} \leq 1/s^2$.
	Suppose that $n_0$ satisfies \cref{lem:inheritance-minimum-degree,thm:framework}.
	Set $\delta = \max \big\{	\th_d^{}(\SpaF{F}),\,	\th_d^{}(\div_F),\,	\th_d^{}(\cov_F) \big\}$.
	Let $G$ be a $k$-graph on $n\geq n_0$ divisible by $m$ vertices with $\delta_d(G) \geq (\delta+\mu) \binom{n-d}{k-d}$.
	In light of \cref{obs:digraph-property-equivalence}, our goal is to show that the $m$-digraph $H = H(F;G)$ has a perfect matching.
	
	It follows by \cref{lem:inheritance-minimum-degree} that the property $s$-graph $P=\PG{G}{\DegF{d}{\delta+\mu/2}}{s}$ satisfies 
	$\delta_1(P) \geq (1-1/s^2) \tbinom{n-1}{s-1}$.
	So by definition of $\delta$ and \cref{obs:digraph-property-equivalence}, the $m$-digraph $H[S]$ satisfies $\spa \cap \div \cap \cov$ for every $S \in P$.
	Thus $\spa \cap \div \cap \cov$ is $s$-robustly satisfied by $H$.
	Finally, note that $H$ is fissile by \cref{obs:hom-graph-fissile}.
	Therefore $H$ contains a perfect matching by \cref{thm:framework}.
\end{proof}

\subsection*{Related work}

{Keevash and Mycroft~\cite{KM15} as well as Han~\cite{Han21} investigated similar phenomena in the setting of $k$-graphs.
These results differ in their notion of robustness and in their proof techniques.
In particular, Keevash and Mycroft~\cite{KM15} introduced the concept of completeness for lattices (see \cref{sec:lattice-completeness}) and used it to find a suitable allocation for the Hypergraph Blow-up Lemma.
Independently, Lo and Markström~\cite{LM15} developed an absorption-based approach to hypergraph tiling using a (more restrictive) form of lattice completeness.
Han~\cite{Han21} combined and extended these ideas to give a simpler proof of the Keevash--Mycroft Theorem avoiding the (Strong) Hypergraph Regularity Lemma.}

Our framework contributes to this line of research in two ways.
Firstly, the interface is simple but practical.
For host graph families that are approximately closed under subsampling, \cref{thm:framework} essentially decomposes the problem of finding perfect tilings into verifying the space, divisibility and cover properties independently (as illustrated by \cref{thm:minimum-degree-thresholds}), which can greatly simplify the analysis.
The fact that $m$-digraphs are allowed to have repeated vertices adds significant flexibility to this approach.
In combination, we obtain short and insightful proofs of many old and new results.

The second noteworthy point about \cref{thm:framework} is that the proof itself is quite short.
The argument is self-contained, after discounting classic insights from combinatorics, and it does not involve the Regularity Lemma.
The techniques can easily be extended to other setups concerning exceptional vertices and the partite setting.
Finally, our framework can also be used to derive stability results via the theory of property testing (see below).

{Since the announcement of this paper, further applications of our framework and the introduced methods have appeared.
We discuss this in more detail in \cref{sec:conclusion}.}

\subsection*{Exceptional vertices} 

The following variant of our main result can be applied when the space property is satisfied up to a small defect.
The \emph{size} of a {fractional matching} $w \colon H \rightarrow [0,1]$  in an $m$-digraph $H$ on $n$ vertices is $\sum_{e \in H} w(e) \leq n/m$.
Note that a perfect fractional matching has size exactly $n/m$.
We say that $w$ is \emph{$\rho$-perfect} if its size is at least $(1-\rho)n/m$.
For $0 \leq \rho \leq 1$, let \gls{spa-defect-digraph} be the set of uniform digraphs with a $\rho$-perfect fractional matching.

\begin{theorem}
	\label{thm:framework-exceptional}
	For all $2\leq m \leq r$ and $\alpha >0$ there is $\rho > 0$ such that for every $s \geq m$ there is an $n_0$  with the following property.
	Let $H$ be a fissile $m$-digraph on $n \geq n_0$ divisible by $m$ vertices. 
	Suppose that $H$ satisfies $r$-robustly  $\div \cap \cov$, and $H-A$ satisfies $s$-robustly $\SpaF \rho$ for every set $A \subset V(H)$ of at most $\alpha n $ vertices.
	Then $H$ has a perfect matching.
\end{theorem}

\COMMENT{For simplicity, we state the degree condition on $\PG{H-A}{  \SpaF \rho}{s}$ in terms of $n$.}

We remark that the assumption on $H-A$ is required for applications where $\alpha$ is much larger than $1/s$, which is usually the case.
The proof of \cref{thm:framework-exceptional} can be found in \cref{sec:proof-main-result}.

\subsection*{Stability}\label{sec:stability}

When studying exact (as opposed to approximate) minimum degree conditions for perfect tilings, one typically proceeds via a stability analysis.
The idea is to show that the graph in question either has a perfect tiling or must be close to an extremal construction.
One then continues with a more careful analysis of the extremal cases.
Techniques based on the Regularity Lemma typically allow one to carry out such a stability analysis (see for instance the work of Kühn and Osthus~\cite{KO09}).
We can combine our framework with results from property testing to obtain similar structural insights.
Recall that an Inheritance Lemma states that a property $\cQ$ is approximately preserved under subsampling, that is, under taking typical induced subgraphs of constant order.
In property testing, the focus is on the inverse statement:
given that a typical induced subgraph of constant order satisfies $\cQ$, does the original graph satisfy $\cQ$ approximately?

A \emph{hypergraph property} is a set of hypergraphs that is closed under taking isomorphisms.
For an $n$-vertex $k$-graph $G$ and a $k$-graph property $\cQ$, let $\dist(G,\cQ)$ be the minimum of the (edit) distance $\dist(G,G') =|E(G) \bigtriangleup E(G')|/\binom{n}{k}$ over all $G' \in \cQ$ with $V(G') = V(G)$.
We denote by $B_{\mu}(\cQ)$ the set of all $k$-graphs $G$ with $\dist(G,\cQ) \leq \mu$.
We say that a $k$-graph property $\cQ$ is \emph{testable} if for every $\mu > 0$, there is a positive integer $s$ such that if we select for an $n$-vertex $k$-graph $G$ with $n \geq s$ and $\dist(G,\cQ) > \mu$ an $s$-set $S \subset V(G)$ uniformly at random then the probability that $\dist(G[S],\cQ) \leq \mu $ is at most $\mu$.
The systematic study of testable properties goes back to the work of Goldreich, Goldwasser and Ron~\cite{GGR98}.
It was shown by Fisher and Newman~\cite{FN07} and Alon, Fischer, Newman and Shapira~\cite{AFNS2009} that a graph property is testable if and only if it can be expressed by a `regularity instance', a notion closely related to the Regularity Lemma.
The analogous result for hypergraphs was proved by Fischer, Matsliah and Shapira~\cite{FMS2010}.
See also the work of Joos, Kim, Kühn, Osthus~\cite[Theorem 3.1]{JKKO23}, where the connection with property testing and random sampling in hypergraphs is made explicit.
We remark that many `natural' properties can be tested without the use of (Hypergraph) Regularity Lemmas~\cite{GGR98,LR25}.

To draw the connection between property testing and our work, we derive the following corollary for the standard setting, which transfers stability results for space, divisibility and cover properties to stability results for perfect tilings.
The proof can be found in \cref{sec:proof-main-result}.

\begin{corollary}
	\label{cor:stability}
	Let $F$ be a $k$-graph on $m$ vertices.
	Let $\cQ$ be a testable $k$-graph property.
	Then, for every $\mu > 0$, there are $s,n_0>0$ with the following property.
	Let $G$ be an $n$-vertex $k$-graph~$G$ with $n \geq n_0$ divisible by $m$.
	Let $H = H(F;G)$, $\P = \spa \cap \div \cap \cov$ and $P = \PG{H}{\P}{s}$.  
	Let $Q$ be $s$-graph on $V(G)$ with an edge $S$ if $G[S] \in B_{\mu/4}(\cQ)$.
	Suppose that $\delta_1 \big(P \cup Q \big) \geq  \left(1-1/s^2 + \mu\right)  \tbinom{n-1}{s-1}$.
	Then $G$ has a perfect $F$-tiling or satisfies $B_\mu(\cQ)$.
\end{corollary}

The purpose of \cref{cor:stability} is to turn stability results for space, divisibility and covering into stability statements for perfect tilings.
We illustrate  this  with the following (trivial) application in the setting of $2$-uniform graphs.
(For a more consequential application of these ideas, see the work of Letzter and Ranganathan~\cite{LR25}.)
Let $\cQ_1$ be the family of graphs obtained from the union of two disjoint cliques of equal size.
Let $\cQ_2$ be the family of complete  balanced bipartite graphs.
Note that these graphs correspond (roughly) to the extremal constructions (space and divisibility barrier) for perfect matchings under minimum degree conditions.
It follows from the work of Goldreich, Goldwasser and Ron~\cite{GGR98} that $\dist(\cdot,\cQ)$ is testable for $\cQ = \cQ_1 \cup \cQ_2$.

Consider the family $\cG_\eps$ of graphs of minimum relative degree at least $1/2-\eps$.
Let $F$ be the graph consisting of a single edge.
Given $\mu > 0$ and choosing $\eps > 0$ small enough, one can show that for a large enough graph $R \in \cG_{2\eps}$, we have $R \in \spa_F \cap \div_F \cap \cov_F$ or we have $R \in B_{\mu/4}(\cQ)$.
This corresponds to a stability statement for space, divisibility and covering.

Now let $G$ be a sufficiently large graph in $\cG_\eps$.
Let $P' =\PG{G}{\cG_{2\eps}}{s}$, $P = \PG{H}{\P}{s}$ and let~$Q$ be $s$-graph on $V(G)$ with an edge $S$ if $G[S] \in B_{\mu/4}(\cQ)$.
By the above, we have $P' \subset P \cup Q$ for $s$ large enough with respect to $\eps$ and $\mu$.
Moreover, it follows that $\delta_1 \big(P' \big) \geq  \left(1-1/s^2 + \mu\right)  \tbinom{n-1}{s-1}$ by \cref{lem:inheritance-minimum-degree}.
Hence, we may apply \cref{cor:stability} to find that $G$ contains a perfect matching or $G \in B_\mu(\cQ)$.
This gives a stability statement for perfect matchings.

\section{Proof of the main results}\label{sec:proof-main-result}

In this section, we prove  \cref{thm:framework,thm:framework-exceptional,cor:stability}.
We begin with an informal discussion on how the assumptions of \cref{thm:framework} are used.

\subsection*{Proof outline}

Consider an $n$-vertex $m$-digraph $H$ as in \cref{thm:framework}.
Our task is to find a perfect matching in $H$.
For convenience, we ignore divisibility conditions on the order (such as~$m$ dividing $n$) in the following discussion.

We begin with a simplified scenario.
Assume that~$H$ contains a large blow-up $R(\cV)$ with parts of equal size such that $R$ is an $s$-vertex $m$-digraph in $\spa \cap  \div \cap \cov $.
In this case, it is possible to show that $H$ contains a matching that spans the vertices of $R(\cV)$.\footnote{We do not verify this claim, but it follows immediately from the ideas used in the proofs of \cref{obs:fractional-to-integral,lem:absorber}. 
	A formal proof appeared in subsequent work~\cite[Proposition 9.5]{LS24a}.}
We informally call embedding into a blow-up the `allocation' of a perfect matching.
The properties $\spa$, $\cov$ and $\div$ allow us to find such an allocation.

So if $H$ is spanned by a single (suitable) blow-up, we are done.
Of course, this is not always the case in the context of \cref{thm:framework}.
However, as it turns out, the assumption on the minimum degree of the property $s$-graph $\PG{H}{\spa \cap  \cov \cap \div}{s}$ allows us to find a collection of pairwise vertex-disjoint blow-ups $R_1(\cV_1),\dots,R_\ell(\cV_\ell)$ that together span the vertices of $H$ and whose reduced graphs $R_1,\dots,R_\ell$ are $s$-vertex $m$-digraphs in $\spa \cap \div \cap \cov$.\footnote{As before, we do not show this statement formally, but it follows from the arguments used in the proofs of \cref{lem:covering-with-blow-ups,lem:local-blow-ups}. See also subsequent work~\cite[Proposition 6.3]{LS24a}.}
Given this, we can find a perfect matching of $H$ by picking a perfect matching in each of the blow-ups.
In practice, we implement these ideas using the Absorption Method (see below), which adds additional flexibility that is required for the proof of \cref{thm:framework-exceptional}.

We note that this approach bears some similarities to the embedding procedure when using a Regularity Lemma.
In this context, one works with a single `reduced' $s$-vertex $m$-digraph~$R$ and a partition $\cV$ of $V(H)$ whose parts correspond to the vertices of $R$.
As above, one can allocate a perfect matching provided that $R(\cV) \subset H$.
However, the latter inclusion is usually not given.
Instead, one can find a `quasirandom blow-up' of $R$ on the clusters of $\cV$ in $H$, meaning that the edges of $R$ are replaced with quasirandom subgraphs of $H$.
Given this, one can turn an allocation of a perfect matching into the embedding of a perfect matching via technical instruments for quasirandomness such as the Hypergraph Blow-up Lemma~\cite{KM15}.
Since in our approach, we identify the blow-ups directly as subgraphs of $H$, we are able to avoid this machinery, which simplifies the proofs considerably.

\subsection*{Details}

The actual proof of \cref{thm:framework,thm:framework-exceptional} is split into two parts.
First we match most of the vertices, and then we integrate the remaining ones.
This process is called the Absorption Method and was popularised (in its modern form) by Rödl, Ruciński and Szemerédi~\cite{RRS06}.
The first step is formalised as follows.

\begin{proposition}[Almost perfect matching]\label{prp:almost-perfect-matching}
	For all $2\leq m \leq s$, $\eta >0$ and  $\rho \geq 0$, there is an $n_0>0$ such that every fissile $m$-digraph $H$ on $n \geq n_0$ vertices that $(\delta,1,s)$-robustly satisfies~$\SpaF \rho$  for $\delta=1-1/s^{1.5}$ has a matching covering all but at most $(\eta+\rho)n$ vertices.
\end{proposition}

In the second step, we take care of the leftover vertices using a special `absorption structure' that is provided by the following result.

\begin{proposition}[Absorption]\label{prp:absorption}
	For all $2\leq m \leq s$ and $\alpha >0$, there are $\eta ,n_0>0$ such that 
	every fissile $m$-digraph $H$ on $n \geq n_0$ vertices that $s$-robustly satisfies $\div \cap \cov$ admits a set $A \subset V(H)$ with $|A| \leq \alpha n$ divisible by $m$ such that $H[A \cup L]$ has a perfect matching for every set $L \subset V(H-A)$ with $|L| \leq \eta n$ divisible by $m$. 
\end{proposition}

We remark that the above propositions do not require $n$ to be divisible by $m$.
The proofs of \cref{prp:almost-perfect-matching,prp:absorption} can be found in \cref{sec:absorption,sec:almost-perfect-matching}.
Given this setup, it is easy to derive our main result.

\begin{proof}[Proof of \cref{thm:framework}]  
	Given $2\leq m \leq s$, choose $\alpha >0$ sufficiently small.
	Let $\eta > 0$ be small enough to apply \cref{prp:absorption}.
	Choose $n_0$ large enough so that \cref{prp:almost-perfect-matching,prp:absorption} are satisfied, where the former is applied with $\rho=0$.
	For $n \geq 2n_0$ divisible by $m$, let $H$ be an $m$-digraph as in the statement.
	We obtain $A \subset V(H)$ by an application of \cref{prp:absorption}.
	Let $P = \PG{H-A}{\spa}{s}$, and note that $\delta_1\left(P\right) \geq  (1-1/s^{1.5})  \binom{\bar n-1}{s-1}$ by choice of $\alpha$, where $\bar n =n-|A|$.
	This allows us to apply \cref{prp:almost-perfect-matching} and find a matching $\cM_1$ in $H-A$ covering all but at most $\eta n$ vertices.
	Denote by $L$ the set of uncovered vertices.
	By choice of~$A$, there is a perfect matching $\cM_2$ of $H[A\cup L]$.
	So $\cM_1 \cup \cM_2$ is a perfect matching of~$H$.
\end{proof}
 
\COMMENT{The proof of \cref{thm:framework-exceptional} follows analogously, so we only sketch the differences.
	We obtain~$\eta$ by from \cref{prp:absorption} with $r$ playing the rôle of $s$.
	Next, we apply \cref{prp:absorption} with~$\eta/2$ playing the rôle of both $\eta$ and $\rho$.
	This allows us to pick $A \subset V(H)$ of size at most $\alpha n$.
	We then use \cref{prp:almost-perfect-matching} on $H-A$ to cover all but at most $\eta n$ vertices and finish as before.}

The proof of \cref{thm:framework-exceptional} follows analogously.
We include the details for the sake of completeness.
\begin{proof}[Proof of \cref{thm:framework-exceptional}]
	Given $2\leq m \leq r$ and $\alpha > 0$, let $\eta > 0$ be small enough to apply \cref{prp:absorption} with $r$ playing the rôle of $s$.
	Set $\rho = \eta/2$, and consider $s \geq m$.
	Choose $n_0$ large enough so that \cref{prp:almost-perfect-matching,prp:absorption} are satisfied, where the former is applied with $\eta/2$ playing the rôle of $\eta$, while the latter is applied with $\eta/2$ and $r$ playing the rôles of $\eta$ and $s$.
	For $n \geq 2n_0$ divisible by $m$, let $H$ be an $m$-digraph as in the statement.
	We obtain $A \subset V(H)$ by an application of \cref{prp:absorption}.
	In particular, $|A| \leq \alpha n$.
	So by assumption $H-A$ satisfies $s$-robustly $\SpaF{\rho}$.
	This allows us to apply \cref{prp:almost-perfect-matching} and find a matching $\cM_1$ in $H-A$ covering all but at most $(\eta/2 + \rho) n = \eta n$ vertices.
	Denote by $L$ the set of uncovered vertices.
	By choice of~$A$, there is a perfect matching $\cM_2$ of $H[A\cup L]$.
	So $\cM_1 \cup \cM_2$ is a perfect matching of~$H$.
\end{proof}

It is also not hard to see how stability can be derived from \cref{thm:framework}:

\begin{proof} [Proof of \cref{cor:stability}]
	Consider an $m$-vertex $k$-graph $F$, a testable $k$-graph property $\cQ$ and $\P = \spa \cap \div \cap \cov$.
	Given $\mu  > 0$, let $s$ and $n$ be sufficiently large with $s \leq n$ and $n$ divisible by~$m$.
	Let $G$ be a $k$-graph on $n$ vertices with $H = H(F;G)$ as in \cref{def:hom-graph} and $P = \PG{H}{\P}{s}$ as in \cref{def:property-graph}.
	Let $Q$ be $s$-graph on $V(G)$ with an edge~$S$ if $G[S] \in B_{\mu/4}(\cQ)$.
	Suppose that $\delta_1 \big(P \cup Q\big) \geq  \left(1-1/s^2 + \mu\right)  \tbinom{n-1}{s-1}$.
	
	If $\delta_1 \big( P \big) \geq  \left(1-1/s^2\right)  \tbinom{n-1}{s-1},$
	then $G$ has a perfect $F$-tiling by \cref{obs:digraph-property-equivalence,thm:framework}, and we are done.
	So suppose otherwise.
	But then there is a vertex $v \in V(G)$ such that $\deg_{Q}(v) \geq \mu  \cdot \binom{n-1}{s-1}$.
	By the definition of $Q$, this means that $G[S \cup \{v\}]$ satisfies $B_{\mu/4}(\cQ)$ with probability at least $\mu$ when choosing an $(s-1)$-set $S \subset V(G-v)$ uniformly at random.
 	Since $s$ is large enough, $G[S]$ also satisfies $B_{\mu/2}(\cQ)$ in this case.
	Put differently, with probability at least $\mu$ we have $\dist(G[S], \cQ) \leq \mu/2$ when selecting an $(s-1)$-set $S\subset V(G-v)$ uniformly at random.
	
	We may assume that $\dist(G,\cQ) > \mu \geq \mu/2$, since we are done otherwise.
	As $\cQ$ is testable, it follows that the probability that $\dist(G[S],\cQ) \leq \mu/2$ is at most $\mu/2$ for a uniformly chosen $(s-1)$-set $S \subset V(G-v)$.
	So with probability strictly above $1-\mu/2$, we have that $\dist(G[S],\cQ) > \mu/2$.
	Combined with the lower bound of $\mu$ for the complementary event, this is a contradiction.
\end{proof}

The remainder of this section is dedicated to the proofs of \cref{prp:absorption,prp:almost-perfect-matching} and thus concludes the proof of the main result.
We prepare ourselves with a few preliminary results in \cref{sec:preliminaries}.
This is followed by the proof of \cref{prp:almost-perfect-matching} in \cref{sec:almost-perfect-matching}.
We discuss an auxiliary boosting process in \cref{sec:boosting}.
\cref{sec:lattice-completeness} is dedicated to the concept of lattice completeness.
Finally, we show \cref{prp:absorption} in \cref{sec:absorption}.

\subsection{Preliminaries}\label{sec:preliminaries}

In the following, we introduce a few general definitions and tools.

\subsubsection*{General terminology}
For an integer $n\geq1$, we let $[n]=\{1,\dots,n\}$.
For $x,y,z \geq 0$, we write $x = y \pm z$ to mean $y - z \leq x \leq y+z$.
We express some of the constant hierarchies in standard $\gg$-notation.
	To be precise, we write $y \gg x$ to mean
	that for any $y \in (0, 1]$ there exists an $x_0 \in (0,1)$
	such that for all $x \in (0,x_0]$ the subsequent statements
	hold.  Hierarchies with more constants are defined in a
	similar way and are to be read from left to right following the order that the constants are chosen.
Moreover, we tend to ignore rounding errors if the context allows~it.

\subsubsection*{Auxiliary results}

Recall from the proof outline above that we plan to use blow-ups $R(\cV)$ whose reduced graph $R$ satisfies $\spa$, $\div$ and $\cov$.
We find these blow-ups using the hypergraph analogue of the Kővári--Sós--Turán Theorem due to Erd{\H o}s~\cite{Erd64} in combination with the supersaturation phenomenon discovered by Erd{\H o}s and Simonovits~\cite{ES83}.

For a set $V$ with a partition $\cV$, we say that $e \subset V$ is \emph{$\cV$-partite} if $e$ has at most one element in each part of $\cV$.
An $s$-graph $P$ is called \emph{$\cV$-partite} if all of its edges are $\cV$-partite.
We say that~$P$ is \emph{complete $\cV$-partite} if $\cV$ partitions $V(P)$ and $P$ contains all $\cV$-partite edges.
If we do not wish to specify the underlying partition, we just say that $P$ is \emph{(complete) $r$-partite} to mean that there exists a partition~$\cW$ with $r$ parts such that $P$ is (complete) $\cW$-partite.
We say that $P$ is \emph{balanced} if the underlying partition has parts of uniform size.
We write \gls{complete-partite-graph} for the balanced complete $s$-partite $s$-graph with parts of size $b$.
Given this, we can state the aforementioned result of {Erd{\H o}s~\cite{Erd64}}.

\begin{theorem}\label{thm:erd-original}
	For all $s\geq 2$, $b \geq 1$ and $\gamma > 0$, there is $n_0 > 0$ such that every $s$-graph $P$ on $n\geq n_0$ vertices with $e(P) \geq \gamma n^s$ contains a copy of $K_s^{(s)}(b)$.
\end{theorem}

We use the following supersaturated variant of this result, whose proof follows the exposition of Keevash~\cite{Kee11}.

\begin{corollary}\label{thm:erd64}
		For all $s\geq 2$, $b \geq 1$ and $\gamma > 0$, there are $\beta,n_0>0$ such that every $s$-graph $P$ on $n\geq n_0$ vertices with $e(P) \geq \gamma n^s$ contains at least $\beta n^{sb}$ copies of $K_s^{(s)}(b)$.
\end{corollary}
 
\begin{proof}
	Fix $t$ such that \cref{thm:erd-original} applies with $\gamma/2$, $t$ playing the rôles of $\gamma$, $n$.
	For $n$ large enough, consider an $s$-graph $P$ on $n$ vertices with $e(P) \geq \gamma n^s \geq \gamma \binom{n}{s}$.
	There are at least $(\gamma/2) \binom{n}{t}$ sets $T \subset V(P)$ of $t$ vertices such that $e(P[T]) \geq (\gamma/2) \binom{t}{s}$.
	Otherwise, we would have $\sum_T e(P[T]) < \gamma    \binom{t}{s} \binom{n}{t}$.
	However, we also have $\sum_T e(P[T]) = \binom{n-s}{t-s}  e(P)  \geq \gamma \binom{n-s}{t-s}  \binom{n}{s} = \gamma    \binom{t}{s} \binom{n}{t}$, which gives a contradiction.
	By \cref{thm:erd-original}, each $P[T]$ with $e(P[T]) \geq (\gamma/2) \binom{t}{s}$ contains a copy of $K = K_s^{(s)}(b)$.
	So the number of copies of $K$ in $P$ is at least $(\gamma/2) \binom{n}{t}/ \binom{n-sb}{t-sb} = (\gamma/2) \binom{n}{sb}/ \binom{t}{sb}$.
	We may therefore finish with $\beta = \gamma/    t^{2sb} $.
\end{proof}

Next, we record (without proof) the observation that minimum degrees are monotone.

\begin{observation}\label{fct:monotone-degrees}
	For $0 \leq d \leq d' \leq s$, every $n$-vertex $s$-graph $P$ satisfies 
	\begin{equation*}
		\frac{\delta_d(P) }{\binom{n-d}{s-d}} \geq \frac{\delta_{d'}(P)}{\binom{n-d'}{s-d'}}   \,.
	\end{equation*}
\end{observation}

We  also require a simple bound on the minimum degree that forces a perfect matching to kickstart our machinery, which was first obtained by Daykin and Häggkvist~\cite{DH81}.

\begin{theorem}\label{thm:DH81}
	For every $s\geq 2$ and $\mu > 0$, there is $n_0$ such that every $s$-graph $P$ on $n\geq n_0$ divisible by $s$ vertices and $\delta_1(P) \geq (1-1/s + \mu ) \binom{n-1}{s-1}$ has a perfect matching.
\end{theorem}

This result allows us to find an (almost) perfect matching in the property graphs of \cref{prp:almost-perfect-matching}.
With some additional work, we shall obtain a stronger outcome in this context (\cref{lem:blow-up-matching}).
For the partite setting, Daykin and Häggkvist~\cite{DH81} proved the following analogue:

\begin{theorem}\label{thm:DH81-partite}
	For every $s \geq 2$ and $\mu > 0$, there is $n_0$ such that every $\cV$-partite $s$-graph $P$ with~$\cV$ having $s$-parts of size $n\geq n_0$ each and $\delta_1(P) \geq (1-1/s + \mu ) n^{s-1}$ has a perfect matching.
\end{theorem}

\subsection{Almost perfect matchings}\label{sec:almost-perfect-matching}

Informally, \cref{prp:almost-perfect-matching} transforms a robust (almost) perfect fractional matching into an almost perfect matching.
The usual approach to proving such a result relies on the (Strong Hypergraph) Regularity Lemma, which involves a somewhat technical setup.
A proof without regularity was given by Alon, Frankl, Huang, Rödl, Ruciński and Sudakov~\cite{AFH+12} for the special setting of $k$-graphs under minimum degree conditions.
It is however not clear whether this argument generalises to perfect tilings.
So for instance, we cannot use it to recover classic results such as Koml\'os' theorem (\cref{thm:komlos}) without further ideas.
In what follows, we present a new approach that works in our general setting and also avoids the use of Regularity Lemmas.

Let us begin with a basic step of turning a fractional matching into an integral one.
We write~$\gls{blow-up-b}$ for the blow-up of an $m$-digraph $R$ with parts of uniform size~$b$.
Recall the definition of fissility and ample edges (\cref{def:fissility}).

\begin{observation}\label{obs:fractional-to-integral}
	Let $R$ be an $s$-vertex $m$-digraph with a $\rho$-perfect fractional matching.
	Suppose that~$H$ is a fissile $m$-digraph on $sb$ vertices with $R(b) \subset H$.
	Then $H$ contains a matching covering at least $(1-\rho) sb - ms^m$ vertices.
\end{observation}

\begin{proof} Consider a $\rho$-perfect fractional matching $w \colon R \to [0,1]$ of $R$.
	Since $H$ is fissile, we may pick a matching $\cM$ in $H$ by greedily selecting $\lfloor b \cdot w(f) \rfloor$ many $f$-preserving {injective} edges for every edge $f \in R$.
	Indeed, by fissility there is an $f$-preserving {injective} edge in $H$ for every ample edge $f \in R$.
	Moreover, we may avoid the vertices of already picked edges, since subgraphs of~$R(b)$ are still blow-ups and ampleness is guaranteed by choice of $w$.
	Then $\cM$ covers at least $(1-\rho) sb - ms^m$ vertices, where $ms^m$ accounts for the rounding error at every edge of $R$.
\end{proof}

Our strategy for showing \cref{prp:almost-perfect-matching} is then to first cover the vertices of $H$ with blow-ups~$R(b)$ where $R \in \spa$ and then to conclude by \cref{obs:fractional-to-integral}.
The next lemma allows us to carry out the first part.

\begin{lemma}[Almost perfect blow-up-tiling]\label{lem:covering-with-blow-ups}
	For all $2\leq m \leq s$, $b\geq 1$ and $\eta >0$, there is an $n_0>0$ such that the following holds for every $m$-digraph property $\P$ and $m$-digraph $H$ on $n \geq n_0$ vertices  
	that $(\delta,1,s)$-robustly satisfies $\P$  for $\delta=1-1/s^{1.5}$.
	All but $\eta n$ vertices of $H$ may be covered with pairwise vertex-disjoint blow-ups $R_1(b),\dots,R_\ell(b)$, where each $R_i$ is an $s$-vertex $m$-digraph satisfying $R_i \in \P$.
\end{lemma}

We remark that \cref{lem:covering-with-blow-ups} is interesting in its own right when combined with the inheritance principle (see \cref{cor:almost-perfect-blow-up-tiling}).
Before we come to the proof of \cref{lem:covering-with-blow-ups}, let us derive the main result of this subsection.

\begin{proof}[Proof of \cref{prp:almost-perfect-matching}]
	Introduce $b$ such that $1/s,\,\eta \gg  1/b \gg 1/n$.
	We apply \cref{lem:covering-with-blow-ups} to cover all but $(\eta/2) n$ vertices of $H$ with pairwise vertex-disjoint blow-ups $R_1(b),\dots,R_\ell(b)$, where each $R_i$ is an $s$-vertex $m$-digraph satisfying $R_i \in \SpaF \rho$.
	Next, we use \cref{obs:fractional-to-integral} to find in each of these blow-ups a matching of order at least $(1-\rho) sb- ms^m$.
	Since $\ell ms^m \leq ( ms^m/(bs)) n \leq (\eta/2) n$, this gives a matching that covers all but $(\eta + \rho)n$ vertices of $H$.
\end{proof}

It remains to show \cref{lem:covering-with-blow-ups}.
The proof is based on the following fact, which tiles a uniform hypergraph $P$ almost perfectly with complete partite graphs of constant order, provided that the minimum degree of $P$ forces a perfect matching.

\begin{lemma}\label{lem:blow-up-matching}
	For $\mu,\, 1/b,\, 1/s \gg 1/n$, every $n$-vertex $s$-graph $P$ with $\delta_1(P) \geq \left(1-1/s+\mu \right) \binom{n-1}{s-1}$ contains a $K_s^{(s)}(b)$-tiling missing at most $\mu n$ vertices.
\end{lemma}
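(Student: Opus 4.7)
The plan is to combine the hypothesis (which by the definition of $\th_{1}(\mat_s)$ gives a perfect matching of $P$) with Erd\H{o}s' supersaturation theorem (\cref{thm:erd64}) to tile most of $P$ by copies of $K_s^{(s)}(b)$. After removing at most $s-1$ vertices to ensure $s\mid n$ (whose effect on the min-degree condition is $O(1/n)$), the hypothesis yields a perfect matching $\cM=\{e_1,\dots,e_{n/s}\}$ of $P$. The hypothesis also implies $e(P)\geq(\th_{1}(\mat_s)+\mu)\binom{n}{s}$, so \cref{thm:erd64} provides at least $\beta n^{sb}$ copies of $K_s^{(s)}(b)$ in $P$ for some $\beta=\beta(s,b,\mu)>0$. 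These two ingredients -- perfect matching and many $K_s^{(s)}(b)$-copies -- are the raw material for the tiling.

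My strategy is to reduce the tiling problem to a matching problem in an auxiliary $b$-uniform hypergraph $\cH$ on vertex set $\cM$: a $b$-subset $\{e_{i_1},\dots,e_{i_b}\}\subset\cM$ is an edge of $\cH$ whenever, for some labeling of each $e_{i_j}$'s vertices by $[s]$, the union $e_{i_1}\cup\dots\cup e_{i_b}$ spans a $K_s^{(s)}(b)$ in $P$ with the matching edges serving as the transversals of the blow-up. A matching in $\cH$ missing at most $\mu n/s$ vertices of $\cM$ translates directly to a $K_s^{(s)}(b)$-tiling of $P$ missing at most $\mu n$ vertices. To find such a matching in $\cH$, I plan to bound $\delta_1(\cH)$ below via a double-counting/random labeling argument: a uniformly random labeling of the matching edges, combined with a rooted version of \cref{thm:erd64} applied to the links of vertices of a fixed $e\in\cM$, gives an expected $\cH$-degree of order $|\cM|^{b-1}$; a concentration argument (e.g., Azuma on the random labeling) then yields a deterministic labeling with $\delta_1(\cH)\geq (\th_{1}(\mat_b)+\mu')\binom{|\cM|-1}{b-1}$. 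The defining property of $\th_{1}(\mat_b)$, or \cref{thm:DH81} when only a weaker matching is needed, then produces the required matching in $\cH$.

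The main obstacle I anticipate is the step bounding $\delta_1(\cH)$: the copies of $K_s^{(s)}(b)$ in $P$ guaranteed by Erd\H{o}s' theorem need not align with the fixed matching $\cM$ -- their transversals are not a priori matching edges. So the counting really requires averaging over random labelings (and possibly random perfect matchings) of $P$, and then concentrating the compatible count uniformly over \emph{every} matching edge simultaneously. Subtleties such as the dependencies between overlapping labelings of distinct matching edges, or ensuring the edge-density slack of $\mu$ in $P$ survives the random choices with enough room to exceed the matching threshold for $\cH$, are where the technical heart of the argument lies.
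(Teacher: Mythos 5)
The central obstacle you flagged is not a technicality to be smoothed over by concentration---it is fatal to this line of attack. Consider what $\delta_1(\cH)$ actually is. To extend a fixed matching edge $e_1$ to an $\cH$-edge you need $b-1$ further matching edges and labelings so that \emph{all} $b^s - b$ cross-transversals lie in $P$. Over a random choice and labeling, each cross-transversal is essentially a uniformly random $s$-set, so the probability that they are all edges of $P$ is on the order of $\gamma^{\,b^s}$ (where $\gamma$ is the density of $P$). Hence $\Exp[\deg_{\cH}(e_1)]$ is indeed of order $|\cM|^{b-1}$, but with a leading constant that decays doubly-exponentially in $b$. Concentration (Azuma, etc.) can only bring a random quantity close to its expectation; it cannot inflate a constant like $\gamma^{\,b^s}$ up to $\th_1^{}(\mat_b)+\mu' \geq 1/2$. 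Thus the bound $\delta_1(\cH) \geq (\th_1^{}(\mat_b)+\mu')\binom{|\cM|-1}{b-1}$ is out of reach, and in fact the degree ratio tends to $0$ as $b$ grows. A tiny positive minimum-degree ratio also does not suffice for a near-perfect matching in $\cH$: the space barrier (all $\cH$-edges meeting a small set $A$) can keep $\delta_1(\cH)$ at a small positive fraction while every matching covers only $b|A| \ll |\cM|$ vertices, so replacing the threshold by \cref{thm:DH81} does not help.

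The paper avoids this problem entirely by an iterative boosting argument (\cref{lem:larger-matching}) rather than reducing to a single auxiliary matching. Given a $K_s^{(s)}(b)$-tiling covering a $\lambda$-fraction of $V(P)$, it partitions $V(P)$ into $b$-sets $\cU$ (aligned with the current tiling where possible), and forms a reduced $s$-graph $R$ on $\cU$ whose edges are the $s$-tuples of clusters between which $P$ has many partite edges. The crucial point is that $\delta_1(R) \geq (\th_1^{}(\mat_s)+\mu/2)\binom{n/b-1}{s-1}$: the reduced graph inherits the \emph{full} degree condition from $P$, not a degenerate constant coming from a Ramsey-type count. This gives a perfect matching in $R$; for each of its edges \cref{thm:erd64} produces a $K_s^{(s)}(b')$-tiling inside the corresponding clusters, and the old $K_s^{(s)}(b)$-copies can also be re-tiled by $K_s^{(s)}(b')$. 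Each round increases the covered fraction by $\Omega(\mu^2)$, so $O(1/\mu^2)$ rounds finish the job. The key structural difference from your plan is that the object whose minimum degree matters ($R$) is defined by \emph{density} of partite edges between clusters, which is robustly inherited from $P$, rather than by the existence of complete $K_s^{(s)}(b)$-configurations rooted at a fixed matching, which is exponentially rare.
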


Before we come to the proof of \cref{lem:blow-up-matching}, let us show how it implies \cref{lem:covering-with-blow-ups}.
The idea is simple: we first apply \cref{lem:blow-up-matching} to cover most vertices of the property $s$-graph with complete partite graphs.
Then we use \cref{thm:erd64} to cover most vertices of each of the partite graphs with the desired blow-ups.

\begin{proof}[Proof of \cref{lem:covering-with-blow-ups}]
	Introduce $b'$ with $1/s,\, 1/b,\,\eta \gg 1/b' \gg 1/n$ and assume without loss of generality that $1/s \gg \eta$.
	Set $P = \PG{H}{\P}{s}$ as in \cref{def:property-graph}.
	So $P$ is the $s$-graph on $V(H)$ with an $s$-edge~$S$ whenever $H[S]$ satisfies $\P$.
	Moreover, $\delta_1(P) \geq (1-1/s^{1.5}) \binom{n-1}{s-1}$ by assumption.
	We apply \cref{lem:blow-up-matching} with $\eta/2$ playing the rôle of $\mu$ to obtain a $K_s^{(s)}(b')$-tiling in $P$ that covers all but at most $(\eta/2) n$ vertices.
	Fix a copy $K$ of $K_s^{(s)}(b')$ in this tiling.
	Note that each edge of $K$ corresponds to an $s$-vertex $m$-digraph satisfying $\P$.
	To finish, we have to cover all but at most $(\eta/2) b's$ vertices of $K$ with blow-ups $R_1(b),\dots,R_{\ell'}(b)$, where each $R_i$ is an $s$-vertex $m$-digraph with $R_i \in \P$.
	
	We colour each edge $Y$ of $K$ by one of at most $2^{s^{m}}$ colours corresponding to the $m$-digraph~$H[Y]$ labelled by the part indices.
	Applying \cref{thm:erd-original} with $K$, $sb'$ and $s^{-s}2^{-s^{m}}$ playing the rôles of $P$, $n$ and $\gamma$ to the colour with the most edges, we identify a complete $s$-partite subgraph $K' \subset K$ with parts of size $b$ whose edges all correspond to the same $s$-vertex $m$-digraph~$R$ with $R \in \P$.
	In other words, $K'$ is isomorphic to the blow-up $R(b)$.
	We take out the vertices of $K'$ and repeat this process until all but at most $(\eta/2) b's$ vertices of~$K$ are covered, as desired.
\end{proof}

It remains to show \cref{lem:blow-up-matching}.
The existence of the tiling follows by a straightforward application of the Weak Hypergraph Regularity Lemma.
Here we provide an alternative (possibly simpler) proof.
More precisely, we derive \cref{lem:blow-up-matching} by repeatedly applying the following result.

\begin{lemma}\label{lem:larger-matching}
	For $1/s,\, \mu \gg 1/\ell \gg 1/b \gg 1/n$, let $P$ be an $s$-graph on $n$ vertices with $\delta_1(P) \geq \left(1-1/s+ \mu \right) \binom{n-1}{s-1}$.
	Set $B = K_{s}^{(s)}(b)$ and $L= K_{s}^{(s)}(\ell)$.
	Suppose that $P$ contains a $B$-tiling $\cB$ on $\lambda n$ vertices with $0 \leq \lambda \leq 1-\mu/8$.
	Then $P$ contains an $L$-tiling on at least $(\lambda + 2^{-6}(\mu/s)^2 ) n$ vertices.
\end{lemma}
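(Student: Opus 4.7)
The plan is to reduce the problem to finding a matching in a carefully-chosen reduced $s$-graph $R$ built from the existing tiling $\cK$ and the uncovered set $U$, then apply the matching threshold $\th_{1}^{}(\mat_s)$ to $R$.

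\medskip

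\textbf{Setup and reduced graph.} Let $U = V(P) \setminus V(\cK)$, so $|U| = (1-\lambda)n \geq \mu n/8$. Partition each of the $\lambda n/b$ clusters of $\cK$ (of size $b$) into $b/b'$ pairwise disjoint sub-clusters of size $b'$, and partition $U$ into $\lfloor |U|/b'\rfloor$ blocks of size $b'$, discarding a leftover of fewer than $b'$ vertices. Let $\cV$ denote the resulting family of super-vertices, all of size $b'$, so $|\cV| = n/b' \pm O(1)$. Define the auxiliary $s$-graph $R$ on $\cV$ by placing an edge on $\{X_1,\dots,X_s\}$ whenever $P$ contains the complete $s$-partite graph $K_s^{(s)}(b')$ with parts $X_1,\dots,X_s$. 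Any matching in $R$ then translates immediately to a vertex-disjoint family of $K'$-copies in $P$, each covering $sb'$ vertices.

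\medskip

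\textbf{Degree transfer.} The main technical step is to verify
\[\delta_1(R) \;\geq\; \bigl(\th_{1}^{}(\mat_s)+\mu/2\bigr)\tbinom{|\cV|-1}{s-1}.\]
Fix $X \in \cV$ and consider a uniformly random $(s-1)$-tuple $(X_2,\dots,X_s)$ of other super-vertices. Averaging the $P$-degrees of $v \in X$ over $v$ and over the random tuple, the expected $s$-partite edge density of $P$ on $X \cup X_2 \cup \cdots \cup X_s$ is at least $\th_{1}^{}(\mat_s)+\mu$; since this density lies in $[0,1]$, a reverse-Markov argument shows that a $\bigl(\th_{1}^{}(\mat_s)+3\mu/4\bigr)$-fraction of $(s-1)$-tuples yield partite density at least $\mu/4$. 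For any such tuple, Erd\H{o}s's supersaturation theorem in the partite setting (a routine variant of \cref{thm:erd64}) supplies a $K_s^{(s)}(b')$-copy with one part in each $X_i$, which is an edge of $R$. A minor technicality: to avoid demanding that every one of the $(b')^s$ partite edges of $P$ be present on the nose, one may instead use super-vertices of constant size $db'$ for a suitable $d=d(s,b',\mu)$ and let Erd\H{o}s select the $b'$-part of each super-vertex; the hierarchy $\mu \gg 1/b' \gg 1/b$ is designed precisely to absorb these error constants.

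\medskip

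\textbf{Matching and conclusion.} Given the degree bound on $R$, the definition of $\th_{1}^{}(\mat_s)$, applied to a divisibility-corrected induced subgraph of $R$, provides a matching $\cM$ in $R$ covering all but at most $s$ super-vertices. The resulting family of vertex-disjoint $K'$-copies in $P$ covers at least $(|\cV|-s)b' = n - O(b')$ vertices, which exceeds $\lambda n + \mu^2 n/64$ for $n$ large, since $(1-\lambda)n \geq \mu n/8 \gg \mu^2 n/64$. The main obstacle is the degree transfer: the averaging and Erd\H{o}s inputs must be combined with enough quantitative precision to keep $\delta_1(R)$ strictly above $\th_{1}^{}(\mat_s)$ by a constant slack, and the super-vertex size must be chosen to trade off the stringency of the edge condition of $R$ against the efficiency of the final vertex count in $P$.
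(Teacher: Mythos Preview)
The degree-transfer step has a genuine gap that your proposed fix does not repair. With super-vertices of size exactly $b'$ and the edge condition ``$P$ contains the complete $K_s^{(s)}(b')$ on these parts'', a partite density of $\mu/4$ is far from sufficient: you would need density $1$, and \cref{thm:erd64} cannot help since the host parts already have size $b'$. Your fix of passing to super-vertices of size $db'$ (with $d=d(s,b',\mu)$ supplied by Erd\H{o}s) does restore the degree transfer, but it destroys the vertex count in the conclusion: the matching in $R$ then yields only about $(|\cV|/s)\cdot sb' \approx n/d$ vertices in $K'$-copies, and since $d$ is a large constant while $\lambda$ may be as large as $1-\mu/8$, this can be far below $\lambda n$, let alone $(\lambda+\mu^2/64)n$. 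The ``trade off'' you allude to is real and does not close.

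The paper sidesteps this tension by \emph{not} attempting to cover nearly everything with the new tiling. It partitions $V(P)$ into clusters of size $b$ (using the $s$ parts of each tile in $\cK$ for the old vertices, arbitrary parts for the rest) and defines the reduced $s$-graph $R$ with the weak edge condition ``at least $4\beta b^s$ partite edges'', where $\beta=\mu/8$; the degree transfer is then a one-line double count. A perfect matching $\cM$ in $R$ is found, and inside each matching edge Erd\H{o}s's theorem is applied greedily to produce a $K'$-tiling $\cK_1'$ covering at least $2\beta sb$ vertices --- only a small fraction, but uniformly at least $2\beta b$ vertices in \emph{every} cluster, in particular those coming from $U$. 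One then takes a maximal $K'$-tiling $\cK_2'$ of $V(\cK)\setminus V(\cK_1')$, which is nearly perfect since each $K_s^{(s)}(b)$-copy splits trivially into $K_s^{(s)}(b')$-copies, losing only $O(b'/b)$ per tile. The gain over $\lambda n$ comes entirely from $|V(\cK_1')\cap U|\ge 2\beta(1-\lambda)n\ge 2\beta^2 n=\mu^2 n/32$, and the rounding loss $(b'/b)\lambda n$ is absorbed by the hierarchy $1/b'\gg 1/b$. This re-tiling of $V(\cK)$ is the idea missing from your approach.
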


We defer the proof of \cref{lem:larger-matching} for a moment and continue by deriving \cref{lem:blow-up-matching}.
The argument proceeds iteratively.
In each step, we turn a tiling $\cB$ of `big' tiles $B$ into a tiling $\cL$ of `little' tiles $L$ such that $v(\cL) \geq 2^{-6} (\mu/s)^2n+v(\cB)$.
So we arrive at the outcome of \cref{lem:blow-up-matching} after at most $2^{7} (s/\mu)^2$ steps.

\begin{proof}[Proof of \cref{lem:blow-up-matching}]
	Given $s$, $b$ and $\mu$, introduce $b_t,\dots,b_1$ with $t= \lceil 2^{7}/(\mu/s)^{2} \rceil$
	and
	\begin{equation*}
		\mu,\,\tfrac{1}{b} \gg \tfrac{1}{b_t} \gg \dots \gg \tfrac{1}{b_1} \gg \tfrac{1}{n}
	\end{equation*}
	 such that  \cref{lem:larger-matching} can be applied with $b_{i}$ and $b_{i-1}$ playing the rôles of $\ell$ and $b$ for each $2\leq i \leq t$.
	Let $B_i =  K_{s}^{(s)}(b_i)$ and $\lambda_i = (i-1) \cdot 2^{-6}(\mu/s)^2$ for $1 \leq i \leq t$.
	First, suppose that there is $1 \leq {j} \leq t$ such that $P$ contains a $B_{j}$-tiling $\cB_{j}$ on at least $(1-\mu/8) n$ vertices.
	We then tile all but $sb$ vertices of each copy of $B_{j}$ in $\cB_j$ into copies of $B=K_{s}^{(s)}(b)$.
	This gives a $B$-tiling of all but $\mu n/8 + sbn/(sb_{j}) \leq \mu n$ vertices as desired.
	So let us assume for the sake of contradiction that no such $j$ exists.
	
	We show by induction that  $P$ contains a $B_i$-tiling on at least $\lambda_i n$ vertices for every $1 \leq i \leq t$.
	Note that this is trivial for $i=1$ as $\lambda_1 = 0$.
	For $i \geq 2$, let $\cB$ be a $B_{i-1}$-tiling on $\lambda n$ vertices with $  \lambda_{i-1} \leq \lambda \leq   1-\mu/8 $.
	By \cref{lem:larger-matching}, $P$ contains a $B_{i}$-tiling on at least $(\lambda + 2^{-6}(\mu/s)^2 ) n \geq \lambda_i n$ vertices as desired.
	But then we obtain a contradiction to the fact that $\lambda_t > 1$.
\end{proof}

To complete the proof of \cref{lem:blow-up-matching}, it remains to show \cref{lem:larger-matching}.

\begin{proof}[Proof of \cref{lem:larger-matching}]
	For convenience, we assume that $n$ is divisible by $bs$.
	We note that if $\lambda = 0$, then $\cB$ is empty.
	Let $\cU$ be a   partition of $V(P)$ into parts of size $b$, which is obtained by taking the $s$ parts of each copy of $B$ in~$\cB$ and the other parts in an arbitrary fashion.
	Let $r$ be the number of parts in $\cU$, and note that $r = n/b$.
	Set $\gamma = \mu/(8s)$.
	Let $R$ be an $s$-graph with vertex set~$\cU$ and an edge $X$ if $P$ contains at least $4\gamma s b^s$ $X$-partite edges.
	{Informally, $R$ plays the rôle of a reduced graph in the context of a Regularity Lemma.}
	As usual in this setting, the degrees of $P$ are inherited to $R$.
	
	\begin{claim}\label{cla:reduced}
	We have $\delta_1(R) \geq \left( 1-1/s +\mu/4 \right) \binom{r-1}{s-1}$.
	\end{claim}
	
	\begin{proofclaim} 
		Fix a vertex $x$ in $R$.
		There are at most $r^{s-2} b^{s} \leq (\mu/4) b^{s} \binom{r-1}{s-1}$ edges of $P$, which have one vertex in the part of~$\cU$ that contains $x$ and at least two vertices in some part of~$\cU$.
		Put differently, there are at least $b^s \cdot (1-1/s+(3/4)\mu) \binom{r-1}{s-1}$ edges in $P$, which are $\cU$-partite and have a vertex in the part of $x$.
		The parts of every edge of $R$ incident to $x$ can host at most~$b^{s}$ of these edges.
		The parts of every $s$-set $f \subset V(R)$ with $x \in f$, but $f \notin R$ can host at most $4\gamma s b^s = (\mu/2) b^s$ of these edges.
		Thus
		\begin{equation*}
			b^{s} \deg_R(x) + (\mu/2) b^s \binom{r-1}{s-1}  \geq b^s (1-1/s+(3/4)\mu) \binom{r-1}{s-1}\,.
		\end{equation*}
		{Solving for $\deg_R(x)$ yields the desired bound.}
	\end{proofclaim}
	
	It follows by \cref{thm:DH81} that there is a perfect matching $\cM$ in~$R$.
	For each edge $X$ of~$\cM$, we repeatedly use \cref{thm:erd64} to find an $L$-tiling on at least $2\gamma sb$ vertices in the $X$-partite subgraph of $P$ induced by the parts of $X$.
	{(This is possible, since an $L$-tiling of order at most $2\gamma sb$ intersects with at most $2\gamma sb^s$ edges whose vertices are covered by~$X$.)}
	Denote the union of these `fresh' $L$-tilings by $\cL_{\text{fresh}}$.
	So $\cL_{\text{fresh}}$ covers at least $2\gamma (1-\lambda)n \geq 2\gamma^2 n$ vertices outside of $V(\cB)$.
	Moreover, $\cL_{\text{fresh}}$ covers the same number of vertices in each part of $\cU$.
	This allows us to pick a maximal $L$-tiling $\cL_{\text{rec}} \subset P$ in $V(\cB) \sm V(\cL_{\text{fresh}})$ to `recycle' what is left of $\cB$.
	So~$\cL_{\text{rec}}$ leaves at most $\ell-1$ vertices uncovered in each of the parts of $\cB$.
	Since $\cB$ is a $B$-tiling on $\lambda n$ vertices with $B = K_{s}^{(s)}(b)$, there are $\lambda n/b$ of these parts.
	Hence the tiling $\cL_{\text{rec}}$ covers all but $(\ell-1)\lambda n/b \leq \gamma^2 n$ vertices of $V(\cB) \sm V(\cL_{\text{fresh}})$.
	It follows that $\cL_{\text{fresh}} \cup \cL_{\text{rec}}$ covers at least $2\gamma^2 n + \lambda n - \gamma^2 n \geq (\lambda + \gamma^2) n$ vertices.
\end{proof}

We remark that the bounds on the order $n$ in the above argument are quite large (tower-type).
A more efficient approach is discussed in \cref{sec:conclusion}.

We conclude this subsection by recording a partite version of \cref{prp:almost-perfect-matching}:

\begin{proposition}[Almost perfect partite matching]\label{prp:almost-perfect-matching-partite}
	For all $2\leq m \leq s$ and $\mu,\eta >0$ and $\rho \geq 0$ there is an $n_0>0$ such that the following holds.
	Let $H$ be a fissile $m$-digraph with a partition~$\cV$ of $V(H)$ into $s$ parts of uniform size $n \geq n_0$.
	Suppose that $P \subset \PG{H}{\SpaF{\rho}}{s}$ is a $\cV$-partite subgraph on $V(H)$ such that
	$\delta_1(P) \geq   \left(1-1/s^{1.5} \right) n^{s-1}.$
	Then $H$ has a matching covering all but at most $(\eta + \rho) sn$ vertices.
\end{proposition}

The proof of \cref{prp:almost-perfect-matching-partite} follows \emph{mutatis mutandis}\footnote{‘Changing what must be changed’, that is, with the obvious modifications to the new setting.} along the lines of the proof of \cref{prp:almost-perfect-matching}.
The main difference is that \cref{thm:DH81-partite} is used instead of \cref{thm:DH81} in the proof of the partite analogue of \cref{lem:larger-matching}.
We omit the details.

\subsection{Lattice completeness}\label{sec:lattice-completeness}

In practice, it is helpful to reformulate the divisibility property in terms of lattice completeness, which was used in the work of Keevash and Mycroft~\cite{KM15} on perfect matchings with origins in the design setting~\cite{GJ73,Wil73}.

For a set $V$ and a tuple $e \in V^m$, we denote by $\gls{indicator-vector} \in \INTS^V$ the \emph{indicator vector}, which takes value $\multi(v,e)$ at index $v \in V$.
({When writing $\INTS^V$, we implicitly fix an ordering of $V$. But this does not matter for the arguments.})
We set $\vn_v = \vn_e$ for $e=(v)$.
The \emph{lattice} of an $m$-digraph $H$ is the additive subgroup $\gls{lattice} \subset \INTS^{V(H)}$ generated by the vectors $\vn_e$ with $e \in H$.
We say that $\cL(H)$ is \emph{complete} if it contains all vectors $\vecb b$ with $\sum_{v \in V(H)} \vecb b (v)$ divisible by $m$, meaning that there are   $c_e \in \INTS$ such that $\vecb b = \sum_{e \in H} c_e \vn_e$.
Our proof requires a simple bound on the values of~$|c_e|$.
Using an old result of Steinitz~\cite{steinitz1913bedingt}, one can show that $|c_e| \leq 2 ||\vecb b||_1(2n)^{n}$ where $n=v(H)$ and  $||\cdot ||_1$ is the $L_1$-norm.
\COMMENT{See also the expository paper of Bárány~\cite{barany2008power}.}
However, for our purposes it suffices that $|c_e|$ is trivially bounded by some function of $n$ and $\vecb b$.

\COMMENT{In our application of lattice completeness (namely the proof of \cref{lem:absorber}), it suffices to understand that every $\vecb b \in \cL(R)$ can (trivially) be written $\vecb b = \sum_{e \in R} c_e \vn_e$, where $|c_e|$ is bounded by some function of $v(R)$ and $||\vecb b||_1$, where $||\cdot ||_1$ is the $L_1$-norm.
	For the sake of completeness, we also include the following explicit bound:
	\begin{lemma}\label{lem:lattice-girth}
		Let $R$ be an $m$-digraph on $s$ vertices with complete lattice.
		Then every $\vecb b \in \cL(R)$ can be written as $\vecb b = \sum_{e \in R} c_e \vn_e$ with $|c_e| \leq 2 ||\vecb b||_1(2s)^{s}$.
	\end{lemma}
	For the proof of \cref{lem:lattice-girth}, we appeal to an old result of Steinitz~\cite{steinitz1913bedingt}, see also the expository paper of Bárány~\cite{barany2008power}.
	\begin{theorem}[Steinitz' lemma]\label{thm:steinitz}
		Given a finite multiset  $W \subset B$ with $\sum_{w \in W} w = 0$, where $B$ is the unit-ball of a norm in $\REALS^d$, there is an ordering of $\vecb w_1,\dots, \vecb w_n$ of the elements of $W$ such that  $\sum_{j=1}^i \vecb w_j \in dB$ for every $1\leq i \leq n$.
	\end{theorem}
	\begin{proof}[Proof of \cref{lem:lattice-girth}]
		By \cref{obs:lattice-completeness}, $\cL = \cL(R)$ contains all transferrals.
		Consider a transferral $\vecb x \in \cL$, and note that $\vecb x$ is in the unit ball $B \subset \REALS^s$ of the infinity norm $||\cdot||_\infty$.
		We first show that one can write $\vecb x = \sum_{e \in R} c_e \vn_e$ with $|c_e| \leq (2s)^s$.
		To see this, let $\vecb v_1,\dots, \vecb v_r$ be a basis of~$\cL$ with $r \leq s$, where each $\vecb v_i$ is an indicator vector on an edge of $R$ and thus in $B$.
		Let $A$ be the matrix with row vectors $\vecb v_1,\dots, \vecb v_r$.
		Consider $\vecb c = (c_1,\dots,c_r)^\intercal$ with $A \vecb c=\vecb x$ such that $||\vecb c||_\infty$ is minimal.
		Let $W$ be the multiset obtained by adding $c_i$ copies of each $\vecb v_i$ and in addition $- \vecb x$.
		We apply \cref{thm:steinitz} to obtain an ordering of $\vecb w_1,\dots, \vecb w_n$ of the elements of $W$ such that  $\sum_{j=1}^i \vecb w_j \in sB$ for every $1\leq i \leq n$.
		Since the elements of $\cL$ are integer-valued, the blown-up unit-ball $sB$ contains at most $(2s)^s$ elements of $\cL$.
		On the other hand, the $n$ partial sums are pairwise distinct by minimality of $\vecb c$.
		It follows that $|W| = n \leq (2s)^s$.
		This shows the above claim.
		Now consider $\vecb b \in \cL(R)$.
		As seen in the proof of \cref{obs:lattice-completeness}, we can write $\vecb b$ as the sum of $||\vecb b||_1$ transferrals and $\ell  \vn_e$, where $e \in R$ and $\ell = \sum_{v \in V(R)} \vecb b(v)$.
		Since $\ell \leq ||\vecb b||_1$, the lemma follows from the above claim.
\end{proof}}

To connect lattice completeness with the divisibility property, note that a perfect integral matching $w$ in an $m$-digraph $H$ can be viewed as a vector $\vecb b = \sum_{e \in H} w(e) \vn_e \in \cL(H)$.
Hence lattice completeness implies the divisibility property.
It turns out that the converse is also true under slightly stronger assumptions:

\begin{lemma}\label{obs:robust-int-matching-gives-lattices-completeness}
	Let $H$ be an $m$-digraph on $n\geq m+1$ divisible by $m$ vertices such that $H - D \in \div$ for every $m$-set $D \subset V(H)$.
	Then $\cL(H)$ is complete. 
\end{lemma}

To show \cref{obs:robust-int-matching-gives-lattices-completeness}, we need to shift weight between vertices.
Formally, a \emph{transferral} is a vector \gls{transferral} that moves weight $1$ from a vertex $u$ to a vertex $v$ in $H$.
We can characterise lattice completeness as follows:

\begin{observation}\label{obs:lattice-completeness}
	The lattice $\cL(H)$ of an $m$-digraph $H$ is complete if and only if $\cL(H)$ contains $\vn_v - \vn_u$ for every $u,v \in V(H)$.
\end{observation}

\begin{proof}
	Clearly, a complete lattice contains all transferrals.
	So we focus on the other direction.
	Suppose that $\cL(H)$ contains all transferrals.
	Consider $\vecb b \in  \INTS^{V(H)}$ with $\sum_{v \in V(H)} \vecb b (v) = \ell \cdot m$ for $\ell \in \INTS$.
	We have to show that $\vecb b  \in \cL(H)$.
	Fix an edge $e \in H$, which is possible as $\cL(H)$ is not trivial.
	In particular, $\ell\cdot \vn_e \in \cL(H)$.
	Set $\vecb b' = \vecb b - \ell\cdot \vn_e $, which means $\sum_{v \in V(H)} \vecb b' (v) = 0$.
	If~$\vecb b'$ is all-zeroes, we are done as $\cL(H)$ contains this vector.
	Otherwise, there are $u,v \in V(H)$ with $\vecb b'(u) > 0$ and $\vecb b'(v) < 0$.
	We update $\vecb b'$ to $\vecb b' + \vn_v - \vn_u$, and iterate this process until~$\vecb b'$ is all-zeroes.
	Note that this process does terminate, since the sum of absolute values $||\vecb b'||_1$ decreases in each step.
	It follows that $\vecb b - \ell \cdot \vn_e \in \cL(H)$, and hence $\vecb b  \in \cL(H)$.
\end{proof}

We finish by deriving the original claim.

\begin{proof}[Proof of \cref{obs:robust-int-matching-gives-lattices-completeness}]
	Given arbitrary $u,v \in V(H)$, let $D' \subset V(H)-u-v$ be a set of $m-1$ vertices.
	Let $D_u = D' \cup \{u\}$ and $D_v = D' \cup \{v\}$.
	By assumption there is a perfect integral matching in $H - D_v$, which corresponds to a vector $\vecb b_u \in  \cL(H)$.
	Similarly, let $\vecb b_v \in  \cL(H)$ correspond to a perfect integral matching in $H - D_u$.
	Thus $\cL(H)$ contains the transferral $\vn_v - \vn_u = \vecb b_v - \vecb b_u$.
	It follows that $\cL(H)$ is complete by \cref{obs:lattice-completeness}.
\end{proof}

\subsection{Boosting}\label{sec:boosting}

It is often convenient to work with properties that survive the deletion of a few vertices.
To formalise this, we need to take into account a few trivial exceptions.
We say that a digraph family $\P$ is \emph{$(m,n)$-trivial}, if $\P$ contains no $m$-digraphs of order~$n$.
For example, $\mat$ and $\div$ are $(m,n)$-trivial whenever $m$ does not divide $n$.\textbf{}
On the other hand, $\spa$ and $\cov$ are not $(m,n)$-trivial for all $m$ and $n$.
Denote by \gls{deletion-robustness} the set of $n$-vertex $m$-digraphs $H$ such that $H-D \in \P$ for every set $D \subset V(H)$ of $d' \leq d$ vertices such that $\P$ is not $(m,n-d')$-trivial.
For instance, $H$ satisfies $\Del_m(\div)$ in the context of \cref{obs:robust-int-matching-gives-lattices-completeness}.
As it turns out, one can harden the robustness conditions of \cref{thm:framework,thm:framework-exceptional} against vertex deletion, while simultaneously boosting its degree.
Recall that for $\delta \in [0,1]$, we denote by $\DegF{d}{\delta}$ the union of all $k$-graphs with $k>d$ and relative minimum $d$-degree at least $\delta$.

\begin{lemma}[Booster]\label{lem:booster}
	For $1/m,\, 1/d,\, 1/r, \,1/p \gg 1/s \gg 1/n$ with $r$ divisible by $m$ and $\rho \in [0,1]$, let $H$ be an $m$-digraph on $n$ vertices that $r$-robustly satisfies $\P \in \{\spa_\rho,\,\div,\,\cov\}$.
	Then $H$ $(\delta,p,s)$-robustly satisfies $\Del_{d}(\P)$ for $\delta = 1-e^{-\sqrt{s}}$.
\end{lemma}

\begin{proof}
	Let $\delta' = 1-1/r$ and $\mu = 1/r-1/r^2 >0$ and $P = \PG{H}{\P}{r}$ as in \cref{def:property-graph}.
	So~$P$ is the $r$-graph on $V(H)$ with an $r$-edge $R$ whenever $H[R]$ satisfies $\P$.
	By assumption, every vertex in~$P$ has degree at least $(\delta'+\mu) \tbinom{n-1}{r-1}$.
	Now set $Q = \PG{P}{\DegF{1}{\delta'+\mu/2}}{s}$.
	So $Q$ is the $s$-graph on $V(P)=V(H)$ with an $s$-edge $S$ whenever the $r$-graph $P[S]$ has minimum vertex degree at least $\left(\delta'+\mu/2\right) \binom{s-1}{r-1}$.
	By \cref{lem:inheritance-minimum-degree} applied with $r,1,p$ playing the rôle of $k,d,q$, it follows that $\delta_{p}(Q) \geq  \delta  \tbinom{n-p}{s-p}$.
	Let $P' = \PG{H}{\Del_{d}\left(\P\right)}{s}$.
	So $H$ satisfies $(\delta,p,s)$-robustly $\Del_{d}(\P)$, if $\delta_p  (P' ) \geq \delta_{p}(Q)$.
	To prove this inequality, fix an $s$-edge $S' \in Q$.
	Let $D \subset S'$ be an arbitrary set of $d' \leq d$ vertices.
	Write $S = S' \sm D$.
	We may assume that $\P$ is not $(m,|S|)$-trivial, since otherwise there is nothing to show.
	(This only concerns the case $\P = \div$.)
	We have to show that $H[S]$ satisfies $\P$.
	
	Recall that $\P \in \{\spa_\rho,\,\div,\,\cov\}$.
	We begin with the space property.
	Note that the $r$-graph~$P[S]$ has a perfect fractional matching by \cref{thm:DH81}.
	This is evident if $|S|$ is divisible by $r$.
	Otherwise, we may consider a perfect fractional matching of $P[S]-T$ for every subset $T\subset S$ such that $|T| \leq r-1$ and $r$ divides $|S|-|T|$.
	Taking the normalised sum of these matchings results in the desired perfect fractional matching of $P[S]$.
	By definition of $P = \PG{H}{\spa_\rho}{r}$, the $m$-graph $H[R]$ has a $\rho$-perfect fractional matching for every $R \in P[S]$.
	Linearly combining this shows that $H[S]$ satisfies $\spa_\rho$ as desired.
	
	Next, we consider the divisibility property.
	Note that $|S|$ is divisible by $m$, since $\div$ is $(m,|S|)$-trivial otherwise.
	To find a perfect integral matching in $H[S]$, it suffices to show that the lattice $\cL=\cL(H[S])$ is complete as discussed in \cref{sec:lattice-completeness}.
	Fix vertices $x, y\in S$.
	By \cref{obs:lattice-completeness} we may conclude by showing that $\cL$ contains $\vn_x - \vn_y$.
	Since $P[S]$ has minimum vertex degree at least $\left(1/2+\mu/2\right) \binom{s-1}{r-1}$, it follows that there is an $(r-1)$-set $Z \subset S$ such that $X = \{x\} \cup Z$ and $Y = \{y \} \cup Z$ are edges in $P[S]$.
	As before, the $r$-graphs~$P[X]$ and $P[Y]$ have perfect integral matchings $\cM_x$ and $\cM_y$ by \cref{thm:DH81}.
	Fix an edge $M \in \cM_x \cup \cM_y$.
	By definition of $P = \PG{H}{{\div}}{r}$, the $m$-graph $H[M]$ has a perfect integral matching.
	Combining these matchings for the edges of $\cM_x$ and $\cM_y$, this gives perfect integral matchings of $H[X]$ and $H[Y]$, respectively.
	We may then take the difference between the two to generate the transferral $\vn_x - \vn_y$  in the lattice $\cL$, as desired.
	
	Finally, the cover property follows simply because every vertex $v \in S$ is on an edge $R \in P[S]$, and $H[R] \in \cov$ if $\P = \cov$.
	So $H[S]$ satisfies $\cov$ in this case, and we are done.
\end{proof}

\subsection{Absorption}\label{sec:absorption}

In this section, we show \rf{prp:absorption}.
We construct the `absorption structure' $A$ by combining many of the following local gadgets:

\begin{definition}[Absorber]\label{def:absorber}
	Let $H$ be an $m$-digraph, and let $X \subset V(H)$ be an $m$-set.
	An \emph{$X$-absorber} is the union of two matchings $\cM_1,\,\cM_2$ in $H$ such that $V(\cM_1) = V(\cM_2) \cup X$ and $V(\cM_2) \cap X = \es$.
	Its \emph{order} is $v(\cM_2)$.
\end{definition}

The following lemma tells us that after boosting the assumptions of \cref{prp:absorption} with \cref{lem:booster}, every $m$-set $X \subset V(H)$ has many $X$-absorbers.

\begin{lemma}[Absorber Lemma]\label{lem:absorber}
	For $m\geq 2$, $s \geq 2m+1$ and $1/s,\,\mu \gg 1/q \gg \beta \gg 1/n$ with~$q$ divisible by $m$, let $H$ be a fissile $n$-vertex $m$-digraph and $P =\PG{H}{\Del_{2m}(  \div_{}^{}  \cap \cov_{}^{})}{s}$.
	Suppose that an $m$-set $X \subset V(H)$ satisfies $\deg_P(X) \geq \mu n^{s-m}$.
	Then $H$ has at least $\beta n^{q}$ $X$-absorbers of order~$q$.
\end{lemma}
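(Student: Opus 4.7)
\emph{Proof plan.}

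My plan is to build each $X$-absorber in two stages: a constant-size \emph{core} produced inside a single good $s$-set containing $X$, followed by greedy \emph{padding} with disjoint one-to-one edges of $H$ until the absorber has order exactly~$q$.

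\emph{Core.} Let $\cS$ be the family of $s$-sets $S$ with $X \subset S$ and $H[S] \in \Del(\div \cap \cov)$; by the hypothesis $|\cS| = \Omega(n^{s-m})$. Fix $S \in \cS$. My goal is to produce a constant-size set $Y_0 \subset V(H) \sm X$ together with matchings $\cM_X$ of $H[Y_0 \cup X]$ and $\cM_0$ of $H[Y_0]$. The strategy is to pick an $m$-set $X' \subset S \sm X$ which forms a one-to-one edge of $H$ (obtained via $\cov$ applied to $H[S-X]$ together with fissility of $H$), and then, since $H[S] \in \div$, to write $\vn_X - \vn_{X'}$ as a signed integer combination of indicator vectors of edges of $H[S]$. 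Splitting into positive and negative parts yields
\[
    \vn_X + \sum_{e \in \cE_-} \vn_e \;=\; \vn_{X'} + \sum_{e \in \cE_+} \vn_e,
\]
with $\cE_\pm$ multisets of edges of $H[S]$ of sizes bounded by a constant depending only on~$s$. A greedy application of fissility, drawing auxiliary vertices from $V(H) \sm S$ one blueprint edge at a time, realises $\cE_+$ as a matching $\cM_X^0$ covering $W_0 \cup X$ and $\cE_-$ as a matching $\cM_{X'}$ covering $W_0 \cup X'$, where $W_0 \cap (X \cup X') = \es$ and $|W_0|$ is constant and divisible by~$m$. Set $Y_0 = W_0 \cup X'$ and take $\cM_X = \cM_X^0 \cup \{X'\}$ and $\cM_0 = \cM_{X'}$, which together form an $X$-absorber of constant order $|Y_0|$.

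\emph{Padding and counting.} If $|Y_0| < q$, I iteratively append disjoint one-to-one edges $f_1, f_2, \dots$ of $H$ (each disjoint from $X \cup Y_0$ and the previous $f_j$'s) to both $\cM_X$ and $\cM_0$. A double-counting across $\cS$ combined with fissility produces $\Omega(n^m)$ distinct one-to-one edges of $H$ inside $V(H) \sm X$ (each $S \in \cS$ provides a one-to-one edge in $S \sm X$, while each such edge is contained in at most $\binom{n-2m}{s-2m}$ good $s$-sets), so each padding step admits $\Omega(n^m)$ valid choices after excluding the bounded set of already used vertices. After $(q - |Y_0|)/m$ padding steps the absorber has order exactly~$q$. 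Multiplying the $\Omega(n^{|Y_0|})$ distinct cores (a consequence of the $\Omega(n^{s-m})$ good $s$-sets together with the bounded number of good $s$-sets containing any fixed $Y_0$) by the $\Omega(n^{q-|Y_0|})$ padding configurations, and dividing by the $O(1)$ labellings per absorber, yields at least $\Omega(n^q) \geq \beta n^q$ distinct $X$-absorbers of order~$q$ when $n \geq n_0$, using $\beta \gg 1/n$.

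\emph{Main obstacle.} The crux is the first stage, realising the two sides of the lattice identity as vertex-disjoint matchings in $H$: the blueprint edges in $\cE_\pm$ may have repeated entries and distinct blueprint edges on the same side may clash on interior vertices, so the greedy use of fissility must route each blueprint around those clashes via fresh outside vertices, while exactly respecting the multiplicity pattern demanded by the identity.
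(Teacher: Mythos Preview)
Your argument has a genuine gap at the core step. Fissility does \emph{not} let you ``draw auxiliary vertices from $V(H)\sm S$'' to realise the blueprint edges $\cE_\pm$ as disjoint one-to-one edges of $H$. Recall that fissility only promises a one-to-one edge once you exhibit a \emph{suitable} tuple $B$, and suitability requires that every tuple obtained by choosing one vertex from each $B_i$ (consistently across repeated blocks) is already an edge of $H$. From a single $s$-set $S$ you only know the edges of $H[S]$; if $e\in E(H[S])$ has a vertex $v$ of multiplicity $r\ge 2$, you would need a block $B_v$ of size $r$ such that replacing the $r$ occurrences of $v$ by any single $u\in B_v$ still gives an edge of $H$ --- and nothing in the hypotheses furnishes such $u$'s. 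The same issue undermines your padding step: $\cov$ on $H[S-X]$ yields an edge in which some vertex has multiplicity one, but the remaining vertices may repeat, and fissility alone cannot upgrade this to a one-to-one edge. So neither the claimed realisation of $\cE_\pm$ as matchings nor the $\Omega(n^m)$ supply of one-to-one padding edges is justified for an abstract fissile $m$-digraph.

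The paper resolves exactly this difficulty with an additional structural step you are missing: before working with the lattice identity, it applies the Local Blow-ups Lemma (\cref{lem:local-blow-ups}, via \cref{thm:erd64}) to find, for some fixed $s$-vertex reduced digraph $R\in\Del(\div\cap\cov)$, at least $\sqrt{\beta}\,n^{(s-m)b}$ rooted copies of the complete blow-up $R^\ast(b;X)$ inside $H$. Inside such a blow-up every edge $(B_1,\dots,B_m)\in E(R)$ is automatically a suitable tuple, so fissility can be invoked freely. The absorber is then built entirely within one blow-up by a reservoir argument: first take $\cM_1$ containing $q'$ disjoint $e$-preserving one-to-one edges for every $e\in E(R)$ together with $m$ edges covering $X$; then use completeness of $\cL(R-X)$ to write the part-count vector of $V(\cM_1'-X)$ as $\sum_e c_e\vn_e$ with $|c_e|\le q'$, and obtain $\cM_2$ by deleting $|c_e|$ reservoir edges for each $e$ with $c_e<0$ and adding $c_e$ fresh $e$-preserving edges for each $e$ with $c_e>0$. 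Counting over the many blow-ups then gives the $\beta n^q$ absorbers. Your lattice intuition is right, but the blow-up is what makes it implementable.
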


Given \cref{lem:absorber}, we can derive \cref{prp:absorption} by constructing the desired set $A$ in a semi-random process.
For the sake of completeness, we spell out the details of this routine argument.

\begin{proof}[Proof of \cref{prp:absorption}]
	For $2\leq m \leq s$ and $\alpha >0$, introduce $r,\, q,\,  \beta,\, \eta,\, n$ such that
	\[\tfrac{1}{m},\,\tfrac{1}{s},\, \alpha\gg \tfrac{1}{r} \gg \tfrac{1}{q} \gg \beta \gg \eta \gg \tfrac{1}{n}\]
	with $s$ divisible by $m$ and $q$ divisible by $m$.
	Let $H$ be a fissile $m$-digraph  on $n \geq n_0$ vertices with $\delta_1 \big(\PG{H}{\div \cap \cov}{s} \big) \geq   (1-1/s^2 )  \tbinom{n-1}{s-1}$ as in \cref{def:robustness-detailed}.
	Write $\P = \Del_{2m}(  \div  \cap \cov)$.
	By \cref{lem:booster} applied with $2m,s,m,r$ playing the rôles of $d,r,p,s$ (once with $\P=\div$ and once with $\P=\cov$), we have ${\delta_m(\PG{H}{\P}{r}) \geq \mu n^{r-m}}$ for $\mu =  {1}/{(2(r-m)!)}$.
	It follows by \cref{lem:absorber} that $H$ has at least $\beta n^{q}$ $X$-absorbers of order $q$ for every $m$-set $X \subset V(H)$.
	
	To construct the set $A$, set $p= (4\eta /  \beta) n^{-q+1}$.
	Let ${Q}$ be a random set containing each $q$-tuple in ${V(H)}^q$ independently with probability $p$.
	Since $\Exp[|{Q}|] = p n^{q} =  (4\eta/\beta)  n \leq (\alpha/(2q)) n$, Markov's inequality gives	$\Pr \left(|{Q}| > ({\alpha}/{q}) n \right) \leq  {1}/{2}.$
	
	We say that two distinct $q$-tuples \emph{overlap} if there is a vertex occurring in both.
	Note that there are at most $q^2 n^{2q-1}$ pairs of overlapping $q$-tuples.
	Let $W$ be the random variable that contains all overlapping pairs whose tuples are both in ${Q}$.
	We have $\Exp[|W|] \leq q^2 n^{2q-1} p^2 = q^2 (4\eta / \beta)^2 n$.
	Thus a further application of Markov's inequality reveals 
	$\Pr(|W| > \eta n) \leq  ({q^2 }/{\eta}) \left( {4\eta}/{\beta}\right)^2 \leq  {1}/{4}.$
	
	For an $m$-set $X \subset V(H)$, let ${Q}_X$ be the set of $X$-absorbers that are contained in ${Q}$.
	It follows that $|{Q}_X|$ is binomially distributed. 
	By the above, we have $\Exp[|{Q_X}|] \geq  p \beta n^{q} = 4\eta n$.
	Using {Chernoff's bound},
	we deduce that
	$\Pr(|{Q}_X| \leq 3 \eta n) \leq   {1}/{(8 n^m)} .$
	
	In combination, the above bounds on the probabilities imply that there is a (deterministic) outcome $Q$ such that
	\begin{itemize}[-]
		\item the tuples of $Q$ contain in total at most $\alpha n$ vertices,
		\item $Q$ has at most $\eta n$ overlapping pairs,
		\item $Q$ contains at least $3\eta n$ $X$-absorbers for every $m$-set $X \subset V(H)$.
	\end{itemize}
	
	Fix such a set $Q$.
	Note that some tuples of $Q$ might overlap, while others might not correspond to any absorber.
	We adjust this as follows.
	First, we delete one tuple from every overlapping pair in $Q$.
	Since we remove at most $\eta n$ tuples this way, it follows that~$H[A]$ still contains at least~$\eta n$ pairwise disjoint $X$-absorbers for every $m$-set $X \subset V(H)$.
	Next, we only keep those tuples of~$Q$ that actually correspond to an $X$-absorber for some $m$-set $X \subset V(H)$.
	Let $A \subset V(H)$ consist of the union of the vertices in the tuples of~$Q$ after these deletions.
	
	We claim that $A$ has the desired absorbing properties.
	To see this, consider a set $L \subset V(H)$ of size at most $\eta n$ and divisible by $m$.
	By the above, we may partition $L$ arbitrarily into $m$-sets and greedily match each such set $X$ with an $X$-absorber.
	We then obtain a perfect matching of $H[A\cup L]$ by covering $L$ with the active states ($\cM_1$ in \cref{def:absorber}) of the corresponding absorbers and covering the rest of $A$ using the passive states ($\cM_2$ in \cref{def:absorber}) of the corresponding absorbers.
\end{proof}

It remains to show \cref{lem:absorber}.
The idea of the proof is to find a suitable blow-up $R(b) \subset H$, which covers $X$ and such that $R$ satisfies $\div$ and $\cov$.
We then use the divisibility and cover property together with the fact that $H$ is fissile to construct the desired absorber in $R(b)$.
Unfortunately, such a cover might not exist, since the vertices in $X$ may exhibit an exceptional position in $H$.
We address this by finding a rooted blow-up $R(b;X) \subset H$ in which each vertex of~ $X$ is covered by a singleton cluster and all other clusters are of size $b$.
In addition, we ensure that $R$ satisfies the property $\Del_{2m}(  \div_{}^{}  \cap \cov_{}^{})$, which allows us to find the desired absorber for~$X$ using the structure of~$R$.

Let us formalise this discussion as follows.
For an $m$-digraph $R$ and a set $X$ disjoint from $V(R)$, we denote by \gls{rooted-blow-up} the blow-up $R(\cV)$ where $\cV$ consists of the singletons $\{x\}$ for $x \in X$ and additional $v(R)-|X|$ disjoint sets each of size~$b$.
We say that an $m$-digraph $H$ with $X \subset V(H)$ contains a \emph{rooted} copy of~$R(b;X)$ if there is an injective homomorphism from~$R(b;X)$ to $H$, which is the identity on $X$.

\begin{lemma}[Local blow-ups]\label{lem:local-blow-ups}
	For $2 \leq m \leq s$, $0 \leq \ell \leq s$ and $\mu,\, 1/b,\, 1/s \gg \beta \gg 1/n$, let~$H$ be an $n$-vertex $m$-digraph with an $\ell$-set $X \subset V(H)$.
	Let $\P$ be a family of $m$-digraphs and $P \subset \PG{H}{\P}{s}$.
	Suppose that $\deg_P(X) \geq \mu n^{s-\ell}$.
	Then there is an $s$-vertex $m$-digraph $R \in  {\P}$ such that $H$ contains at least ${\beta} n^{(s-\ell)b}$ rooted copies of $R(b;X)$.
\end{lemma}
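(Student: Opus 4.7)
The plan is to combine a pigeonhole over labelled isomorphism types with the partite analogue of \cref{thm:erd64}, followed by a routine disjointness check. Enumerate $X=\{x_1,\dots,x_\ell\}$ and call a tuple $\mathbf v=(v_{\ell+1},\dots,v_s)\in(V(H)\setminus X)^{s-\ell}$ with pairwise distinct entries \emph{good} if $H[X\cup\{v_{\ell+1},\dots,v_s\}]\in\cP$. To each good tuple attach the labelled $[m]$-digraph $R(\mathbf v)$ on $\{1,\dots,s\}$ obtained via $i\mapsto x_i$ for $i\le\ell$ and $i\mapsto v_i$ for $i>\ell$. The hypothesis $\deg_P(X)\ge\mu n^{s-\ell}$ yields at least $\tfrac12(s-\ell)!\,\mu\, n^{s-\ell}$ good tuples, while there are only boundedly many, say $c=c(s,m)$, labelled $[m]$-digraphs on $s$ vertices. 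Pigeonholing produces a single labelled digraph $R\in\cP$ together with a set $T$ of good tuples all inducing $R$, with $|T|\ge\mu_1 n^{s-\ell}$ for some $\mu_1=\mu_1(\mu,s,m)>0$.

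Next I would view $T$ as the edge set of an $(s-\ell)$-partite $(s-\ell)$-graph $\widetilde P$ whose parts are $s-\ell$ disjoint copies of $V(H)\setminus X$. Its edge count is at least $\mu_1 n^{s-\ell}$, so the partite analogue of \cref{thm:erd64} (proved by the same supersaturation and hypergraph Kővári--Sós--Turán arguments) yields at least $\beta_1 n^{(s-\ell)b}$ copies of $K_{s-\ell}^{(s-\ell)}(b)$ in $\widetilde P$ for some $\beta_1=\beta_1(s,b,\mu_1)>0$. Each copy is specified by a tuple $(U_{\ell+1},\dots,U_s)$ of $b$-sets $U_j\subseteq V(H)\setminus X$ such that every partite choice $(u_{\ell+1},\dots,u_s)\in U_{\ell+1}\times\dots\times U_s$ lies in $T$. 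At most $O(n^{(s-\ell)b-1})$ of these copies have two of the $U_j$'s intersecting once the parts of $\widetilde P$ are identified with $V(H)\setminus X$, so at least $\beta_1 n^{(s-\ell)b}/2$ of them have pairwise disjoint $U_j$'s.

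For each such disjoint copy and each choice of bijections between the abstract $b$-sets $U_j^0$ of $R^\ast(b;X)$ and the concrete $U_j$, the induced map $\phi$, identity on $X$, is injective; I claim it is a homomorphism. Given any edge of $R^\ast(b;X)$ arising from $(B'_1,\dots,B'_i)\in E(R)$ and a choice of vertices $w_\alpha\in B'_\alpha$ respecting repetitions, extend $(\phi(w_1),\dots,\phi(w_i))$ to a full partite tuple $(u_{\ell+1},\dots,u_s)\in U_{\ell+1}\times\dots\times U_s\subset T$; since this tuple induces $R$ under the canonical labelling, the edge of $R$ maps to the required edge of $H$. Multiplying by the $(b!)^{s-\ell}$ internal bijections yields the claimed $\beta n^{(s-\ell)b}$ rooted copies, provided $\beta$ is small enough compared to $\beta_1$. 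The main obstacle is this last verification: one must check the homomorphism property for \emph{every} edge of $R^\ast(b;X)$, including those arising from edges of $R$ with repeated vertices, which is precisely why $T$ must consist of ordered tuples and we need the \emph{partite} rather than the unordered version of \cref{thm:erd64}.
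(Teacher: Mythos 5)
Your proof is correct and follows the same broad strategy as the paper (pigeonhole on labelled induced $[m]$-digraph types, then a supersaturation/Erd\H{o}s application to upgrade to a blow-up), but it streamlines the middle part. The paper builds an \emph{unordered} $r$-graph $L$ of supporting $r$-sets, applies \cref{thm:erd64} once to find $K_r^{(r)}(b')$, and then needs a second colour pigeonhole (over the $s!$ labellings) together with a second application of \cref{thm:erd64} to shrink to $K_r^{(r)}(b)$ with all edges inducing $R$ in a \emph{consistently ordered} way. You avoid this second step by encoding the ordering from the outset: you record the labelled type of each ordered tuple and pass to an $r$-partite $r$-graph $\widetilde P$ on $r$ disjoint copies of $V(H)\setminus X$, so that a single application of \cref{thm:erd64} already yields blow-ups whose parts come with the correct coordinate identification. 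This works because any copy of $K_r^{(r)}(b)$ inside an $r$-partite $r$-graph with $r$ parts automatically respects the partition (a one-vertex-swap argument shows each part of the copy lies entirely in one $\widetilde P$-part), so the ``partite analogue'' you invoke is in fact a direct corollary of the unordered \cref{thm:erd64} rather than a genuinely new ingredient. The projection/disjointness cleanup ($O(n^{(s-\ell)b-1})$ degenerate copies discarded) and the homomorphism verification by extending a partial transversal to a full tuple in $T$ are both sound and mirror what the paper does at the end, so the argument goes through; the main gain of your route is that you dispense with the intermediate size $b'$ and the second colour pigeonhole, at the negligible cost of phrasing things through ordered tuples.
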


\begin{proof}
	Introduce $b'$ with $1/b,\,1/s \gg   1/b' \gg \beta$, and set $r=s-\ell$.
	The number of $m$-digraphs on a fixed set of $s$ vertices is $2^{s^m}$.
	So averaging reveals an $s$-vertex $m$-digraph $R \in \P$ such that the $s$-graph $P$ has at least $\mu 2^{-s^m} n^{r}$ edges $S \cup X$ such that $H[S \cup X]$ is a rooted copy of $R(1;X)$.
	Let~$L$ be the $r$-graph on $V(P-X)$, whose edges consist of the $r$-sets $S$ supporting these copies.
	By \cref{thm:erd64} applied with $\mu 2^{-s^m}$ and~$\sqrt{\beta}$ playing the rôle of $\gamma$ and $\beta$, it follows that $L$ contains $\sqrt{\beta} n^{rb'}$ copies of the complete $r$-partite $r$-graph whose parts have size $b'$.
	Let $K$ be one of these copies.
	It suffices to show that $K$ contains a rooted copy of $R(b;X)$, since we can then double count to find that $H$ contains at least
	\begin{equation*}
		\sqrt{\beta} n^{rb'} \tbinom{n-rb}{rb'-rb}^{-1} \geq \sqrt{\beta}n^{rb'} \big(\tfrac{1}{e}\tfrac{rb'-rb}{n-rb}\big)^{rb'-rb}
		\geq \sqrt{\beta} n^{rb} \big(\tfrac{rb'-rb}{e}\big)^{rb'-rb}
		\geq {\beta} n^{rb}
	\end{equation*}
	rooted copies of~$R(b;X)$.
	
	It remains to show that $K$ contains a rooted copy of $R(b;X)$.
	We colour each $r$-edge $S$ of~$K$ by one of $s!$ colours, corresponding to the $s!$ possibilities in which the vertices of a rooted copy of $R(1;X)$ are mapped to $X$ and the parts of $K$.
	Applying \cref{thm:erd-original} to the colour with most edges, we obtain a complete $r$-partite subgraph $K' \subset K$ with parts of size $b$ whose edges correspond to rooted copies $R(1;X)$ that have their respective vertices in the same parts of $K$.
	Let us identify the parts of $K'$ with the vertices of $R$ and consider an edge $(V_1,\dots,V_m) \in R$.
	So~$H$ contains every edge $(v_1,\dots,v_m) \in V_1 \times \dots \times V_m$ with $v_i = v_j$ whenever $V_i = V_j$.
	In other words, $K'$ together with $X$ forms the desired rooted copy of $R(b;X)$ in $K$.
\end{proof}

It remains to show \cref{lem:absorber}.
Thanks to the setup of \cref{lem:local-blow-ups} the argument is equivalent to finding a suitable allocation in the context of a Blow-up Lemma.
Recall the definition of lattices and their completeness from \cref{sec:lattice-completeness}.

\begin{proof}[Proof of \cref{lem:absorber}]
	Introduce $q',\,b$ with $1/s,\,\mu \gg 1/q' \gg 1/q \gg 1/b \gg \beta$.
	In particular, we choose $q'$ large enough such that for every $(s-m)$-vertex $m$-digraph $Q$ and $\vecb b \in \cL(Q)$ with $||\vecb b||_1 \leq m^2$, we can write $\vecb b = \sum_{e \in Q} c_e \vn_e$ with $|c_e| \leq q'$.
	(Since there is a finite number of all the involved objects, we can take $q'$ simply to be the maximum of $|c_e|$.) 
	By \cref{lem:local-blow-ups} applied with~$m$, $\sqrt{\beta}$ and $\Del_{2m} \left( {\div}  \cap  {\cov}\right)$ playing the rôles of $m$, $\beta$ and $\P$, there is an  $s$-vertex $m$-digraph  $R \in \Del_{2m} \left( {\div}  \cap  {\cov}\right)$ such that $H$ contains at least $\sqrt{\beta} n^{(s-{m})b}$ rooted copies of $R(b;X)$.
	Let~$R^\ast$ be one of these copies.
	We will show that~$R^\ast$ contains an $X$-absorber of order~$q$.
	From this it follows by double counting (as in the proof of \cref{lem:local-blow-ups}) that $H$ contains at least $\sqrt{\beta} n^{(s-{m})b} / \binom{n - q}{(s-m) b -q}  \geq \beta n^{q}$ of the desired $X$-absorbers.
	
	Recall that an $X$-absorber in $R^\ast$ consists of two matchings $\cM_1,\cM_2 \subset R^\ast$ such that $V(\cM_1) = X \cup V(\cM_2)$ and $X \cap V(\cM_2) = \es$.
	We begin by constructing $\cM_1$ as follows.
	Select a matching $\cM_{\cover} \subset R^\ast$ on $m$ edges that covers~$X$.
	This is possible, since $H$ is fissile, $R \in \Del_{2m} \left(   {\cov}\right)$ and~ $b$ is large enough.
	Next, we pick a matching $\cM_{\res} \subset R^\ast - V(\cM_{\cover})$ that acts as a `reservoir' by selecting $q'$ disjoint $e$-preserving ({injective}) edges for every $e \in R$.
	Set $\cM_1 = \cM_{\cover} \cup \cM_{\res}.$
	Since $e(R) \leq s^m$, we have $v(\cM_1) \leq {m}^2 + mq'  s^m  \leq q+{m} \leq~b$.
	(This calculation also justifies why $\cM_{\res}$ can be selected in the first place.)
	We add some additional {injective} edges to $\cM_{\res}$ until $v(\cM_1)=q+{m}$, keeping the names for convenience.
	This concludes the construction of~$\cM_1$.
	
	It remains to construct a second matching $\cM_2 \subset R^\ast$ with $V(\cM_1) = X \cup V(\cM_2)$ and $X \cap V(\cM_2) = \es$.
	By definition, the vertex set of $\cM_1 - X$ consists of the vertices of~$\cM_{\res}$ and $\cM_{\cover}-X$.
	Our plan is to remove a special matching $\cM_{\ominus} \subset \cM_{\res}$ from the reservoir such that the vertices of $\cM_{\ominus} \cup (\cM_{\cover}-X)$ are spanned by another matching $\cM_{\oplus} \subset R^\ast$.
	Setting $\cM_2 = \cM_{\oplus} \cup (\cM_{\res} \sm \cM_{\ominus})$ gives the desired $X$-absorber.
	
	To formalise this, let $\mathbf {b} \in \INTS^{V(R)}$ count the number of vertices of $\cM_{\cover}-X$ in each cluster of $R^\ast$.
	By the definition of $\cM_{\cover}$, we have  $||\vecb b||_1 = m(m-1)$.
	Since  $R \in  \Del_{2m} \left( {\div} \right)$, we have $R-X \in \Del_{m} \left( {\div} \right)$.
	Hence \cref{obs:robust-int-matching-gives-lattices-completeness} tells us that the lattice $\cL(R-X)$ is complete.
	Moreover, $\sum_{v \in  V(R-X)} \mathbf {b}(v) = m(m-1)$ is divisible by $m$.
	Hence $\mathbf {b}\in \cL(R-X)$.
	By the definition of $q'$, there are integers $|c_e| \leq q'$ for every $e \in R-X$ such that $\mathbf {b} = \sum_{e \in R-X} c_e \vn_e$.
	Let $E_{\oplus}$ and $E_{\ominus}$ denote the sets of edges $e \in R-X$ for which~$c_e$ is positive and negative, respectively.
	We pick $\cM_{\ominus} \subset \cM_{\res}$ by selecting $|c_e|$ many $e$-preserving (injective) edges for each $e \in E_{\ominus}$.
	Note that
	\begin{equation*}
		\sum_{e \in E_{\oplus}} c_e \vn_e  = \mathbf {b} - \sum_{e \in E_{\ominus}} c_e \vn_e = \mathbf {b} + \sum_{e \in E_{\ominus}} |c_e|  \vn_e\,.
	\end{equation*}
	Owing to the fissility of $H$ and the fact that $R^\ast \subset H$ is a blow-up, we may then (greedily) pick a matching $\cM_{\oplus} \subset R^\ast$ on the vertices of $\cM_{\ominus} \cup \cM_{\cover}-X$ by adding~$c_e$ many $e$-preserving {(injective)} edges for every  $e \in   E_{\oplus}$.
	Setting $\cM_2 = \cM_{\oplus} \cup (\cM_{\res} \sm \cM_{\ominus})$, we obtain that 
	\begin{equation*}
		V(\cM_2) = V(\cM_{\oplus}) \cup V(\cM_{\res} \sm \cM_{\ominus}) = V(\cM_{\res}) \cup V(\cM_{\cover}-X) = V(\cM_1-X)
	\end{equation*}
	as desired.
\end{proof}

We conclude this section with a partite version of \cref{prp:absorption}.
Let $V$ be a set with a partition $\cU$.
We say that a set $S \subset V$ is \emph{$\cU$-proportional} if $|U \cap S|/|S| = |U|/|V|$ for every $U \in \cU$.
Analogously, a tuple $e \in V^m$ is \emph{$\cU$-proportional} if $\sum_{v \in U} \multi(v,e) / m = |U \cap V|/|V|$ for every $U \in \cU$.
An $m$-digraph $H$ is \emph{$\cU$-proportional} if $V(H) \subset V$ is $\cU$-proportional and all edges of~$H$ are $\cU$-proportional.
In this case, we can fix a representative edge $e \in H$, and denote for each $U \in \cU$, by  $\multi(U)$ the number of vertices that $e$ has in $U$.
The lattice of $H$ is then \emph{$\cU$-complete}, if it contains every $\mathbf {b} \in \INTS^{V(H)}$ for which there is $p \in \INTS$, such that for each $U \in \cU$, we have $\sum_{v \in U} \mathbf {b}(v) = p \cdot \multi(U)$.
We remark that $H$ is complete in the non-partite sense if $\cU = \{V(H)\}$.
The following is the partite analogue of \cref{obs:lattice-completeness}.

\begin{observation}\label{obs:lattice-completeness-partite}
	Let $\cU$ be a partition of a set $V$.
	Let $H$ be a $\cU$-proportional $m$-digraph.
	If $\cL(H)$ contains the {transferral} $\vn_v - \vn_u$ for all choices of $U \in \cU$ and $u,v \in V(H) \cap U$, then $\cL(H)$ is $\cU$-complete. 
\end{observation}

\begin{proof}
	Fix a `representative' vertex $w_U \in U$ for each $U \in \cU$.
	Consider  $\mathbf {b} \in \INTS^{V(H)}$ for which there is $p \in \INTS$, such that for each $U \in \cU$, we have $\sum_{v \in U} \mathbf {b}(v) = p \cdot \multi(U)$.
	Informally, we assemble $\vecb b$ from transferrals as follows.
	Fix an edge $e \in H$, and note that $\multi(U) = \sum_{v \in U} \multi(v,e)$.
	We begin with $p$ copies of $e$ and transfer their  weight to $w_U$ with each $U \in \cU$.
	Then, we distribute the weight of each~$w_U$ within $U$ according to the demands of $\vecb b$.
	Formally, 
	\begin{equation*}
		p  \vn_e +   p \sum_{U \in \cU} \sum_{v\in U}  \multi(v,e) (\vn_{w_U} - \vn_{v}) + \sum_{U \in \cU} \sum_{v \in U} \mathbf {b}(v) (\vn_v - \vn_{w_U}) = \mathbf {b}\,.
	\end{equation*}
	Since $\cL(H)$ contains all the involved transferrals, this implies $\mathbf {b} \in \cL(H)$.
\end{proof} 

Motivated by \cref{obs:robust-int-matching-gives-lattices-completeness}, we define the partite divisibility property directly in terms of lattice completeness.
More precisely, given a partition $\cU$ of a set $V$, we denote by \gls{div-partite} the set of all $\cU$-proportional uniform digraphs $H$ with $\cU$-complete lattice.
Now we are ready to state a partite version of \cref{prp:absorption}.

\begin{lemma}[Partite absorption]\label{prp:absorption-partite}
	For all $2\leq m \leq s$ and $\mu,\alpha >0$, there are $\eta,n_0 >0$ with the following property.
	Let $H$ be a fissile $\cU$-proportional $m$-digraph on $n \geq  n_0$ vertices, where $\cU$ is a partition of $V(H)$.
	Let $P \subset \PG{H}{\Del_m (  \pdiv_{}^{}(\cU) \cap \cov_{}^{})}{s}$ be the subgraph of $\cU$-proportional edges.
	Suppose that every $\cU$-proportional $m$-set $X \subset V(H)$ satisfies $\deg_P(X) \geq   \mu  n^{s-m}.$
	Then there is a set $A \subset V(H)$ with $|A| \leq \alpha n$ such that $H[A \cup L]$ has a perfect matching for every $\cU$-proportional set $L \subset V(H-A)$  with $|L| \leq \eta n$ divisible by $m$.
\end{lemma}

Note that the input conditions of \cref{prp:absorption-partite} correspond to the input of \cref{prp:absorption} after boosting them with \cref{lem:booster}.
It is stated in this way to avoid the inconvenience of defining (and bounding) minimum degree conditions for perfect matchings in the proportional setting.
The proof of \cref{prp:absorption-partite} follows mutatis mutandis along the lines of the proof of \cref{prp:absorption}.
The main difference is that we apply \cref{lem:local-blow-ups} with $\P = \Del_m(\pdiv_{}^{}(\cU)  \cap \cov_{}^{})$ and $P \subset \PG{H}{\P}{s}$ containing only the $\cU$-partite edges.
One then concludes using \cref{obs:lattice-completeness-partite} in place of \cref{obs:lattice-completeness}.
We omit the details.

\section{Applications}\label{sec:applications}

In what follows, we illustrate the use of \cref{thm:framework,thm:framework-exceptional} with applications for minimum degree and uniform density conditions in graphs and hypergraphs.
We also touch on variations of these problems in ordered and transversal settings.

\subsection{Dirac-type results for hypergraphs} \label{sec:dirac-hyper}

A classic theorem of Dirac determines the optimal minimum degree conditions that force a perfect matching in a graph.
The generalisation of this problem to perfect tilings was studied by Hajnal and Szemerédi~\cite{HS70}, Komlós~\cite{Kom00} and Kühn and Osthus~\cite{KO09} among many others. 
We refer the reader to the extensive survey of Simonovits and Szemerédi~\cite{SS19} for a more detailed history of the subject.

We discuss the results of this area in terms of the {minimum degree thresholds} $\th_d^{}(\til_F)$ defined in \cref{sec:introduction}.
Let us introduce two further pieces of notation.
An \emph{$\ell$-colouring} $\phi\colon V(F)  \to  [\ell]$ of a $k$-graph $F$ is \emph{proper} if no edge contains more than one vertex of the same colour.\footnote{Note that this is different from the classic definition of proper colourings in hypergraphs.}
The \emph{chromatic number} \gls{chromatic} is the least integer $\ell$ for which $F$ has a proper $\ell$-colouring.
(In the terminology of \cref{sec:preliminaries}, $F$ is $\cV$-partite if the parts of $\cV$ are the colour classes of a proper colouring.
Moreover, $\chi(F) \leq \ell$ if and only if $F$ is $\ell$-partite.)
We denote by \gls{tau-F} the relative size of a smallest colour class over all proper $\chi(F)$-colourings of $F$.
Thresholds for graphs are typically expressed in terms of the \emph{critical chromatic number}, which is defined by $\gls{critical-chromatic} = \tfrac{\chi(F)-1}{1-\tau(F)}$ for $\chi(F) \geq 2$ and $\chi_{\crit}(F) = 1$ otherwise.
Given this, we can state the following classic result of Komlós~\cite{Kom00}:

\begin{theorem}[Komlós]\label{thm:komlos}
	Let $F$ be a graph with $\chi(F) \geq 2$, $\mu > 0$ and $n$ be sufficiently large.
	Then every $n$-vertex graph $G$ with $\delta_1(G) \geq \big(1-1/\chi_{\crit}(F)+\mu\big)n$ contains an $F$-tiling on at least $(1-\mu)n$ vertices.
\end{theorem}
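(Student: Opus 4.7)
The plan is to derive Komlós' theorem from \cref{lem:almost-perfect-matching} applied to the homomorphism digraph $H := H(F;G)$. Let $m = v(F)$; then $H$ is an $m$-digraph, fissile by \cref{obs:hom-graph-fissile}. A matching of $H$ missing at most $\mu n$ vertices is precisely an $F$-tiling of $G$ covering at least $(1-\mu)n$ vertices, so this is what I would produce. I fix $s$ large with $1/s \ll \mu$ and set $\rho = \eta = \mu/4$ in the application of \cref{lem:almost-perfect-matching}.

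The hypothesis to verify is that $\PG{H}{\SpaF{\rho}}{s}$ has minimum vertex degree at least $(\th_{1}(\mat_s) + \mu')\binom{n-1}{s-1}$ for some fixed $\mu' > 0$. I would reduce this to a statement about small induced subgraphs via the Grabbing Lemma. By \cref{lem:grabbing-minimum-degree} with $k = 2$, $d = 1$ and $\delta = 1 - 1/\chi_{\crit}(F)$, for every vertex $v$ all but an $\exp(-\Omega(s))$-fraction of the $s$-sets $S \ni v$ induce a subgraph $G[S]$ with $\delta_1(G[S]) \geq (1 - 1/\chi_{\crit}(F) + \mu/2)(s-1)$. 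Combined with the Daykin--Häggkvist bound $\th_{1}(\mat_s) \leq 1 - 1/s$ from \cref{thm:DH81}, it then suffices to show that every such $G' := G[S]$ satisfies $H(F;G') \in \SpaF{\rho}$.

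The remaining task is a fractional version of Komlós' theorem: if $G'$ is a graph on $s$ vertices with $\delta_1(G') \geq (1 - 1/\chi_{\crit}(F) + \mu/2)s$, then $H(F;G')$ admits a $\rho$-perfect fractional matching. This fractional statement is the main obstacle. To establish it, I would let $r = \chi(F)$ with a proper colouring of $F$ whose class sizes $a_1 \geq \dots \geq a_r$ satisfy $a_r/m = 1 - (r-1)/\chi_{\crit}(F)$, and exhibit a weighted $K_r$-tiling of $G'$ in which every vertex receives total weight $1$ while the fraction of its weight serving in the $i$-th colour role equals $a_i/m$. Each such weighted clique then decomposes, by averaging over role-respecting assignments, into a convex combination of $F$-homomorphisms, and summing gives the desired fractional matching. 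The existence of the weighted $K_r$-tiling is standard and can be obtained either by applying weak regularity to $G'$ and running a fractional Hajnal--Szemerédi-type argument on the reduced graph, or directly by linear programming duality using the definition of $\chi_{\crit}(F)$.

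Assembling the pieces, $\PG{H}{\SpaF{\rho}}{s}$ has minimum vertex degree at least $(1 - \exp(-\Omega(s)))\binom{n-1}{s-1}$, which for $s$ sufficiently large exceeds $(\th_{1}(\mat_s) + \mu/4)\binom{n-1}{s-1}$. \cref{lem:almost-perfect-matching} then produces a matching of $H$ covering all but at most $(\eta + \rho)n = \mu n/2 \leq \mu n$ vertices, i.e., the desired near-perfect $F$-tiling of $G$.
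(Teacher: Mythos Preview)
Your proposal is correct and follows the same high-level route as the paper: apply the Grabbing Lemma (\cref{lem:grabbing-minimum-degree}) so that typical $s$-subsets inherit the degree condition, verify that each such induced subgraph lies in $\SpaF{\rho}$ (the fractional Koml\'os step), and then invoke \cref{lem:almost-perfect-matching}. The paper records this derivation in one line at the end of \cref{sec:dirac-komlos-kuhn-osthus}.

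The only substantive difference is in how the fractional Koml\'os statement is established. You sketch it via a weighted $K_r$-tiling obtained ``by weak regularity'' or ``by LP duality'', calling it standard. The paper instead proves it as \cref{lem:fractional-komlos} by a short self-contained induction on $\chi(F)$: one removes a largest colour class to get $F'$ with $\chi(F')=\chi(F)-1$, passes to the link graph of a vertex, checks that the relative minimum degree in the link is at least $1-1/\chi_{\crit}(F')$, and applies the induction hypothesis together with the link-graph transfer \cref{lem:linkgraph} (which is where LP duality enters). This avoids any regularity lemma and yields a \emph{perfect} fractional matching directly, so one can take $\rho=0$ rather than $\rho=\mu/4$. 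Your weighted-clique formulation is equivalent in spirit, but the paper's inductive link-graph argument is cleaner and more explicit than the hand-wave you give here.
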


To extend this result to perfect tilings, we need to address the underlying divisibility issues.
Let $\cD(F) \subset~\NATS$ be the union of the integers $\big||\phi^{-1}(1)|-|\phi^{-1}(2)|\big|$ over all proper $\chi(F)$-colourings~$\phi$ of $F$.
We denote by \gls{gcd} the greatest common divisor of the integers in $\cD(F)\sm \{0\}$ with the convention that $\gcd(F)=\infty$ if $\cD(F)=\{0\}$.
Since this concept is somewhat cryptic, let us also highlight the following consequence, which follows from elementary number theory.
It was proved by Kühn and Osthus~\cite[Lemma 15]{KO09} for graphs and by Mycroft~\cite[Proposition 3.6]{Myc16} for hypergraphs with large \emph{codegrees} (when $d=k-1$).

\begin{lemma}\label{lem:downspin}
	Let $F$ be an ${m}$-vertex $k$-graph with $\gcd(F)=1$ and $\ell=\chi(F)$.
	Let $B$ be the complete $\ell$-partite $k$-graph with parts of size $b+1,\,b-1,\,b,\,\dots,\, b$, where $b$ is sufficiently large and divisible by $m$.
	Then $B$ contains a perfect $F$-tiling.
\end{lemma}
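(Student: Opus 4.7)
The plan is to reinterpret the statement as a vector decomposition problem in $\INTS^\ell$ and to resolve it by combining Bezout's identity (to identify the relevant lattice) with a standard padding argument (to move from lattice membership to the non-negative integer cone).

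Since $B$ is the complete $\ell$-partite $k$-graph with parts $V_1,\dots,V_\ell$, any embedding of $F$ into $B$ is determined, up to the specific vertices chosen within each part, by a proper $\ell$-colouring $\phi$ of $F$ together with a bijection $\sigma\colon [\ell]\to[\ell]$ assigning colour classes to parts. Such an embedding uses $|\phi^{-1}(\sigma^{-1}(i))|$ vertices from $V_i$; denote the resulting usage vector by $\vecb{v}(\phi,\sigma)\in\NATS^\ell$. A perfect $F$-tiling of $B$ exists if and only if the part-size vector $\vecb{b}:=(b+1,b-1,b,\dots,b)$ is expressible as a non-negative integer combination $\sum_{\phi,\sigma} n_{\phi,\sigma}\vecb{v}(\phi,\sigma)$: given such coefficients, one places the required tiles greedily, which works because $B$ is complete $\ell$-partite and embeddings only constrain which part each vertex lies in.

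Next I would compute the $\INTS$-span $\cL$ of $\{\vecb{v}(\phi,\sigma)\}$. Since every generator has coordinate sum $m$, $\cL$ is contained in $\{\vecb{w}\in\INTS^\ell : m\mid \sum_i \vecb{w}_i\}$. For the reverse inclusion, observe that for any proper $\ell$-colouring $\phi$ with class sizes $a_1,\dots,a_\ell$ and any transposition $(ij)$,
\[
	\vecb{v}(\phi,\mathrm{id}) - \vecb{v}(\phi,(ij)) = (a_i-a_j)(\vecb{e}_i-\vecb{e}_j) \in \cL.
\]
Relabelling colours shows $\{|a_i-a_j|:\phi\text{ proper},\ i\neq j\}=\cD(F)$, so the hypothesis $\gcd(F)=1$ combined with Bezout's identity yields $\vecb{e}_i-\vecb{e}_j\in\cL$ for every pair; together with any single generator (of coordinate sum $m$) this gives $\cL=\{\vecb{w}\in\INTS^\ell : m\mid \sum_i \vecb{w}_i\}$. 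Since $m\mid b$, the vector $\vecb{b}$ has coordinate sum $\ell b$ divisible by $m$, so $\vecb{b}\in\cL$.

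To upgrade lattice membership to a non-negative combination, I would split $\vecb{b}=\vecb{b}_{\mathrm{big}}+\vecb{c}$, where $\vecb{b}_{\mathrm{big}}=c_0(1,\dots,1)$ for an integer $c_0$ with $b-c_0=O_F(1)$ and suitable divisibility, and $\vecb{c}\in\cL$ is a bounded correction with entries of size $O_F(1)$. The balanced piece $\vecb{b}_{\mathrm{big}}$ is realised by uniformly spreading tiles across pairs $(\phi,\tau)$, using the identity $\sum_\tau \vecb{v}(\phi,\tau)=(\ell-1)!\,m\,(1,\dots,1)$; this assigns a coefficient of order $b$ to every $\vecb{v}(\phi,\tau)$ in the representation. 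For $\vecb{c}$ one applies the Bezout construction from the lattice step to obtain a signed integer representation whose total magnitude is $O_F(1)$, and the negative part is absorbed by subtracting it from the large uniform coefficients of $\vecb{b}_{\mathrm{big}}$. The main technical obstacle is the bookkeeping in this absorption: one must verify that every coefficient remains non-negative through the subtraction, which forces $b$ to exceed a threshold depending only on $F$ — precisely the hypothesis of the lemma.
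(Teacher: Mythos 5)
The paper does not prove this lemma itself; it cites Kühn and Osthus (Lemma 15) for graphs and Mycroft (Proposition 3.6) for hypergraphs. So there is no in-paper proof to compare against. Your argument is, however, essentially the standard one underlying those references: reduce the perfect $F$-tiling of the complete $\ell$-partite $B$ to the problem of writing the part-size vector as a non-negative integer combination of colour-class usage vectors, show via Bezout and the hypothesis $\gcd(F)=1$ that the lattice $\cL$ they generate is exactly $\{\vecb w\in\INTS^\ell : m\mid \sum_i \vecb w_i\}$, and then pass from a signed lattice representation to a non-negative combination by padding with a large balanced piece realised via the identity $\sum_\tau \vecb v(\phi,\tau)=(\ell-1)!\,m\,(1,\dots,1)$. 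The reasoning is sound, and this is indeed the same overall route.

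A few points are worth tightening when writing this out in full. First, the choice of $c_0$ has to satisfy two constraints simultaneously: $\vecb b_{\mathrm{big}}=c_0(1,\dots,1)$ must itself be a genuine non-negative combination of generators, which (if you spread uniformly over all proper colourings $\phi$) forces $c_0$ to be a multiple of $N(\ell-1)!\,m$ with $N$ the number of proper $\ell$-colourings of $F$; and $b-c_0$ must be $O_F(1)$. Choosing $c_0$ to be the largest such multiple at most $b$ achieves both, and since $m\mid b$ and $m\mid c_0$ one gets $\vecb c\in\cL$ as needed. Second, the claim that $\vecb c$ admits a signed representation of bounded total magnitude needs a sentence: because $b-c_0$ lies in a fixed finite range once you impose divisibility, $\vecb c$ itself ranges over a finite set depending only on $F$, so one can fix a bounded representation for each possible value, giving a uniform bound $D=D(F)$; non-negativity of $\lambda+d_{\phi,\tau}$ then holds once $b$ is large enough that $\lambda\geq D$. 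Third, the implication (non-negative combination $\Rightarrow$ tiling) silently requires that each part is large enough to host every colour class; since the largest class has at most $m$ vertices and each part has at least $b-1\geq m$ vertices (for $b$ sufficiently large), this is fine, but it is worth noting explicitly.
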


Given this, we can state the classic result for perfect tilings under minimum degree conditions in graphs due to Kühn and Osthus~\cite{KO09}:

\begin{theorem}[Kühn--Osthus]\label{thm:kuhn-osthus}
	Every graph $F$ with $\gcd(F) = 1$ and $ \chi(F) \geq 3$ satisfies $\th_1^{}(\til_F) = 1-1/\chi_{\crit}(F)$.
\end{theorem}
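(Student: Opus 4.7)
The lower bound $\th_1(\til_F) \geq 1 - 1/\chi_{\crit}(F)$ is witnessed by a standard space-barrier construction (two disjoint cliques of suitable sizes that just miss the fractional tiling threshold), so I focus on the upper bound. Let $G$ be an $n$-vertex graph with $m := v(F)$ dividing $n$ and $\delta_1(G) \geq (1 - 1/\chi_{\crit}(F) + \mu)n$. The plan is to apply \cref{thm:framework-exceptional} to the $m$-digraph $H := H(F;G)$, which is fissile by \cref{obs:hom-graph-fissile}. With suitable parameters, it suffices to verify that, for most $s$-sets $S$, we have $H[S] \in \SpaF{\rho}$, and for most $r$-sets $R$, we have $H[R] \in \div \cap \cov$. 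By the Grabbing Lemma (\cref{lem:grabbing-minimum-degree}), such typical subgraphs $G[S]$, $G[R]$ inherit a minimum-degree bound of the form $(1 - 1/\chi_{\crit}(F) + \mu/2)$ times their respective orders.

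For the space condition, Komlós's theorem (\cref{thm:komlos}) supplies an $F$-tiling of $G[S]$ missing at most $\rho s$ vertices, which translates to a $\rho$-perfect matching, and hence a $\rho$-perfect fractional matching, in $H[S]$. Combined with $\th_1(\mat_s) \leq 1 - 1/s$ from \cref{thm:DH81}, taking $s$ large enough beats the required threshold.

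For the divisibility and cover condition, cover is immediate: every vertex of $G[R]$ has enough neighbours to lie in at least one copy of $F$, greedily embedded. For divisibility, fix $u, v \in R$; the goal is $\vn_v - \vn_u \in \cL(H[R])$. I would locate pairwise disjoint sets $X_1, \dots, X_\ell \subset R \setminus \{u, v\}$ with $\ell = \chi(F)$ and sizes $b, b-1, b, \dots, b$ (with $b$ large and divisible by $m$) such that $\bigcup_i X_i$ spans a complete $\ell$-partite subgraph of $G[R]$, and both $u$ and $v$ are adjacent to all of $X_2 \cup \dots \cup X_\ell$. The Downspin Lemma (\cref{lem:downspin}), which is where the hypothesis $\gcd(F) = 1$ enters, then yields perfect $F$-tilings $T_u$ of $(X_1 \cup \{u\}) \cup X_2 \cup \dots \cup X_\ell$ and $T_v$ of $(X_1 \cup \{v\}) \cup X_2 \cup \dots \cup X_\ell$. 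Subtracting the corresponding indicator vectors produces $\vn_u - \vn_v \in \cL(H[R])$, as desired.

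The main obstacle lies in the final embedding step: when $\chi_{\crit}(F)$ is close to $2$, the common neighbourhood $N(u) \cap N(v)$ in $G[R]$ does not automatically inherit a strong enough minimum-degree guarantee to locate the complete $(\ell-1)$-partite structure $X_2, \dots, X_\ell$ directly via \cref{thm:erd64}. I expect to handle this by a stability-style dichotomy: either one can find the embedding inside $N(u) \cap N(v)$ in $G[R]$ and conclude as above, or $G[R]$ is near-extremal, in which case one routes the transferral $\vn_v - \vn_u$ through a suitably chosen intermediate vertex $w$ that shares enough structure with both endpoints, exploiting the connectivity of the auxiliary graph in which such `local transferrals' are available. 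The hypothesis $\chi(F) \geq 3$ combined with $\gcd(F) = 1$ is used precisely to rule out the parity-type obstructions that would otherwise disconnect this auxiliary graph.
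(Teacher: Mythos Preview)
Your overall strategy is sound, and in particular the reduction via the framework together with the Grabbing Lemma is exactly how the paper proceeds (the paper uses \cref{thm:framework} via \cref{cor:minimum-degree-thresholds}, but your use of \cref{thm:framework-exceptional} with Koml\'os for the space side is equally valid). Two points deserve comment.

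First, a small slip: the lower bound is not witnessed by two disjoint cliques (that is the divisibility barrier and only gives $1-1/\chi(F)$); the correct space barrier is the construction that keeps all edges meeting a set $A$ of size just below $\tau(F)n$, see \cref{cons:space}.

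Second, and more substantially, the ``obstacle'' you describe for divisibility is a phantom, and your proposed stability dichotomy is unnecessary. You have overlooked the elementary inequality $\chi_{\crit}(F)\geq \chi(F)-1=:k-1$, which means your inherited degree in $G[R]$ already satisfies $\delta(G[R])>(1-1/(k-1))|R|$. At this degree every $(k-1)$-set has a common neighbour, so every edge lies in a $k$-clique; since $k\geq 3$ this also forces $\delta(G[R])>|R|/2$, hence $G[R]$ is connected and therefore $k$-connected in the sense of \cref{sec:completness}. Now the paper's key simplification (\cref{lem:connectivity-to-completeness}) kicks in: rather than locating a large \emph{injective} complete $\ell$-partite blow-up anchored at $u$ and $v$, you work with \emph{non-injective} homomorphisms from $F$ into a single $k$-clique $K$. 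The Downspin Lemma, applied to an abstract blow-up of $K$ with part sizes $b+1,b-1,b,\dots,b$, exhibits the vector $(b+1,b-1,b,\dots,b)$ (and all its permutations) in $\cL(H(F;K))$; subtracting the balanced vector $(b,\dots,b)$ yields every transferral between the vertices of $K$. Connectivity of $K_k(G[R])$ then propagates this to arbitrary pairs $u,v$. No common-neighbourhood embedding of large partite structures is needed, and the argument is uniform in $\chi_{\crit}(F)$.

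For completeness: the paper also handles space more directly, proving $\th_1(\spa_F)\leq 1-1/\chi_{\crit}(F)$ by an induction on $\chi(F)$ through link graphs (\cref{lem:fractional-komlos}), which in fact \emph{reproves} Koml\'os' theorem rather than assuming it. Your black-box use of \cref{thm:komlos} is fine for the purpose of recovering Kühn--Osthus, but is less self-contained.
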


We note that the work of Kühn and Osthus also covers the cases $\gcd(F) \geq 2$ and $\chi(F) = 2$.
In particular, $\th_1^{}(\til_F) \leq 1/2$ if $\chi(F) = 2$.
(See also the discussion after \cref{pro:komlos-kuhn-osthus-thresholds}.)
Moreover, their bounds on the degree conditions are more precise (with an additive instead of multiplicative error).

Next, we turn to the hypergraph setting.
Here, much attention has been dedicated to the case of perfect matchings following the work of Rödl, Ruciński and Szemerédi~\cite{RRS09b} for {minimum codegree conditions}.
For (proper) hypergraph tilings, most results have been given in terms of codegree conditions~\cite{Kee18,SS19,Zha16}.
In particular, Mycroft~\cite{Myc16} determined asymptotically optimal bounds for complete $k$-partite $k$-graph tiles:

\begin{theorem}[Mycroft]\label{thm:mycroft}
	For $k \geq 3$, let $F$ be a complete $k$-partite $k$-graph with $\gcd(F)=~1$.
	Then $\th_{k-1}^{}(\til_F) =  \tau(F)$.
\end{theorem}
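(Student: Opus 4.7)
The plan is to derive the upper bound $\th_{k-1}^{}(\til_F) \leq \tau(F)$ via \cref{thm:framework} applied to the fissile $m$-digraph $H = H(F;G)$, where $m = v(F)$. The matching lower bound $\th_{k-1}^{}(\til_F) \geq \tau(F)$ is given by the standard space-barrier construction that partitions $V(G)$ proportionally to the parts of $F$ with the smallest part just below $\tau(F) n$, so the argument will focus on the upper bound.

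Let $G$ be an $n$-vertex $k$-graph with $\delta_{k-1}(G) \geq (\tau(F) + \mu) n$, and pick $s$ large with $1/s \ll \mu$. The first step is to apply \cref{lem:grabbing-minimum-degree} with $d = k-1$, which guarantees that for all but an $\exp(-\Omega(s))$-fraction of $s$-sets $S$ through any fixed vertex, the induced subgraph $G[S]$ satisfies $\delta_{k-1}(G[S]) \geq (\tau(F) + \mu/2) s$. Since $H[S] = H(F;G[S])$, the task then reduces to showing that every $k$-graph $G'$ on $s$ vertices with $\delta_{k-1}(G') \geq (\tau(F) + \mu/2) s$ satisfies $H(F;G') \in \spa \cap \div \cap \cov$. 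Invoking the bound $\th_{1}^{}(\mat_s) \leq 1 - 1/s$ from \cref{thm:DH81}, the minimum vertex degree of $\PG{H}{\spa \cap \div \cap \cov}{s}$, which will be close to $\binom{n-1}{s-1}$, comfortably exceeds the threshold required by \cref{thm:framework}, yielding a perfect $F$-tiling of $G$.

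The easier verifications are cover and divisibility. For cover, any vertex $v \in V(G')$ lies in an embedded copy of $F$ built greedily from the codegree, each contributing a one-to-one edge of $H(F;G')$ covering $v$ with multiplicity one. For divisibility, we invoke \cref{lem:downspin}: since $\gcd(F) = 1$, the complete $k$-partite $k$-graph $B$ with parts $b+1, b-1, b, \ldots, b$ has a perfect $F$-tiling for $b$ large and divisible by $m$. Given $u, v \in V(G')$, the codegree lets us embed in $G'$ two copies $B_u, B_v$ of $B$ whose vertex sets differ only by swapping $u$ for $v$ in the first part: choose common base parts $W_1, \ldots, W_k$ of sizes $b, b-1, b, \ldots, b$ via a codegree-driven greedy selection, then enlarge $W_1$ by $u$ for $B_u$ and by $v$ for $B_v$. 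The matchings in $H(F;G')$ arising from perfect $F$-tilings of $B_u$ and $B_v$ then have indicator vectors differing by $\vn_u - \vn_v$, placing this transferral in $\cL(H(F;G'))$ and witnessing lattice completeness.

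The main obstacle will be the space property, since this is where the threshold $\tau(F)$ enters critically. We need a perfect fractional $F$-tiling of $G'$. The plan is to fix a uniformly random equipartition $V(G') = U_1 \cup \dots \cup U_k$ with $|U_i| = a_i s/m$, where $a_1 \leq \dots \leq a_k$ are the part sizes of $F$, so that $|U_1| = \tau(F) s$. By concentration, the partite codegree within this partition is at least $\mu s/4$ in each direction, so only an $o(1)$-fraction of partite $k$-tuples fail to be edges of $G'$. Uniform weighting of all partite $F$-copies in the ambient complete partite graph assigns total load exactly one to each vertex, and the small deficit from missing $F$-copies can be repaired by local weight shifts along augmenting alternating structures supplied by the codegree excess of $\mu/2$ over $\tau(F)$. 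This yields a perfect fractional matching of $H(F;G')$, and together with cover and divisibility this completes the verification and, via \cref{thm:framework}, the theorem.
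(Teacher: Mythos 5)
Your overall strategy is correct and matches the paper: reduce via \cref{thm:framework} (equivalently \cref{cor:minimum-degree-thresholds}) to verifying $\spa$, $\div$, $\cov$ separately for $k$-graphs $G'$ of constant order $s$ with $\delta_{k-1}(G') \geq (\tau(F) + \mu/2)s$. But all three verifications as written have gaps, and the one for $\spa$ is where the argument actually breaks.

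\textbf{Space (critical gap).} Your plan of fixing a random partition $V(G') = U_1 \cup \dots \cup U_k$ proportional to the parts of $F$ and then claiming ``the partite codegree within this partition is at least $\mu s/4$'' is false. If a $(k-1)$-tuple $D$ has its vertices in $k-1$ of the parts and you want neighbours in the remaining part $U_i$, the guaranteed count is only $\delta_{k-1}(G') - (s - |U_i|) = (\tau(F) + \mu/2)s - s + |U_i|$. Even for the \emph{largest} part this is $((a_1 + a_k)/m - 1 + \mu/2)s$, which is negative whenever $a_2 + \dots + a_{k-1} \geq \mu m/2$, i.e.\ for any fixed $F$ with $k\geq 3$ once $\mu$ is small; for the smallest part it is $(2\tau(F) - 1 + \mu/2)s$, which is negative since $\tau(F) \leq 1/k \leq 1/3$. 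So ``only an $o(1)$-fraction of partite $k$-tuples fail'' is wrong, and there is no near-complete partite structure to repair by local weight shifts. The paper instead uses LP duality and the link-graph reduction (\cref{lem:linkgraph,lem:linkgraph-threshold}): deleting the $k-1$ largest parts of $F$ leaves a $1$-graph $F'$ on $m_1$ vertices, and the codegree condition says every $(k-1)$-set has link a $1$-graph with at least $(\tau(F) + \mu/2)s = (m_1/m + \mu/2)s$ edge-vertices, which is exactly what one needs for the fractional matching bound $\th_0(\spa_{F'}; 1/m) = m_1/m$. That observation is where $\tau(F)$ genuinely enters; your approach does not locate it.

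\textbf{Divisibility.} You try to embed two copies $B_u, B_v$ of $B$ in $G'$ via ``codegree-driven greedy selection,'' sharing all vertices except that $u$ and $v$ swap in the first part. This is not justified: extending a complete $k$-partite structure to a new vertex requires that vertex to lie in the common neighbourhood of $\Theta(b^{k-1})$ prescribed $(k-1)$-tuples, and a union bound from codegree $(\tau(F)+\mu/2)s$ fails badly for any $b\geq 2$ since $\tau(F)\leq 1/k$. (Finding \emph{some} copy of $B$ is possible via supersaturation, but insisting it pass through $u$ and through $v$ in the same controlled way is a much stronger demand.) The paper avoids all of this by working \emph{abstractly}: it observes that under the codegree condition any two vertices of $G'$ lie on a common edge (so $G'$ is $k$-connected), and that for any $k$-clique $K$ (i.e.\ any edge), the lattice $\cL(H(F;K))$ already contains all transferrals among $V(K)$ because $\cL(H(F;K))$ is generated by colour-class-size vectors of proper $k$-colourings of $F$ and \cref{lem:downspin} shows $(b+1,b-1,b,\dots,b)$ and its permutations lie there. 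Connectivity then propagates transferrals globally (\cref{lem:connectivity-to-completeness}, \cref{pro:kruskal-katona}). No embedding of $B$ into $G'$ is ever needed.

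\textbf{Cover.} You claim every vertex lies in a greedily built one-to-one copy of $F$, but under codegree $(\tau(F)+\mu/2)s$ a greedy embedding of an injective copy of a fixed $k$-partite $F$ through a prescribed vertex is not guaranteed (same union-bound obstruction as above). The property you actually need is weaker: a homomorphism covering $v$ exactly once, which need not be injective. Since $H(F;G')$ allows repeated vertices, one can fold all of $F$ onto two overlapping edges: take $e = \{v, u_2, \dots, u_k\}$ and $e' = \{v', u_2, \dots, u_k\}$ in $E(G')$ (both exist by codegree), send one vertex of the first part of $F$ to $v$, the other vertices of that part to $v'$, and the $i$-th part to $u_i$. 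This is why the paper calls $\th_{k-1}^{}(\cov_F) = 0$ ``trivial''; the fissility / repeated-vertex flexibility of the framework is doing the work here, and your one-to-one claim discards exactly that flexibility.
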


Beyond codegree conditions, Lo and Markström~\cite{LM15} provided several general estimates for upper bounds.
Moreover, Han, Zang and Zhao~\cite{HZZ17} determined the minimum vertex degree threshold for complete $3$-partite $3$-uniform tiles.

\subsubsection*{Results}

\cref{thm:minimum-degree-thresholds} allows us to recover the main results from the area and establish a few new bounds.
In particular, we recover \cref{thm:kuhn-osthus} by determining the thresholds of the properties $\SpaF{F}$, $\div_F$ and $\cov_F$, which turns out to be a quick exercise (proved in \cref{sec:dirac-komlos-kuhn-osthus}).

\begin{proposition}\label{pro:komlos-kuhn-osthus-thresholds}
	Every $2$-graph $F$ with $\gcd(F) = 1$ and $\chi(F) \geq 3$ satisfies   
	\begin{align*}
		\th_{1}^{}(\SpaF{F})  &=  1-1/\chi_{\crit}(F)\,, \\
		\th_{1}^{}(\div_F)   &= 1- {1}/{(\chi(F)-1)}\,,\\
		\th_{1}^{}(\cov_F)  &=  1- {1}/{(\chi(F)-1)}\,.
	\end{align*}
\end{proposition}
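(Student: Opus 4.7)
The plan is to verify each of the three equalities via matching upper and lower bounds. The upper bounds rely on \cref{thm:komlos}, the Downspin Lemma \cref{lem:downspin}, and Erdős--Stone--Bollobás; the lower bounds come from classical extremal constructions.

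\textbf{Space threshold.} For the upper bound, I apply \cref{thm:komlos} to $G$ with $\delta_1(G) \geq (1-1/\chi_{\crit}(F)+\mu)n$ to obtain an $F$-tiling missing at most $\eta n$ vertices; its indicator function is an almost-perfect fractional matching of $H(F;G)$, and a small redistribution through the many $F$-copies passing through each uncovered vertex (afforded by the $\mu$-slack in the degree) upgrades this to a perfect fractional matching. For the lower bound, take $V(G) = A \cup B$ with $|B| = \lfloor n/\chi_{\crit}(F)\rfloor + 1$, $B$ independent and every other edge present; every copy of $F$ meets $B$ in an independent set of $F$ of size at most $(1-\tau(F))m/(\chi(F)-1)$, so any fractional $F$-matching assigns weight at most $n/\chi_{\crit}(F) < |B|$ to $B$.

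\textbf{Cover threshold.} For the upper bound, I show that $\delta_1(G) \geq (1-1/(\chi-1)+\mu)n$ places every vertex $v$ into a copy of $K_\chi(t)$ for $t = v(F)$: the induced subgraph $G[N(v)]$ has relative minimum degree just above the Erdős--Stone threshold $(\chi-3)/(\chi-2)$ and therefore contains $K_{\chi-1}(t)$; combined with $v$ (and a further set of size $t$ chosen in the appropriate common neighbourhood) this produces $K_\chi(t) \ni v$, hence a copy of $F$ through $v$. For the lower bound (shared with divisibility below), take $V(G) = A \cup B$ with $|A| = \lfloor n/(\chi-1)\rfloor + 1$, $A$ independent and every other edge present: a direct check shows some vertex of $A$ lies in no copy of $F$.

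\textbf{Divisibility threshold.} For the upper bound, fix $u, v \in V(G)$. The cover argument embeds each of $u, v$ in a $K_\chi(t)$-subgraph $K^u, K^v \subset G$. Within any fixed $K_\chi(t) \subset G$, two complete $\chi$-partite configurations with part sizes $(b+1, b-1, b, \ldots, b)$ for $b$ divisible by $v(F)$, differing only in which of two prescribed vertices occupies the $(b+1)$-part, admit perfect $F$-tilings by \cref{lem:downspin}; their indicator-vector difference equals the transferral between those two vertices, so any two vertices of a single $K_\chi(t) \subset G$ lie in the same transferral class. The abundance of $K_\chi(t)$-subgraphs through any vertex (at least $\Omega_\mu(n^{(\chi-1)t})$ copies) guarantees that $K^u$ and $K^v$ either share a vertex or are linked by an intermediate $K_\chi(t)$; chaining transferrals along this overlap yields $\vn_v - \vn_u \in \cL(H(F;G))$. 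The lower bound reuses the cover construction: every copy of $F$ in $G$ meets $A$ in an independent set of $F$ of a fixed size, placing $\cL(H(F;G))$ into a proper sublattice of $\INTS^{V(G)}$ and obstructing transferrals between $A$ and $B$.

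\textbf{Main obstacle.} The crux is the divisibility upper bound. The bookkeeping within a fixed $K_\chi(t) \subset G$---arranging part sizes of the Downspin-compatible shape $(b+1, b-1, b, \ldots, b)$ with $b$ divisible by $v(F)$, while ensuring that each of two prescribed vertices can be promoted to the $(b+1)$-part in a separate configuration sharing all other vertices---requires care, as does the short-chain connectivity of $K_\chi(t)$-subgraphs when $\chi = 3$, where pairwise common neighbourhoods are narrowest.
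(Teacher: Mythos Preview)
Your lower-bound construction for the cover and divisibility thresholds does not work. In your graph $G$ (with $A$ independent, $|A| = \lfloor n/(\chi-1)\rfloor + 1$, all other edges present), both $\cov_F$ and $\div_F$ actually hold: since $B = V(G)\setminus A$ is a clique of order much larger than $v(F)$, for any $a\in A$ one can map a single vertex of $F$ to $a$ and the rest injectively into $B$, giving $\phi\in\hom{F}{G}$ with $\multi(a,\phi)=1$. Varying this $\phi$ (and using embeddings entirely into $B$) also produces all transferrals, so the lattice is complete. You are tacitly reasoning about injective copies, but $\cov$ and $\div$ are defined via arbitrary homomorphisms in $H(F;G)$. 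The correct lower-bound construction is the Tur\'an graph $T_{\chi-1}(n)$: there $\chi(G)<\chi(F)$ forces $\hom{F}{G}=\emptyset$, so $H(F;G)$ is edgeless and both properties fail trivially while $\delta_1(G)=(1-1/(\chi-1))n - O(1)$.

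The same slip affects your upper bounds. For $\cov_F$, the paper does not look for a $K_\chi(t)$ through $v$ (your ``further set in the common neighbourhood'' step is unjustified --- that common neighbourhood can consist of $v$ alone); instead it finds two overlapping $K_\chi$-cliques $\{v,v_2,\dots,v_\chi\}$ and $\{v_2,\dots,v_{\chi+1}\}$ and sends the colour class of a chosen $w\in V(F)$ to $\{v,v_{\chi+1}\}$ with only $w$ going to $v$, exploiting precisely the non-injectivity of homomorphisms. For $\div_F$ the paper likewise works at the level of single $K_\chi$-cliques via \cref{lem:connectivity-to-completeness,cor:completeness-connectivity}. For $\SpaF{F}$ the paper gives a self-contained inductive argument through link graphs (\cref{lem:fractional-komlos}) rather than going via \cref{thm:komlos}; your redistribution from an almost-perfect integral tiling to a perfect fractional one is not routine, since \cref{thm:komlos} leaves $\mu n$ vertices uncovered where $\mu$ is the degree slack, and you have not explained how to drive this deficit to zero for a fixed $G$.
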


We also note that $\th_{1}^{}(\div_F)=1-{1}/{\chi(F)}$ when $\gcd(F) \geq 2$.
Moreover, for $\chi(F)=2$, we have $\th_{1}^{}(\div_F)=0$ if the greatest common divisor of $\{v(C)\colon C \subset F \text{ connected}\}$ is~$1$ and $\th_{1}^{}(\div_F)=1/2$ otherwise.
In conjunction with \cref{thm:minimum-degree-thresholds}, this recovers the complete picture for the graph case~\cite{KO09}.
The above observations can be easily derived from the tools developed in \cref{sec:completeness}.
But we omit the details.
Lastly, we remark that one can also recover \cref{thm:komlos} individually following the proof of \cref{thm:minimum-degree-thresholds} and using \cref{prp:almost-perfect-matching} instead of \cref{thm:framework}.

Turning to the hypergraph setting, we also recover \cref{thm:mycroft} by showing the following simple result (proved in \cref{sec:mycroft-thresholds}).

\begin{proposition}\label{pro:mycroft-thresholds}
	For $k \geq 3$, let $F$ be a complete $k$-partite $k$-graph with $\gcd(F)=1$.
	Then
		$\th_{k-1}^{}(\SpaF{F})  =  \tau(F),$  
		$\th_{k-1}^{}(\div_F)   = 0$ and 
		$\th_{k-1}^{}(\cov_F)  =  0$.
\end{proposition}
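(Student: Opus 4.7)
We verify the three threshold equalities separately. The lower bounds $\th_{k-1}(\div_F),\th_{k-1}(\cov_F)\geq 0$ are trivial, and $\th_{k-1}(\SpaF{F})\geq\tau(F)$ comes from the standard space barrier: partition $V(G)=A\cup B$ with $|A|=\lfloor\tau(F)n\rfloor-1$ and make every $k$-tuple meeting $A$ an edge. Then $\delta_{k-1}(G)=|A|$, yet since $F$ is complete $k$-partite, any $F$-copy must place a complete part (of size at least $\tau(F)v(F)$) inside $A$, so any perfect fractional $F$-tiling would force weight $\tau(F)n$ on $A$, exceeding $|A|$.

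For the upper bounds, fix $\mu>0$ and $G$ on $n$ vertices with the relevant codegree condition. The \emph{cover} bound follows quickly: when $\delta_{k-1}(G)\geq\mu(n-k+1)$, the link of any vertex $v$ is a $(k-1)$-graph of positive density, so \cref{thm:erd64} yields a complete $(k-1)$-partite subgraph of the link with large parts, giving an injective $F$-copy through $v$ in which $v$ has multiplicity one.

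For the \emph{divisibility} bound under $\delta_{k-1}(G)\geq\mu(n-k+1)$, we prove $\vn_u-\vn_v\in\cL(H(F;G))$ for all $u,v\in V(G)$. The plan is: \cref{thm:erd64} provides, through any fixed $u$, many complete $k$-partite subgraphs of $G$ with parts of sizes $(b+1,b-1,b,\dots,b)$ and $u$ in the first part (for large fixed $b$). The \fr{lem:downspin} supplies a perfect $F$-tiling of each such subgraph, so its vertex-set indicator lies in $\cL(H(F;G))$. Replacing $u$ by a ``swap-compatible'' vertex $w$ in the same position --- compatibility meaning $w$ extends the same key $(k-1)$-neighborhoods used in the chosen subgraph --- gives $\vn_u-\vn_w\in\cL(H(F;G))$; a short counting argument via \cref{thm:erd64} shows that the set of such $w$ forms a majority of $V(G)$. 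Since the swap-neighborhoods of $u$ and $v$ are both majorities, they meet at some $w^{\ast}$, giving $\vn_u-\vn_v=(\vn_u-\vn_{w^{\ast}})+(\vn_{w^{\ast}}-\vn_v)\in\cL(H(F;G))$.

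Finally, for the \emph{space} upper bound under $\delta_{k-1}(G)\geq(\tau(F)+\mu)(n-k+1)$, take a uniformly random balanced $k$-partition $V(G)=U_1\cup\cdots\cup U_k$ with $|U_j|=a_jn/v(F)$, where $a_j=|V_j|$. Standard concentration shows that with positive probability all transversal $(k-1)$-codegrees in $G[U_1,\dots,U_k]$ are at least $(\tau(F)+\mu/2)|U_j|$, and moreover for any prescribed $\rho>0$ each vertex $v\in U_i$ lies in approximately the same number $N_i$ of partite $F$-copies, uniform up to a factor of $1\pm\rho$. Weighting each partite $F$-copy by $(1-\rho)/N_i$ then produces a $\rho$-perfect fractional $F$-tiling. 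Since this holds for every $\rho>0$ and the sets of $\rho$-perfect fractional matchings form a nested decreasing family of nonempty compact polytopes in $[0,1]^{|E(H(F;G))|}$, Cantor's intersection principle delivers a perfect fractional $F$-tiling. The main obstacle I expect is this concentration step: translating the codegree condition on $G$ into uniform control of the partite $F$-copy counts $N_i$ through each vertex, so that the uniform weighting actually achieves a size-$(1-\rho)n/v(F)$ fractional matching rather than merely an almost-perfect one.
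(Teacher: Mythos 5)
Your proposal departs considerably from the paper's argument, and in my reading all three upper-bound steps contain genuine gaps. For \emph{space}, the paper applies \cref{lem:linkgraph-threshold}: after stripping the $k-1$ largest parts from $F$, the residual $F'$ is a $1$-uniform hypergraph on $m_1$ vertices, and the codegree hypothesis says every $(k-1)$-link has at least $(\tau(F)+\mu)n$ singleton edges, which trivially supports a fractional $F'$-tiling of size $n/m$. By contrast, your random-proportional-partition approach hinges on the claim --- which you yourself flag as the obstacle --- that all vertices of $U_i$ lie in roughly the same number of partite $F$-copies up to a $1\pm\rho$ factor. That concentration is simply not available from a codegree hypothesis: it only gives a \emph{lower} bound of order $n^{m-1}$ on per-vertex $F$-copy counts, not uniformity to within a constant close to $1$, so the proposed constant weighting does not equalize the vertex loads and the Cantor compactness step has nothing to act on.

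For \emph{divisibility} and \emph{cover} the paper's arguments are one-liners that your constructions replace by machinery which then isn't justified. Since $k-1\geq 2$, any two vertices $u,v$ extend some $(k-1)$-set containing both to an edge, so $K_k(G)$ is complete; \cref{lem:connectivity-to-completeness} (applied via \cref{cor:completeness-connectivity,pro:kruskal-katona}) then gives lattice completeness immediately --- there is no need to build actual complete $k$-partite subgraphs inside $G$. Your swap argument instead requires that, for each $u$, a \emph{majority} of $w\in V(G)$ share with $u$ some complete $k$-partite subgraph in the same part. Rooted applications of \cref{thm:erd64} give only $\Omega(\beta n)$ such $w$ for a constant $\beta$ depending on $\mu$ and $b$ which may be tiny, so the intersection $S_u\cap S_v$ of two swap-neighbourhoods is not forced; and indeed the swap relation you construct is an equivalence, whose classes a priori might all be small. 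For \emph{cover}, the paper's observation is that a single edge $e\ni v$ already gives a homomorphism covering $v$ once: map $V_i$ to the $i$-th vertex of $e\setminus\{v\}$ for $i\geq 2$, and split $V_1$ over $v$ and a second extension of $e\setminus\{v\}$, which exists since $\delta_{k-1}(G)\geq \mu(n-k+1)\geq 2$. Your attempt to build an \emph{injective} $F$-copy from a complete $(k-1)$-partite subgraph of $L(v)$ controls only the parts $V_2,\dots,V_k$; it leaves the placement of the remaining $m_1-1$ vertices of $V_1$ unresolved, since those too must be common neighbours of all $(k-1)$-transversals of the chosen parts, which \cref{thm:erd64} in the link does not provide.
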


Lastly, we prove the following new result, which generalises the work of Mycroft to $(k-2)$-degree thresholds of complete $k$-partite $k$-graphs $F$ with $\gcd(F)=1$.
The case of $k=3$ was already shown by Han, Zang and Zhao~\cite{HZZ17}.
We say that a $k$-graph $F$ is a \emph{cone} if it has a proper $k$-colouring with a singleton colour class.

\begin{theorem}\label{thm:k-partite}
	For $k \geq 3$, let $F$ be a complete $k$-partite $k$-graph with $\gcd(F)=1$.
	Suppose that $\tau_1 \leq \tau_2$ are the relative sizes of the two smallest parts of $F$.
	Then
	\begin{align*}
		\th_{k-2}^{}(\SpaF{F}) &=  \max \left \{ 1-\left(1- \tau_1 \right)^{2},\, \left( {\tau_1 + \tau_2} \right)^2 \right\};
		\\
	\th_{k-2}^{}(\div_F)  &= \begin{cases}
				 {1}/{4} & \text{if $k = 3$\,,} \\0 & \text{if $k\geq 4$}\,;
			\end{cases}
		\\	
		\th_{k-2}^{}(\cov_F) &\leq \begin{cases}
				0 & \text{if $F$ is a cone,} \\ 2(\sqrt{2}-1)^2 \approx 0.34 & \text{otherwise}.
			\end{cases}
	\end{align*}
\end{theorem}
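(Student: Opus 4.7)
The plan is to apply \cref{cor:minimum-degree-thresholds} to decompose the claimed threshold into the three independent thresholds $\th_{k-2}(\SpaF F)$, $\th_{k-2}(\div_F)$ and $\th_{k-2}(\cov_F)$, and to verify each via matching lower-bound constructions and upper-bound arguments. The three upper bounds are conceptually separate and can be executed in parallel, while the lower bounds are all standard extremal examples whose minimum $(k-2)$-degree is computed by direct counting.

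For $\th_{k-2}(\SpaF F)$, two lower-bound constructions realise the two terms in the maximum. The first uses a subset $B\subset V(G)$ of size slightly below $\tau_1 n$ around which $G$ excludes those $k$-sets that would prevent some colour class of $F$ of relative size $\tau_1$ from landing in $B$, yielding degree $(1-(1-\tau_1)^2)\binom{n-k+2}{2}$. The second takes $|B|$ slightly below $(\tau_1+\tau_2)n$ and excludes $k$-sets that block the two smallest colour classes, yielding $(\tau_1+\tau_2)^2\binom{n-k+2}{2}$. For the upper bound I would use LP duality on $H(F;G)$: absence of a perfect fractional matching produces a fractional vertex cover $\omega\colon V(G)\to\REALS_{\ge 0}$ of total weight less than $n/v(F)$, and thresholding $\omega$ at the two natural cutoffs determined by $\tau_1$ and $\tau_1+\tau_2$ produces sets that must obey one of the two structural profiles above, contradicting the degree hypothesis.

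For $\th_{k-2}(\div_F)$ with $k\ge 4$, the slack in the $(k-2)$-degree permits a supersaturation argument via \cref{thm:erd64}: for any $u,v\in V(G)$, one locates two $F$-copies agreeing outside a single $u \leftrightarrow v$ swap, showing $\vn_v-\vn_u\in\cL(H(F;G))$ from any positive density. For $k=3$, the value $1/4$ is witnessed by the parity construction $V=V_1\cup V_2$, $|V_i|=n/2$, keeping only triples $e$ with $|e\cap V_1|$ even; the resulting lattice has index~$2$, blocking $\gcd(F)=1$ tilings from equalising the two parts, and the matching upper bound is a short completeness argument producing enough $F$-copies to generate all transferrals. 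For $\th_{k-2}(\cov_F)$, the cone case is immediate because the singleton colour class lets the target play the solo role from any positive density. If $F$ is not a cone, the extremal construction carefully places a distinguished vertex $v$ so that $v$'s link is concentrated in a subset $S$ of size $\alpha n$, preventing $v$ from ever being the unique preimage of any $F$-vertex under a homomorphism; optimising $\alpha$ against the degree penalty at $v$ yields the extremal value $\alpha=\sqrt 2-1$ and hence threshold $2(\sqrt 2-1)^2$. The upper bound is a direct greedy extension: any vertex of large enough $(k-2)$-link can be placed as a solo preimage in some $F$-copy, again via \cref{thm:erd64}.

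The main obstacle will be the upper bound for the space threshold, where the LP-dual certificate must be classified into exactly the two structural regimes matched by the two lower-bound constructions; the divisibility and cover upper bounds, by contrast, reduce to routine extension arguments built on \cref{thm:erd64}.
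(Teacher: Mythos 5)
Your overall decomposition via \cref{cor:minimum-degree-thresholds} is correct and matches the paper's strategy, but several of the individual arguments have genuine gaps.

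The most serious problem is your divisibility lower bound for $k=3$. The parity construction you propose --- keeping only triples $e$ with $|e\cap V_1|$ even --- is not a divisibility barrier for $F$ with $\gcd(F)=1$. Indeed, its auxiliary graph $K_3(G)$ is connected: for $u,u'\in V_1$ and $w\in V_2$ the triple $\{u,u',w\}$ is an edge (even intersection with $V_1$), and any triple inside $V_2$ is an edge; hence \cref{lem:connectivity-to-completeness} applies and $H(F;G)\in\div$. Moreover, even if it were a barrier, its minimum vertex degree is roughly $\tfrac12\binom{n-1}{2}$, not $\tfrac14\binom{n-1}{2}$, so it could not witness the claimed threshold. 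The correct lower-bound construction (used in the paper) is two disjoint cliques on sets of sizes $\lfloor n/2\rfloor+1$ and $n-\lfloor n/2\rfloor-1$, whose minimum vertex degree is $\approx 2^{-k+1}\binom{n-1}{k-1}=\tfrac14\binom{n-1}{2}$ for $k=3$, and which genuinely disconnects the $k$-clique graph, hence is a divisibility barrier. For the matching upper bound, the paper does not argue directly about generating transferrals from $F$-copies; it shows via Kruskal--Katona that any $G$ with $\delta_1(G)\geq(2^{-k+1}+\mu)\binom{n-1}{k-1}$ is $k$-connected (\cref{pro:kruskal-katona}) and then invokes \cref{cor:completeness-connectivity}.

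For the space upper bound, your sketch (LP duality on $H(F;G)$ and ``thresholding'' the dual cover) skips the key structural step. The paper first reduces the $(k-2)$-degree space problem to a bipartite fractional tiling problem for the 2-graph $F'=F\setminus U$, where $U$ consists of the $k-2$ largest parts of $F$; this is \cref{lem:linkgraph-threshold} (proved via LP duality, but crucially on the link graph, \cref{lem:linkgraph}). It then applies Grosu--Hladk\'y's bipartite result \cref{thm:GH} with $F'$ and $\beta=1/m$ to obtain exactly $\max\{1-(1-\tau_1)^2,(\tau_1+\tau_2)^2\}$. Without this link-graph reduction, ``thresholding the dual at two cutoffs'' is not a proof: classifying arbitrary dual covers of $H(F;G)$ directly is precisely what Grosu--Hladk\'y already does in the cleaner bipartite setting, and redoing it in uniformity $m$ with part-size constraints from a general $k$-partite $F$ is considerably harder than you acknowledge.

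Finally, you also underestimate the cover upper bound. It is not a greedy extension via \cref{thm:erd64}: the paper's \cref{lem:linkage-upper} sets up two nearly-disjoint link 2-graphs $L(S)$, $L(S')$ around a bad vertex, partitions $L(S)$ by the maximum-degree neighbourhood $N$, and derives two competing lower bounds on $|N|$ that feed into a nontrivial optimisation (\cref{fac:optimisation}) yielding $2(\sqrt2-1)^2$. Your extremal construction (``concentrate $v$'s link in a set of size $\alpha n$'') is also not quite what is needed; the paper's \cref{constr:local} is more delicate, with a four-part partition $\{A_1,A_2,B,T\}$ and $|A_1|=|A_2|\approx(\sqrt2-1)n$.
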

 
When $F$ is not a cone, the bound of $\th_{k-2}^{}(\cov_F)$ is matched from below by a construction when $k=3$.
It remains an open problem to determine $\th_{k-2}^{}(\cov_F)$ for $k\geq 4$.

The proofs of the upper bounds of \cref{pro:komlos-kuhn-osthus-thresholds,pro:mycroft-thresholds,thm:k-partite} can be found in \cref{sec:dirac-hyper-proofs}.
The constructions for the lower bounds, derived from the work of Han, Zang and Zhao~\cite{HZZ17}, are in \cref{sec:constructions}.

\subsection{Transversal tilings} \label{sec:transversal}

In the ordinary embedding problem any combination of edges can be used to embed the guest graph.
However, in many applications the host graph may contain certain conflicting edges, and our task is to find a conflict-free embedding (using at most one edge of any conflicting pair).
An interesting special case of this problem arises when the conflicts between edges form an equivalence relation.
If we identify each equivalence class with a \emph{colour}, the host graph is edge-coloured and we have to find a \emph{rainbow} copy of the guest graph, meaning that all its edges have distinct colours.
Apart from being a natural question on its own, this encodes many combinatorial problems such as the Ryser--Brualdi--Stein Conjecture on Latin squares~\cite{KPS+22} and Ringel's tree-packing conjecture~\cite{MPS21}.

In the following, we focus on a setting introduced by Aharoni et al.~\cite{ADH+20}.
Here each colour $1,\dots,{\ell}$ is represented by a hypergraph $G_1,\dots,G_{{\ell}}$ on the same vertex set.
The goal is to find a \emph{transversal} ${\ell}$-edge guest graph $J \subset G_1 \cup \dots \cup G_{\ell}$ whose edges $E(J) = \{e_1,\dots,e_{\ell}\}$ satisfy $e_j \in G_j$ for every $1 \leq j \leq {\ell}$.
For instance, Joos and Kim~\cite{JK20} showed that every choice of $n$-vertex graphs $G_1,\dots,G_{n/2}$ with $\deg(G_j) \geq n/2$  and $n\geq3$ even is guaranteed to contain a transversal perfect matching.
The research on spanning connected transversal structures has undergone a surge of activity in recent years, see for instance the recent survey of Sun, Wang and Wei~\cite{SWW24}.

We investigate these  questions for tilings.
The \emph{minimum $d$-degree threshold for transversal perfect $F$-tilings} of a $k$-graph $F$, written \gls{thransversal-threshold}, is defined as the infimum over all $\delta \in [0,1]$ such that for all $\mu>0$ and $n$ large enough, every collection of $k$-graphs $(G_1,\dots,G_{e(F)n})$ on the same $ v(F)n$ vertices with $\delta_d(G_i) \geq (\delta +\mu) \binom{v(F)n-d}{k-d}$ for every $1\leq i \leq e(F)n$ has a transversal perfect $F$-tiling.
Montgomery, M{\"u}yesser and Pehova~\cite{MMP22} determined this threshold for graphs.
A \emph{bridge} in a graph is an edge whose removal increases the number of connectivity components.

\begin{theorem}[Montgomery--M{\"u}yesser--Pehova]\label{thm:montgomery-muyesser-pehova}
	For every graph $F$, we have 
	\begin{align*}
		\th_1^{}(\rtil_F) = 
		\begin{cases}
			\th_1^{}(\til_F) & \text{if $\chi(F) \geq 3$ or $F$ is bridgeless,} \\
			1/2 & \text{if $\chi(F) = 2$ and $F$ has a bridge.}
		\end{cases}
	\end{align*}
\end{theorem}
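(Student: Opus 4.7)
The plan is to encode rainbow $F$-tilings as matchings in a fissile digraph and apply \cref{thm:framework} (in its $\cU$-proportional form via \cref{lem:absorption-partite}). First, I would set $V$ equal to the common vertex set of the $G_j$'s, $C = [e(F)n]$, $\cU = \{V, C\}$, and $m = v(F) + e(F)$, then define an $m$-digraph $H$ on $V \sqcup C$ whose edges are tuples $(u_1, \dots, u_{v(F)}, c_1, \dots, c_{e(F)})$ such that $w_i \mapsto u_i$ embeds a labelled copy of $F$ with the image of the $j$-th edge of $F$ belonging to $G_{c_j}$. A direct variant of \cref{obs:hom-graph-fissile} yields fissility of $H$, which is $\cU$-proportional, and matchings in $H$ biject with rainbow partial $F$-tilings of $V$ using distinct colours.

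Next, I would note that by \cref{lem:grabbing-minimum-degree} the minimum degree hypothesis on each $G_j$ transfers to typical $\cU$-proportional induced subgraphs of $H$. It then remains to verify, for such $H[S]$, the three properties Space, $\cU$-Divisibility, and Cover. Space will follow from \cref{thm:komlos} applied colour by colour: under the relevant degree threshold each $G_j[S \cap V]$ admits an almost-perfect $F$-tiling, and averaging one such tiling per colour produces a $\rho$-perfect fractional matching in $H[S]$; the bridgeless bipartite case is analogous with threshold $1/2$. Cover is routine: each vertex of $V \cap S$ lies in $F$-copies of any desired colour palette, and each colour $c \in C \cap S$ produces a rainbow $F$-copy by the minimum degree of $G_c$.

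The technical heart of the argument is $\cU$-Divisibility, i.e., showing that $\cL(H[S])$ is $\cU$-complete. I would handle vertex-transferrals $\vn_v - \vn_u$ (for $u, v \in S \cap V$) via \cref{lem:downspin} together with $\gcd(F) = 1$, reducing to lattice completeness of $\div_F$ in the non-rainbow setting by averaging over colour palettes. The harder part will be colour-transferrals $\vn_{c'} - \vn_c$: these amount to swapping two colours between two rainbow $F$-copies without changing any vertex, and such a swap is available exactly when $\chi(F) \geq 3$ (an extra colour class permits rerouting) or $F$ is bridgeless (every edge lies on a cycle, enabling an alternating colour swap along it).

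For the lower bound, $\th_1(\rtil_F) \geq \th_1(\til_F)$ is obtained by taking all $G_j$ equal to the extremal construction for $\th_1(\til_F)$, which still precludes a rainbow tiling. In the bridged bipartite case I would construct each $G_j$ to respect a common balanced bipartition of $V$ and to have minimum degree just below $v(F)n/2$, chosen so that every copy of $F$ must orient its bridge in a fixed way; a counting argument on the two component sizes of $F$ minus its bridge then obstructs a perfect rainbow $F$-tiling. The main obstacle throughout is the colour-transferral step of Divisibility, which is where the bridge plus bipartite condition creates a genuine lattice obstruction and explains the jump to $1/2$.
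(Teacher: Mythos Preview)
Your overall strategy matches the paper's: encode rainbow $F$-tilings as perfect matchings in a $\cU$-proportional fissile $(v(F)+e(F))$-digraph $H$ on $V\sqcup C$ with $\cU=\{V,C\}$, verify fissility as a variant of \cref{obs:hom-graph-fissile}, and feed this into the partite framework (\cref{lem:almost-perfect-matching-partite}, \cref{lem:absorption-partite}). Your Space and Cover sketches align with the corresponding parts of \cref{pro:rainbow-thresholds}.

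The gap is in the colour-transferral step of Divisibility. Your mechanism --- ``swapping two colours between two rainbow $F$-copies'' with the swap enabled by ``an extra colour class'' or ``alternating along a cycle'' --- is vague, and by speaking of \emph{rainbow} copies you are implicitly restricting to one-to-one edges of $H$, discarding the repeated-colour edges that actually do the work. The paper's device is to exploit precisely those: a \emph{mixed homomorphism} from $F$ to $(G_c,G_{c''})$ yields an edge of $H$ in which colour $c$ appears once and colour $c''$ appears $e(F)-1$ times; subtracting two such edges (for $c$ and $c'$, common auxiliary $c''$) and correcting with the already-established vertex-transferrals gives $\vn_c-\vn_{c'}\in\cL(H[S])$. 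The paper packages the existence of such homomorphisms as a threshold $\th_d(\rmix_F)$ and proves the clean reduction $\th_d(\rtil_F)=\max\{\th_d(\til_F),\th_d(\rmix_F)\}$ (\cref{thm:rainbow-tiling} via \cref{cor:rainbow} and \cref{pro:rainbow-thresholds}); \cref{thm:montgomery-muyesser-pehova} then follows from the graph computation in \cref{obs:rainbow}. Relatedly, your appeal to \cref{lem:downspin} and ``$\gcd(F)=1$'' for vertex-transferrals is misplaced: those are tools for \emph{establishing} the uncoloured property $\div_F$ for particular $F$, whereas the relevant step here (and the paper's) is simply to work above $\th_d(\div_F)$ and import completeness of each single-colour lattice $\cL(H(F;G_j[S]))$ into $\cL(H[S])$ via the monochromatic edges $(v_1,\dots,v_m,j,\dots,j)$ of $H$.

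Your lower-bound sketch for the bipartite-with-bridge case also departs from the paper's route. There the lower bound comes through the general identity $\th_d(\rtil_F)\geq\max\{\th_d(\til_F),\th_d(\rmix_F)\}$: one exhibits either the usual uncoloured extremal construction, or a pair $(G_1,G_2)$ admitting no mixed homomorphism, which is a cover barrier on the colour side (see the discussion preceding \cref{thm:rainbow-tiling} and the proof of \cref{obs:rainbow}). A common-bipartition counting argument of the kind you propose does not obviously separate the rainbow threshold from the uncoloured one.
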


Cheng, Han, Wang and Wang~\cite{CHW+23} confirmed this independently for clique-tilings and also showed a corresponding result for matchings in hypergraphs.

\begin{theorem}[Cheng--Han--Wang--Wang]\label{thm:cheng-han-wang-wang}
	We have $\th_d^{}(\rtil_F) =  \th_d^{}(\til_F)$ when $F$ is a $k$-uniform edge.
\end{theorem}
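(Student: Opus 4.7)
The lower bound $\th_d^{}(\rtil_F) \geq \th_d^{}(\til_F)$ is immediate by taking all $G_j$'s equal to an extremal example. For the upper bound, my plan is to encode the rainbow matching problem as an ordinary matching in an auxiliary directed hypergraph and then invoke the partite variants of the framework lemmas. Let $V$ be the common vertex set with $|V|=kn$, and introduce a disjoint colour set $C=[n]$. On $W = V \cup C$ I build a $(k+1)$-digraph $H$ whose edges consist of all $(k+1)!$ orderings of $(v_1,\dots,v_k,j)$ for every $j \in [n]$ and every $\{v_1,\dots,v_k\} \in E(G_j)$. Since all orderings are present, $H$ is fissile, and a perfect matching in $H$ corresponds bijectively to a rainbow perfect matching of the $G_j$'s, because every matching edge uses exactly one colour together with $k$ distinct vertices of $V$ that form a $G_j$-edge.

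Every edge of $H$ has exactly $k$ vertices in $V$ and one in $C$, so the lattice $\cL(H[S])$ can never contain a transferral crossing the partition $\cU=\{V,C\}$. I therefore work within the $\cU$-proportional setting and plan a two-step proof: first I use a partite almost perfect matching argument to cover most of $W$, and then I apply the partite absorption \cref{lem:absorption-partite} to clean up the leftover. The hypotheses ask, for a typical $\cU$-proportional $s$-subset $S \subset W$ (so $|V \cap S| = ks/(k+1)$ and $|C \cap S| = s/(k+1)$), that $H[S]\in \spa \cap \pdiv(\cU) \cap \cov$. The cover property is immediate from the minimum $d$-degree condition on each $G_j$. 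For $\cU$-completeness of $\cL(H[S])$, transferrals $\vn_u-\vn_{u'}$ with $u,u'\in V\cap S$ follow from two $G_j$-edges sharing $k-1$ vertices in $V\cap S$ (a standard codegree argument), and transferrals $\vn_j-\vn_{j'}$ with $j,j'\in C\cap S$ follow by picking any edges of $G_j$ and $G_{j'}$ inside $V\cap S$ and cancelling the $V$-part using the transferrals already established. For the space property, I take a fractional perfect matching $\omega_j$ in each $G_j[V\cap S]$ (available since $G_j$ meets the threshold for a perfect matching, and by \cref{lem:grabbing-minimum-degree} its minimum degree is inherited by typical induced subgraphs) and weight each $H$-edge corresponding to a pair $(e,j)$ by $\omega_j(e)/|C\cap S|$; this gives a perfect fractional matching of $H[S]$. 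A union bound over the at most $s$ colours in $C\cap S$ handles these simultaneous requirements with failure probability $\exp(-\Omega(s))$.

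The main obstacle is that \cref{lem:almost-perfect-matching-partite} is stated for parts of equal size, whereas $|V|=kn$ and $|C|=n$ are unequal. I resolve this by a random equipartition $V=V_1\cup\dots\cup V_k$ into parts of size $n$ and by restricting attention to edges of $H$ whose $V$-vertices meet the classes $V_1,\dots,V_k$ with exactly one vertex each; a second-moment calculation combined with the Grabbing Lemma shows that after this restriction the partite minimum degree condition on the property graph is still satisfied, placing us inside the scope of \cref{lem:almost-perfect-matching-partite} on balanced $(k+1)$-partite structure with parts of size $n$. Combining the almost perfect matching obtained this way with the absorbing set supplied by \cref{lem:absorption-partite} produces a perfect matching of $H$, which translates back to the desired rainbow perfect matching.
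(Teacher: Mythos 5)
Your overall strategy---encode rainbow matchings as perfect matchings in a $(k+1)$-digraph on $V\cup C$, set $\cU=\{V,C\}$, verify $\spa$, $\pdiv(\cU)$, $\cov$ for typical $\cU$-proportional $s$-sets, then run the partite almost-perfect-matching and partite absorption lemmas---is precisely what the paper does for its general rainbow construction $H(F;\cG)$ (Section~6), and for $F$ a single edge the two auxiliary digraphs coincide up to bookkeeping on orderings. (The paper's own proof of this theorem is in fact a one-line reduction: it proves the general \cref{thm:rainbow-tiling}, $\th_d^{}(\rtil_F)=\max\{\th_d^{}(\til_F),\th_d^{}(\rmix_F)\}$, and observes that $\th_d^{}(\rmix_F)\le\th_d^{}(\til_F)$ for an edge because $\th_d^{}(\mat_k)\ge 1/2$.) So your route is the same in spirit, just inlined for the matching case.

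However, your resolution of the unequal-part-size obstacle contains a genuine gap. You propose to apply \cref{lem:almost-perfect-matching-partite} with the $(k+1)$-part partition $\cV=\{V_1,\dots,V_k,C\}$, each part of size $n$, i.e.\ with $s=k+1=m$. At that uniformity the property graph $\PG{H}{\SpaF{\rho}}{k+1}$ is not the near-complete $s$-graph delivered by the Grabbing/Booster machinery---it is essentially $H$ itself: an edge $\{v_1,\dots,v_k,j\}$ lies in $P$ iff $\{v_1,\dots,v_k\}\in E(G_j)$. Its $\cV$-partite vertex degree is therefore directly dictated by the degree of the $G_j$'s, and there is no reason it clears the unrelated threshold $\th_1^{}(\pmat_{k+1})$; in particular this cannot recover the sharp codegree bound $1/2$. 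Your own space argument also presupposes a large $s$ (you take fractional perfect matchings of the $G_j[V\cap S]$, which are trivial when $|V\cap S|=k$), so the proposal is internally inconsistent. The paper instead keeps the $\cU$-proportional partition coarse for absorption and, for the almost-perfect-matching step, takes a \emph{random refinement of $\{V\sm A,C\sm A\}$ into $s$ parts of equal size $n''$} with $1/s\ll\mu,1/k$; then $\PG{H}{\SpaF{\rho}}{s}$ has $\delta_1\ge(1-\exp(-\gamma s))\binom{n'-1}{s-1}$ by the Grabbing Lemma, so its $\cV$-partite restriction comfortably exceeds $(\th_1^{}(\pmat_s)+\mu)(n'')^{s-1}$ and \cref{lem:almost-perfect-matching-partite} applies. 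Replacing your $(k+1)$-part split by this refinement into $s\gg k$ parts closes the gap; the rest of your verification of $\spa$, $\pdiv(\cU)$, $\cov$ (including the divisibility argument via shared $(k-1)$-sets and cancellation across the $\{V,C\}$ partition) is correct.
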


\subsubsection*{Results}

We give a common generalisation of these two results.
To motivate this, suppose we want to show that graphs $G_1,\dots,G_{\ell}$ contain a transversal copy of a graph $J$ whenever each~$G_i$ has relative minimum degree at least $\delta$.
Clearly, the parameter $\delta$ has to be large enough to force a {monochromatic} copy of $J \subset G_1$.
Otherwise, we could prevent a transversal copy of $J$ by taking $G_2,\dots,G_{\ell}$ to be copies of $G_1$.
This alone is however not sufficient.
Assume for instance that $J$ can be written as the union of (not necessarily disjoint) copies of some graph $F$.
It is then required that $\delta$ also forces a `{colour covering}' copy of~$F$, that is, a copy of $F$ with one edge of colour $1$ and all other edges of colour $2$.
Indeed, otherwise we can prevent a transversal copy of $J$ by taking $G_1$ as is and $G_3,\dots,G_{\ell}$ to be copies of $G_2$.
Moreover, this can be the bottleneck.
For instance, we have $\th_1^{}(\til_F) = 1/3$ for $F = K_{2,3} \cup K_{2,5}$~\cite{KO09}, while the above leads to a construction that gives $\th_1^{}(\rtil_F) \geq 1/2$. (Take $G_1 = K_{m,m}$ and $G_2,\dots,G_\ell$ as its complement.) 
As it turns out, constructions of this type are covering barriers in a reformulation of the problem in terms of partite hypergraphs (see~\cref{sec:transversal-proof}).
A tempting `meta question' is now whether these are the only two obstacles for the existence of a transversal copy of $J$.
The main result of this section confirms this asymptotically when $J$ is a perfect $F$-tiling.

To formalise this discussion, let $G_1$ and $G_2$ be $k$-graphs on the same vertex set $V$, and let~$F$ be a $k$-graph.
A \emph{colour covering homomorphism from $F$ to $(G_1,G_2)$} is a (not necessarily injective) function ${\phi\colon V(F) \to V}$ with an edge $f \in F$ such that $\phi(f) \in   G_1 $ and $\phi$ is a homomorphism from~$F'$ to~$G_2$, where~$F'$ is obtained from $F$ by deleting $f$ from its edge set.
We define $\gls{colour-cover}$ to be the infimum over all $\delta \in [0,1]$ such that for all $\mu>0$ and $n$ large enough any two $k$-graphs $G_1,G_2$ on the same $n$ vertices with $\delta_d(G_1),\delta_d(G_2) \geq (\delta +\mu) \binom{n-d}{k-d}$ admit a colour covering homomorphism from $F$ to $(G_1,G_2)$.
By the above discussion, $\th_d^{}(\rtil_F)$ is bounded from below by $\th_d^{}(\til_F)$ and $\th_d^{}(\rmix_F)$.
Our main contribution in this setting states that this is in fact tight.

\begin{theorem}\label{thm:transversal-tiling}
	For every $k$-graph $F$ and $1 \leq d < k$, we have
	\begin{equation*}
		\th_d^{}(\rtil_F) = \max\left\{\th_d^{}(\til_F),\,\th_d^{}(\rmix_F)\right\}\,. 
	\end{equation*}
\end{theorem}

The proof of \cref{thm:transversal-tiling} can be found in \cref{sec:transversal-proof}.
We conclude this section by briefly discussing the consequences of this result.

Note that when $F$ is a $k$-uniform edge, then $\th_d^{}(\rmix_F)  = 0$.
This is because for $k$-graphs $G_1,G_2$, every homomorphism from $F$ to $G_1$ is colour covering for $(G_1,G_2)$.
Hence \cref{thm:transversal-tiling} gives $\th_d^{}(\rtil_F) =  \th_d^{}(\til_F)$ as in \cref{thm:cheng-han-wang-wang}.
Similarly, we can also easily recover \cref{thm:montgomery-muyesser-pehova} from \cref{thm:kuhn-osthus} when $\gcd(F)=1$ and $\chi(F) \geq 3$ (see \cref{obs:transversal}).
The cases where $\gcd(F) \geq 2$ or $\chi(F) = 2$ follow similarly as discussed after \cref{pro:komlos-kuhn-osthus-thresholds}, but we omit details.

For hypergraphs, we may draw the following straightforward conclusion.
Let $G$ be a $3$-graph on $n$ vertices.
If $G$ has minimum codegree greater than $(2/3) n$, then every edge of $G$ is contained in a copy of a tetrahedron $F = K_4^{(3)}$.
It follows that $\th_2^{}(\rmix_F) \leq 2/3$.
On the other hand, it is known that $\th_2^{}(\til_F)=3/4$~\cite{KM15,LM15}.
In combination with \cref{thm:transversal-tiling}, this gives $\th_2^{}(\rtil_F)=3/4$.

As an extension of the above, we remark that if for a $k$-graph $F$ a relative minimum $d$-degree~$\delta$ asymptotically guarantees that every edge is contained in a copy of $F$, then $\th_d^{}(\rmix_F) \leq \max\{\delta,\,1/2\}$.
Indeed, given two $k$-graphs $G_1$ and $G_2$ of this degree, we first find a common edge $e \in G_1 \cap G_2$ (using the bound $1/2$) and then extend it to a copy of $F$ in $G_2$ (using the bound $\delta$).
 
Lastly, we highlight an application to transversal Hamilton cycles.
A classic result of Komlós, Sárközy and Szemerédi~\cite{KSS98} concerns optimal degree conditions for powers of Hamilton cycles, which resolves the so-called P\'osa--Seymour Conjecture.
The best known bounds for the analogous problem in hypergraphs are due to Pavez-Signé, Sanhueza-Matamala and Stein~\cite{PSS23}.
Recently, Gupta, Hamann, M{\"u}yesser, Parczyk and Sgueglia~\cite{GHM+23} developed a framework for transversal powers of Hamilton cycles and used this to obtain a transversal analogue of the P\'osa--Seymour Conjecture.
As it turns out, one can combine this framework with \cref{thm:transversal-tiling} to obtain a transversal version of the results of Pavez-Signé et al.
Since this paper focuses on tilings, we just give an outline of the argument.

For $2 \leq k \leq r$, the \emph{$(r-k+1)$st power of a Hamilton cycle} in a $k$-graph $G$ is an ordering of the vertices of $G$ such that every interval of $r$ consecutive vertices induces a clique.
Denote by $\th_{k-1}^r(\text{Ham})$ the \emph{minimum codegree threshold} for containing such a cycle.
It was proved by Pavez-Signé et al.~\cite{PSS23} that $\th_{k-1 }^r(\text{Ham}) \leq  \delta$ for $\delta = 1-\big(\binom{r-1}{k-1} + \binom{r-2}{k-2}\big)^{-1} $.
Let $F$ be a $k$-uniform $r$-clique.
We have $\th_{k-1}^{}(\til_F) \leq \delta$, since $(r-k+1)$st powers of Hamilton cycles contain perfect $r$-clique-tilings (as long as the host graph has order divisible by $r$).
Moreover, $\th_{k-1}^{}(\rmix_F) \leq \delta$, which is a simple generalisation of the above observation that $\th_2^{}(\rmix_F) \leq 2/3$.
By \cref{thm:transversal-tiling}, we thus obtain $\th_{k-1}^{}(\rtil_F) \leq \delta$.
One can then derive a transversal version of the results of Pavez-Signé et al.~\cite{PSS23} by following the steps of Gupta et al.~\cite[Section 5.1]{GHM+23} in their proof of a transversal analogue of the P\'osa--Seymour Conjecture.

\subsection{Ordered graphs}\label{sec:ordered-graphs}

Over the past years, many results from Turán and Ramsey theory have been transferred to (vertex-)ordered graphs~\cite{Tar19}.
The challenge in this setting is to find order-preserving embeddings.
A $k$-graph $F$ is \emph{(vertex) ordered} if its vertices are equipped with a linear ordering.
We typically assume that the vertices of $F$ form an interval in $\NATS$ with the natural ordering of the integers.
For another ordered $k$-graph $G$, a homomorphism $\phi \in \hom{F}{G}$ is \emph{order-preserving} if $\phi(u) \leq \phi(v)$ whenever $u < v$ for $u,v \in V(F)$.
Note that if $x < y < z$ are vertices of $F$, and $x,z$ are mapped to a vertex $v$ of $G$, then $y$ must also be mapped to $v$ under an order-preserving homomorphism.
Put differently, the preimages of vertices of $G$ are intervals.
An \emph{order-preserving copy} of $F$ in $G$ corresponds to an injective order-preserving homomorphism.
Naturally, an \emph{order-preserving $F$-tiling} of $G$ is a tiling of $G$ with order-preserving copies of $F$.
For $1\leq d < k$, we define the minimum $d$-degree threshold $\th_d^{}(\otil^{}_F)$ as in \cref{sec:dirac-hyper}, where the host  structure ranges over all ordered $k$-graphs.

The threshold for an almost perfect order-preserving $F$-tiling was established by Balogh, Li and Treglown~\cite{BLT22} for ($2$-uniform) graphs $F$.
Recently, Freschi and Treglown~\cite{FT22} were able to determine $\th_1^{}(\otil_F)$ for all graphs $F$ (\cref{thm:freschi-treglown}).
To state this result, we introduce some further notation.\footnote{Our exposition differs slightly from that of Freschi and Treglown. Instead of $\chi^<(F)$ and $\chi^<_{\crit}(F)$, they use the symbols $\chi_<(F)$ and $\chi_{\crit}^\ast(F)$, respectively.
Moreover, $F$ is an ordered cone in our terminology if and only if~$F$ does not have a `local barrier' in their terminology.}
Let $F$ be an ordered graph on~${m}$ vertices.
An \emph{interval $r$-colouring} of~$F$ is a partition of $V(F)$ into $r$ intervals $V_1,\dots,V_r$ such that no two vertices belonging to the same interval are adjacent in $F$.
So each $V_i$ consists of consecutive integers. Moreover, we implicitly assume that the vertices in $V_i$ are smaller than those in $V_j$ whenever $i<j$.
The \emph{interval chromatic number} $\chi^<(F)$ of an ordered graph $F$ is the least $r$ such that there exists an interval $r$-colouring of~$F$.

We say that a complete $k$-partite unordered graph $B$ is a \emph{bottle graph} for $F$ (a term introduced by Komlós~\cite{Kom00}), if for every permutation $\sigma \in \cS_k$, there exists a $b$ such that the blow-up $B(b)$ ordered by $\sigma$ contains a perfect $F$-tiling.\footnote{Formally, $B(b)$ is obtained by replacing each vertex of $B$ with a {part} of $b$ new vertices and each edge with a complete bipartite graph between the parts such that the ordering between the vertices of the parts is inherited from an ordering of the parts by $\sigma$.}
The \emph{ordered critical chromatic number} $\chi^<_{\crit} (F)$ of $F$ is defined as $\chi^<_{\crit} (F) = \min \{\chi_{\crit} (B)\colon \text{$B$ is a bottle graph for $F$}\}$.
Note that $\chi^<(F)-1 \leq \chi_{\crit}^<(F)$. 
(To see this note that every ($k$-partite) bottle graph $B$ satisfies $\chi^<(B) = k \geq \chi^<(F)$, since otherwise its blow-ups
contain no ordered copy of~$F$.
Moreover, $\chi_{\crit}(B) \geq k-1 = \chi^<(B)-1$.)
On the other hand and contrary to the unordered setting, we do not always have $\chi_{\crit}^<(F) \leq \chi^< (F)$~\cite{FT22}.

Finally, we say that $F$ is an \emph{ordered cone} if $r = \chi^<(F)\geq2$ and for all distinct $i, j \in \{1,\dots,r+1\}$, there is an interval $(r+1)$-colouring of $F$ with colour classes $V_1,\dots,V_{r+1}$ such that $V_i$ contains a single vertex and there is no edge between $V_i$ and $V_j$.
Given this, we are ready to formulate the main result in this area due to Freschi and Treglown~\cite{FT22}.

\begin{theorem}[Freschi--Treglown]\label{thm:freschi-treglown}
	Let $F$ be an ordered graph with $\chi^<(F) \geq 3$. Then
	\begin{align*}
		\th_{1}^{}(\otil_F^{}) = 
		\begin{cases}
			1 - {1}/{{\chi^<_{\crit}(F)}} & \text{if ${\chi^<_{\crit}(F)} \geq {\chi^<(F)}$,} \\
			1 - {1}/{{\chi^<(F)}} & \text{if ${\chi^<_{\crit}(F)} < {\chi^<(F)}$ and $F$ is not an ordered cone,} \\
			1 - {1}/{{\chi^<_{\crit}(F)}} & \text{if ${\chi^<_{\crit}(F)} < {\chi^<(F)}$ and $F$ is an ordered cone.}
		\end{cases}
	\end{align*}
\end{theorem}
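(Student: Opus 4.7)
The plan is to apply the axiomatic framework of \cref{sec:main-results} to the ordered graph setting by introducing an ordered homomorphism digraph. First I would define the ordered analogue of \cref{def:hom-graph}: for an ordered graph $F$ on vertices $w_1 < \dots < w_m$ and an ordered graph $G$, let $H^<(F;G)$ be the $m$-digraph on $V(G)$ whose edges are the tuples $(\phi(w_1),\dots,\phi(w_m))$ ranging over all order-preserving homomorphisms $\phi\colon F \to G$. A direct analogue of \cref{obs:hom-graph-fissile} shows that $H^<(F;G)$ is fissile, and its perfect matchings correspond to order-preserving perfect $F$-tilings of $G$. Writing $\ospa_F$, $\odiv_F$, $\ocov_F$ for the ordered graph properties capturing whether $H^<(F;G)$ satisfies $\spa$, $\div$, $\cov$ respectively, combining \cref{thm:framework} (or \cref{thm:framework-exceptional} to absorb exceptional interval-boundary vertices) with \cref{lem:grabbing-minimum-degree} yields the decomposition
\[
\th_{1}^{}(\otil_F^{}) \;=\; \max \left\{ \th_{1}^{}(\ospa_F^{}),\ \th_{1}^{}(\odiv_F^{}),\ \th_{1}^{}(\ocov_F^{}) \right\}.
\]

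Next I would evaluate the three thresholds separately. For the space threshold, I expect $\th_{1}^{}(\ospa_F^{}) = 1 - 1/\chi^<_{\crit}(F)$: the lower bound is a space barrier built around a bottlegraph of $F$ realising $\chi^<_{\crit}(F)$, and the upper bound produces a perfect fractional matching in $H^<(F;G)$ by fractionally tiling $G$ with scaled blow-ups of that bottlegraph, using the ordered Komlós-type theorem of~\cite{BLT22} as a black box. For the divisibility threshold, I expect $\th_{1}^{}(\odiv_F^{}) \leq \max\{1 - 1/\chi^<_{\crit}(F),\, 1 - 1/\chi^<(F)\}$, proved by locating ordered $(\chi^<(F)+1)$-coloured complete blow-ups through any given pair of vertices $u,v$ and applying an ordered analogue of the Downspin Lemma (\cref{lem:downspin}) to certify the transferral $\vn_v - \vn_u \in \cL(H^<(F;G))$. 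For the cover threshold, the ordered cone condition is precisely tailored so that every $v \in V(G)$ can be covered by a one-to-one order-preserving homomorphism once the degree exceeds $1 - 1/\chi^<(F)$; this gives $\th_{1}^{}(\ocov_F^{}) = 1 - 1/\chi^<(F)$ when $F$ is not an ordered cone, and $\th_{1}^{}(\ocov_F^{}) \leq 1 - 1/\chi^<_{\crit}(F)$ otherwise.

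Taking the maximum of these three bounds reproduces exactly the three cases of the theorem: when $\chi^<_{\crit}(F) \geq \chi^<(F)$ the space threshold dominates; when $\chi^<_{\crit}(F) < \chi^<(F)$ and $F$ is not an ordered cone the cover threshold $1 - 1/\chi^<(F)$ dominates; and when $\chi^<_{\crit}(F) < \chi^<(F)$ with $F$ an ordered cone both the cover and divisibility obstructions vanish and the space threshold again dominates. The lower-bound constructions are the ones of Freschi and Treglown~\cite{FT22}, each now recognisable as a space, divisibility, or cover barrier within our framework.

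The main obstacle I foresee is the non-cone cover case. The lower bound requires a partition of $V(G)$ into $\chi^<(F)$ intervals inside which some distinguished vertex cannot be the unique image of any vertex of $F$ under an order-preserving homomorphism, and the matching upper bound must unpack the non-cone hypothesis in a local way to repair this for \emph{every} vertex, entailing a case analysis over which interval contains the vertex and which colour class of $F$ is assigned to cover it. Once this is in place, the remaining pieces combine cleanly through the framework to deliver \fr{thm:freschi-treglown}.
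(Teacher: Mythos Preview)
Your overall architecture is exactly the paper's: define $H^{\Ord}(F;G)$, check fissility, obtain the threshold decomposition $\th_1^{}(\otil_F)=\max\{\th_1^{}(\ospa_F),\th_1^{}(\odiv_F),\th_1^{}(\ocov_F)\}$ via \cref{thm:framework-exceptional} and \cref{lem:grabbing-minimum-degree}, and then bound each threshold separately. The space and cover pieces are essentially as in the paper (the paper gives a new regularity-free proof of the space bound rather than citing~\cite{BLT22}, but yours is a legitimate shortcut).

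There is, however, a genuine gap in your divisibility analysis. Your stated method---finding $(\chi^<(F)+1)$-cliques through any pair $u,v$---only works once $\delta_1(G)>(1-1/\chi^<(F))n$, so it yields $\th_1^{}(\odiv_F)\leq 1-1/\chi^<(F)$ and no better. In the third case of the theorem ($\chi^<_{\crit}(F)<\chi^<(F)$ and $F$ an ordered cone) this is fatally weak: the maximum of your three bounds would then be $1-1/\chi^<(F)$, not the claimed $1-1/\chi^<_{\crit}(F)$. Your own summary (``both the cover and divisibility obstructions vanish'') is therefore not supported by the bound you propose to prove.

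What is missing is precisely the step the paper flags as the crucial borrowed insight: the \emph{$0$-flexibility} lemma of Freschi and Treglown (\cref{lem:flexible} here, their Corollary~8.6), asserting that whenever $\chi^<_{\crit}(F)<\chi^<(F)$ the graph $F$ admits, for each $i\in[\chi^<(F)-1]$, an interval $\chi^<(F)$-colouring in which one vertex can slide from class $i$ to class $i{+}1$. With this in hand one produces transferrals inside ordinary $\chi^<(F)$-cliques rather than $(\chi^<(F)+1)$-cliques (\cref{lem:connectivity-to-completeness-connected}), and the divisibility bound drops to $1-1/(\chi^<(F)-1)\leq 1-1/\chi^<_{\crit}(F)$, which is what the third case requires. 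Incidentally, the paper identifies this divisibility step---not the non-cone cover analysis you single out---as the one non-routine ingredient.
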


\subsubsection*{Results}

Our main contribution characterises the barriers for perfect tilings in the ordered setting analogous to \cref{thm:minimum-degree-thresholds}.
From this we derive a short proof of \cref{thm:freschi-treglown}.
To connect the vertex-ordered setting with our main result, we introduce the corresponding auxiliary digraph.

\begin{definition}[Vertex-ordered setting]\label{def:hom-graph-ordered}
	Given ordered $k$-graphs $F$ and $G$ such that $V(F)=\{1,\dots,m\}$, let \gls{digraph-vertex-ordered} be the $m$-digraph on vertex set $V(G)$ with an edge $(\phi(1),\dots,\phi(m))$ for every order-preserving $\phi \in \hom{F}{G}$.
\end{definition}

For an ordered $k$-graph $F$ and $1\leq d < k$, let $\th_d^{}(\ospa^{}_F)$, $\th_d^{}(\odiv^{}_F)$ and $\th_d^{}(\ocov^{}_F)$ be the infimum over all $\delta \in [0,1]$ such that for all $\mu>0$ and $n$ large enough, every $n$-vertex ordered $k$-graph~$G$ with $\delta_d(G) \geq (\delta +\mu) \binom{n-d}{k-d}$ satisfies $H^{\Ord}(F;G) \in \spa_{\mu}^{}$, $\div$ and $\cov$, respectively.\footnote{In fact, our proof works with $\spa_{0}$, but we take this as an opportunity to illustrate the use of \cref{thm:framework-exceptional}.}
Our framework allows us to decompose the threshold for order-preserving perfect tilings as follows:

\begin{corollary}\label{cor:threshold-decomposition-ordered}
	For every ordered $k$-graph $F$ and $1 \leq d \leq k-1$, we have
	\begin{equation*}
		\th_d^{}(\otil^{}_F) = \max \big\{	\th_d^{}(\ospa^{}_F),\,	\th_d^{}(\odiv^{}_F),\,	\th_d^{}(\ocov^{}_F) \big\}\,.
	\end{equation*}
\end{corollary}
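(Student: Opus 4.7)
Write $\delta^{\ast} = \max \{\th_d(\ospa_F), \th_d(\odiv_F), \th_d(\ocov_F)\}$ and $m = v(F)$. The plan is to mimic the argument implicit behind \cref{cor:minimum-degree-thresholds}, using \cref{obs:matching-to-space-divisibility-cover} for the lower bound and \cref{thm:framework-exceptional} for the upper bound.

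For the lower bound $\th_d(\otil_F) \geq \delta^{\ast}$, I would argue that each of $\th_d(\ospa_F)$, $\th_d(\odiv_F)$, $\th_d(\ocov_F)$ is bounded by $\th_d(\otil_F)$. Suppose $\delta > \th_d(\otil_F)$ and fix any small $\mu>0$. For any ordered $n$-vertex $G$ with $m \mid n$ and $\delta_d(G) \geq (\delta+\mu)\binom{n-d}{k-d}$ and any $X \subset V(G)$ with $|X|\leq m$, the induced subgraph $G-X$ still has $\delta_d(G-X) \geq (\delta + \mu/2)\binom{n-|X|-d}{k-d}$ and $n - |X|$ divisible by $m$ (after trivially relabelling $X$ so it has size exactly $m$ or $0$); so $G-X$ admits a perfect order-preserving $F$-tiling, which translates into a perfect matching of $H^{\Ord}(F;G) - X$. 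Thus \cref{obs:matching-to-space-divisibility-cover} applies to $H^{\Ord}(F;G)$, yielding $H^{\Ord}(F;G) \in \spa \cap \div \cap \cov \subset \spa_{\mu} \cap \div \cap \cov$. Taking infima, $\delta \geq \th_d(\ospa_F),\,\th_d(\odiv_F),\,\th_d(\ocov_F)$, which gives $\th_d(\otil_F) \geq \delta^{\ast}$.

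For the upper bound $\th_d(\otil_F) \leq \delta^{\ast}$, fix $\mu > 0$ and let $\alpha = \mu/8$. Apply \cref{thm:framework-exceptional} (with $s=r$) to obtain $\rho$ and choose $s$ large, then take $n$ sufficiently large with $m \mid n$. For an ordered $k$-graph $G$ with $\delta_d(G) \geq (\delta^{\ast}+\mu)\binom{n-d}{k-d}$, set $H = H^{\Ord}(F;G)$; this is fissile by \cref{obs:ordered-hom-graph-fissile}. The key observation is that for any $S \subset V(G)$, the induced digraph $H[S]$ equals $H^{\Ord}(F; G[S])$, since $G[S]$ inherits the vertex ordering from $G$ and order-preserving homomorphisms from $F$ into $G$ landing in $S$ are exactly order-preserving homomorphisms into $G[S]$. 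Given any $A \subset V(G)$ with $|A| \leq \alpha n$, we have $\delta_d(G - A) \geq (\delta^{\ast}+\mu/2)\binom{n-|A|-d}{k-d}$. Applying \cref{lem:grabbing-minimum-degree} to $G-A$ with $d, 1$ in the roles of $d, m$, a $1-\exp(-\Omega(s))$ fraction of $s$-subsets $S$ of $V(G-A)$ containing any fixed vertex satisfy $\delta_d(G[S]) \geq (\delta^{\ast} + \mu/4)\binom{s-d}{k-d}$; for such $S$, the definition of $\th_d(\ospa_F)$ gives $H[S] \in \spa_{\rho}$ (taking $s$ large enough so that the slack $\mu/4$ exceeds any constant used, and $\rho$ smaller than that). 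Thus $\delta_1\big(\PG{H-A}{\spa_{\rho}}{s}\big) \geq (1-\exp(-\Omega(s)))\binom{n-1}{s-1}$, which exceeds $(\th_1(\mat_s)+\mu)\binom{n-1}{s-1}$ because $\th_1(\mat_s) \leq 1-1/s$ by \cref{thm:DH81}. An identical argument applied to $G$ itself, this time using the definitions of $\th_d(\odiv_F)$ and $\th_d(\ocov_F)$, yields the degree bound on $\PG{H}{\div \cap \cov}{s}$. Invoking \cref{thm:framework-exceptional} then produces a perfect matching in $H$, which is exactly a perfect order-preserving $F$-tiling of $G$.

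I expect the only mildly delicate point to be the bookkeeping in the upper bound: checking that $H^{\Ord}(F; G[S]) = H[S]$ so that the thresholds $\th_d(\ospa_F)$ (etc.), defined in terms of the \emph{host graph} $G$, translate into property-graph edges of $H$, and verifying fissility of $H^{\Ord}(F;G)$ for the ordered setting (which should be routine given \cref{obs:hom-graph-fissile}). Everything else is a direct transcription of the scheme used for \cref{cor:minimum-degree-thresholds}.
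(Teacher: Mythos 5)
Your proof is essentially the same route the paper takes for the upper bound — \cref{obs:ordered-hom-graph-fissile} for fissility, the identity $H^{\Ord}(F;G)[S]=H^{\Ord}(F;G[S])$, \cref{lem:grabbing-minimum-degree}, the definitions of the three ordered thresholds, and \cref{thm:framework-exceptional}. You additionally spell out the lower bound via \cref{obs:matching-to-space-divisibility-cover}, which the paper leaves implicit; that direction is correct.

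The one place you need to be careful is the parameter bookkeeping around \cref{thm:framework-exceptional}. The theorem has \emph{two} uniformities: $r$ governs the $\div\cap\cov$ condition and determines $\rho$, and then for \emph{any} $s\geq m$ the $\SpaF{\rho}$ condition is checked at uniformity $s$. The paper's proof fixes the hierarchy $1/m,\mu\gg 1/r\gg\rho\gg 1/s\gg 1/n$: one fixes $r$, obtains $\rho=\rho(m,r,\mu,\alpha)$ from the theorem, and only then chooses $s$ large enough that the definition of $\th_d(\ospa_F)$, applied with error $\rho$, kicks in on $s$-vertex subgraphs. Your phrase ``Apply \cref{thm:framework-exceptional} (with $s=r$) to obtain $\rho$ and choose $s$ large'' followed by checking $\PG{H}{\div\cap\cov}{s}$ rather than $\PG{H}{\div\cap\cov}{r}$ collapses the two uniformities. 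If you genuinely set $s=r$, the argument becomes circular: $\rho$ depends on $r$, and $s$ must exceed the $n_0$ in the definition of $\th_d(\ospa_F)$ applied with error parameter $\rho$, so in general $s$ must be much larger than $r$. Keeping $r$ and $s$ distinct, with $r$ fixed first, resolves this; otherwise the step where you conclude $H[S]\in\SpaF{\rho}$ for $s$-subsets is not justified.

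Two small cosmetic points: the paper uses $\alpha=\mu/4$ (you use $\mu/8$, which is harmless), and the degree condition on $\PG{H}{\div\cap\cov}{\cdot}$ should carry uniformity $r$, not $s$.
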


In light of this, the next result recovers the upper bounds of \cref{thm:freschi-treglown}.
For the lower bound constructions, we refer the reader to the work of Freschi and Treglown~\cite{FT22}.

\begin{proposition}\label{pro:thresholds-ordered}
	Let $F$ be an ordered graph with $\chi^<(F) \geq 3$.
	Then
	\begin{align*}
		\th_1^{}(\ospa^{}_F) &\leq  1- {1}/{{\chi^<_{\crit}(F)}}\,; 
		\\
		\th_1^{}(\odiv^{}_F)  &\leq \begin{cases}
			1- {1}/{({\chi^<(F)}-1)} & \text{if $ \chi^<_{\crit}(F)< {\chi^<(F)}$},
			\\ 1- {1}/{{\chi^<(F)}} & \text{otherwise};
		\end{cases}
		\\	
		\th_1^{}(\ocov^{}_F) &\leq \begin{cases}
			1- {1}/{({\chi^<(F)}-1)} & \text{if $F$ is an ordered cone,} 
			\\ 1- {1}/{{\chi^<(F)}} & \text{otherwise}.
		\end{cases}
	\end{align*}
\end{proposition}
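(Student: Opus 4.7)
The plan is to verify each of the three bounds separately by exhibiting the required structural witness inside the order-preserving homomorphism digraph $H^{\Ord}(F;G)$: an almost-perfect fractional matching for $\ospa_F$, a generating set of transferrals for $\odiv_F$, and a one-to-one edge at each vertex for $\ocov_F$. In all three cases the density threshold is tuned to the combinatorial parameter governing how much interval room the relevant colouring templates of $F$ demand.

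For the space bound, let $B$ be an unordered complete multi-partite graph realising the minimum in the definition of $\chi^<_{\crit}(F)$, so that $B$ is a bottlegraph for $F$ with $\chi_{\crit}(B) = \chi^<_{\crit}(F)$. The density hypothesis matches the Komlós threshold $1 - 1/\chi_{\crit}(B)$, so by \cref{thm:komlos} applied to a blow-up $B^\ast(t)$ with $t$ large, $G$ has an unordered $B^\ast(t)$-tiling missing at most $\mu n/2$ vertices. Within each copy, a short supersaturation argument using \cref{thm:erd64} lets us pass to a sub-blow-up in which the $k$ parts of $B^\ast(t)$ are interval-ordered in some permutation $\sigma$ of the original parts. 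The bottlegraph property then furnishes a blow-up ordered by $\sigma$ with a perfect order-preserving $F$-tiling, which we realise inside this sub-structure. Gluing across all copies produces an almost-perfect order-preserving $F$-tiling of $G$, whose indicator is a $\mu$-perfect fractional matching in $H^{\Ord}(F;G)$.

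For the divisibility bound, given $u,v \in V(G)$ I will express $\vn_v - \vn_u$ as an integer combination of indicators of order-preserving $F$-copies, chaining local transferrals through intermediate vertices. When the density is at least $1-1/(\chi^<(F)-1)+\mu$, the regime $\chi^<_{\crit}(F) < \chi^<(F)$ ensures that some bottlegraph has strictly skew parts; a downspin-style argument adapted to interval blow-ups, in the spirit of \cref{lem:downspin} but with intervals of unequal sizes, then produces all local swaps and hence all transferrals. In the other regime, where only $1-1/\chi^<(F)+\mu$ is available and $\chi^<_{\crit}(F) \geq \chi^<(F)$, we work with $\chi^<(F)$ essentially balanced interval classes and construct transferrals directly by placing $u$ and $v$ into the same interval class of two $F$-copies sharing all other vertices. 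The cover bound proceeds similarly at the single-vertex level: for each $v \in V(G)$ we embed a copy of $F$ containing $v$ by choosing an interval colouring compatible with $v$'s position in the ordering and greedily filling the remaining intervals via common-neighbourhood estimates; if $F$ is an ordered cone, we exploit the singleton class in its $(\chi^<(F)+1)$-colouring (merged with an adjacent class) to drop the effective chromatic number by one, yielding the reduced density requirement $1-1/(\chi^<(F)-1)$.

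The main difficulty will be the divisibility argument. In the ordered world, order-preservation restricts which permutations of $F$-copies can occur between two given vertex sets, so the unordered Downspin Lemma cannot be applied verbatim; making ``skewed blow-ups'' work in the presence of a fixed linear order is the delicate point. I expect the resolution to come from passing to sufficiently long ordered windows of $G$ inside which the bottlegraph realising $\chi^<_{\crit}(F)$ (or the absence of such a bottlegraph) controls the feasible interval class sizes, and from chaining single-vertex swaps carefully between $u$ and $v$.
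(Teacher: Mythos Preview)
Your approach to the space bound is essentially identical to the paper's proof (\cref{lem:ordered-space-threshold}): Komlós plus a colour-and-refine argument via \cref{thm:erd64} inside each $B^\ast(b)$-tile to force a common ordering on the parts, then the bottlegraph definition. No issues there.

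The divisibility bound, however, has a genuine gap in the case $\chi^<_{\crit}(F) < \chi^<(F)$. Your proposed ``downspin-style argument adapted to interval blow-ups'' is not the right mechanism. The unordered Downspin Lemma (\cref{lem:downspin}) crucially uses $\gcd(F)=1$, and there is no analogue of that hypothesis here; the condition $\chi^<_{\crit}(F) < \chi^<(F)$ is of a different combinatorial nature. The paper's proof rests on the notion of \emph{$j$-flexibility}: $F$ is $j$-flexible if for each $i$ there is an interval $(\chi^<(F)+j)$-colouring in which one vertex can be shifted from class $i$ to class $i+1$. Every ordered graph is trivially $1$-flexible (\cref{obs:flexible-simple}), and this already handles the threshold $1-1/\chi^<(F)$ by producing transferrals between consecutive vertices inside any $(\chi^<(F)+1)$-clique, followed by chaining via $(\chi^<(F)+1)$-connectivity. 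The hard case needs $0$-flexibility, and the implication ``$\chi^<_{\crit}(F) < \chi^<(F) \Rightarrow F$ is $0$-flexible'' is precisely the Ordered Downspin Lemma of Freschi and Treglown (\cref{lem:flexible}), which you have not identified. Your suggestion of ``passing to long ordered windows where the bottlegraph controls the feasible interval class sizes'' does not obviously produce the single-vertex shift that flexibility encodes. Also, in the easier case your plan of ``placing $u$ and $v$ into the same interval class of two $F$-copies sharing all other vertices'' fails as stated whenever there are image vertices strictly between $u$ and $v$ in the order; you really do need to produce transferrals between \emph{consecutive} clique vertices and then chain, which is exactly what $1$-flexibility provides.

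For the cover bound your plan is broadly right in spirit, but the mechanism you describe for the ordered-cone case (``merge the singleton class with an adjacent class'') does not obviously preserve a proper interval colouring. The paper instead uses the cone definition directly: the minimum degree produces, around the target vertex $i$, an $(r+2)$-set $S$ inducing all edges except possibly one pair $\{i,j\}$; the ordered cone property then gives an interval $(r+2)$-colouring with $V_i$ a singleton and no $V_i$--$V_j$ edges, so the missing edge is irrelevant.
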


\cref{cor:threshold-decomposition-ordered,pro:thresholds-ordered} are proved in \cref{sec:ordered-graphs-proofs}.
We remark that the bound on $\th_1(\odiv^{}_F)$ relies on an insight of Freschi and Treglown (\cref{lem:flexible}).
On the other hand, the bound on $\th_1(\ospa^{}_F)$ follows from a new short argument.

\subsection{Uniform density} \label{sec:quasirandomness}

Our last application concerns deterministic hypergraphs that exhibit typical properties of a random hypergraph.
For $\eps, d >0$, we say that an $n$-vertex $k$-graph $G$ is \emph{uniformly $(\eps,d)$-dense} if for all (not necessarily disjoint) $X_1,\dots, X_k \subset V(G)$, we have
\begin{equation*}
	e_G(X_1,\dots,X_k) \geq d\, |X_1|\cdots |X_k| - \eps n^k
\end{equation*}
where $e_G(X_1,\dots,X_k)$ is the number of edges $ v_1 \dots v_k \in G$ with $v_i \in X_i$ for each $i \in [k]$.
Let $\gls{uni-dense}$ contain, for every $k\geq 2$, all  $(\eps,d)$-uniformly dense $n$-vertex $k$-graphs~$G$.

Lenz and Mubayi~\cite{LM16} asked for which tiles $F$ every large enough uniformly dense hypergraph is guaranteed to contain a perfect $F$-tiling.
Since uniform density does not guarantee that every vertex is on an edge, we have to add a mild degree condition for any chances of success.
(Recall the definition of $\DegF{d}{\delta}$ in \cref{sec:example-application}.)
Formally, let $\cF({\til})$ contain, for every $k\geq 2$, the $k$-graphs $F$ such that for every $d,\mu >0$, there is an $\eps >0$  such that for every  
$n    $ large enough divisible by~$v(F)$, every $n$-vertex $k$-graph $G \in {\DenF{\eps}{d}^{}} \cap \DegF{1}{\mu}$ has a perfect $F$-tiling or, equivalently, $H(F;G) \in \mat$ (\cref{obs:digraph-property-equivalence}). 

Lenz and Mubayi~\cite{LM16} initiated the study of the class $\cF({\til})$ and showed that it contains all linear hypergraphs.
More recently, Ding, Han, Sun, Wang and Zhou~\cite{DHS+22} provided a characterisation of $k$-partite $k$-graphs and all $3$-graphs in $\cF({\til})$.
For further progress on these problems in the multipartite setting see also  references~\cite{DHS+23,Sun23}.

A crucial step towards the characterisation of Ding et al.~\cite{DHS+22} is to reduce the problem of finding a perfect tiling to the problem of covering all vertices.
Let $\cF({\cov})$ be defined analogously to $\cF({\til})$ with $H(F;G) \in \cov$ instead of $H(F;G) \in \mat$.

\begin{theorem}[Ding--Han--Sun--Wang--Zhou]\label{thm:Ding-Han-Sun-Wang-Zhou}
	We  have $\cF({\til}) = \cF({\cov})$.
\end{theorem}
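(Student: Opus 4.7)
The plan is to apply Theorem~\ref{thm:framework} (or, if the space property only holds in a near-perfect sense, its exceptional variant Theorem~\ref{thm:framework-exceptional}) to the homomorphism digraph $H := H(F;G)$, which is fissile by \cref{obs:hom-graph-fissile}. The inclusion $\cF(\til) \subseteq \cF(\cov)$ is immediate, since a perfect $F$-tiling covers every vertex. For the reverse inclusion, suppose $F \in \cF(\cov)$ and let $d,\mu > 0$. I would fix $s$ sufficiently large and then $\eps$ sufficiently small so that, for every large enough $n$-vertex $G \in \DenF{\eps}{d} \cap \DegF{1}{\mu}$, the degree hypothesis of the framework is satisfied.

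The central ingredient is a Grabbing Lemma for uniform density (analogous to \cref{lem:grabbing-minimum-degree}), which via standard concentration shows that for all but an $\exp(-\Omega(s))$ fraction of the $s$-subsets $S \subset V(G)$ containing any fixed vertex, the induced subgraph $G[S]$ remains $(\eps',d)$-uniformly dense and satisfies $\delta_1(G[S]) \geq (\mu/2)\binom{s-1}{k-1}$, where $\eps'$ is small enough for the definition of $\cF(\cov)$ at scale $s$ to apply. For such typical $S$, the cover property $H(F;G[S]) \in \cov$ follows directly from $F \in \cF(\cov)$.

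It then remains to verify the space and divisibility properties of $H(F;G[S])$ from uniform density alone. For space, uniform density forces every vertex of $G[S]$ to lie in essentially the same number of $F$-homomorphisms, so the uniform weight on $\hom{F}{G[S]}$ rescaled accordingly yields a (near-)perfect fractional matching. For divisibility, fix a root $w \in V(F)$; given $u,v \in V(G[S])$, uniform density and the min-degree condition produce many injections $\psi\colon V(F) \sm \{w\} \to V(G[S])$ such that both extensions $\psi \cup \{w \mapsto u\}$ and $\psi \cup \{w \mapsto v\}$ are homomorphisms from $F$ into $G[S]$. Their indicator vectors differ by exactly $\vn_v - \vn_u$, which gives the desired transferral and hence completeness of $\cL(H(F;G[S]))$.

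Putting these three facts together, the property graph $\PG{H}{\spa \cap \div \cap \cov}{s}$ has minimum vertex degree at least $(1 - \exp(-\Omega(s)))\binom{n-1}{s-1}$, which for $s$ large enough exceeds $(\th_1(\mat_s) + 1/(2s))\binom{n-1}{s-1}$ since $\th_1(\mat_s) \leq 1 - 1/s$. An application of the framework then delivers a perfect matching in $H$, that is, a perfect $F$-tiling of $G$. I expect the main obstacle to be the divisibility step: producing swappable pairs of $F$-homomorphisms cleanly from uniform density requires a supersaturation-type argument that must handle roots $w \in V(F)$ whose links in $F$ are restrictive, and it is the interaction with the minimum degree $\mu$ that ultimately makes the counting go through.
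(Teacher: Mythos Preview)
Your overall architecture (apply the framework to $H(F;G)$ via a Grabbing Lemma for uniform density, verify $\spa$, $\div$, $\cov$ on typical $s$-sets) matches the paper's route through \cref{cor:uniformly-dense}. The cover step is fine. But both your space and your divisibility arguments, as written, do not go through.

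For space, uniform density is a \emph{one-sided} lower bound on edge counts; it does not force vertices to lie in ``essentially the same number'' of $F$-homomorphisms. A vertex of maximal degree may sit in far more homomorphisms than a vertex of degree $\mu\binom{s-1}{k-1}$, so the uniform weight does not give a near-perfect fractional matching. The paper instead derives $\cF(\cov)\subset\cF(\spa)$ \emph{from the cover property itself}: since induced subgraphs of linear order remain uniformly dense, one can greedily (or via LP duality) use the cover hypothesis on the leftover to produce a $\rho$-perfect fractional matching. You have $F\in\cF(\cov)$ available and should use it here.

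For divisibility, the swap you propose requires, for arbitrary $u,v$, a homomorphism $\psi$ on $V(F)\setminus\{w\}$ extendible to both $u$ and $v$. With minimum degree only $\mu\binom{s-1}{k-1}$ and $\mu$ arbitrarily small, the links $N(u)$ and $N(v)$ can be essentially disjoint, so no such $\psi$ need exist. The paper's \cref{lem:quasirandom-divisibility} handles this in stages: the direct swap works only when $|N(u)\cap N(v)|\ge\rho n^{k-1}$ (\cref{cla:quasirandomness-joint-neighbourhood}, proved by deleting the non-common link of $u$ and invoking $F\in\cF(\cov)$); this gives an auxiliary $2$-graph $J$ of bounded independence number; after removing a small exceptional set $W$ one has $\delta_1(J-W)\ge\rho^2 n$, and then lattice identities of the form $m\vn_{v_1}\in\cL$ and $(m-1)\vn_{v_1}+\vn_{v_2}\in\cL$ (obtained via \cref{cla:quasirandom-divisibility-vx-U_i}) yield all transferrals outside $W$; finally $W$ is handled separately. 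You correctly flag divisibility as the crux, but the actual mechanism is this indirect lattice argument rather than a supersaturation count of swappable pairs.
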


\subsubsection*{Results}

We use our framework to give a short proof of \cref{thm:Ding-Han-Sun-Wang-Zhou}.
Define $\cF({\div})$ analogously to $\cF({\cov})$ with $H(F;G) \in \div$ instead of $H(F;G) \in \cov$.
Moreover, let $\cF({\spa})$ contain the $k$-graphs $F$ such that for every $d,\,\mu,\,\rho >0$, there are $\eps,n_0$  such that every $k$-graph $G \in {\DenF{\eps}{d}^{}} \cap \DegF{1}{\mu} $ on $n \geq n_0$ vertices satisfies $H(F;G) \in \spa_\rho$.
Since uniformly dense graphs are approximately closed under taking typical induced subgraphs of constant order (\cref{lem:inheritance-uniformly-dense}), our framework has the following implication:

\begin{corollary}	\label{cor:uniformly-dense}
	We have $\cF({\til}) =  \cF({\spa}) \cap \cF({\div}) \cap \cF({\cov})$.
\end{corollary}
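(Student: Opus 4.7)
The proof establishes both inclusions. For $\cF(\til) \subset \cF(\spa) \cap \cF(\div) \cap \cF(\cov)$, fix $F \in \cF(\til)$ and let $G \in \DenF{\eps}{d} \cap \DegF{1}{\mu}$ be an $n$-vertex $k$-graph with $n$ divisible by $v(F)$. Deleting any $v(F)$-set $X \subset V(G)$ preserves uniform density and vertex minimum degree with slightly degraded parameters, so the assumption yields a perfect $F$-tiling of $G - X$, i.e., a perfect matching of $H(F;G) - X$. Applying \cref{obs:matching-to-space-divisibility-cover} to $H(F;G)$ gives $H(F;G) \in \spa \cap \div \cap \cov$, hence $F \in \cF(\div) \cap \cF(\cov)$. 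For $\cF(\spa)$ (whose definition does not impose divisibility on $n$), discarding at most $v(F) - 1$ vertices first restores divisibility, and the resulting matching of $H(F;G)$ covers all but $O(1)$ vertices; this is $\rho$-perfect for any fixed $\rho > 0$ once $n$ is large.

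For the reverse inclusion, suppose $F \in \cF(\spa) \cap \cF(\div) \cap \cF(\cov)$, and let $d, \mu > 0$ be given. We plan to apply \cref{thm:framework-exceptional} with $m = v(F)$, some small $\mu_T, \alpha_T > 0$, and $r$ a large multiple of $m$, producing $\rho_T, n_{0,T}$; we then pick $s \geq m$ sufficiently large. Invoking $F \in \cF(\spa)$ (with parameter $\rho_T$) and $F \in \cF(\div) \cap \cF(\cov)$ yields an $\eps_0 > 0$ such that every $k$-graph $G' \in \DenF{\eps_0}{d} \cap \DegF{1}{\mu/4}$ on $s$ (respectively $r$) vertices satisfies $H(F; G') \in \SpaF{\rho_T}$ (respectively $H(F; G') \in \div \cap \cov$); here we use that $r$ is divisible by $v(F)$, as required in the definitions of $\cF(\div)$ and $\cF(\cov)$.

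Now let $G \in \DenF{\eps}{d} \cap \DegF{1}{\mu}$ with $\eps$ sufficiently small and $n$ large. Set $H = H(F;G)$, which is fissile by \cref{obs:hom-graph-fissile}. For any $A \subset V(G)$ with $|A| \leq \alpha_T n$, $G - A$ remains uniformly $(2\eps, d)$-dense with minimum degree at least $\mu/2$. Then \cref{lem:grabbing-minimum-degree,lem:grabbing-uniformly-dense} ensure that for any $v \in V(G - A)$, all but an $\exp(-\Omega(s))$ fraction of $s$-sets $S \ni v$ inside $V(G-A)$ satisfy $G[S] \in \DenF{\eps_0}{d} \cap \DegF{1}{\mu/4}$. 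For such $S$, the identity $H[S] = H(F;G[S])$ together with $F \in \cF(\spa)$ gives $H[S] \in \SpaF{\rho_T}$, and since $\th_1(\mat_s) \leq 1 - 1/s$ by \cref{thm:DH81}, this exceeds the required density $\th_1(\mat_s) + \mu_T$. The analogous argument with $r$-subsets of $V(G)$ (using divisibility of $r$ by $v(F)$) verifies the divisibility-cover hypothesis, and \cref{thm:framework-exceptional} provides the perfect $F$-tiling. The main conceptual ingredient is the identity $H(F;G)[S] = H(F; G[S])$: paired with the grabbing lemmas, it transfers the local quasirandom properties of $F$ into density of the property graphs of $H(F;G)$, with the remaining work being parameter bookkeeping.
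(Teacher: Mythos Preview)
Your argument is correct and, for the substantive direction $\cF(\spa)\cap\cF(\div)\cap\cF(\cov)\subset\cF(\til)$, follows the same route as the paper: use the two Grabbing Lemmas (\cref{lem:grabbing-minimum-degree,lem:grabbing-uniformly-dense}) together with the identity $H(F;G)[S]=H(F;G[S])$ to populate the property graphs $\PG{H}{\div\cap\cov}{r}$ and $\PG{H-A}{\SpaF{\rho}}{s}$, and then invoke \cref{thm:framework-exceptional}. You also supply the easy inclusion $\cF(\til)\subset\cF(\spa)\cap\cF(\div)\cap\cF(\cov)$ via \cref{obs:matching-to-space-divisibility-cover}, which the paper leaves implicit; your handling of the divisibility bookkeeping (choosing $r$ divisible by $v(F)$ for $\cF(\div),\cF(\cov)$, and noting that $\cF(\spa)$ carries no such constraint) is accurate and slightly more explicit than the paper's write-up.
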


It follows immediately from the definition that a large enough induced subgraph of a uniformly dense graph is still uniformly dense with somewhat weaker parameters.
To be precise, if $G$ is a uniformly $(\eps,d)$-dense $k$-graph on $n$ vertices, then every induced subgraph of $G$ on at least~$\rho n$ vertices is still uniformly $(\eps/\rho^k,d)$-dense.
So given a graph in $\cF({\cov})$, we may recursively remove copies of $F$ to arrive at an almost perfect $F$-tiling.
This shows $\cF({\cov}) \subset \cF({\spa})$.
We may therefore recover \cref{thm:Ding-Han-Sun-Wang-Zhou} from the next observation. 

\begin{proposition}\label{lem:quasirandom-divisibility}
	We have $\cF_{}^{}(\cov) \subset \cF({\div})$.
\end{proposition}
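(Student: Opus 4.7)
The plan is to verify lattice completeness of $H(F;G)$ directly, showing that $\vn_v - \vn_u \in \cL(H(F;G))$ for every two distinct vertices $u, v \in V(G)$. Given $F \in \cF({\cov})$ and parameters $d,\,\mu > 0$, I would choose $\eps$ small enough (below the cover threshold provided by $\cF({\cov})$ for suitable parameters $(d, \mu')$ with $0 < \mu' \ll \mu$) and consider any admissible $n$-vertex $G$ with $n$ large. The central construction is an auxiliary graph $\tilde G$ obtained from $G$ by \emph{restricting the link of $v$} to $N_G(u) \cap N_G(v)$: the edges of $\tilde G$ are those edges of $G$ not incident to $v$, together with the edges $\{v\} \cup S$ for which $\{u\} \cup S$ also lies in $E(G)$. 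Since only edges at $v$ are affected, at most $\binom{n-1}{k-1}$ edges change, so $\tilde G$ is still $(\eps', d)$-uniformly dense for some $\eps' \approx \eps$, and retains the minimum-degree condition at every vertex other than $v$.

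Call the pair $(u,v)$ \emph{linkable} if $|N_G(u) \cap N_G(v)| \geq \mu' n^{k-1}$. For a linkable pair, $\tilde G - u$ is still admissible, and $\cF({\cov})$ applied to it yields a homomorphism $\phi \colon V(F) \to V(\tilde G) \setminus \{u\}$ with $\phi^{-1}(v) = \{w\}$ a singleton and $u \notin \phi(V(F))$. By the construction of $\tilde G$, every edge $e \in E(F)$ containing $w$ satisfies $\phi(e \setminus \{w\}) \in N_G(u) \cap N_G(v)$, so the map $\phi'$ obtained from $\phi$ by setting $\phi'(w) = u$ is again a homomorphism $V(F) \to V(G)$ with $\phi'^{-1}(u) = \{w\}$ a singleton. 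Therefore $\vn_v - \vn_u = \vn_\phi - \vn_{\phi'} \in \cL(H(F;G))$, and linkability between any two vertices suffices to conclude.

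For pairs $(u,v)$ that are not linkable, I would route the transferral through an intermediate vertex $z$ such that both $(u,z)$ and $(z,v)$ are linkable, then telescope. The existence of such a $z$ reduces to showing that the auxiliary graph $\Gamma$ on $V(G)$, with $x \sim_\Gamma y$ iff $(x,y)$ is linkable, is connected (in fact, of small diameter). This is \emph{the main obstacle}: uniform density with constant $\eps$ does not directly control individual pair codegrees, so one must pick $\eps$ sufficiently small relative to $(d, \mu)$ and use a discrepancy argument. Concretely, if $\Gamma$ were disconnected into components $C_1, C_2$ of linear size, the uniform-density condition applied to the box $(C_1, C_2, V, \ldots, V)$ would force $\Omega(n^k)$ cross-edges in $G$, and a double-counting of edge pairs sharing a $(k-1)$-set would convert this into $\Omega(n^2)$ cross pairs of codegree $\geq \mu' n^{k-1}$, contradicting the assumed disconnection. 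With $\Gamma$-connectivity in hand, any two vertices are joined by a short $\Gamma$-path, and the transferrals chain additively to give $\vn_v - \vn_u \in \cL(H(F;G))$ in all cases, completing the verification that $H(F;G) \in \div$ and hence $F \in \cF({\div})$.
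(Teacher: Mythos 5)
Your first half --- the \emph{linkable pair} transferral --- is essentially the paper's Claim~\ref{cla:quasirandomness-joint-neighbourhood}: delete one vertex and thin the other's link down to the joint neighbourhood, observe that the resulting graph is still admissible for $\cF(\cov)$ with slightly worse constants, extract a homomorphism covering one of the two vertices exactly once, and swap it for the other. Both arguments are correct here and yield $\vn_v-\vn_u\in\cL(H(F;G))$ whenever $|N(u)\cap N(v)|$ is $\Omega(n^{k-1})$.

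The genuine gap is in the second half. You reduce the general case to showing that the ``linkable'' graph $\Gamma$ is connected (in fact of small diameter), and you sketch a discrepancy argument: a split of $\Gamma$ into two linear-size components would force $\Omega(n^{k})$ cross-edges, which ``a double-counting of edge pairs sharing a $(k-1)$-set would convert into $\Omega(n^{2})$ cross pairs of codegree $\geq \mu' n^{k-1}$.'' That conversion does not follow. Uniform density in the sense of $\DenF{\eps}{d}$ is a one-sided lower bound on box densities; it gives no lower bound on individual codegrees $|N(S)|$ of $(k-1)$-sets, nor does it prevent the sets $\{S: N(S)\cap C_1\ne\emptyset\}$ and $\{S: N(S)\cap C_2\ne\emptyset\}$ from being disjoint while both box-density inequalities $e_G(C_i,V,\dots,V)\geq d|C_i|n^{k-1}-\eps n^{k}$ hold. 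Concretely, writing $a(S)=|N(S)\cap C_1|$, $b(S)=|N(S)\cap C_2|$, the quantity you need to lower bound is $\sum_S a(S)b(S)$, but uniform density only controls $\sum_S a(S)$ and $\sum_S b(S)$ separately, and these can be large with $a(S)b(S)\equiv 0$ without an obvious contradiction. Moreover, even if two linear components were ruled out, you still need to handle sublinear components and $\Gamma$-isolated vertices, which your argument does not address.

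The paper takes a structurally different and more robust route. Instead of trying to prove $\Gamma$ is connected, it defines the transferral graph $J$ (of which $\Gamma$ is a subgraph), and uses Claim~\ref{cla:quasirandomness-joint-neighbourhood} only to bound the \emph{independence number}: any independent set of $J$ has all pairwise codegrees small, which together with the lower bound on each $|N(v)|$ forces $\alpha(J)=O(1)$. From this it prunes a small exceptional set $W$ (of size $O(\rho n)$) so that every remaining vertex has a linear-size transferral-neighbourhood $U_i$. Crucially, it then bridges between $v_1$ and $v_2$ not by chaining transferrals along a $J$-path, but by applying $\cF(\cov)$ directly inside the induced subgraph $G[U_1\cup\{u\}]$ for some $u\in U_2$ (Claim~\ref{cla:quasirandom-divisibility-vx-U_i}), producing a single homomorphism that touches both transferral-neighbourhoods; a short lattice calculation then yields $\vn_{v_1}-\vn_{v_2}\in\cL$, and a similar trick handles vertices in $W$. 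This avoids any connectivity claim about $\Gamma$ or $J$ entirely and is the device your proposal is missing.
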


We remark that \cref{lem:quasirandom-divisibility} is implicitly proved in the  work of Ding et al.~\cite{DHS+22}.
In \cref{sec:quasirandomness-proofs}, we give a streamlined version of the argument and also prove \cref{cor:uniformly-dense}.

\section{Dirac-type results for hypergraphs -- Proofs} \label{sec:dirac-hyper-proofs}

Our first three applications take place in the setting of Dirac's theorem, which concerns optimal minimum degree conditions that ensure a perfect tiling.
Recall that \cref{thm:minimum-degree-thresholds} reduces this problem to determining the thresholds for space, divisibility and covering.
In the following two subsections, we develop a few general tools for satisfying the space and divisibility properties under degree conditions.
In the last three subsections, we give the proofs of \cref{pro:komlos-kuhn-osthus-thresholds,pro:mycroft-thresholds,thm:k-partite}.

\subsection{Fractional matching via link graphs}\label{sec:tilings-via-linkgraphs}

To study the threshold for space, we focus on the structure of the link graphs.
{For a $k$-graph $G$ and a $d$-set $D \subset V(G)$, the \emph{link graph} $L_G(D)$ is the $(k-d)$-graph on {$V(G-D)$} with an edge $Y$ whenever $D \cup Y \in G$.}
A common way to find a  perfect fractional matching in $G$ is to show that every link graph contains a large enough matching, an idea systematically studied by Alon, Frankl, Huang, Rödl, Ruci\'{n}ski and Sudakov~\cite{AFH+12}.
In the following, we generalise this approach to partite tilings in hypergraphs.
For graphs, similar results have been obtained by Hladk{\`y}, Hu and Piguet~\cite{HHP21}.\COMMENT{We remark that at this point it is crucial that fractional tilings are defined in terms of weighted homomorphisms and not just in terms of weighted copies.}

Recall that the size of a {fractional matching} $w \colon H \rightarrow [0,1]$ in an $m$-digraph $H$ on $n$ vertices is $\sum_{e \in H} w(e) \leq n/m$.
Let $F$ be an ${m}$-vertex $k$-graph and $0 \leq \beta \leq 1/m$.
We denote by \gls{edge-turan} the infimum over all $\delta \in [0,1]$ such that for every $\mu >0$, $n$ large enough and every $n$-vertex $k$-graph~$G$ with $\delta_0(G)=e(G)\geq (\delta +\mu) \binom{n}{k}$, the ${m}$-digraph $H(F;G)$ has a fractional matching of size at least $\beta n$.
For $U \subset V(F)$, we denote by $F \sm U$ the hypergraph obtained from $F$ by replacing each edge~$e$ with $e\sm U$ and deleting the vertices of $U$.
(Not to be confused with $F-U=F[V(F) \sm U]$.)
We use this notation only for partite graphs $F$ with $U$ containing all vertices of some parts, so that~$F \sm U$ continues to be uniform.

\begin{lemma}\label{lem:linkgraph-threshold}
	Let $F$ be an $m$-vertex $k$-partite $k$-graph.
	Let $U\subset V(F)$ contain the vertices of $d$ parts of $F$, and let $F'=F\sm U$.
	Then $\th^{}_d(\SpaF{F}) \leq \th_0^{}\left(\SpaF{F'}; 1/{m}\right)$.
\end{lemma}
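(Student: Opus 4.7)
The plan is to lift the fractional matching guarantees from the link graphs $L_G(D)$ up to a fractional matching of $H(F;G)$ via a weighted averaging over $d$-subsets of $V(G)$. Write $\delta = \th_0^{}(\spa_{F'}; 1/m)$ and let $W_1,\dots,W_d$ be the $d$ parts of $F$ contained in $U$. The minimum degree hypothesis $\delta_d(G) \geq (\delta+\mu)\binom{n-d}{k-d}$ means that for every $d$-set $D \subset V(G)$, the link $L_G(D)$, a $(k-d)$-graph on $n-d$ vertices, has edge density at least $\delta+\mu - o(1)$. So by definition of $\th_0^{}(\spa_{F'}; 1/m)$, the $m'$-digraph $H(F';L_G(D))$ admits a fractional matching $\omega_D$ of size at least $(n-d)/m$.

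The combinatorial bridge between the two settings is the following extension. Given an ordered $d$-tuple $D=(u_1,\dots,u_d)$ of distinct vertices of $G$ and a homomorphism $\psi \in \hom{F'}{L_G(D)}$, define $\phi_{D,\psi}\colon V(F)\to V(G)$ by $\phi_{D,\psi}(x)=u_i$ for all $x\in W_i$ and $\phi_{D,\psi}(x)=\psi(x)$ for $x \in V(F')$. Since $F$ is $k$-partite, each edge $e$ of $F$ meets each $W_i$ in exactly one vertex, so $\phi_{D,\psi}(e) = D \cup \psi(e \setminus U)$, and since $\psi(e\setminus U) \in E(L_G(D))$, this $k$-set is an edge of $G$. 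Thus $\phi_{D,\psi} \in \hom{F}{G}$. The next step is to define
\[
\omega(\phi) = c \sum_{(D,\psi)\,:\,\phi_{D,\psi}=\phi} \omega_D(\psi)
\]
where the sum is over ordered $d$-tuples $D$ and $\psi$ in the support of $\omega_D$, and $c$ is a normalising constant.

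Verifying that $\omega$ is a fractional matching reduces to bounding, for each $v \in V(G)$, the contributions from two regimes: $D$-tuples with $v \notin D$, where $v$ is loaded by $\psi$ with total weight at most the vertex-load of $v$ under $\omega_D$; and $D$-tuples with $v=D_i$ for some $i$, where $v$ is loaded exactly $|W_i|\cdot\mathrm{size}(\omega_D)$ by every homomorphism we create. Summing over all $\sim n^d$ choices of $D$ and choosing $c \approx 1/P(n-1,d)$, the total load on each vertex can be kept at most $1$, while the total weight is $c \sum_D \mathrm{size}(\omega_D) \geq c \cdot P(n,d) \cdot (n-d)/m \to n/m$.

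The main obstacle is precisely this balancing act: a naive uniform weighting leaves $D$-vertices over-loaded compared to vertices outside, because each homomorphism $\phi_{D,\psi}$ collapses an entire part $W_i$ onto the single vertex $D_i$. To absorb this, I expect to exploit the slack between the guaranteed size $(n-d)/m$ and the maximum possible size $(n-d)/m'$ of a fractional matching in $H(F';L_G(D))$, selecting $\omega_D$ so that the per-vertex load is of order $m'/m$ rather than the trivial bound of $1$. With this finer choice of $\omega_D$, the two regimes combine to yield exactly $1$ on each vertex and total weight $n/m$, as required.
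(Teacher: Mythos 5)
The paper derives this lemma from Lemma~\ref{lem:linkgraph}, whose proof works in the LP \emph{dual}: assuming $H(F;G)$ has no fractional matching of size $\ell$, one takes a fractional cover of size below $\ell$, localizes to the $d$-set $D$ of minimum cover weight, shifts and rescales the cover, and exhibits a cover of $H(F';L_G(D))$ of size below $\ell$, contradicting the link hypothesis. Your proposal instead works in the \emph{primal}, aggregating the link matchings $\omega_D$ into a fractional matching of $H(F;G)$. The gluing $\phi_{D,\psi}$ is the right construction, and with your normalization the total weight does come to $n/m$ --- \emph{if} the per-vertex loads of each $\omega_D$ are all $\approx m'/m$.

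That premise is the gap. You propose to ``select $\omega_D$ so that the per-vertex load is of order $m'/m$'', but such a selection is not available in general. The threshold $\th_0^{}(\spa_{F'};1/m)$ only guarantees that \emph{some} fractional matching of size $(n-d)/m$ exists in $H(F';L_G(D))$; the total load $m'(n-d)/m$ averages to $m'/m$ over the $n-d$ vertices, but nothing forces the distribution to be flat, and the minimiser of the maximum load may well be pinned at $1$ on certain vertices (for instance when the size $(n-d)/m$ can only be achieved by saturating a bottleneck set). If some vertex $v$ of $G$ is so saturated in $\omega_D$ for a positive fraction of the $d$-sets $D$, the aggregate violates the matching constraint at $v$, and the degree hypothesis (which bounds degrees from below, not above) does not exclude this. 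The dual route sidesteps balancing entirely: it only needs to produce \emph{one} well-chosen $D$, namely the one minimising the cover weight, rather than to control an average over all $D$. To make your primal route rigorous you would need an additional structural lemma showing that, under the density hypothesis, one can pick the family $(\omega_D)_D$ so that $\sum_{D \not\ni v} \mathrm{load}_v(\omega_D)$ is at most $(1+o(1))$ times its average for \emph{every} $v$; you offer no argument for this, and it does not follow from the definition of the threshold.
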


We obtain \cref{lem:linkgraph-threshold} as a corollary of the following more general result, which is of independent interest.
Recall the connection between an $F$-tiling in a $k$-graph $G$ and a matching in the auxiliary $m$-digraph $H(F;G)$ with $m=v(F)$ (\cref{obs:digraph-property-equivalence}).
 
\begin{lemma}\label{lem:linkgraph}
	Let $F$ be an ${m}$-vertex $k$-partite $k$-graph.
	Let $U\subset V(F)$ contain the vertices of~$d$ parts, and let $F'=F\sm U$ with $m'=v(F')$.
	Let $G$ be an $n$-vertex $k$-graph.
	Suppose that for every $d$-set $D \subset V(G)$, the ${m'}$-digraph $H(F';L_G(D))$ contains a fractional matching of size at least $n/{m}$.
	Then $H(F;G)$ contains a perfect fractional matching.
\end{lemma}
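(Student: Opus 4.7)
The plan is to build a fractional matching of $H(F;G)$ by averaging the link-graph matchings $\omega_D$ over all $d$-sets $D\subset V(G)$, using the $k$-partite structure of $F$ to extend each $\omega_D$ to a weighted family of homomorphisms $F\to G$ and then pooling these extensions into a single fractional matching.

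First I would set up the extension step. Because $F$ is $k$-partite and $U$ consists of $d$ full parts, say $U_1,\dots,U_d$, every bijection $\sigma\colon[d]\to D$ together with a homomorphism $\phi'\in\hom(F';L_G(D))$ determines a homomorphism $\phi_{D,\sigma,\phi'}\in\hom(F;G)$ via $\phi|_{U_i}\equiv\sigma(i)$ and $\phi|_{V(F')}=\phi'$; that this is indeed a homomorphism follows from the $k$-partite structure of $F$ together with the link-graph property $D\cup\phi'(e\setminus U)\in E(G)$ for every edge $e$ of $F$. After scaling each $\omega_D$ to have size exactly $\ell$, I would assign weight $\omega(\phi_{D,\sigma,\phi'})=\omega_D(\phi')/(d!\binom{n}{d})$ and set $\omega=0$ on all other homomorphisms. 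A direct double count yields $\sum_\phi\omega(\phi)=\ell$.

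Next I would analyse the load $\sum_\phi\omega(\phi)\,\multi(v,\phi)$ at a fixed vertex $v\in V(G)$. The contribution splits into a $V(F')$-part (from $D$'s with $v\notin D$, controlled by $\sum_{\phi'}\omega_D(\phi')\,|\phi'^{-1}(v)|\le 1$) and a $U$-part (from $D\ni v$, where summing $|U_i|$ over $i$ and orderings $\sigma$ with $\sigma(i)=v$ yields a clean expression involving $|U|$ and $\ell$). Routine binomial identities then give a total load at $v$ of at most $(n-d+|U|\ell)/n$.

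The hard part will be upgrading this bound so that it does not exceed $1$ in the full range $\ell\le n/m$, since the naive estimate only suffices when $|U|\ell\le d$. To close the gap I would exploit the flexibility in choosing each $\omega_D$ — whose mass $\ell$ can be redistributed over the $n-d$ vertices of $V(G-D)$ — in order to balance the $V(F')$-load against the $U$-load. The identities $|U|+m'=m$ and $\ell\le n/m$ together suggest that a coordinated choice of the $\omega_D$'s brings the total load down to $m\ell/n\le 1$; making this balancing step precise, likely via an LP-duality argument that uses the $k$-partite structure of $F$, is where the main technical work of the proof lies.
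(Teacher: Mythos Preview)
Your averaging construction is correct as far as it goes, and your load computation $(n-d+|U|\ell)/n$ is accurate, but the gap you identify is real and your proposed fix does not work. The $U$-load $|U|\ell/n$ at a vertex $v$ depends only on the \emph{sizes} of the $\omega_D$ with $D\ni v$, not on how their mass is distributed, so ``coordinating the $\omega_D$'s'' cannot reduce it. To reach total load $m\ell/n$ you would need each $\omega_D$ to spread its $F'$-load perfectly evenly over $V(G-D)$, which is an extra regularity condition you have no way to guarantee.

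The paper avoids this obstacle by working entirely on the dual side, and---crucially---by choosing a \emph{single} $D$ rather than averaging over all of them. Suppose for contradiction that $\nu(H(F;G))<\ell$. By LP duality there is a fractional cover $\mathbf{c}$ of size $<\ell\le n/m$. Take $D$ to be the $d$ vertices of \emph{smallest} weight and replace their weights by their common average $c_0$; since the total weight is below $n/m$, one has $c_0<1/m$. Now apply the affine map $\mathbf{c}''(v)=(\mathbf{c}'(v)-c_0)/(1-mc_0)$. This transformation kills the weight on $D$, keeps the total below $\ell$, and---because every edge of $H(F;G)$ extending an edge of $H(F';L_G(D))$ picks up exactly $m c_0$ extra weight from $D$---preserves the cover inequality for $H(F';L_G(D))$. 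Hence $\lambda(H(F';L_G(D)))<\ell$, contradicting the hypothesis. The choice of $D$ as the lightest vertices is exactly what replaces your unachievable balancing step.
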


The proof of \cref{lem:linkgraph} is based on linear programming duality following the ideas of Alon et al.~\cite{AFH+12}.
The dual program for fractional matchings in directed hypergraphs is defined as follows.
For an $m$-digraph~$H$, a \emph{fractional cover} is a function $\bvec{c}\colon V(H) \to \REALS$ such that ${\sum_{v \in V(H)} \bvec c(v) \multi(v,e) \geq 1}$ for every $e \in H$.
The \emph{size} of $\vecb c$ is $\sum_{v \in V(H)} \vecb c (v)$.
We denote by $\nu(H)$ the maximum size of a fractional matching and by $\lambda(H)$ the minimum size of a fractional cover of $H$.
By linear programming duality, we have $\nu(H)=\lambda(H)$.

\begin{proof}[Proof of \cref{lem:linkgraph}]
	For the sake of contradiction, suppose that $H = H(F;G)$ does not have a perfect fractional matching and thus $\nu(H) <  n/m$.
	By duality, there is a fractional cover $\bvec{c} \in \mathbb{R}^{V(H)}$ of size $\lambda(H) = \nu(H)  <  n/m$.
	Let $D$ be a set of $d$ vertices of lowest weight.
	We obtain a weighting $\bvec{c}'$ from $\bvec{c}$ by replacing the weights of the vertices in $D$ by their average weight under~$\bvec{c}$.
	Note that~$\bvec{c}'$ still has the same size as~$\bvec{c}$.
	
	Let $c_0 = \bvec{c}'(w)$ for some $w \in D$, and observe that $c_0 <1/m$.
	(Otherwise, $\bvec c'$ would have size $n/m$.)
	If $c_0 >0$, we apply  to  $\bvec{c}'$ the following linear transformation
	\begin{equation*}
		\bvec{c}''(v) = \frac{\bvec{c}'(v)-c_0}{1-m c_0}\,.
	\end{equation*}
	Note that the size of $\bvec{c}''$ is still less than $n/m$ as
	\begin{equation*}
		(1-m c_0) \sum_{v \in {V(H)}}  \vecb c''(v) =\sum_{v \in {V(H)}} (\bvec{c}'(v) - c_0) < \frac{n}{m} - c_0 n = (1- m c_0) \frac{n}{m}\,.
	\end{equation*}
	If $c_0 = 0$, we simply set $\bvec{c}'' = \bvec{c}'$.
	Note that in either case, $\bvec c''$ vanishes on~$D$.
	
	Let $L=L_G(D)$ be the link $(k-d)$-graph of $D$ in $G$.
	Set $H' = H(F';L)$.
	We can identify each edge $f \in H'$ with an edge $e_f \in H$ whose corresponding homomorphism in $\hom{F}{G}$ maps the vertices of each part of $U$ to one of the vertices of $D$.
	So for an arbitrary  $f \in H'$, we obtain 
	\begin{equation*}
		\sum_{v \in V(L)} \bvec c''(v) \multi(v,f)  = \sum_{v \in V(H)} \bvec c''(v) \multi(v,e_f) 
		=  \frac{\sum_{v \in V(H)} \bvec{c}'(v) \multi(v,e_f)  - mc_0}{1-m c_0} \geq 1\,.
	\end{equation*}
	Hence, $\bvec c''$ restricted to $V(H)\sm D$ is a fractional cover of $H'$.
	Since $\bvec c''$ vanishes on~$D$, we obtain $\nu(H') = \lambda(H') <  n/m$ in contradiction to the assumption.
\end{proof}

\begin{proof}[Proof of \cref{lem:linkgraph-threshold}]
	Set $\delta = \th_0^{}\left(\spa_{F'}; 1/{m}\right)$ and $m'=v(F')$.
	Given $\mu > 0$, consider an $n$-vertex $k$-graph $G$ with relative minimum $d$-degree at least $\delta +\mu$ where $n$ is large enough.
	We have to show that the $m$-digraph $H(F;G)$ contains a perfect fractional matching.
	By assumption, the link graph $L_G(D)$ has edge density at least $\delta+\mu/2$ for each $d$-set $D \subset V(G)$.
	By choice of~$\delta$, the $m'$-digraph $H(F';L_G(D))$ has a fractional matching of size at least $n/m$.
	(Technically, we just obtain a matching of size $(n-d)/m$. But we can boost this to $n/m$ by setting aside a few edges, and then using the choice of $\delta$ on the remainder.)
	By \cref{lem:linkgraph}, the $m$-digraph $H(F;G)$ contains a fractional matching of size at least $n/m$, which is equivalent to containing a perfect fractional matching.
\end{proof}

Finally, we prove the following variant of \cref{lem:linkgraph} for $2$-uniform graphs using essentially the same argument.

\begin{lemma}\label{lem:linkgraph-d=2}
	Let $F$ be an ${m}$-vertex complete $k$-partite $2$-graph and $U \subset V(F)$ be one of its parts.
	Let $F' = F - U$ with $m'=m-|U|$.
	Let $G$ be an $n$-vertex $2$-graph.
	Suppose that for every vertex $v \in V(G)$, the ${m'}$-digraph $H(F';G[N(v)])$ contains a fractional matching of size at least $n/{m}$.
	Then $H(F;G)$ contains a perfect fractional matching.
\end{lemma}

\begin{proof}
	For the sake of contradiction, suppose that $H = H(F;G)$ has $\nu(H) <  n/m$.
	By duality, there is a fractional cover $\bvec{c} \in \mathbb{R}^{V(H)}$ of size $\lambda(H) = \nu(H)  <  n/m$.
	Let $w$ be a vertex of lowest weight. 
	Set $c_0 = \bvec{c}(w)$, and observe that $c_0 <1/m$.
	(Otherwise, $\bvec c$ would have size $n/m$.)
	If $c_0 >0$, we apply  to  $\bvec{c}$ the following linear transformation
	\begin{equation*}
		\bvec{c}'(v) = \frac{\bvec{c}(v)-c_0}{1-m c_0}\,.
	\end{equation*}
	Note that the size of $\bvec{c}'$ is still less than $n/m$ as
	\begin{equation*}
		(1-m c_0) \sum_{v \in {V(H)}}  \vecb c'(v) =\sum_{v \in {V(H)}} (\bvec{c}(v) - c_0) < \frac{n}{m} - c_0 n = (1- m c_0) \frac{n}{m}\,.
	\end{equation*}
	If $c_0 = 0$, we simply set $\bvec{c}' = \bvec{c}$.
	Note that in either case, $\bvec c'$ vanishes on~$w$.
	
	Let $H' = H(F';G[N(w)])$.
	We can identify each edge $f \in H'$ with an edge $e_f \in H$ whose corresponding homomorphism in $\hom{F}{G}$ maps the vertices of $U$ to $w$.
	Note that $\multi(v,e_f) = 0$ if $v \notin N(w)$ and $\bvec c'(w)=0$.
	So for an arbitrary  $f \in H'$, we obtain 
	\begin{align*}
		\sum_{v \in N(w)} \bvec c'(v) \multi(v,f)  &= \sum_{v \in N(w)} \bvec c'(v) \multi(v,e_f)  
		= \sum_{v \in V(H)} \bvec c'(v) \multi(v,e_f) \\
		&=  \frac{\sum_{v \in V(H)} \bvec{c}(v) \multi(v,e_f)  - mc_0}{1-m c_0} \geq 1\,.
	\end{align*}
	Hence, $\bvec c'$ restricted to $N(w)$ is a fractional cover of $H'$.
	So $\nu(H') = \lambda(H') <  n/m$, which contradicts the assumption.
\end{proof}

\subsection{Lattice completeness via connectivity}\label{sec:completeness}

In order to obtain the divisibility condition for $k$-graphs $F$ with $\gcd(F)=1$, it is convenient to work with the following notion of connectivity.
Let $G$ be a $k$-graph and $\ell \geq k$.
We define  $K_{\ell}(G)$ as the ($2$-uniform) graph on $V(G)$ obtained by placing a $2$-uniform $\ell$-clique on the vertex set of every $k$-uniform $\ell$-clique of $G$.
We say that~$G$ is \emph{$\ell$-connected} if $K_{\ell}(G)$ is connected (in the usual sense).

A lack of connectivity  may pose an obstruction to perfect tilings.
We have already seen this in the introduction: the union of two odd cliques does not contain a perfect matching.
More generally, consider an ${m}$-vertex $k$-graph $F$ that is $\ell$-connected for $\ell = \chi(F) $.
(This is, for instance, the case when $F$ is complete $k$-partite.)
Now suppose that $G$ is a $k$-graph such that the $2$-graph $K_{\ell}(G)$ has two (connected) components $A$ and $B$.
It follows that the vertex set of each copy of $F$ in~$G$ must be contained in either $A$ or $B$.
So if both $|A|$ and~$|B|$ are not divisible by ${m}$, then $G$ cannot have a perfect $F$-tiling.

On the other hand, if $\gcd(F)=1$, then connectivity implies the divisibility property:

\begin{lemma}\label{lem:connectivity-to-completeness}
	Let $F$ be an $m$-vertex $k$-graph with $\gcd(F)=1$ and $\ell = \chi(F)$.
	For $n$ divisible by~$m$, let $G$ be an $n$-vertex $\ell$-connected $k$-graph.
	Then $ H(F;G)$ satisfies $\div$.
\end{lemma}

\begin{proof}
	Let $B$ be the complete $\ell$-partite $k$-graph with parts of size $b+1,\,b-1,\,b,\dots,b$ where $b$ is divisible by $(\ell-1)!\cdot m$ and large enough in terms of ${m}$ and $\ell$.
	So $B$ has a perfect $F$-tiling by \cref{lem:downspin}.
	Let $B'$ be the complete $\ell$-partite $k$-graph with parts of uniform size $b$.
	Then $B'$ also has a perfect $F$-tiling.
	(This is because the disjoint union of $\ell!$ copies of $F$ with permuted colour classes assigns exactly $(\ell-1)!m$ vertices to each part of $B'$.)
	
	Consider an arbitrary $\ell$-clique $K \subset G$.
	(Such a clique exists since~$G$ is $\ell$-connected.)
	Recall the definition of the $m$-digraph $H'=H(F;K)$ (\cref{def:hom-graph}) and the lattice $\cL(H')$ in \cref{sec:lattice-completeness}.
	Observe that $\cL(H')$ contains the element $(b+1,b-1,b,\dots,b)$ and each of its permutations.
	Moreover, $\cL(H')$ also contains the element $(b,\dots,b)$.
	So for $H=H(F;G)$, the lattice $\cL(H)$ contains all transferrals $\vn_v - \vn_u$ whose endpoints $u,v$ are in a common $\ell$-clique.
	By the definition of $\ell$-connectivity, we can generalise this to arbitrary pairs of vertices by navigating along the $\ell$-cliques of $G$.
	It follows by \cref{obs:lattice-completeness} that $\cL(H)$ is complete.
	So in particular, $H(F;G)$ has a perfect integral matching as required.
\end{proof}

For a $k$-graph $F$ with $\ell = \chi(F)$ and $1\leq d < k$, define $\th_d^{}(\con_F)$ to be the minimum $d$-degree threshold that forces $\ell$-connectivity in a $k$-uniform host graph.
To give a simple example, a $k$-partite $k$-graph $F$ with $k\geq 3$ satisfies $\th_{2}^{}(\con_F) = 0$, since every two vertices are on a common edge, which is also a $k$-clique in $K_k(G)$.
The next result is an immediate consequence of \cref{lem:connectivity-to-completeness}.

\begin{corollary}\label{cor:completeness-connectivity}
	For a $k$-graph $F$ with $\gcd(F)=1$ and $1 \leq d < k$, we have $\th_d^{}(\div_F) \leq\th_d^{}(\con_F)$.
\end{corollary}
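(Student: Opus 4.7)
The plan is to unwind the definitions and apply \cref{lem:connectivity-to-completeness} directly. Since $F$ is complete $k$-partite, we have $\ell := \chi(F) = k$, so the relevant notion of connectivity matches the chromatic number appearing in the hypothesis of \cref{lem:connectivity-to-completeness}. Fix $\mu > 0$ and set $\delta = \th_d^{}(\con_F)$. By definition of $\th_d^{}(\con_F)$, every sufficiently large $n$-vertex $k$-graph $G$ with $\delta_d(G) \geq (\delta + \mu)\binom{n-d}{k-d}$ is $\ell$-connected, i.e.\ the auxiliary $2$-graph $K_\ell(G)$ is connected.

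Since $\gcd(F) = 1$ by hypothesis, \cref{lem:connectivity-to-completeness} then applies verbatim and yields $H(F;G) \in \div$. Since this holds for every $\mu > 0$ and every large enough $G$ meeting the minimum $d$-degree condition, it follows by the definition of $\th_d^{}(\div_F)$ that $\th_d^{}(\div_F) \leq \delta + \mu$. Taking $\mu \to 0$ gives the claim.

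There is no real obstacle here, as the corollary is designed to package \cref{lem:connectivity-to-completeness} into the threshold language of \cref{sec:dirac-hyper}. The only point worth flagging is that one must verify $\chi(F) = k$ for complete $k$-partite $F$ (which is immediate from the fact that every edge is a transversal of the $k$ parts), so that the connectivity forced by $\th_d^{}(\con_F)$ is exactly the connectivity consumed by \cref{lem:connectivity-to-completeness}.
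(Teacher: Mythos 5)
Your proof is correct and matches the paper's intent: the paper states the corollary as an immediate consequence of \cref{lem:connectivity-to-completeness} and gives no further argument, and your write-up is exactly the definition-unwinding (including the observation that $\chi(F)=k$ for a complete $k$-partite $k$-graph) that makes this immediacy precise.
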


\subsection{Minimum degree thresholds for graphs}\label{sec:dirac-komlos-kuhn-osthus}

In the following, we show \cref{pro:komlos-kuhn-osthus-thresholds}.
Consider a ($2$-uniform) graph $F$ with $k = \chi(F) \geq 3$ and $\gcd(F)=1$.
Let $G$ be an $n$-vertex graph with $\delta(G) > \left( 1- 1/(k-1)\right)n$.
So, in particular, every set of $k-1$ vertices has a common neighbour when $n$ is large enough.
For a given vertex $v_1$, this implies that there are $v_2,\dots,v_{k+1}$ such that both $\{v_1,\dots,v_k\}$ and $\{v_2,\dots,v_{k+1}\}$ induce a $k$-clique in $G$.
This immediately gives $\th_{1}^{}(\cov_F) \leq  1- 1/(k-1)$.
Similarly, every edge of $G$ is on a $k$-clique.
Combining this with the fact that $G$ is $2$-uniform, it follows that $K_k(G) = G$.
Lastly, note that $\delta(G) > n/2$ since $k \geq 3$, and hence $K_k(G) = G$ is connected. 
{By \cref{cor:completeness-connectivity} we can bound the divisibility threshold as required for \cref{pro:komlos-kuhn-osthus-thresholds}.}
It remains to bound the space threshold:

\begin{lemma}\label{lem:fractional-komlos}
	For any graph $F$, we have $\th_{1}^{}(\SpaF{F}) \leq 1- {1}/{\chi_{\crit}(F)}$.
\end{lemma}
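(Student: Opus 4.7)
The approach is to reduce to Komlós's theorem (\cref{thm:komlos}) and then use the slack $\mu$ in the degree condition to fractionally absorb the residue. Applying \cref{thm:komlos} with parameter $\mu'$ chosen small with respect to $\mu$ yields an $F$-tiling $\cT$ of $G$ covering all but a set $L\subseteq V(G)$ with $|L|\leq\mu' n$. Viewed as a fractional matching of $H(F;G)$, the tiling $\cT$ puts weight~$1$ on each tile, giving total size $(n-|L|)/m$ and covering every vertex of $V(G)\setminus L$ exactly once.

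To upgrade this to a perfect fractional matching, I would fractionally cover $L$. For each $v\in L$, the degree condition supplies many homomorphisms $\phi_v\in\hom{F}{G}$ sending some vertex of $F$ to $v$: fixing an optimal proper $\chi(F)$-colouring of $F$ with class sizes $a_1\leq\cdots\leq a_r$ (where $a_1=\tau(F)m$), assign $v$ to a vertex in the smallest class and embed the remaining vertices of $F$ greedily using the common-neighbourhood sizes guaranteed by $\delta(G)\geq(1-1/\chi_{\crit}(F)+\mu)n$; here the precise form of $\chi_{\crit}(F)=(\chi(F)-1)/(1-\tau(F))$ matches the unbalanced multipartite skeleton of $F$. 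I then add a small weight $\eta$ to one $\phi_v$ per $v\in L$ and compensate by decreasing, by the same total amount, the weights on those tiles of $\cT$ that share a vertex with some $\phi_v$. A greedy selection of the $\phi_v$'s making them pairwise vertex-disjoint outside $L$ (which is possible since there are many valid choices and $|L|$ is tiny) ensures that each tile is affected only boundedly often.

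The main obstacle is the compensation step: reducing the weight of a tile slightly under-covers its remaining vertices, which could cascade. I would control this by iterating the absorption; each round shrinks the total deficit by a factor bounded away from $1$ (using the $\mu$-slack to produce enough alternative copies of $F$ at each step), so the procedure terminates after $O(\log(1/\mu'))$ rounds with a valid perfect fractional matching. An alternative, more self-contained route is to invoke LP duality: a cover $c\colon V(G)\to\mathbb{R}_{\geq 0}$ with $\sum_v c(v)<n/m$ would yield, via random $K_r$-sampling weighted by $(a_1,\dots,a_r)$ in $G$, a specific $\phi\in\hom{F}{G}$ with $\sum_{w}c(\phi(w))=\sum_i a_ic(v_i)<1$, contradicting the cover property; the required $K_r$-tuples exist in abundance by the degree condition together with $\chi_{\crit}(F)\geq\chi(F)-1$.
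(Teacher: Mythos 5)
Your proposal takes a genuinely different route from the paper, and both of your suggested routes have problems.

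The paper's proof is a short induction on $\chi(F)$, made possible by the link-graph reduction (\cref{lem:linkgraph}): for an $m$-vertex graph $F$ with optimal colouring into one small part and $k-1$ balanced big parts, one deletes a big part $U$, sets $F'=F\setminus U$ (so $\chi(F')=k-1$), and applies \cref{lem:linkgraph} to reduce the desired fractional matching in $H(F;G)$ to a perfect fractional matching in $H(F';G[N(v)])$ for each $v$. A short calculation shows that $G[N(v)]$ satisfies the degree hypothesis for $F'$, so induction closes the argument. All the LP-duality work is hidden inside \cref{lem:linkgraph}, whose proof is in the appendix.

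Your primary route (Koml\'os plus fractional absorption) has a concrete gap in the compensation step. Starting from an almost-perfect $F$-tiling, the fractional matching has size $(n-|L|)/m$ and you need to raise it to $n/m$. Adding weight $\eta$ on a homomorphism $\phi_v$ through each $v\in L$ and decreasing tile weights ``by the same total amount'' leaves the size of the fractional matching unchanged, so no progress is made. Worse, if you instead decrease tile weights just enough to keep the matching constraint at the $m-1$ non-$L$ vertices touched by $\phi_v$, you must reduce $m-1$ tiles each by $\eta$, under-covering $m(m-1)$ vertices while $\phi_v$ only supplies coverage at $m$ vertices; the deficit \emph{grows} rather than shrinks, and the claimed geometric decay does not hold. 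There is also a logical wrinkle: the paper derives Koml\'os's theorem (\cref{thm:komlos}) \emph{from} \cref{lem:fractional-komlos} via \cref{lem:almost-perfect-matching,lem:grabbing-minimum-degree}, so assuming \cref{thm:komlos} here is circular within this paper's development (even if \cref{thm:komlos} is externally known).

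Your alternative LP-duality sketch is closer in spirit to what \cref{lem:linkgraph} does, but it is under-specified: it is not clear how to guarantee a single homomorphic copy of $F$ with total cover weight below $1$ from a degree condition alone, since a fractional cover can concentrate high weight on a small set that still meets every unbalanced $K_{\chi(F)}$-pattern. The paper sidesteps this precisely by the inductive link-graph reduction: rather than finding one good copy globally, it recursively finds a perfect fractional matching inside each neighbourhood, which is a stronger and cleaner invariant to maintain. I would recommend reading the proof of \cref{lem:linkgraph} in the appendix; that is where the duality argument you are reaching for is carried out correctly, and the induction on chromatic number is what makes it terminate.
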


We remark that \cref{lem:fractional-komlos} has already been proved by Hladk{\`y}, Hu and Piguet~\cite{HHP21}.
For the sake of completeness, we include the details.

\begin{proof}[Proof of \cref{lem:fractional-komlos}]
	Consider an $m$-vertex graph $F$ with $k = \chi(F)$.
	Trivially $\th_{1}^{}(\SpaF{F}) =~0$ when $k=1$.
	We proceed to show the lemma by induction on $k \geq 2$, assuming that the statement holds for smaller values of $k$.
	By assumption, $F$ has a proper $k$-colouring whose smallest colour class has relative size $\tau=\tau(F)$.
	Since we are aiming for a fractional matching in $H(F;G)$, we may assume that $F$ has one \emph{small} colour class of relative size $\tau$ and $k-1$ \emph{big} colour classes of relative size $(1-\tau)/(k-1)$.
	(Otherwise, replace $F$ with a suitable disjoint union of $(k-1)!$ copies of $F$.)
	Let $U$ be one of the big colour classes, and set $F'=F \sm U$ with $m' =v(F')$.
	Recalling that $\chi_{\crit}= ({k-1})/({1-\tau})$, this gives
	\begin{equation*}
		m' = \left(1-\frac{1-\tau }{k-1} \right) m = \left(1-\frac{1 }{\chi_{\crit}} \right) m \,.
	\end{equation*}
	It follows that $\chi(F')=k-1$, and 
	\begin{equation*}
		\tau' := \tau(F') =  \frac{\tau m}{m'}  = \frac{\tau}{1- \tfrac{1-\tau}{k-1}}   \,.
	\end{equation*}
	\begin{claim}\label{cla:fractional-komlos-computation}
		We have  $2  - \frac{1}{1-\frac{1-\tau }{k-1}}  = 1-\frac{1-\tau'}{k-2}$ if $k\geq 3$.
	\end{claim}
	\begin{proofclaim}
		Solving the equation $1-\frac{1-x}{k-2} = 2 -\frac{x}{\tau}$ for $x$ gives $x = \tau \frac{k-1}{k-2+\tau}$.
		Moreover, $\frac{k -2 + \tau}{k-1}  =  1- \tfrac{1-\tau}{k-1}$ by rearranging.
		Hence, $x = \frac{\tau}{1- \tfrac{1-\tau}{k-1}}$, as desired.
	\end{proofclaim}
	
	Now consider an $n$-vertex graph $G$ with $\delta(G) \geq ( 1- {1}/{\chi_{\crit}(F)})n$.
	Fix a vertex $v\in V(G)$.
	{By \cref{lem:linkgraph-d=2}, it suffices to show that there is a fractional matching of size at least $n/m$ in the $m'$-digraph $H(F';G[N(v)])$.}
	By assumption on~$G$ and \cref{cla:fractional-komlos-computation}, every vertex $w \in N(v)$ satisfies
	\begin{align*}
		\frac{\deg_{G[N(v)]}(w)}{|N(v)|}  &\geq \frac{|N(w)| - |V(G) \sm N(v)|}{|N(v)| }  
		  \geq  \frac{2\delta(G) - v(G)}{\delta(G)}  \\
		 &=   2  - \frac{v(G)}{\delta(G)} 
		   \geq   2  - \frac{1}{1-\frac{1-\tau }{k-1}}  = 1-\frac{1-\tau'}{k-2} = 1- \frac{1 }{\chi_{\crit}(F')}\,.
	\end{align*}
	So by the induction assumption, $H(F';G[N(v)])$ has a perfect fractional matching.
	This translates to the desired fractional matching of size
	\[
	\frac{|N(v)|}{m'} \geq \left(1-\frac{1 }{\chi_{\crit}} \right) \frac{n}{m'} = \frac{n}{m}\,. \qedhere
	\] 
\end{proof}
\urldef\myurlx\url{https://www.wolframalpha.com/input?i=2-1%2F%281-%281-a%29%2F%28k-1%29%29+-+%281-+%281-b%29%2F%28k-2%29%29+where+b%3D+a%2F%281-%281-a%29%2F%28k-1%29%29}
\COMMENT{\myurlx}

Lastly, we observe that \cref{lem:fractional-komlos} in conjunction with \cref{prp:almost-perfect-matching} and \cref{lem:inheritance-minimum-degree} also recovers \cref{thm:komlos}.

\subsection{Codegree thresholds}\label{sec:mycroft-thresholds}

Next, we show \cref{pro:mycroft-thresholds}.
For $k \geq 3$, consider a $k$-partite $k$-graph $F$ with $\gcd(F)=1$ and parts of size ${m}_1 \leq \dots \leq {m}_k$ with ${m}= {m}_1+\dots+{m}_k$.
We begin with the space property.
Let $U$ contain the $k-1$ largest parts of $F$, and set $F'=F\sm U$.
So $F'$ is a $1$-graph with $m_1$ vertices and $m_1$ edges.
By \cref{lem:linkgraph-threshold}, it follows that $\th^{}_{k-1}(\SpaF{F}) \leq \th_0^{}(\spa_{F'};1/{m}) = {m}_1/{m} = \tau(F)$.
The simple construction for the lower bound can be found in \cref{sec:constructions}.

The divisibility property follows by \cref{cor:completeness-connectivity} in conjunction with the following observation.
 
\begin{proposition}\label{pro:kruskal-katona}
	Let $F$ be a $k$-partite $k$-graph.
	Then $\th_{1}^{} (\con_F) = 2^{-k+1}$ and $\th_{d}^{} (\con_F) =~0$ for every $2 \leq d < k$.
\end{proposition}

\begin{proof}
	The second part of the statement is trivial as in this case any two vertices are on a common edge.
	It remains to show the first part.
	
	The lower bound follows by considering the union of two disjoint cliques of the same order.
	For the upper bound, let $n$ be sufficiently large with respect to $k$ and~$\mu > 0$.
	Suppose~$G$ is an $n$-vertex $k$-graph with $\delta_{1}(G) \geq (2^{-k+1} +\mu) \binom{n-1}{k-1}$.
	Note that a set of $n/2$ vertices may host at most $\binom{n/2}{k-1} \leq  (2^{-k+1}+\mu/2) \binom{n }{k-1}$ distinct $(k-1)$-sets. 
	It follows that the edges incident to each vertex span more than $n/2$ vertices.
	Hence, any two vertices are on intersecting edges, and $G$ must therefore be $k$-connected.
\end{proof}

Finally, recall that a $k$-graph $G$ satisfies $H(F;G) \in \cov$ if for every vertex $v$, there is a homomorphism $\phi \in \hom{F}{G}$ which covers $v$ exactly once (\cref{def:hom-graph,obs:digraph-property-equivalence}).
So for any $k$-partite $k$-graph~$F$, we trivially have $\th_{k-1}^{}(\cov_F)=0$, which finishes the proof of \cref{pro:mycroft-thresholds}.

\subsection{Beyond codegree}

We conclude this section by showing \cref{thm:k-partite}.
The constructions for the lower bounds are similar to the ones of Han, Zang and Zhao~\cite{HZZ17} and can be found in \cref{sec:constructions}.
In the following, we focus on the upper bounds.
We derive the space threshold of \cref{thm:k-partite} by applying a result of Grosu and Hladk{\`y}~\cite{GH12}.
Recall from \cref{sec:tilings-via-linkgraphs} that $\th_0^{}(\SpaF{F};\beta)$ is the edge density that forces an $F$-tiling of size $\beta$.

\begin{theorem}[Grosu--Hladk{\`y}]\label{thm:GH}
	Let $F$ be a bipartite graph with parts of size ${m}_1 \leq {m}_2$.
	Then for any $0< \beta \leq 1/({m}_1+{m}_2)$, we have $\th_0^{}(\SpaF{F};\beta) \leq \max\left\{1-(1-\beta {m}_1)^2,\,(\beta ({m}_1+{m}_2))^2 \right\}.$
\end{theorem}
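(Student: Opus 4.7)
My plan is to prove \cref{thm:GH} by linear programming duality, reducing to the analysis of fractional vertex covers of a simplified LP and then to a single-variable convex optimisation.

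First, by LP duality, the maximum fractional matching size in $H(F;G)$ equals the infimum of $\sum_{v \in V(G)} y_v$ over all $y \colon V(G) \to \mathbb{R}_{\geq 0}$ satisfying $\sum_v y_v |\phi^{-1}(v)| \geq 1$ for every $\phi \in \hom{F}{G}$. I lower-bound this by restricting to the ``edge-collapsing'' homomorphisms, which for each $\{x,y\} \in E(G)$ and each of the two $A$-$B$ orientations map all of $A$ to one endpoint and all of $B$ to the other. Duality for this restricted primal/dual pair yields the edge-constraint LP
\[
\min \sum_v y_v \quad \text{s.t.}\quad y \geq 0,\ \ m_1 \max(y_x, y_y) + m_2 \min(y_x, y_y) \geq 1 \ \text{for all } \{x,y\} \in E(G),
\]
(obtained by combining the two constraints $m_1 y_x + m_2 y_y \geq 1$ and $m_2 y_x + m_1 y_y \geq 1$ using $m_1\leq m_2$), and it suffices to show this minimum is at least $\beta n$ under the density hypothesis.

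Second, I claim the edge-constraint LP is optimised by a three-level cover $y \in \{0,\, 1/m,\, 1/m_1\}^{V(G)}$. Writing $Z = y^{-1}(0)$, $L = y^{-1}(1/m)$, $H = y^{-1}(1/m_1)$, the edge constraints immediately force $Z$ to be independent in $G$ (since an interior edge would require $m_1\cdot 0 + m_2 \cdot 0 \geq 1$) and $N(Z) \subseteq H$ (since an edge from $Z$ to a vertex of value $1/m$ gives only $m_1/m < 1$). A local perturbation argument, exploiting linearity of the constraints and the slope gap $1/m < 1/m_1$, rounds any intermediate $y_v \in (0,1/m) \cup (1/m, 1/m_1)$ to one of the three canonical values without increasing the total weight; this step, essentially a polyhedral analysis of the LP, yields
\[
\tau^*_F(G) \geq \min_{Z \text{ independent}} \left[\frac{|N(Z)|}{m_1} + \frac{n - |Z| - |N(Z)|}{m}\right].
\]

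Third, the inequality $\tau^*_F(G) \geq \beta n$ rearranges to
\[
|Z| - \frac{m_2}{m_1} |N(Z)| \leq n(1 - \beta m) \quad \text{for every independent set } Z \subseteq V(G).
\]
Since $Z$ is independent and $Z \cap N(Z) = \emptyset$, we have $e(G) \leq \binom{n-|Z|}{2} + |Z||N(Z)|$, so asymptotically $d \leq (1-z)^2 + 2zt$ where $z = |Z|/n$ and $t = |N(Z)|/n$. Assuming the condition fails for some $Z$, the constraints $z - (m_2/m_1)t > 1 - \beta m$ and $z + t \leq 1$ force, upon setting $u = 1 - z$, that $u \in [\beta m_1,\, \beta m]$ and
\[
d \leq f(u) := u^2 + \frac{2 m_1}{m_2}(1-u)(\beta m - u).
\]
Since $f''(u) = 2 + 4m_1/m_2 > 0$, the function $f$ is convex on $[\beta m_1, \beta m]$ and attains its maximum at an endpoint; a direct computation yields $f(\beta m_1) = 1 - (1-\beta m_1)^2$ and $f(\beta m) = (\beta m)^2$. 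Hence edge density exceeding $\max\{1 - (1-\beta m_1)^2,\, (\beta m)^2\}$ rules out any failing independent set, which completes the proof.

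The main obstacle is the second step: proving that the edge-constraint LP optimum is \emph{attained} by (and not merely upper-bounded by) a three-level cover. While three-level covers are obviously feasible and thus upper-bound the LP optimum, showing that no intermediate cover beats them requires a careful perturbation argument or polyhedral analysis of the extreme points, and this is the technical heart of the Grosu--Hladk\'y argument. Steps one and three reduce to standard LP duality and a short convex computation.
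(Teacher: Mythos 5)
The paper does not actually prove \cref{thm:GH}: it is imported as a black box from Grosu and Hladk{\`y}~\cite{GH12} and only its statement is used (in the derivation of the space threshold of \cref{thm:k-partite}). So there is no internal proof to compare against, and I assess your argument on its own merits. The overall strategy is sound, and steps one and three check out. In step one the logic is correct: discarding all dual constraints except those coming from the edge-collapsing homomorphisms can only decrease the minimum of the cover LP, so a lower bound of $\beta n$ for the restricted LP transfers to the full cover minimum and hence, by duality, to the maximum fractional matching. In step three the only slip is the assertion that the constraints force $u \geq \beta m_1$; they only force $u < \beta m$. For $u < \beta m_1$ the binding constraint on $t$ is $t \leq u$ rather than $t \leq (m_1/m_2)(\beta m - u)$, and the resulting bound $1-(1-u)^2 < 1-(1-\beta m_1)^2$ is even better than needed, so this extra case is harmless but should be mentioned. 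The endpoint evaluations $f(\beta m_1)=1-(1-\beta m_1)^2$ and $f(\beta m)=(\beta m)^2$ and the convexity computation are correct.

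The gap you flag in step two is real but closable, and more cheaply than you suggest. After capping all values at $1/m_1$ (which preserves feasibility and does not increase the objective), the minimum is attained at an extreme point of a polytope, and every extreme point is three-level. The key observation is that the all-$1/m$ vector solves every equation $a y_x + b y_y = 1$ with $\{a,b\}=\{m_1,m_2\}$; consequently any cycle of tight edge-constraints forces all of its vertices to the value $1/m$, while a tight edge-constraint incident to a vertex of value $0$ (respectively $1/m$, $1/m_1$) forces the other endpoint to $1/m_1$ (respectively $1/m$, $0$), the alternative orientation being infeasible. A vertex carrying an intermediate value would therefore lie in a component of the tight-constraint graph containing neither a cycle nor a bound-tight vertex; since for an edge with distinct endpoint values only the constraint $m_1\max+m_2\min\geq 1$ can be tight, such a component supplies at most $|C|-1$ tight constraints for $|C|$ free variables, contradicting extremality. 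With this inserted your proof is complete; it appears to be in the same LP-duality spirit as the original argument of~\cite{GH12}, which the present paper does not reproduce.
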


\begin{lemma}
	For $k \geq 3$, let $F$ be an $m$-vertex $k$-partite $k$-graph with parts of size ${m_1\leq \dots \leq m_k}$.
	Then
	$\th_{k-2}^{}(\SpaF{F}) \leq  \max \left \{ 1-\left(1- m_1/m \right)^{2},\, \left(  (m_1 + m_2)/m\right)^2 \right\}$. 
\end{lemma}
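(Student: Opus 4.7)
The plan is to combine \cref{lem:linkgraph-threshold} with the Grosu--Hladk{\'y} theorem (\cref{thm:GH}) to reduce the $(k-2)$-codegree space threshold for $F$ to a $0$-degree fractional matching problem on a bipartite graph supported on the two smallest parts of $F$.

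Concretely, I would let $U \subset V(F)$ be the union of the $k-2$ largest parts of $F$, namely the parts of sizes $m_3,\dots,m_k$. This is a union of $d := k-2$ parts, so \cref{lem:linkgraph-threshold} applies and yields
$$\th_{k-2}^{}(\SpaF{F}) \;\leq\; \th_0^{}\!\left(\SpaF{F'};\, 1/m\right),$$
where $F' := F \setminus U$. Since $F$ is $k$-partite, each edge of $F$ contains exactly one vertex from each part, and after deleting the $k-2$ parts in $U$ every edge retains exactly one vertex in each of the two remaining parts. Hence $F'$ is a bipartite graph whose parts have sizes $m_1 \leq m_2$, and $v(F')=m_1+m_2$.

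Next I would apply \cref{thm:GH} to $F'$ with $\beta := 1/m$. The required constraint $\beta \leq 1/(m_1+m_2) = 1/v(F')$ is immediate from $m = m_1+\dots+m_k \geq m_1+m_2$. The theorem then gives
$$\th_0^{}\!\left(\SpaF{F'};\, 1/m\right) \;\leq\; \max\!\left\{\,1-(1-m_1/m)^2,\; \left((m_1+m_2)/m\right)^2\right\},$$
and chaining with the bound from \cref{lem:linkgraph-threshold} produces the claimed inequality.

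There is no real obstacle here: the argument is a direct composition of two black boxes. The only substantive choice is which $k-2$ parts to place in $U$, and the optimal choice is forced because both terms in the bound of \cref{thm:GH} are increasing in the sizes of the parts of $F'$; keeping the two smallest parts $m_1,m_2$ in $F'$ therefore minimises the final maximum.
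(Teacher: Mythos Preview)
Your proposal is correct and follows essentially the same approach as the paper: take $U$ to be the $k-2$ largest parts, apply \cref{lem:linkgraph-threshold} to reduce to $\th_0^{}(\SpaF{F'};1/m)$ for the bipartite $F'$ on the two smallest parts, and then invoke \cref{thm:GH} with $\beta=1/m$. Your write-up is in fact slightly more detailed than the paper's, which omits the verification of $\beta\le 1/(m_1+m_2)$ and the remark on why the two smallest parts are the right choice.
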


\begin{proof}
	Let $U$ contain the $k-2$ largest parts of $F$, and set $F'=F\sm U$.
	So $F'$ is a bipartite graph with parts of size ${m}_1 \leq {m}_2$.
	Hence, the claim follows by combining \cref{lem:linkgraph-threshold,thm:GH} where the latter is applied with $F'$ and $1/m$ playing the rôle of $F$ and~$\beta$, respectively.
\end{proof}

The upper bound of the divisibility threshold in \cref{thm:k-partite} follows immediately from \cref{pro:kruskal-katona,cor:completeness-connectivity}.
It remains to show the upper bound for the cover property.
Recall from the discussion before \cref{thm:minimum-degree-thresholds} that a $k$-graph $G$ is {$\Hom(F)$-covered} if for every vertex $v \in V(G)$, there is a homomorphism $\phi \in \hom{F}{G}$ with $|\phi^{-1}(v)| =1$.
Moreover, $\cov_F$ denotes the set of $\Hom(F)$-covered hypergraphs and $\th^{}_{d}(\cov_F)$ is the corresponding minimum $d$-degree threshold.

Let $F$ be a $k$-partite cone, meaning that it has a proper $k$-colouring with a singleton colour class.
So in particular, $H(F;G)$ satisfies $\cov$ if every vertex of $G$ is on an edge.
This is trivially the case under any positive minimum $d$-degree.
Hence the cover threshold of \cref{thm:k-partite} follows for cones.
It remains to handle the case when $F$ is not a cone:

\begin{lemma}\label{lem:linkage-upper}
	For every $k$-partite $k$-graph $F$, we have $\th^{}_{k-2}(\cov_F) \leq 2(\sqrt{2}-1)^2$. 
\end{lemma}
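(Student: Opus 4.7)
The strategy is to reduce the cover property at a fixed vertex $v \in V(G)$ to a rooted embedding question that can be handled by the Grosu--Hladky theorem (\cref{thm:GH}) via the linkage reduction of \cref{lem:linkgraph}. First I would fix $v \in V(G)$ and a vertex $u$ in a smallest part $V_1$ of $F$. Since cones already yield cover threshold $0$, I may assume every part of $F$ has size at least $2$. The task is to produce a homomorphism $\phi \colon V(F) \to V(G)$ with $\phi(u) = v$ and $v \notin \phi(V(F) \setminus \{u\})$; equivalently, to find a map $\psi \colon V(F) \setminus \{u\} \to V(G) \setminus \{v\}$ such that every edge of $F$ disjoint from $u$ maps to an edge of $G$ and every edge of $F$ through $u$ maps, after deleting $u$, to an edge of the $(k-1)$-uniform link graph $L_G(v)$.

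Next I would apply the linkage construction \cref{lem:linkgraph} iteratively to peel off $k - 2$ parts of $F$, reducing to a bipartite fractional-matching problem in a $2$-uniform graph $L$ that arises as a $(k-3)$-fold iterated link at $v$. By the codegree hypothesis, $L$ has edge density at least $2(\sqrt{2}-1)^2 + \mu$. The retained bipartite residue $F' \subseteq F$ is a graph on two parts: $V_1$ and a judiciously chosen second part of $F$. Invoking \cref{thm:GH} on $F'$ and $L$ with a fractional parameter $\beta$ aligned with the rooted setup, the bound $\max\{1 - (1-\beta m_1)^2,\, (\beta(m_1+m_2))^2\}$ collapses to $2(\sqrt{2}-1)^2$ at the critical setting $\beta(m_1+m_2) = 2 - \sqrt{2}$. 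This yields a fractional matching of positive size in the rooted digraph $H(F; G)$.

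The final step converts this fractional structure into an integer homomorphism covering $v$ at least once; the strict inequality $\mu > 0$ in the codegree hypothesis provides the slack needed to select an integer homomorphism that covers $v$ \emph{exactly} once, by excluding the vanishing fraction of over-covering configurations via a standard averaging / deletion argument.

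The main obstacle will be choosing the second part and the parameter $\beta$ so as to uniformly attain the ceiling $2(\sqrt{2}-1)^2$, especially in the balanced regime $|V_1| \approx \dots \approx |V_k|$ where the naive choice $\beta = 1/v(F)$ falls short. Handling this will require a case analysis based on the part-size configuration of $F$, combined with a supersaturation argument exploiting the many edges through $v$ guaranteed by the codegree hypothesis, to absorb parts of $F$ that cannot be directly accommodated in the bipartite reduction.
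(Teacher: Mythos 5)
Your approach is fundamentally different from the paper's, and I do not think it works. The key conceptual problem is that you are repurposing the \emph{space} machinery (\cref{lem:linkgraph} together with the Grosu--Hladk\'y bound \cref{thm:GH}) to prove the \emph{cover} property, but these thresholds measure different things. \cref{lem:linkgraph} and \cref{thm:GH} control when the homomorphism digraph supports a large fractional matching; they say nothing about whether some fixed vertex $v$ can be made to have multiplicity exactly one in a single edge of $H(F;G)$. The cover barrier construction (\cref{constr:local}) is precisely an example where the space property holds abundantly (large codegree, many homomorphisms) yet the cover property fails for a designated vertex — so a ``positive-size fractional matching in the rooted digraph'' cannot by itself yield the conclusion you want. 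Moreover, \cref{lem:linkgraph} is a global reduction quantified over \emph{all} $d$-sets $D$; it is not a rooted lemma that one iterates at a single vertex $v$, and ``$(k-3)$-fold iterated link at $v$'' is not a $2$-graph (one needs a $(k-2)$-set to reach uniformity $2$). You partially notice the trouble when you observe that the Grosu--Hladk\'y maximum does not uniformly collapse to $2(\sqrt{2}-1)^2$, but the proposed remedy (``supersaturation argument'') is not supplied and I do not see how it would supply the missing rooted multiplicity-one control.

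The paper's actual proof is a direct extremal argument by contradiction and does not go through fractional matchings at all. One assumes a vertex $x$ is covered at least twice by every homomorphism that covers it, and extracts the clean structural consequence that there are no edges $e,f \in E(G)$ with $x \in f$, $x \notin e$, $|e \cap f| = k-1$. One then fixes a $(k-2)$-set $S \ni x$, lets $y$ be a max-degree vertex in the $2$-uniform link $L(S)$, and sets $S' = (S \setminus \{x\}) \cup \{y\}$. The no-$e,f$ condition forces $E(L(S))$ and $E(L(S'))$ to be disjoint and forces $L(S')$ to miss $N_{L(S)}(y)$ entirely. A partition of $E(L(S))$ relative to $N_{L(S)}(y)$, two lower bounds on $|N_{L(S)}(y)|$, and the elementary optimisation \cref{fac:optimisation} then deliver $\delta \leq 2(\sqrt{2}-1)^2$. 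You should identify the forbidden-$e,f$ configuration as the operative combinatorial content of failing to cover $x$ once; that is the idea your proposal is missing.
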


We omit the standard, but somewhat tedious proof of the following optimisation problem.

\urldef\myurly\url{https://www.wolframalpha.com/input?i=maximise+d_1%2Bd_2%2Bd_3+for+d_1%2Bd_1%2Bd_2%2Bd_3+%3C%3D+%281-max%28sqrt%28%28d_2%2Bd_3%2F2%29%29%2Cd_1%2Bd_2%2Bd_3%29%29%5E2%2C+0%3C%3Dd_1%3C%3D1%2C+0%3C%3Dd_2%3C%3D1%2C+0%3C%3Dd_3%3C%3D1%2C+d_1%2Bd_2%2Bd_3%3C%3D1}
\COMMENT{\myurly}

\begin{fact}\label{fac:optimisation}
	Suppose $0 \leq \delta,\delta_1,\delta_2,\delta_3 \leq 1$ with $\delta=\delta_1 + \delta_2 + \delta_3$ satisfy 
	\begin{equation*}
		\delta_1 + \delta \leq \left(1-\max \left\{\sqrt{\delta_2 + \delta_3/2},\, \delta\right\}\right)^2. 
	\end{equation*}
	Then $\delta_1 + \delta_2 + \delta_3 \leq 2(\sqrt{2}-1)^2$.
\end{fact}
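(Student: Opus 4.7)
The plan is to reduce the three-variable optimization to a one-variable inequality. First, I would show one may assume $\delta_2 = 0$: replacing $(\delta_1, \delta_2, \delta_3)$ by $(\delta_1, 0, \delta_2 + \delta_3)$ preserves $\delta$ and only shrinks $\sqrt{\delta_2 + \delta_3/2}$ to $\sqrt{(\delta_2+\delta_3)/2}$ (because $(\delta_2+\delta_3)/2 \le \delta_2 + \delta_3/2$). The outer maximum $M := \max\{\sqrt{\delta_2 + \delta_3/2},\,\delta\}$ automatically satisfies $M \le 1$ under the hypothesis, since $\delta \le 1$ by assumption and $\sqrt{\delta_2 + \delta_3/2} \le \sqrt{\delta} \le 1$, so replacing $\delta_2$ by $0$ only enlarges $(1-M)^2$ and the hypothesis continues to hold. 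Thus it suffices to treat the reduced problem $\delta = \delta_1 + \delta_3$ with $\delta_1 + \delta \le (1 - \max\{\sqrt{\delta_3/2},\,\delta\})^2$.

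I would then split into two cases according to which term attains the maximum. In the first case $\sqrt{\delta_3/2} \le \delta$, equivalently $\delta_3 \le 2\delta^2$. Since $\delta_1 = \delta - \delta_3 \ge \delta - 2\delta^2$, the hypothesis yields $2\delta - 2\delta^2 \le (1-\delta)^2$, which rearranges to $(3\delta-1)(\delta-1) \ge 0$. As $\delta \le 1$, this forces $\delta \le 1/3 < 2(\sqrt{2}-1)^2$, so this case is immediate.

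In the remaining case $\sqrt{\delta_3/2} > \delta$, I would substitute $t := \sqrt{\delta_3/2}$, so that $\delta_3 = 2t^2$ and $\delta_1 = \delta - 2t^2 \ge 0$. The hypothesis $\delta_1 + \delta \le (1-t)^2$ simplifies to
\[
\delta \le g(t) := \tfrac{1}{2}\bigl(1 - 2t + 3t^2\bigr).
\]
Combining with $\delta \ge 2t^2$ forces $2t^2 \le g(t)$, i.e.\ $t^2 + 2t - 1 \le 0$, hence $t \le \sqrt{2}-1$. The parabola $g$ attains its minimum at $t = 1/3$ and is strictly increasing on $[1/3, \infty)$, with $g(\sqrt{2}-1) = 6 - 4\sqrt{2} = 2(\sqrt{2}-1)^2$. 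For $t \in [1/3, \sqrt{2}-1]$ this gives $\delta \le g(t) \le 2(\sqrt{2}-1)^2$; for $t < 1/3$ the case hypothesis $t > \delta$ already yields $\delta < 1/3 < 2(\sqrt{2}-1)^2$.

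The critical constant emerges naturally as the value of $g$ at the positive root of $t^2 + 2t - 1 = 0$, where the positivity constraint $\delta \ge 2t^2$ and the main hypothesis $\delta \le g(t)$ coincide. The only mildly subtle step is verifying the direction of the $\delta_2$-reduction in the first paragraph; everything else is routine algebraic manipulation, so I do not anticipate any serious obstacles.
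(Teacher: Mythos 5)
Your argument is correct and essentially complete. Note that the paper itself offers no proof of this fact --- it merely asserts that it ``can be verified by hand or with a computer'' (the source contains only a commented-out WolframAlpha query) --- so your elementary verification supplies something the paper omits rather than rederiving its argument. The reduction to $\delta_2=0$ is valid: the substitution $(\delta_1,\delta_2,\delta_3)\mapsto(\delta_1,0,\delta_2+\delta_3)$ preserves $\delta_1$ and $\delta$, and since both candidates for the maximum lie in $[0,1]$ (because $\delta_2+\delta_3/2\le\delta\le 1$), shrinking the maximum can only enlarge $(1-M)^2$. The subsequent one-variable analysis is sound, and the computation $g(\sqrt{2}-1)=6-4\sqrt{2}=2(\sqrt{2}-1)^2$ checks out, as does $1/3<2(\sqrt{2}-1)^2$. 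One pedantic point in the first case: $(3\delta-1)(\delta-1)\ge 0$ together with $\delta\le 1$ gives $\delta\le 1/3$ \emph{or} $\delta=1$, and you should dispose of $\delta=1$ explicitly --- for instance, in that case the hypothesis reads $\delta_1+1\le(1-1)^2=0$, which is absurd. With that one-line addition the proof is airtight.
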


\begin{proof}[Proof of \cref{lem:linkage-upper}]
	Given a $k$-graph $F$ on $m$ vertices and $\mu>0$, let $\eps,\,n$ with $1/m\,,\mu \gg \eps \gg 1/n$.
	The purpose of $\eps$ is to bound a few errors along the argument.
	Let $G$ be a $k$-graph with $\delta_{k-2}(G) = (\delta +\mu) \binom{n-(k-2)}{2}$ for a $\delta \in [0,1]$.
	Suppose that $G$ is not {$\Hom(F)$-covered}, which means that there is a vertex $x \in V(G)$ that is covered at least twice by every homomorphism $\phi \in \hom{F}{G}$ that covers $x$ at least once.
	So, in particular, there are no edges $e,\,f \in G$ such that $x \in f$, $x\notin e$ and $|f \cap e|=k-1$.
	Our goal is to show that this implies $\delta \leq  2(\sqrt{2}-1)^2$.
	
	Let $S$ be a $(k-2)$-set that contains $x$.
	We obtain a subgraph $G' \subset G$ by deleting edges containing $S$ until the degree of $S$ is precisely $\delta \binom{n-(k-2)}{2}$.
	In other words, the $2$-graph $L(S) = L_{G'}(S)$ contains $ \delta \binom{n-(k-2)}{2}$ edges.
	Let $y$ be a vertex of maximum degree in $L(S)$.
	Denote its neighbourhood by $N = N_{L(S)}(y)$ and let $|N| = \nu n$.
	By averaging, we deduce that
	\begin{equation}\label{equ:linkage-simple-bound-N}
		\nu \geq  \delta -\eps \,.
	\end{equation}
	
	Let $S'$ be the $(k-2)$-set obtained from $S$ by removing $x$ and adding $y$.
	Note that among the edges of $G$ that have been deleted to obtain $G'$, there are at most $n-k+1$ edges that contain both $S$ and $y$.
	So for $L(S') = L_{G'}(S')$, we still have
	\begin{equation}\label{equ:LS'}
		e(L(S')) \geq (\delta+\mu/2) \binom{n}{2} \,.
	\end{equation}

	{We claim that the edge sets of $L(S)$ and $L(S')$ are disjoint.}
	Indeed, an edge $uw$ in both $L(S)$ and $L(S')$ would contradict the assumption on $x$ with $S \cup \{u,w\}$, $S' \cup \{u,w\}$ playing the rôles of $e,f$ respectively.
	A similar argument shows that
	\begin{equation}\label{equ:linkage-L(S')}
		\text{the edges of $L(S')$ do not intersect with $N$.}
	\end{equation}
	
	We partition the edge set of $L(S)$ into parts $L_1$, $L_2$ and $L_3$ by setting $L_1 = L(S) - N$ and $L_2 = (L(S))[N]$ such that $L_3$ contains the edges leaving $N$.
	Let $0 \leq \delta_1,\delta_2,\delta_3 \leq 1$ such that $e(L_i) = \delta_i \binom{n-(k-2)}{2}$ for $1 \leq i \leq 3$.
	Note that {$\delta   =\delta_1 +\delta_2+\delta_3.$}

	We now derive another lower bound for $\nu$.
	Averaging reveals a vertex $u \in N$ with $\deg_{L(S)}(u) \geq |N|^{-1} \sum_{w \in N} \deg_{L(S)} (w)$.
	So by maximality of $y$, it follows that
	\[
	|N| = |N_{L(S)}(y)| \geq |N|^{-1}\sum_{w \in N} \deg_{L(S)} (w)\,.
	\]
	Moreover $\sum_{w \in N} \deg_{L(S)} (w) \geq (2\delta_2 + \delta_3) \binom{n-(k-2)}{2}$.
	Putting this together, we obtain $|N|^2 \geq (2\delta_2 + \delta_3) \binom{n-(k-2)}{2}$, which translates to 
	\begin{equation}\label{equ:linkage-advanced-bound-N}
		\nu \geq  \sqrt{\delta_2 + \delta_3/2 - \eps}\,.
	\end{equation}
	
	Recall that the edge sets of $L(S)$ and $L(S')$ are disjoint and $|N|=\nu n$.
	Together with observations~\eqref{equ:LS'} and~\eqref{equ:linkage-L(S')}, this gives
	\[
		\delta_1  \binom{n}{2} + (\delta+\mu/2)\binom{n}{2} \leq e(L_1 \cup L(S'))  \leq \binom{n-|N|}{2} \leq (1-\nu)^2  \binom{n}{2}\,.
	\]	
	In combination with the bounds \eqref{equ:linkage-simple-bound-N} and \eqref{equ:linkage-advanced-bound-N}, we obtain
	\[
		\delta_1 + \delta + \mu/2 \leq (1-\nu)^2 \leq \left(1-\max \left\{\sqrt{\delta_2 + \delta_3/2 - \eps},\,\delta -\eps\right\}\right)^2\,.
	\]
	By the choice of $\eps$, it follows that
	\[
	\delta_1 + \delta \leq \left(1-\max \left\{\sqrt{\delta_2 + \delta_3/2},\,\delta\right\}\right)^2\,.
	\]
	So \cref{fac:optimisation} implies that $\delta \leq 2(\sqrt{2}-1)^2$, as desired.
\end{proof}

\section{Transversal tilings -- Proofs}\label{sec:transversal-proof}

We prove \cref{thm:transversal-tiling} by transforming the transversal tiling problem into a `bipartite' matching problem for directed hypergraphs and then applying our (partite) framework.
To formalise this transition, we introduce some ad-hoc notation.

Consider a $k$-graph $F$ with $V(F)=\{v_1,\dots,v_{m}\}$ and $E(F) = \{e_1,\dots,e_h\}$.
Let $\cG=\{G_1,\dots,G_{hn}\}$ be a family of $k$-graphs on the same ${m}n$ vertices $V$.
Let $F'$ be the $(k+1)$-graph on vertex set $V(F) \cup E(F)$ with an edge $f \cup \{f\}$ for each $f \in E(F)$.
Let~$G'$ be the $(k+1)$-graph on vertex set $V \cup [hn]$ with an edge $e \cup \{j\}$ whenever $e \in E(G_j)$ for colour $1 \leq j \leq hn$.
We define $H(F;\cG)$ as the $({m}+h)$-digraph with an $({m}+h)$-edge $(\phi(v_1),\dots,\phi(v_{m}),\phi(e_1),\dots,\phi(e_h))$ for every homomorphism $\phi \in \hom{F'}{G'}$ with $\phi(V(F)) \subset V $ and $\phi(E(F)) \subset [hn]$.
Fissility in this setting follows along the lines of the proof of \cref{obs:hom-graph-fissile}.
We include the details for the sake of completeness.

\begin{claim}\label{cla:hom-graph-transversal-fissile}
	$H(F;\cG)$ is fissile. 
\end{claim}

\begin{proof}
	Set $H = H(F;\cG)$, and consider a blow-up $R(\cV) \subset H$ with a family $\cV=\{V_x\}_{x \in V(R)}$ and an ample $m$-edge $f \in R$.
	We show that $H$ contains an $f$-preserving {injective} edge.
	Let $e$ be a strictly $f$-preserving edge in $R(\cV)$.
	So $e$ corresponds to a homomorphism $\phi_e \in \hom{F'}{G'}$ with $\phi_e(V(F)) \subset V $ and $\phi_e(E(F)) \subset [hn]$ by definition of $H$.
	Let $\phi \in \hom{F'}{R}$ be the corresponding homomorphism, which maps each vertex $u \in V(F')$ to the vertex $x \in V(R)$ for which $\phi_e(u) \in V_x$.
	
	Now consider an edge $u_1 \dots u_{k+1}  \in F'$ with $u_i \in V(F)$ for $i \in [k]$ and  $u_{k+1} =  u_1 \dots  u_{k} \in E(F)$, and let $\phi(u_i) = x_i$ for $i \in [k+1]$.
	We claim that every set $\{w_1,\dots,w_{k+1}\}$ with $w_i \in V_{x_i}$ is an edge in $G'$.
	To see this, define a function $\phi' \colon V(F') \to V(G')$ by setting $\phi'(u) = w_i$ if $\phi_e(u) \in V_{x_i}$ and $\phi'(u) = \phi_e(u)$ otherwise.
	Note that $\phi'$ is in fact a homomorphism, since its corresponding $m$-tuple is strictly $f$-preserving in $R$ and $R(\cV) \subset H$.
	Since homomorphisms map edges to edges, it follows that $w_1 \cdots w_{k+1} \in G'$ as claimed.
	
	Since $f$ is ample, we  can thus select for each $x = f(j)$ with $j \in [m+h]$, a number of $|\phi^{-1}(x)| = \multi(x,f)  \leq |V_{x}|$ vertices in each cluster $V_{x}$ to find a copy of $F'$ in $G'$ on exactly these vertices.
	This results in an $f$-preserving {injective} edge in $H$.
\end{proof}

Recall the definition of proportionality from the discussion before \cref{obs:lattice-completeness}, and note that $H(F;\cG)$ is $\cU$-proportional for $\cU = \{V,[hn]\}$.
We remark that, crucially, every transversal perfect $F$-tiling of $\cG$ corresponds to a  perfect matching in $H(F;\cG)$ and vice versa.

To prove \cref{thm:transversal-tiling}, we define the following auxiliary thresholds.
For a $k$-graph $F$ on ${m}$ vertices and $h$ edges, let $\th_d^{}(\rspa_F)$, $\th_d^{}(\rdiv_F)$ and $\th_d^{}(\rcov_F)$ be the infimum over all $\delta \in [0,1]$ such that for all $\mu>0$ and $n$ large enough and every family of $k$-graphs $\cG=\{G_1,\dots,G_{hn}\}$ on the same ${m}n$ vertices $V$ with $\delta_d(G_j) \geq (\delta +\mu) \binom{mn-d}{k-d}$ for each $j \in [hn]$, the $({m}+h)$-digraph $H(F;\cG)$  satisfies $\spa$, $\pdiv(\cU)$ and $\cov$, respectively, where $\cU = \{V,[hn]\}$.
As before, we may use our framework to derive a threshold decomposition.

\begin{corollary}\label{cor:transversal}
	For every $k$-graph $F$ and $1\leq d < k$, we have
	\begin{equation*}
		\th_d^{}(\rtil_F) = \max \left\{	\th_d^{}(\rspa_F),\,	\th_d^{}(\rdiv_F),\,	\th_d^{}(\rcov_F) \right\}\,.
	\end{equation*}
\end{corollary}

Given this, \cref{thm:transversal-tiling} follows from the next proposition and \cref{thm:minimum-degree-thresholds}.

\begin{proposition}\label{pro:transversal-thresholds}
	For every $k$-graph $F$ and $1 \leq d < k$, we have
	\begin{align*}
		\th_d^{}(\rspa_F) &= \th_d^{}(\SpaF{F})\,, \\ 
		\th_d^{}(\rdiv_F) &= \max\left\{\th_d^{}(\div_F),\, \th_d^{}(\rmix_F)\right\}\,, \\ 
		\th_d^{}(\rcov_F) &= \max\left\{\th_d^{}(\cov_F),\, \th_d^{}(\rmix_F)\right\}\,.
	\end{align*}
\end{proposition}

The proof of \cref{cor:transversal} is fairly similar to the proof of \cref{thm:framework}.
Since the setting is somewhat unfamiliar, we spell out the details.

\begin{proof}[Proof of \cref{cor:transversal}] 
	For $2 \leq k \leq m$, let $F$ be a $k$-graph on $m$ vertices and $h$ edges.
	Set  $\delta = \max \left\{	\th_d^{}(\rspa_F),\,	\th_d^{}(\rdiv_F),\,	\th_d^{}(\rcov_F) \right\}$.
	Consider $\mu, \alpha,s',\eta, n$ with \[
	\frac{1}{m},\, \mu \gg  \alpha,\, \frac{1}{s'}  \gg \eta \gg \frac{1}{n'}\,.
	\]
	Set $m' = m+h$, $n' = m' n$ and $s' = m' s$.
	Let $\cG=\{G_1,\dots,G_{hn}\}$ be a family of $k$-graphs on the same ${m}n$ vertices $V$ with $\delta_d(G_j) \geq (\delta +\mu) \binom{{m}n-d}{k-d}$.
	Set $\cU = \{V,[hn]\}$.
	Our goal is to find a transversal a perfect $F$-tiling in $\cG$, which is equivalent to finding a perfect matching in $H = H(F;\cG)$.
	
	We begin by selecting a suitable absorption structure.
	For each $j \in [hn]$, let $P_j = \PG{G_j}{\P}{ms}$ be the property $ms$-graph for the $\P = \DegF{d}{\delta+\mu/2}$.
	So $P_j$ contains an edge $S$ if $\delta_d(G_j[S]) \geq (\delta+\mu/2) \binom{ms-d}{k-d}$.
	It follows by \cref{lem:inheritance-minimum-degree} applied with $m$, $mn$ playing the rôles of $q$, $n$ that $\delta_{m}(P_j) \geq  (1-e^{-\sqrt{ms}})  \tbinom{mn-m}{ms-m}$.
	Let $P$ be the $s'$-graph on vertex set $V(H)$, which contains a $\cU$-proportional $s'$-edge $S$ if $S \cap V \in P_j$ for every colour $j \in S \cap [hn]$.
	Consider a  $\cU$-proportional $m'$-set $X \subset V(H)$.
	So $X$ has $m$ vertices in $V$ and $h$ vertices in $[hn]$.
	By taking the intersection of the $ms$-graphs $P_j$ with $j \in X \cap [hn]$, we find that $\deg_P(X) \geq   (1-hs e^{-\sqrt{ms}})  \binom{n'-m'}{s'-m'}  \geq \mu  {n'}^{s'-m'}$.
 
	Consider an $S \in P$, and let $S' \subset S$ be obtained by deleting $m'$ vertices.
	Since $\mu$ is much larger than $1/s$, we have $\delta_d(G_j[S' \cap V]) \geq (\delta +\mu/4) \binom{{m}s-d}{k-d}$ for every $j \in S' \cap [hn]$.
	So $H[S']$ satisfies $  \div(\cU) \cap \cov $ by definition of $\delta$.
	It follows that~$H[S]$ satisfies $\Del_m(\div(\cU) \cap \cov)$.
	This allows us to apply \cref{prp:absorption-partite} with $n'$, $m'$, $2\eta$ playing the rôles of $n$, $m$, $2\eta$, which provides a set $A \subset V(H)$ with $|A| \leq \alpha n'$ such that $H[A \cup L]$ has a perfect matching for every $\cU$-proportional set $L \subset V(H-A)$ with $|L| \leq 2\eta n'$ divisible by~$m'$.
	Note that $A$ is itself $\cU$-proportional, which can be seen by taking $L=\es$.

	Next, we cover most of the vertices of $H-A$ with a matching.
	By the same argument as above (taking $hs$ intersections now), we deduce that $\delta_1(P) \geq   (1- hs e^{-\sqrt{ms}})  \binom{n'-1}{s'-1}$.
	Let $\cV$ be a random refinement of the partition~$\{V \sm A,[hn] \sm A\}$ into $s'$ parts of size $n''$.
	(In case there are divisibility issues, we temporarily add some dummy vertices.)	
	Let $P' \subset P$ contain the $\cV$-partite edges of $P$.
	A standard concentration argument shows that $\delta_1(P') \geq (1-1/s'+\mu) n''^{s'-1}$.
	Recall that $H[S]$ satisfies $\spa$ for each $S \in P$ by definition of $P$ and~$\delta$.
	We may therefore apply \cref{prp:almost-perfect-matching-partite} with $\rho=0$ to find a matching $M$ in $H$ covering all but $2\eta n'$ vertices (after accounting for the possible removal of dummy vertices). 
	
	It follows that the set of uncovered vertices $L = V(H) \sm (V(M) \cup A)$ is $\cU$-proportional and of size  $|L| \leq 2 \eta n'$ divisible by $m'$.
 	Owing to the absorption property of $A$, we may extend~$M$ to a perfect matching of $H$.
\end{proof}

It remains to show \cref{pro:transversal-thresholds}.

\begin{proof}[Proof of \cref{pro:transversal-thresholds}]
	Let $F$ be a $k$-graph on ${m}$ vertices and $h$ edges.
	For $\delta$, $\mu>0$ and $n$ large enough, consider a family $\cG=\{G_1,\dots,G_{hn}\}$ of $k$-graphs on the same $mn$ vertices~$V$ with $\delta_d(G_j) \geq (\delta +\mu) \binom{{m}n-d}{k-d}$ for every $j \in [hn]$.
	Let $H = H(F;\cG)$ and  $H_j = H(F;G_j)$ for each $j \in [hn]$.
	By the definition of homomorphisms,
  	\begin{equation}\label{obs:stepping-up}
		\begin{minipage}[c]{0.9\textwidth}
			$H$ contains $(v_1,\dots,v_{m},j,\dots,j)$ for every colour $j\in [hn]$ and edge $(v_1,\dots,v_{m})$ in $H_j$.
		\end{minipage}\ignorespacesafterend 
 	\end{equation}
	
	\subsubsection*{Space}
	
	We have to show that $H$ contains a perfect fractional matching under the assumption $\delta \geq \th_d({\spa_F})$.
	In this case each $H_j$ has a perfect fractional matching $w_j$.
	For an edge $f=(v_1,\dots,v_{m})$ in $H_j$, we set $e_f = (v_1,\dots,v_{m},j,\dots,j)$ and note that $e_f \in  H $ by observation~\eqref{obs:stepping-up}.
	It follows that for distinct colours $j,\ell \in [hn]$   and $v \in V$, we have $\sum_{f \in H_j} w_j(f) \multi(j,e_f) =hn$, $\sum_{f \in H_j} w_j(f) \multi(\ell,e_f) = 0$ and $\sum_{f \in H_j} w_j(f) \multi(v,e_f) =~1$.
	We can therefore define a perfect fractional matching $w' \colon H \to [0,1]$ by setting for each $e \in  H $, the weight $w'(e)$ to be the sum of $w_j(f)/(hn)$ over all  $j \in [hn]$ and $f \in  H_j $ with  $e=e_f$.
	
	\subsubsection*{Divisibility}
	
	We obtain the divisibility property via lattice completeness (\cref{sec:lattice-completeness}).
	Our goal is to show that the lattice $\cL (H)  \subset \INTS^{V \cup [hn]}$ is $\{V,[hn]\}$-complete given that $\delta$ is at least $\max\{\th_d^{}(\div_F),\, \th_d^{}(\rmix_F)\}$.
	By \cref{obs:lattice-completeness-partite}, it suffices to show that $\cL(H)$ contains the {transferral} $\vn_v - \vn_u$ for all elements $u,v$ coming from the same part of $\{V,[hn]\}$.
	We begin with the following observation.
	\begin{claim}\label{cla:transversal-divisibility}
		For every colour $j \in [hn]$ and $\mathbf {b} \in \cL(H_j) \subset~\INTS^{V}$ with $\sum_{v \in V} \mathbf {b}(v)=0$, there is  $\mathbf {b}' \in \cL(H) \subset \INTS^{V \cup [hn]}$ with $\mathbf {b'}(v)=\mathbf {b}(v)$ for all $v \in V$ and  $\mathbf {b}' (i)=0$ for each colour $i \in [hn]$.
	\end{claim}
	
	\begin{proofclaim}
		Without loss of generality, suppose that $j=1$.
		Write $\mathbf {b}= \sum_{e \in H_1}  c_e  \vn_{e}$.
		For $e=(v_1,\dots,v_m) \in H_1$, let $e' = (v_1,\dots,v_m,1,\dots,1)$ be obtained from $e$ by concatenating~$h$ times colour~$1$.
		So $ \vn_{e'}(1)  = h$.
		Given this, set $\mathbf {b}'= \sum_{e \in H_1}  c_e \vn_{e'}$.
		Note that $\mathbf{b}' \in \cL(H) $ by observation~\eqref{obs:stepping-up}.
		Clearly, $\mathbf {b}'(v)=\mathbf {b}(v)$ for all $v \in V$.
		Moreover, $\mathbf {b}'(i)=0$ for every colour $i \neq 1$.
		It remains to show that  $\mathbf {b}'(1)  = 0$.
		To this end, note that $\mathbf {b}'(1) =  \sum_{e \in H_1}  c_e  \vn_{e'}(1)  = \sum_{e \in H_1} c_e  h$.
		Moreover,
		\begin{equation*}
			\sum_{e \in H_1}  c_e  m = \sum_{e \in H_1}  c_e \sum_{v \in V} \vn_{e}(v)  = \sum_{v \in V} \sum_{e \in H_1}  c_e   \vn_{e}(v)  = \sum_{v \in V}    \mathbf {b}(v) = 0 \,.
		\end{equation*}
		Together, this gives $\mathbf {b}'(1)  = 0$ as desired.
	\end{proofclaim}
	 
	Now consider any two vertices $u,v \in V$.
	We argue that lattice $\cL(H_1) \subset  \INTS^{V}$ is complete.
	By \cref{obs:robust-int-matching-gives-lattices-completeness}, it suffices to show that $H_1 - D \in \div$ for every $m$-set $D \subset V(H_1)$.
	This follows by definition of $\delta \geq  \th_d^{}(\div_F)$, as $G_1 - D$ still has minimum $d$-degree at least $(\delta +\mu/2) \binom{{m}(n-1)-d}{k-d}$. 
	So  $\cL(H_1)$ is indeed complete and contains, in particular, the transferral $\mathbf b = \vn_v - \vn_u \in \INTS^{V}$.
	By \cref{cla:transversal-divisibility}, it follows that $\cL(H)$ therefore also contains the transferral $\mathbf b' =  \vn_v - \vn_u \in \INTS^{V \cup [hn]}$, as desired.
		
	To finish, we have to show that $\cL(H)$ contains the transferral between two arbitrary colours.
	Without loss of generality, it suffices to check that  $\vn_1 - \vn_{2} \in \cL (H)$.
	By the definition of $\th_d^{}(\rmix_F)$, there is an $({m}+h)$-edge $e_1 \in H$ with $\multi(1,e_1)=1$ and $\multi(3,e_1)=h-1$.
	Let~$e_2$  be the analogous $({m}+h)$-edge for colours $2$ and $3$.
	Then $\mathbf {b} ' = \vn_{e_1} - \vn_{e_2} \in \cL(H)$ satisfies $\vecb b'(1) = 1$, $\vecb b'(2) = -1$ and $\vecb b'(j) = 0$ for all $j \in [hn] \sm [2]$.
	Let $\vecb b   \in \INTS^{V}$ be the restriction of $\vecb b'$ to~$V$.
	So $\vecb b = \vn_{\phi_1(V(F))} - \vn_{\phi_2(V(F))}$, where $\phi_i$ is the homomorphism corresponding to $\vecb b_i$.
	Note that the lattice $\cL(H_3)$ is complete for the same reason that $\cL(H_1)$ is complete.
	It follows that $\vecb {b} \in \cL(H_3)$.
	Note that
	\begin{equation*}
	 	\sum_{v \in V} \vecb b (v) = \sum_{v \in V} \vecb b'(v)= \sum_{v \in V}  \vn_{e_1}(v) - \sum_{v \in V} \vn_{e_2}(v) = m - m  = 0\, .
	\end{equation*}
	Hence \cref{cla:transversal-divisibility} allows us to find a $\vecb b'' \in \cL(H)$ with $\mathbf {b}''(v)=\mathbf {b}(v) =\mathbf {b}'(v)$ for all $v \in V$ and  $\mathbf {b}'' (i)=0$ for each colour $i \in [hn]$.
	It follows that $\vn_1 - \vn_2 = \vecb b' - \vecb b''$ as desired.
	
	\subsubsection*{Cover}
	
	Finally, we verify the cover property for $\delta \geq \max\{\th_d^{}(\cov_F),\, \th_d^{}(\rmix_F)\}$.
	Consider first a vertex $v \in V.$
	Since $H_1$ is covered, there is an $m$-edge $f \in H_1$ with $\multi(v,f)=1$.
	By observation~\eqref{obs:stepping-up}, the $(m+h)$-edge $e_f$ satisfies $\multi(v,e_f)=1$.
	Next, consider an arbitrary colour, say colour $1$.
	By the definition of $\th_d^{}(\rmix_F)$, there is an $(m+h)$-edge $e_1 \in H$ with $\multi(1,e_1)=1$ and $\multi(2,e_1)=h-1$.
	So we are done.
\end{proof}

Recall from \cref{pro:komlos-kuhn-osthus-thresholds} that $\th_{1}^{}(\cov_F) \geq 1-{1}/(k-1)$.
Moreover, we have ${\th_{1}^{}(\til_F)\leq1/2}$ for $\chi(F)=2$ as mentioned after \cref{thm:kuhn-osthus}.
Hence, the following observation recovers \cref{thm:montgomery-muyesser-pehova} from \cref{thm:transversal-tiling}.

\begin{proposition}\label{obs:transversal}
	Let $F$ be a graph with $k =\chi(F) \geq 2$.
	\begin{itemize}
		\item If $F$ is bridgeless, then $\th_1^{}(\rmix_F) = \max\{1-1/(k-1),\, 1/2\}$.
		\item If $k=2$ and $F$ has a bridge, then $\th_1^{}(\rmix_F)=0$.
	\end{itemize}
\end{proposition}

\begin{proof}
	For $\delta \in [0,1]$, consider $n$-vertex graphs $G_j$ with minimum degree $\delta(G_j) \geq \delta n + 2$ for $j \in [2]$.
	First note that when $k=2$ and $F$ has a bridge $f$, we can easily map $f$ to an edge $uv$ of $G_1$ and the rest of $F$ into two edges of $G_2$, one attached to $u$ and one attached to $v$.
	Hence $\th_1^{}(\rmix_F)=0$.
	
	When $F$ has no bridge, we can assume that $\delta = \max\{1-1/(k-1),\, 1/2\}$.
	Note that in this case $G_1$ and $G_2$ must in particular share an edge $e$.
	(This is because every set of $k-1$ vertices has a common neighbour.)
	Moreover, it is not hard to see that every edge of $G_2$ is contained in a $k$-clique of $G_2$.
	This gives the upper bounds.
	
	For the lower bounds, it suffices to take $G_1,G_2,\dots$ as the Turán graph for~$k\geq 3$.
	For $k=2$, we can take the construction from \cref{sec:transversal}.
	More precisely, let $G_1=K_{n/2,n/2}$, and let the graphs $G_2,G_3,\dots$ be its complement.
	Then any copy of $F$ with an edge in $G_1$ must have at least two edges of $G_1$ as $F$ has no bridges, which makes a transversal perfect $F$-tiling impossible.
\end{proof}

\section{Ordered graphs -- Proofs}\label{sec:ordered-graphs-proofs}

In this section, we derive \cref{cor:threshold-decomposition-ordered} and use this to recover the work of Freschi and Treglown on perfect tilings in ordered graphs by deriving \cref{pro:thresholds-ordered}.
Recall from \cref{def:hom-graph-ordered} that $H^{\Ord}(F;G)$ is the subgraph corresponding to order-preserving homomorphisms, and thus a subgraph of the (standard) auxiliary digraph $H(F;G)$ from \cref{def:hom-graph}.
We begin with the following analogue of \cref{obs:hom-graph-fissile}, whose proof is almost identical.

\begin{observation}\label{obs:ordered-hom-graph-fissile}
	$H^{\Ord}(F;G)$ is fissile for ordered $k$-graphs $F$ and $G$.
\end{observation}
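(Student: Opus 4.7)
The plan is to adapt the proof of Observation \ref{obs:hom-graph-fissile} by exploiting the fact that suitability for $H^{\Ord}(F;G)$ forces a strong compatibility between the partition $\cB$ and the vertex ordering of $V(G)$. Fix $V(F) = \{w_1 < \dots < w_m\}$ and let $B \in \cB^m$ be a suitable tuple. The first step I propose is a structural claim: for any $1 \leq i < j \leq m$ with $B(i) \neq B(j)$, one has $\max B(i) < \min B(j)$ in the order on $V(G)$. Indeed, given $a \in B(i)$ and $b \in B(j)$, I would pick a representative $v_\alpha \in B_\alpha$ for each $\alpha$ with $v_{B(i)} = a$ and $v_{B(j)} = b$, and define $e'(k) = v_{B(k)}$. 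This $e'$ trivially satisfies the consistency condition in the definition of suitability, so $e' \in E(H^{\Ord}(F;G))$. The corresponding homomorphism is order-preserving, yielding $a = e'(i) \leq e'(j) = b$, and disjointness of the $B_\alpha$'s then upgrades this to $a < b$.

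The structural claim has two immediate consequences. First, each $B_\alpha$ occupies a contiguous block of positions in $B$: otherwise two blocks of some $B_\alpha$ would straddle a block of a different $B_\beta$, producing incompatible strict inequalities $\max B_\alpha < \min B_\beta$ and $\max B_\beta < \min B_\alpha$. Second, these contiguous blocks are arranged in the same linear order as the sets $B_\alpha$ are arranged as intervals in $V(G)$. Consequently, sorting $\bigcup_\alpha B_\alpha$ as $x_1 < x_2 < \dots < x_m$ yields $x_i \in B(i)$ for every $i$, so defining $e(i) := x_i$ produces a $B$-preserving, one-to-one, strictly increasing $m$-tuple. The associated map $\phi \colon w_i \mapsto e(i)$ is then automatically order-preserving.

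To conclude that $e \in E(H^{\Ord}(F;G))$, I would follow the unordered argument of Observation \ref{obs:hom-graph-fissile} \emph{mutatis mutandis}: for any edge $\{w_{i_1}, \dots, w_{i_k}\}$ of $F$, the vertices $w_{i_\ell}$ must lie in pairwise distinct $B$-classes (otherwise suitability would force the corresponding homomorphism to collapse this edge, contradicting that $G$ is a $k$-graph). One can therefore extend the partial assignment $e'(i_\ell) = e(i_\ell)$ to a tuple constant on $B$-classes, apply suitability to obtain $e' \in E(H^{\Ord}(F;G))$, and deduce $\{e(w_{i_1}), \dots, e(w_{i_k})\} \in E(G)$. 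I expect the main obstacle to be verifying the structural claim in the first paragraph; once that is in hand, the rest of the argument is essentially a relabeling of the proof of Observation \ref{obs:hom-graph-fissile}.
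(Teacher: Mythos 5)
Your proof is correct and follows the same approach as the paper's: derive the order-separation of the $B_\alpha$ from suitability combined with order-preservation, assemble the strictly increasing one-to-one tuple $e$ from the resulting contiguous block structure, and verify edge-by-edge (via suitability and the definition of $H^{\Ord}$) that $e$ corresponds to an order-preserving homomorphism. The paper compresses the order-separation step into ``we may assume the elements of $B_i$ are smaller than those of $B_j$ whenever $i<j$'' and implicitly uses that an edge of $F$ meets pairwise distinct $B$-classes; you make both of these checks explicit, which is exactly what the terse argument in the paper is relying on.
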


\begin{proof}
	Set $H = H^{\Ord}(F;G)$, and consider a blow-up $R(\cV) \subset H$ with a family $\cV=\{V_x\}_{x \in V(R)}$ and an ample $m$-edge $f \in R$.
	We show that $H$ contains an $f$-preserving {injective} edge.
	Let $e$ be a strictly $f$-preserving edge in $R(\cV)$.
	So $e$ corresponds to an order-preserving homomorphism $\phi_e \in \hom{F}{G}$ by definition of $H$.
	Let $\phi \in \hom{F}{R}$ be the corresponding order-preserving homomorphism, which maps each vertex $v \in V(F)$ to the vertex $x \in V(R)$ for which $\phi_e(v) \in V_x$.
	
	Now consider an edge $v_1\cdots v_k \in F$, and let $\phi(v_i) = x_i$ for $1 \leq i \leq k$.
	We claim that every set $\{w_1,\dots,w_k\}$ with $w_i \in V_{x_i}$ is an edge in $G$.
	To see this, define a function $\phi' \colon V(F) \to V(G)$ by setting $\phi'(v) = w_i$ if $\phi_e(v) \in V_{x_i}$ and $\phi'(v) = \phi_e(v)$ otherwise.
	Note that $\phi'$ is in fact an order-preserving homomorphism, since its corresponding $m$-tuple is strictly $f$-preserving in $R$ and $R(\cV) \subset H$.
	Since homomorphisms map edges to edges, it follows that $w_1 \cdots w_k \in G$ as claimed.
	
	Since $f$ is ample, we  can thus select for each $x = f(j)$ with $1 \leq j \leq m$, a number of $|\phi^{-1}(x)| = \multi(x,f)  \leq |V_{x}|$ vertices in each cluster $V_{x}$ to find a copy of $F$ in $G$ on exactly these vertices.
	This results in an $f$-preserving {injective} edge in $H$.
\end{proof}

We can now derive \cref{cor:threshold-decomposition-ordered} from \cref{thm:framework-exceptional} in the same way that we deduced \cref{thm:minimum-degree-thresholds} from \cref{thm:framework}.

\begin{proof}[Proof of \cref{cor:threshold-decomposition-ordered}]
	Given a $k$-graph $F$ on $m$ vertices and $1 \leq d \leq k-1$, set
	\[\delta = \max \big\{	\th_d^{}(\ospa_F),\,	\th_d^{}(\odiv_F),\,	\th_d^{}(\ocov_F)\,\big\}\,.\]
	Introduce constants with $1/{m},\,\mu \gg 1/r \gg \rho \gg 1/s \gg 1/n$ such that $r$ and $n$ are divisible by $m$.
	Set $\alpha = \mu/4$.
	Let $G$ be a $k$-graph on $n$ vertices with $\delta_d(G) \geq (\delta+\mu) \binom{n-d}{k-d}$.
	Set $H=H^{\Ord}(F;G)$, and note that $H$ is fissile by \cref{obs:ordered-hom-graph-fissile}.
	
	It follows by \cref{lem:inheritance-minimum-degree} that the property $r$-graph $P_r=\PG{G}{\DegF{d}{\delta+\mu/2}}{r}$ satisfies $\delta_1(P_r) \geq (1-1/r^2) \tbinom{n-1}{r-1}$.
	By the definition of $\delta$, the $m$-digraph $H[S]$ satisfies $\div \cap \cov$ for every $S \in P_r$.
	So, in particular, $\delta_1\left(\PG{H}{\div \cap \cov}{r}\right) \geq  \left(1-1/r^2 \right)  \binom{n-1}{r-1}.$
	
	Next, let $A \subset V(G)$ have size at most $\alpha n$.
	Let $P_s =\PG{H-A}{\P}{s}$ be the property $s$-graph with $\P = \DegF{d}{\delta+\mu/4}$.
	Since $\alpha = \mu /4$, we still have $\delta_d(G-A) \geq (\delta+\mu/2) \tbinom{\bar n-d}{k-d}$, where $\bar n = n-|A|$.
	By applying \cref{lem:inheritance-minimum-degree} to $P_s$, we thus obtain $\delta_1\left(\PG{H-A}{\SpaF{\rho}}{s}\right) \geq  \left( 1-1/s^2 \right)  \binom{\bar n-1}{s-1}$ in the same way as before. 
	So $G$ contains a perfect $F$-tiling by \cref{thm:framework-exceptional}.
\end{proof}

It remains to show \cref{pro:thresholds-ordered}.
We begin with the threshold for space.
The next lemma was shown by Balogh, Li and Treglown~\cite{BLT22}.
Here we give a simple proof avoiding the use of a Regularity Lemma.

\begin{lemma}\label{lem:ordered-space-threshold}
	Every ordered graph  $F$ satisfies $\th_{1}^{}(\ospa_F^{}) \leq  1-{1}/{{\chi^<_{\crit}(F)}}$.
\end{lemma}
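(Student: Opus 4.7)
The plan is to combine \cref{thm:komlos} with the bottlegraph property underlying the definition of $\chi^<_{\crit}(F)$. I would fix a bottlegraph $B$ for $F$ achieving $\chi_{\crit}(B) = \chi^<_{\crit}(F)$, set $r = \chi(B)$, and for each $\sigma \in \cS_r$ fix a constant $b_\sigma$ so that $B^\ast(b_\sigma)$ ordered by $\sigma$ admits a perfect $F$-tiling. Setting $b^\ast = \max_\sigma b_\sigma$, for given $\mu > 0$ I would pick $b$ large in terms of $r, b^\ast, 1/\mu$ and $n$ large in terms of $b$, and consider an $n$-vertex graph $G$ with $\delta_1(G) \geq \big(1 - 1/\chi^<_{\crit}(F) + \mu\big) n$.

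Since $\chi_{\crit}(B^\ast(b)) = \chi_{\crit}(B) = \chi^<_{\crit}(F)$, applying \cref{thm:komlos} with $B^\ast(b)$ in place of $F$ yields a $B^\ast(b)$-tiling $\cT$ in $G$ covering all but $(\mu/4)n$ vertices. Each $K \in \cT$ inherits a linear ordering from $G$, and the remaining task reduces to showing that every such ordered copy of $B^\ast(b)$ admits an order-preserving $F$-tiling covering all but a $\mu/4$-fraction of $V(K)$. Summing these $F$-tilings over $\cT$ then produces a matching in $H^{\Ord}(F;G)$ of size at least $(1-\mu)n/v(F)$, placing $H^{\Ord}(F;G)$ in $\SpaF{\mu}$ as required.

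The crux is the following sublemma: for every $b' \geq r! \cdot b^\ast$ and every linear ordering $\preceq$ of $V(B^\ast(b'))$, there exist $\sigma \in \cS_r$ and a sub-blow-up of the form $B^\ast(b_\sigma) \subset B^\ast(b')$ whose ordering under $\preceq$ matches $\sigma$. To prove it, I would partition $\preceq$ into $r$ consecutive intervals $I_1 \prec \cdots \prec I_r$ of length $b'$ each, and consider the matrix $M$ with $M_{ij} = |I_j \cap P_i|/b'$, where $P_1, \ldots, P_r$ are the parts of $B^\ast(b')$. Since all row and column sums of $M$ equal $1$, the Birkhoff--von Neumann theorem writes $M$ as a convex combination of permutation matrices with coefficients $\lambda_\sigma$ summing to $1$, so some $\sigma$ satisfies $\lambda_\sigma \geq 1/r!$. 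This yields $|I_j \cap P_{\sigma(j)}| \geq b'/r! \geq b_\sigma$ for each $j$, and selecting $b_\sigma$ vertices from each such intersection produces the desired ordered sub-blow-up.

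To conclude, I would iterate the sublemma inside each $K \in \cT$: the extracted ordered $B^\ast(b_\sigma)$ carries a perfect $F$-tiling by the bottlegraph property, and removing its vertices discards exactly $b_\sigma \leq b^\ast$ vertices from each of the $r$ parts, leaving a smaller complete multipartite blow-up to which the sublemma reapplies. Iterating until the remaining part size falls below $r! \cdot b^\ast$ leaves at most $r \cdot r! \cdot b^\ast \leq (\mu/4) v(K)$ vertices uncovered, provided $b$ was chosen sufficiently large in terms of $\mu, r$ and $b^\ast$. The hard part will be the sublemma: an arbitrary linear ordering of $V(B^\ast(b'))$ can interleave the parts in ways that defeat a naive greedy or pigeonhole search for the exact block pattern prescribed by $\sigma$, so extracting a heavy permutation from the doubly stochastic interval--part incidence matrix via Birkhoff--von Neumann is the essential new idea that replaces a Regularity-Lemma--based argument.
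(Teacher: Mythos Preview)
Your overall strategy matches the paper's: apply Komlós' theorem to cover almost everything with copies of a blow-up of an optimal bottlegraph $B$, then inside each copy iteratively extract a sub-blow-up whose $r=\chi(B)$ parts are totally ordered by some $\sigma$, and tile that via the bottlegraph property. The paper carries out the extraction step by colouring the $r$-cliques of each $B^\ast(b)$ according to the ordering type of their vertices and then applying \cref{thm:erd64} to the densest colour to obtain a complete $r$-partite subgraph with parts in the right proportions and a single ordering type.

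Your Birkhoff--von Neumann step, however, has a genuine gap. You partition $V(B^\ast(b'))$ into $r$ intervals of length $b'$ and assert that the $r\times r$ matrix $M_{ij}=|I_j\cap P_i|/b'$ is doubly stochastic. But $B^\ast(b')$ has $v(B)\cdot b'$ vertices and its parts satisfy $|P_i|=n_i b'$, where $n_1,\dots,n_r$ are the part sizes of $B$; so the row sums of $M$ are $n_i$, not $1$, and $r$ intervals of length $b'$ do not even cover $V(B^\ast(b'))$ unless $v(B)=r$, i.e.\ $B=K_r$. The whole point of $\chi^<_{\crit}(F)$ is that the minimising bottlegraph is typically unbalanced (this is what pushes $\chi_{\crit}(B)$ below $\chi(B)$), so assuming $B=K_r$ loses exactly the cases that matter. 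Refining to $v(B)$ mini-parts and applying Birkhoff--von Neumann there does yield a totally ordered system of mini-parts, but the resulting ordering may interleave the $r$ big parts, which the bottlegraph property (quantified over $\cS_r$, not $\cS_{v(B)}$) does not cover. The supersaturation route via \cref{thm:erd64} sidesteps this because it works directly on $r$-partite $r$-tuples and preserves the part proportions automatically.
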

 
\begin{proof}
	Introduce constants with $\mu \gg 1/b' \gg 1/b \gg 1/n$.
	Let $G$ be an $n$-vertex graph with $\delta_1(G) \geq (1- {1}/{{\chi^<_{\crit}(F)}} + \mu) n$.
	Let $B$ be an (unordered) $k$-partite bottle graph of $F$ with $\chi_{\crit} (B) = \chi^<_{\crit} (F)$.
	So $B$ is a complete $k$-partite unordered graph such that for every permutation $\sigma \in \cS_k$ the blow-up $B(b)$ ordered by $\sigma$ contains a perfect $F$-tiling.
	By  \cref{thm:komlos} we may cover all but $(\mu/2) n$ vertices of $G$ with copies of the blow-up $B(b)$.
	Let $B^\ast$ be one of these copies on vertex set $U$, whose partition we denote by $B_1,\dots,B_k$.
	We have to cover all but $(\mu/2)|U|$ vertices of $B^\ast$.\footnote{We may also directly find a fractional perfect matching in $H^{\Ord}(F;B)$ by a brief meditation on the definitions.
	}
	Colour the $k$-cliques induced in $B^\ast$ with one of~$k!$ colours depending on the ordering of their vertices.
	Using \cref{thm:erd64} on the most common colour, we can find sets $V_i \subset B_i$ with $|V_i|/|V| = |B_i|/|U|$ for $i \in [k]$ and $V=V_1 \cup \dots V_k$ such that $G[V]$ contains a copy of $B({b'})$ in which all $k$-cliques have the same colour.
	By assumption, we can tile this copy of $B({b'})$ with $F$.
	We continue in this fashion until \cref{thm:erd64} no longer applies.
	For sufficiently large $b$, this leaves at most $(\mu/2) |U|$ vertices of~$B^\ast$ uncovered.
\end{proof}

For the divisibility threshold of \cref{pro:thresholds-ordered}, we borrow the notion of `flexibility' introduced by Freschi and Treglown~\cite{FT22}.
Let $F$ be an ordered graph with $r = \chi^<(F)$. 
For $j \in \{0,1\}$, we say that $F$ is \emph{$j$-flexible} if, for every $i \in [r + j - 1]$, there exists an interval $(r + j)$-colouring $V_1, \dots , V_{r+j}$ of $F$ with a vertex $v \in V_i$ such that $V_1,\, \dots,\, {V_i \sm \{v\}},\, {V_{i+1}\cup\{v\}} ,\, \dots ,\,  V_{r+j}$ is an interval $(r+j)$-colouring of $F$.
It is easy to see that we are always guaranteed $1$-flexibility.

\begin{observation}\label{obs:flexible-simple}
	Every ordered graph has a $1$-flexible colouring.
\end{observation}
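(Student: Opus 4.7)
The plan is to construct the required interval $(r+1)$-colouring, for each $i \in [r]$ separately, by starting from any interval $r$-colouring $U_1,\dots,U_r$ of $F$ (which exists by definition of $r = \chi^<(F)$) and inserting an extra class at position $i+1$. Concretely, I would set $V_j = U_j$ for $j \leq i$, $V_{i+1} = \es$, and $V_{j+1} = U_j$ for $j > i$, so that $V_1,\dots,V_{r+1}$ is a valid interval $(r+1)$-colouring of $F$. The distinguished vertex is taken to be $v := \max U_i$, which exists because $U_i$ is non-empty.

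The flexibility condition is then immediate to verify: after removing $v$ from $V_i$ and adding it to $V_{i+1}$, the new $i$-th class $U_i \sm \{v\}$ is an initial segment of the interval $U_i$, hence still an interval, and remains independent as a subset of the independent set $U_i$; the new $(i+1)$-th class $\{v\}$ is a singleton, which is trivially an independent interval; and the remaining classes are unchanged. Therefore the modified partition $V_1,\dots,V_{i-1},\,U_i \sm \{v\},\,\{v\},\,V_{i+2},\dots,V_{r+1}$ is again an interval $(r+1)$-colouring of~$F$. Since $i \in [r]$ is arbitrary, this exhibits $1$-flexibility of $F$.

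The only subtle point, and the main (minor) obstacle, is the convention that a class in an interval $(r+j)$-colouring may be empty, as is the inserted $V_{i+1}$ above. This convention is implicit in the paper, since otherwise the statement would already fail for any $F$ with $|V(F)| = r$, such as the ordered clique~$K_r^<$, where every interval $r$-colouring uses singleton classes and no interval $(r+1)$-colouring with all parts non-empty exists. With empty classes allowed there is no real obstacle to the argument, which is why the observation is presented as easy.
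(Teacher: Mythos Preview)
Your proof is correct and essentially identical to the paper's. The paper also takes an interval $(r+1)$-colouring with $V_{i+1}$ empty (implicitly obtained just as you describe, by inserting an empty class into an interval $r$-colouring) and lets $v$ be the largest vertex of $V_i$; your discussion of the empty-class convention is a useful clarification that the paper leaves implicit.
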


\begin{proof}
	Let $F$ be an ordered graph with $r = \chi^<(F)$, and fix $i \in [r]$.
	By assumption, $F$ has an interval $(r+1)$-colouring $\psi$ with colour classes $V_1,\dots,V_{r+1}$, where $V_{i+1}$ is empty.
	Let $v$ be the largest vertex in $V_i$.
	Then $V_1 ,\, \dots ,\, V_i \sm \{v\} ,\, V_{i+1} \cup \{v\} ,\, \dots ,\, V_{r+1}$ also is an interval $(r+1)$-colouring of $F$, as desired.
\end{proof}

We can use this to obtain the following analogue of \cref{lem:connectivity-to-completeness}.
Recall the definition of $\ell$-connectivity from \cref{sec:completeness} and the definition of lattice completeness (\cref{sec:lattice-completeness}).

\begin{observation}\label{obs:connectivity-to-completeness-connected-simple}
	Let $F$ be a graph with $r = \chi^<(F)$, and  let $G$ be an $(r+1)$-connected ordered graph.
	Then $H^{\Ord}(F;G)$ has an integral perfect matching.
\end{observation}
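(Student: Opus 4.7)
The plan is to first show that within every single $(r+1)$-clique of $G$, the lattice already contains all transferrals between the vertices of the clique, and then to chain such local transferrals together using $(r{+}1)$-connectivity. The source of the local transferrals will be \cref{obs:flexible-simple}: the $1$-flexibility of $F$ provides, for every $i \in [r]$, a pair of interval $(r{+}1)$-colourings of $F$ that differ only by shifting a single vertex from colour $i$ to colour $i{+}1$.

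In detail, fix an $(r{+}1)$-clique $K \subset G$ with vertices $u_1 < u_2 < \dots < u_{r+1}$ (in the underlying linear order on $V(G)$). Each interval $(r{+}1)$-colouring $V_1,\dots,V_{r+1}$ of $F$ induces an order-preserving homomorphism $\phi \colon F \to K$ by sending $V_j$ to $u_j$; this is well-defined as a homomorphism precisely because the colouring is proper. Its indicator vector $\vn_\phi \in \INTS^{V(K)}$ takes value $|V_j|$ at $u_j$. Now, given $i \in [r]$, applying $1$-flexibility produces two such colourings $\psi_i, \psi_i'$ differing only by moving one vertex $v$ from colour $i$ to colour $i{+}1$; the corresponding homomorphisms $\phi_i, \phi_i'$ satisfy
\[
\vn_{\phi_i'} - \vn_{\phi_i} = \vn_{u_{i+1}} - \vn_{u_i}.
\]
Hence $\cL(H^{\Ord}(F;K))$ contains every transferral $\vn_{u_{i+1}} - \vn_{u_i}$ for $i \in [r]$, and therefore, by telescoping, every transferral $\vn_{u_j} - \vn_{u_i}$ with $i,j \in [r+1]$.

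For the global step, let $u,v \in V(G)$ be arbitrary. Since $G$ is $(r{+}1)$-connected, $K_{r+1}(G)$ contains a path from $u$ to $v$; unpacking the definition, this yields a sequence $u = w_0, w_1, \dots, w_t = v$ such that each consecutive pair $w_{j-1}, w_j$ lies in a common $(r{+}1)$-clique $K_j$ of $G$. By the local step, $\vn_{w_j} - \vn_{w_{j-1}} \in \cL(H^{\Ord}(F;K_j)) \subset \cL$ for each $j$, and summing gives $\vn_v - \vn_u \in \cL$. Thus $\cL$ is complete.

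The only real content is the local step, and even there no obstacle arises: $1$-flexibility is tailor-made to produce exactly the two homomorphisms needed for a single elementary transferral, and the chaining via $(r{+}1)$-connectivity is formally identical to the unordered argument in \cref{lem:connectivity-to-completeness}.
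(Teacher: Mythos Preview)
Your proof is correct and follows essentially the same approach as the paper's: use $1$-flexibility (\cref{obs:flexible-simple}) to produce the elementary transferrals $\vn_{u_{i+1}} - \vn_{u_i}$ inside a single $(r{+}1)$-clique, telescope to get all transferrals within that clique, and then chain along $(r{+}1)$-cliques via connectivity. The paper's version is terser but structurally identical.
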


\begin{proof}
	Suppose without loss of generality that $[r+1]$ induces a clique in $G$.
	Fix an $i \in [r]$.
	By \cref{obs:flexible-simple}, we may find edges $e,e' \in H^{\Ord}(F;G)$ such that $\vn_{e} - \vn_{e'} = \vn_{i} - \vn_{i+1}$.
	So the lattice $\cL =\cL(H^{\Ord}(F;G))$ contains $\vn_{i} - \vn_{i+1}$, and by extension $\vn_{i} - \vn_{j}$ for every $j \in [r+1]$.
	Since~$G$ is $(r+1)$-connected, we may navigate along the $(r+1)$-cliques to discover that $\cL$ contains, in fact, all transferrals and is thus complete by \cref{obs:lattice-completeness}.
	It follows in particular that $H^{\Ord}(F;G)$ has an integral perfect matching.
\end{proof}

The next lemma due to Freschi and Treglown~\cite{FT22} gives a criterion for $0$-flexibility and is key to showing the divisibility part of \cref{pro:thresholds-ordered}.
Its proof is short and self-contained, so there is no need to reproduce it here.

\begin{lemma}[{Ordered Downspin Lemma~\cite[Corollary 8.6]{FT22}}]\label{lem:flexible}
	Let $F$ be an ordered graph.
	If $\chi^<_{\crit} (F) < \chi_{\crit} (F)$, then $F$ has a $0$-flexible colouring.
\end{lemma}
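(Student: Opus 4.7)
The plan is to extract a bottlegraph $B$ for $F$ realising $\chi_{\mathrm{crit}}(B) < \chi_{\mathrm{crit}}(F)$ and exploit the tileability of its blow-ups under \emph{every} ordering to manufacture the required flexibility. Write $r = \chi^<(F)$ and $k = \chi(B)$; since $B^\ast(b)$ admits a perfect order-preserving $F$-tiling for every permutation $\sigma$ of its parts (and some $b$ possibly depending on $\sigma$), the strict inequality in the hypothesis gives us a genuine choice in how $F$-tiles are laid out, and this is what I plan to convert into $0$-flexible colourings.

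First I would unpack the hypothesis. Since $\chi_{\mathrm{crit}}(B) = (k-1)/(1-\tau(B))$ and the natural bottlegraph of $F$ attains $\chi_{\mathrm{crit}}(F)$, the inequality $\chi_{\mathrm{crit}}(B) < \chi_{\mathrm{crit}}(F)$ guarantees strict slack: either $k > \chi(F)$, or $k = \chi(F)$ and $\tau(B) > \tau(F)$. In either case, when one orders $B^\ast(b)$ so that the $k$ parts appear as consecutive intervals of lengths $\beta_1 b, \dots, \beta_k b$, each order-preserving $F$-tile inherits an interval colouring; and by averaging over the tiles, $F$ itself has an interval $k$-colouring with relative part sizes matching $(\beta_1,\dots,\beta_k)$ (up to rounding). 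In particular $r \leq k$.

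Next, for each target boundary $i \in [r-1]$, I would fix a permutation~$\sigma$ that places the two parts of $B$ contributing to slots $i$ and $i+1$ (after rounding) adjacently. Consider two orderings $\sigma, \sigma'$ of $B^\ast(b)$ that agree everywhere except that a single vertex is moved across the $i\,\vert\,i{+}1$ boundary; both blow-ups are tileable by the bottlegraph property, possibly after adjusting $b$ by $\pm 1$ in two parts. A tile-by-tile comparison of the two order-preserving $F$-tilings must exhibit an $F$-tile whose embedding differs in exactly one vertex's interval class. Pulling this tile back to $F$ yields an interval $r$-colouring $V_1,\dots,V_r$ and a vertex $v \in V_i$ such that the shift $V_i \setminus \{v\},\, V_{i+1} \cup \{v\}$ is still an interval $r$-colouring, i.e.\ the $0$-flexibility at index $i$.

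The main obstacle will be step~3: a single $F$-tile only gives an interval colouring with at most $k \geq r$ classes, so the colouring we read off may not naturally have exactly $r$ classes, and the shift may occur at a finer boundary than the one we want. To handle this, I would merge consecutive classes to coarsen the colouring down to $r$ classes while preserving the shiftable vertex, verifying that the merging respects the interval structure and that the shifted vertex still has no neighbours in the new $V_{i+1}$. A secondary subtlety is choosing $\sigma$ so that the swapped vertex lies in the right pair of parts of $B$ mapping to interval slots $i$ and $i+1$ of $F$; this should follow from a simple counting argument using the part ratios $(\beta_1,\dots,\beta_k)$, since any boundary in the interval $r$-colouring of~$F$ must be realised at some boundary of $B$.
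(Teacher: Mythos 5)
The paper does not actually prove this lemma — it cites it as \cite[Corollary~8.6]{FT22} with the remark that the original proof ``is short and self-contained, so there is no need to reproduce it here.'' So there is no paper proof to compare against; your proposal must be judged on its own.

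The central step — ``a tile-by-tile comparison of the two order-preserving $F$-tilings must exhibit an $F$-tile whose embedding differs in exactly one vertex's interval class'' — is the crux of the argument, and it is not justified. Two perfect $F$-tilings of two different ordered blow-ups are completely unrelated collections of $F$-copies: there is no canonical bijection between their tiles, and nothing forces any pair of tiles, from either tiling, to read off interval colourings differing at a single vertex. The averaging-over-tiles idea in the preceding paragraph is sound and does give you interval $r$-colourings with different sizes for $|V_i|$, but the jump from ``two colourings with different boundary positions exist'' to ``there is a single colouring where boundary $i$ can be shifted by exactly one'' is precisely what has to be proved, and it is nontrivial: in general two valid boundary sequences can differ at several positions simultaneously, and there is no automatic chain of single-step moves between them. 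Relatedly, ``moving a single vertex across the $i\mid i{+}1$ boundary, possibly after adjusting $b$ by $\pm 1$ in two parts'' is not sanctioned by the bottlegraph definition: $B^\ast(b)$ scales every part by the same $b$, so you cannot perturb two interval lengths by $\pm 1$ while staying in the family of blow-ups for which tileability is guaranteed. Comparing $\sigma$ with $\sigma' = \sigma\circ(i,\,i{+}1)$ swaps two whole intervals, not a single vertex.

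There are also two smaller slips in the ``unpacking'' paragraph. From $\chi_{\crit}(B)=\chi^<_{\crit}(F)<\chi_{\crit}(F)\le\chi(F)\le\chi^<(F)=:r$ together with $\chi_{\crit}(B)\ge k-1$ one gets $k\le r$, and the bottlegraph property forces $r\le k$, so $k=\chi(F)=\chi^<(F)$ always; the case $k>\chi(F)$ does not occur, and the ``coarsening'' obstacle you flag at the end is moot. Moreover, in the case $k=\chi(F)$ the inequality $(k-1)/(1-\tau(B))<(\chi(F)-1)/(1-\tau(F))$ gives $\tau(B)<\tau(F)$, not $\tau(B)>\tau(F)$. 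What the hypothesis really buys you is $\tau(B)<1/k$, i.e.\ $B$ has parts of \emph{unequal} sizes, and that is the slack one must exploit. A workable route from there (still requiring care) is to fix $\sigma$ and $\sigma'=\sigma\circ(i,\,i{+}1)$ with $a_{\sigma(i)}\neq a_{\sigma(i+1)}$, compare the \emph{averages} of $|V_i|$ over the tiles of each tiling to produce two interval $r$-colourings with distinct $i$-th boundary, and then supply a genuine structural argument — e.g.\ exploiting that the set of valid boundary sequences is closed under coordinate-wise min and max — to contract the difference down to a single unit shift at position $i$. That last reduction is the missing content of your proposal.
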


Given this, we may derive the following result following along the lines of the proof of \cref{obs:connectivity-to-completeness-connected-simple}.
We omit the details.

\begin{lemma}\label{lem:connectivity-to-completeness-connected}
	Suppose $F$ is an ordered graph with $r=\chi^<(F)$ and a $0$-flexible colouring, and suppose $G$ is an $r$-connected ordered graph.
	Then $H^{\Ord}(F;G)$ has an integral perfect matching.
\end{lemma}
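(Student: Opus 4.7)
The plan is to adapt the argument from \cref{obs:connectivity-to-completeness-connected-simple} almost verbatim, replacing $1$-flexibility with $0$-flexibility and $(r+1)$-connectivity with $r$-connectivity. As before, the goal is to exhibit, inside any single $r$-clique of $G$, enough edges of $H^{\Ord}(F;G)$ to force the transferrals between the $r$ clique-vertices to lie in $\cL:=\cL(H^{\Ord}(F;G))$, and then propagate this along $r$-cliques using $r$-connectivity.

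Concretely, I would begin by using $r$-connectivity of $G$ to pick an $r$-clique in $G$; after relabelling, assume its vertex set is $[r]$ in the natural order. Then, for each fixed $i\in[r-1]$, the hypothesis of $0$-flexibility provides an interval $r$-colouring $V_{1},\dots,V_{r}$ of $F$ and a vertex $v\in V_{i}$ such that $V_{1},\dots,V_{i}\setminus\{v\},V_{i+1}\cup\{v\},\dots,V_{r}$ is again an interval $r$-colouring. Mapping each $V_{j}$ to the vertex $j$ of the clique yields an order-preserving homomorphism $\phi\colon F\to G[[r]]$, and the modified colouring yields a second order-preserving homomorphism $\phi'$ which differs from $\phi$ only at $v$ (sending $v$ to $i+1$ instead of $i$). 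Both are order-preserving precisely because interval colourings respect the vertex order and because the target vertex labels $1,\dots,r$ are also ordered. The two resulting edges $e,e'\in E(H^{\Ord}(F;G))$ satisfy $\vn_{e}-\vn_{e'}=\vn_{i}-\vn_{i+1}$, so $\cL$ contains each $\vn_{i}-\vn_{i+1}$ for $i\in[r-1]$, and hence every transferral $\vn_{u}-\vn_{v}$ with $u,v\in[r]$.

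To finish, I would navigate along $r$-cliques as in \cref{obs:connectivity-to-completeness-connected-simple}: by definition, $r$-connectivity of $G$ means that $K_{r}(G)$ is connected, so any two vertices $u,w\in V(G)$ are joined by a sequence of $r$-cliques in which consecutive members share at least one vertex. Writing $\vn_{u}-\vn_{w}$ as a telescoping sum of transferrals along such a chain, each summand is in $\cL$ by the previous paragraph, so $\vn_{u}-\vn_{w}\in\cL$ as well, giving completeness.

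The main obstacle — and the reason the lemma is not quite immediate from \cref{obs:connectivity-to-completeness-connected-simple} — is that in the $1$-flexible case one has an extra colour class $V_{r+1}$ available, which automatically guarantees an order-preserving extension; with only $0$-flexibility one must verify carefully that the two interval $r$-colourings provided by the hypothesis give genuinely \emph{order-preserving} homomorphisms into the $r$-clique on $[r]$. This is where the precise formulation of $0$-flexibility (an interval colouring whose swap is again an interval colouring) is used, and it is essentially the only non-mechanical step of the proof.
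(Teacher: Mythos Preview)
Your proposal is correct and is exactly the approach the paper intends: the paper explicitly says to derive the lemma ``following along the lines of the proof of \cref{obs:connectivity-to-completeness-connected-simple}'' and omits the details, which are precisely what you have written. Your final paragraph slightly overstates the difficulty---both maps are order-preserving immediately from the definition of interval colouring, so there is no genuine obstacle---but the argument itself is sound.
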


Now we are ready to bound the threshold for divisibility defined just after \cref{def:hom-graph-ordered}.

\begin{lemma}
	Let $F$ be an ordered graph with $k = \chi^<(F)\geq 3$.
	Then
	\begin{align*} 
		\th_{1}^{}(\odiv_F)  &\leq \begin{cases}
			1- {1}/{(k-1)} & \text{if $ \chi^<_{\crit}(F)< \chi^<(F)$}\,,
			\\ 1- {1}/{k} & \text{otherwise}. 
		\end{cases}
	\end{align*}
\end{lemma}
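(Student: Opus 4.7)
The plan is to derive the requisite clique-connectivity of $G$ from the minimum degree hypothesis in each case, and then apply one of the two flexibility-based completeness results from \cref{sec:completness} (either \cref{obs:connectivity-to-completeness-connected-simple} or \cref{lem:connectivity-to-completeness-connected}) to conclude that $H^{\Ord}(F;G) \in \div$.

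The key preliminary observation is the following. If $\delta_1(G) \geq (1 - 1/t + \mu)n$ for some integer $t \geq 2$ and $\mu > 0$, then any $t$ vertices of $G$ share a common neighbour, since the union of their non-neighbourhoods has size at most $t(1/t - \mu)n < n$. As $t \geq 2$ also gives $\delta_1(G) > n/2$, every pair of vertices has a common neighbour, and iterating one extends every edge of $G$ to a $(t+1)$-clique. Finally, any two non-adjacent vertices $u, v$ share a common neighbour $w$, and the $(t+1)$-cliques containing $uw$ and $wv$ meet in $w$. This shows that $G$ is $(t+1)$-connected in the sense of \cref{sec:completness}.

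Given this, the two cases are immediate. In the first case, $\chi^<_{\crit}(F) < \chi^<(F)$ and $\delta_1(G) \geq (1 - 1/(k-1) + \mu)n$ with $k = \chi^<(F) \geq 3$; taking $t = k-1$ yields that $G$ is $k$-connected. Since \cref{lem:flexible} supplies a $0$-flexible colouring of $F$, \cref{lem:connectivity-to-completeness-connected} gives that $\cL(H^{\Ord}(F;G))$ is complete, i.e.\ $H^{\Ord}(F;G) \in \div$. In the second case, $\delta_1(G) \geq (1 - 1/k + \mu)n$, so taking $t = k$ shows $G$ is $(k+1)$-connected. Then \cref{obs:connectivity-to-completeness-connected-simple}, whose $1$-flexibility hypothesis is automatic by \cref{obs:flexible-simple}, again gives completeness of the lattice.

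The only genuinely difficult ingredient here is the Ordered Downspin Lemma (\cref{lem:flexible}): it is this result that lets us trade one unit of clique-connectivity for the weaker degree bound $1 - 1/(k-1)$ in the first case by upgrading $1$-flexibility to $0$-flexibility. Everything else --- the clique-extension computation and the invocations of the completeness lemmas --- is short and routine, so I expect no further technical obstacle.
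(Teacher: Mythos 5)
Your proof is correct and takes essentially the same route as the paper: derive $(r+1)$-clique connectivity from the degree hypothesis (with $r = \chi^<(F)-1$ or $r = \chi^<(F)$ depending on the case), then invoke \cref{lem:flexible} together with \cref{lem:connectivity-to-completeness-connected} in the first case, and \cref{obs:connectivity-to-completeness-connected-simple} in the second. You spell out the clique-connectivity deduction in slightly more detail than the paper does, but the argument is the same.
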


\begin{proof}
	Suppose that $G$ is an $n$-vertex ordered graph with $\delta_1(G) > (1-1/r)n$ for an integer $r \geq 2$ and $n$ large enough.
	Consider an arbitrary edge $ij$ in $G$.
	It is easy to see that $ij$ is contained in a $K_{r+1}$-clique.
	(Every set of $r$ vertices has a common neighbour.)
	Since $r\geq2$, this implies in particular that $G$ is $(r+1)$-connected.
	Now if $r=\chi^<(F)-1$ and $\chi^<_{\crit}(F)< \chi^<(F)$, we are done by \cref{lem:flexible,lem:connectivity-to-completeness-connected}.
	On the other hand, if $r=\chi^<(F)$, then we are done by \cref{obs:connectivity-to-completeness-connected-simple}.
\end{proof}

Lastly, we bound the threshold for the covering part of \cref{pro:thresholds-ordered}.

\begin{lemma}
	Let $F$ be an ordered graph.
	Then
	\begin{align*}
		\th_{1}^{}(\ocov_F) &\leq \begin{cases}
			1-\frac{1}{\chi^<(F)-1} & \text{if $F$ is an ordered cone,} 
			\\ 1-\frac{1}{\chi^<(F)} & \text{otherwise}.
		\end{cases}
	\end{align*}
\end{lemma}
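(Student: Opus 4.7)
The plan is to exhibit, for every $v \in V(G)$, an order-preserving homomorphism $\phi \in \hom{F}{G}$ with $|\phi^{-1}(v)| = 1$; this is exactly what $H^{\Ord}(F;G) \in \cov$ asks for, and therefore gives the claimed bound on $\th_{1}^{}(\ocov_F)$. Throughout, write $r = \chi^<(F)$.

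For the non-cone case, assume $\delta_1(G) \geq (1 - 1/r + \mu)n$. A Turán-type calculation, analogous to the one in the divisibility lemma just above, shows that every vertex $v$ lies in some ordered $(r+1)$-clique $K = \{u_1 < \dots < u_{r+1}\}$ in $G$, with $v = u_i$ for some $i \in [r+1]$: the neighbourhood $N(v)$ has relative minimum degree strictly above $1 - 1/(r-1)$, so it contains an $r$-clique by Turán. It then suffices to produce an interval $(r+1)$-colouring $V_1, \dots, V_{r+1}$ of $F$ with $|V_i| = 1$ and to set $\phi(w) = u_l$ for each $w \in V_l$: the resulting $\phi$ is order-preserving, respects edges because each $V_l$ is independent in $F$ and $K$ is a clique in $G$, and satisfies $|\phi^{-1}(v)| = |V_i| = 1$. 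Such a colouring is constructed by starting from any interval $r$-colouring $U_1, \dots, U_r$ of $F$ and splitting off one extremal vertex of $U_i$ (or $U_{i-1}$, or $U_r$) as a singleton at the $i$-th slot, with at most one of the neighbouring slots possibly empty.

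For the cone case, assume $\delta_1(G) \geq (1 - 1/(r-1) + \mu)n$ with $r \geq 2$. The analogous Turán-type calculation now only produces an $r$-clique $K' = \{u_1 < \dots < u_r\}$ containing $v = u_{i'}$. The key extra step is to augment $K'$: the common neighbourhood $\bigcap_{l \neq i'} N_G(u_l)$ has size at least $(r-1)\mu n$ by a standard inclusion-exclusion bound, so some $w \notin K'$ lies in it. Sort $K' \cup \{w\}$ into $\{u_1^* < \dots < u_{r+1}^*\}$ with $v = u_i^*$ and $w = u_j^*$; since $w \neq v$ we have $i \neq j$. The ordered cone hypothesis then supplies an interval $(r+1)$-colouring $V_1, \dots, V_{r+1}$ of $F$ with $|V_i| = 1$ and no edge between $V_i$ and $V_j$. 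Setting $\phi(z) = u_l^*$ for $z \in V_l$ yields the desired homomorphism: every pair of positions in $\{u_1^*, \dots, u_{r+1}^*\}$ other than $\{u_i^*, u_j^*\} = \{v, w\}$ is an edge of $G$ (the edges inside $K'$ come from the clique, and the edges $w u_l$ for $l \neq i'$ from the choice of $w$ in the common neighbourhood), and this single possibly missing adjacency is precisely the one not demanded by the colouring.

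The main obstacle in both cases is the positional arithmetic: matching the rank of $v$ inside the clique tuple (and, in the cone case, also the rank of the auxiliary vertex $w$) to an interval colouring of $F$ with the required singleton placement. In the non-cone case this reduces to a routine splitting construction from an interval $r$-colouring of $F$; in the cone case it is handled by the cone hypothesis, which is engineered to accommodate every pair of distinct positions $(i, j)$ that can arise from inserting $w$ into $K'$.
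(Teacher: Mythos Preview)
Your proof is correct and follows essentially the same approach as the paper: in both cases you build an ordered $(\chi^<(F)+1)$-tuple containing the target vertex $v$ in which all pairs except possibly one are $G$-edges, and then map the classes of a suitable interval $(\chi^<(F)+1)$-colouring of $F$ onto this tuple. The only difference is organisational --- the paper constructs this near-clique once (as an ``$(r+2)$-set $S$ with all edges but possibly $ij$'') and then specialises, whereas you treat the cone and non-cone cases separately; your Tur\'an-style justification for finding the initial clique is equivalent to the paper's common-neighbour iteration.
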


\begin{proof}
	Given $\mu > 0$ and $r\geq 2$, suppose that $G$ is an $n$-vertex ordered graph with $\delta_1(G) \geq (1-1/r +\mu)n$  for $n$ large enough.
	Fix a vertex $i \in [n]$.
	Our goal is to find an order-preserving homomorphism $\phi \in \hom{F}{G}$ that covers $i$ exactly once.
	By the minimum degree condition, every set of $r$ vertices must have a common neighbour.
	Hence, there is a vertex $j \neq i$ and an $(r+2)$-set $S \subset V(G)$ with $i,j \in S$ such that $G[S]$ contains all edges but (possibly) $ij$.
	Without loss of generality, we can assume that $S$ equals $[r+2]$.
	
	We first handle the case where $F$ is an ordered cone and $r=\chi^<(F)-1$.
	So there is an interval $(r+2)$-colouring of $F$ with colour classes $V_1,\dots,V_{r+2}$ such that $V_i$ is a singleton class, and there is no edge between $V_i$ and $V_j$ in $F$.
	Hence we can define $\phi$ by sending the vertices of~$V_k$ to $k$ for each $k \in [r+2]$.
	
	Now suppose that $r=\chi^<(F)$.
	In this case, we actually do not use the vertex $j$.
	Without loss of generality, we may therefore assume that $j=r+2$.
	By the definition of $r$, the graph $F$ has an interval $(r+1)$-colouring with colour classes $V_1,\dots,V_{r+1}$, where $V_i$ is empty.
	We obtain a new interval colouring by taking one vertex from $V_{i+1}$ (or $V_{i-1}$ if $i=r+1$) and adding it to $V_i$.
	Hence we can define $\phi$ by sending the vertices of $V_k$ to $k$ for each~$k \in [r+1]$.
\end{proof}

\section{Uniform density -- Proofs} \label{sec:quasirandomness-proofs}
 
The goal of this section is to show \cref{thm:Ding-Han-Sun-Wang-Zhou}.
We begin by showing \cref{cor:uniformly-dense}.
To apply our framework, we require a suitable Inheritance Lemma.
For stronger forms of uniform density, such lemmas have been studied by Mubayi and Rödl~\cite{MR04}, Alon, Fernandez de la Vega, Kannan and Karpinski~\cite{AFKK02} as well as Czygrinow and Nagle~\cite{CN11}.
Related considerations can be found in the work of Conlon, H{\`a}n, Person and Schacht~\cite{CHP+12} and Kohayakawa, Nagle, R{\"o}dl and Schacht~\cite{KNR+10}.
In combination with a standard application of the Weak Hypergraph Regularity Lemma, we obtain the following Inheritance Lemma for uniform density.
Since it is not clear how widely known this fact is~\cite{DHS+22}, we present a proof in \cref{sec:inheritance-lemma-uniformly-dense}.

\begin{lemma}[Inheritance Lemma for uniform density]\label{lem:inheritance-uniformly-dense}
	For $1/k,\,1/{q},\,d \gg \eps' \gg \eps,\, 1/s \gg 1/n$, let $G$ be a $k$-graph on $n$ vertices satisfying $\DenF{\eps}{d } $.
	Then the property $s$-graph $P=\PG{G}{\DenF{ \eps'}{d}}{s}$  satisfies $\delta_{q}(P) \geq  (1- e^{- s^{1/(2k)}}  )  \tbinom{n-q}{s-q}$.
\end{lemma}

In combination with \cref{thm:framework-exceptional}, we may derive a property decomposition for perfect tilings under uniform density.

\begin{proof}[Proof of \cref{cor:uniformly-dense}]
	Consider a $k$-graph $F \in \cF({\spa}) \cap \cF({\div}) \cap \cF({\cov})$ with $m = v(F) \geq k$.
    Introduce constants with $1/{m},\,d,\,\mu \gg \eps'' \gg 1/r \gg  \rho \gg \eps' \gg \eps \gg  1/s \gg 1/n$ with~$r$ and $n$ divisible by $m$ as well as $\alpha = \mu/4$, let $G$ be an $n$-vertex $k$-graph in $\DenF{\eps}{d} \cap \DegF{1}{\mu}$.
    Set $P_r =\PG{G}{\DenF{\eps''}{d} \cap \DegF{1}{\mu/2}}{r}$.
   	We apply \cref{lem:inheritance-minimum-degree,lem:inheritance-uniformly-dense} with $m$ playing the rôle of $q$ to find that $\delta_m(P_r) \geq  (1- 2e^{- r^{1/(2k)}} ) \tbinom{n-m}{r-m}$.
   	Set $H = H(F;G)$.
	By the definition of  $\cF({\cov})$ and $\cF({\div})$, the $m$-digraph $H[S]$ satisfies $\div \cap \cov$ for every edge $S \in  P_r$.
	We deduce that $\delta_1\left(\PG{H}{\div \cap \cov}{r}\right) \geq  \left( 1-1/r^2 \right)  \binom{n-1}{r-1}$ by \cref{fct:monotone-degrees}.
	
	Next, let $A \subset V(G)$ have size at most $\alpha n$, and set $P_s = \PG{G-A}{\DenF{\eps'}{d} \cap \DegF{1}{\mu/4}}{s}$.
	Note that for all $X_1,\dots, X_k \subset V(G-A)$, we have
	\[
	e_{G-A}(X_1,\dots,X_k) \geq d\, |X_1|\cdots |X_k| - \eps n^k \geq d \, |X_1|\cdots |X_k| - 2\eps ( n-|A|)^k\,.
	\]
	It follows that $G-A$ {is in $ {\DenF{2\eps}{d} \cap \DegF{1}{\mu/2}}$.}
	Thus \cref{lem:inheritance-minimum-degree,lem:inheritance-uniformly-dense} applied with $m$ playing the rôles of $s$ reveal that $\delta_m(P_s) \geq (1- 2e^{- s^{1/(2k)}} ) \tbinom{n-m}{s-m}$.
	 So $\delta_1\left(\PG{H}{\SpaF{\rho}}{s}\right) \geq  \left(1-1/s^2\right)  \binom{n-1}{s-1}$. 
	It follows that $G$ contains a perfect $F$-tiling by \cref{thm:framework-exceptional}.
\end{proof}

Given this, we recover \cref{thm:Ding-Han-Sun-Wang-Zhou} by showing \cref{lem:quasirandom-divisibility} following the ideas of Ding et al.~\cite{DHS+22}.

\begin{proof}[Proof of \cref{lem:quasirandom-divisibility}]
	Consider a $k$-graph $F \in \cF({\cov})$ with $m= v(F)$.
	Let $1/m,\, d,\, \mu \gg \rho \gg \eps \gg 1/n$.
	Let $G$ be an $n$-vertex $(\eps,d)$-dense $k$-graph with $\delta_1(G) \geq \mu \, \binom{n-1}{k-1}$.
	Set $H=H(F;G)$.
	Recall the definition of lattice completeness in \cref{sec:lattice-completeness}.
	By \cref{obs:lattice-completeness}, it suffices to show that the lattice $\cL  = \cL(H)$ contains all transferrals $\vn_{v} - \vn_{u}$ with $u,v \in V(H)=V(G)$.
	We write $N(v)$ for the collection of $(k-1)$-sets $X$ such that $X\cup\{v\} \in G$.
	
	\begin{claim}\label{cla:quasirandomness-joint-neighbourhood}
		If $u, v \in V(G)$ satisfy $|N(u) \cap N(v)| \geq \rho n^{k-1}$, then $\cL$ contains $\vn_{v} - \vn_{u}$.
	\end{claim}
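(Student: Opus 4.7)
The plan is to construct an auxiliary $k$-graph $G^*$ that ``merges'' $u$ and $v$ into a small number of fresh vertices whose common link records exactly $N(u) \cap N(v)$, then to apply the cover property to $G^*$ to produce a single homomorphism that can be split into two homomorphisms into $G$ whose indicator vectors differ by $\vn_v - \vn_u$. Concretely, set $m := v(F)$, pick $t \in [1,m]$ with $m \mid (n-2+t)$, and take $V(G^*) := (V(G) \setminus \{u,v\}) \cup \{u^*_1, \dots, u^*_t\}$ with edge set $E(G^*) := E(G[V(G) \setminus \{u,v\}]) \cup \{\{u^*_i\} \cup Y : i \in [t],\, Y \in N(u) \cap N(v)\}$. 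A key feature is that no edge of $G^*$ contains two distinct clones, so for any $\phi \in \hom{F}{G^*}$ the set $W_\phi := \phi^{-1}(\{u^*_1, \dots, u^*_t\})$ is an independent set of $F$.

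Next, I would verify that $G^* \in \DenF{\eps'}{d'} \cap \DegF{1}{\rho/2}$. The surgery from $G$ to $G^*$ changes at most $O(n^{k-1})$ edges, shifting any box count by $o(n^k)$, which is absorbed by the hierarchy $d' \ll d$, $\eps' \gg \eps$, $1/n \ll \eps'$. Each clone $u^*_i$ has 1-degree $|N(u) \cap N(v)| \geq \rho n^{k-1} \geq (\rho/2)\binom{|V(G^*)|-1}{k-1}$ for large $n$, while every other vertex loses at most $2\binom{n-2}{k-2}$ edges and so retains degree at least $(\mu/2)\binom{n-1}{k-1} \geq (\rho/2)\binom{|V(G^*)|-1}{k-1}$ via $\mu \gg \rho$. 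Since $|V(G^*)|$ is divisible by $m$ by design, applying $F \in \cF({\cov})$ to $G^*$ yields $H(F;G^*) \in \cov$, and in particular there is some $\phi \in \hom{F}{G^*}$ with $\phi^{-1}(u^*_1) = \{w_1\}$ for a unique $w_1 \in V(F)$. Set $W := W_\phi$, so $w_1 \in W$ and $W$ is independent in $F$.

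Define $\phi_u,\phi_v \colon V(F) \to V(G)$ by setting $\phi_u(w) := u$ for $w \in W$ and $\phi_u(w) := \phi(w)$ otherwise, and $\phi_v(w_1) := v$, $\phi_v(w) := u$ for $w \in W \setminus \{w_1\}$, $\phi_v(w) := \phi(w)$ otherwise. For $f \in E(F)$ disjoint from $W$, $\phi_u(f) = \phi_v(f) = \phi(f) \in E(G[V(G) \setminus \{u,v\}]) \subset E(G)$; for $f$ meeting $W$ in its unique vertex $w$, $\phi(f) = \{\phi(w)\} \cup Y$ with $Y \in N(u) \cap N(v) \subset \binom{V(G) \setminus \{u,v\}}{k-1}$, so $\phi_u(f) = \{u\} \cup Y \in E(G)$ and $\phi_v(f) \in \{\{u\} \cup Y, \{v\} \cup Y\} \subset E(G)$. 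Hence $\phi_u, \phi_v \in \hom{F}{G}$. Comparing indicator vectors, $u$ has multiplicity $|W|$ in $e_{\phi_u}$ and $|W|-1$ in $e_{\phi_v}$, $v$ has multiplicities $0$ and $1$ respectively, and all other coordinates coincide. Therefore $\vn_{e_{\phi_v}} - \vn_{e_{\phi_u}} = \vn_v - \vn_u$, which lies in $\cL$ as claimed.

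The main obstacle will be confirming that the perturbation from $G$ to $G^*$ sits squarely inside the parameter window required to invoke $\cF({\cov})$ with minimum-degree parameter $\rho/2$; once the density and degree bounds for $G^*$ are established, the subsequent splitting of $\phi$ into $\phi_u$ and $\phi_v$ is routine, its correctness resting on the independence of $W$ and on the fact that $u, v \notin V(G^*)$.
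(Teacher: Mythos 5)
Your construction is correct and yields a valid proof, but it differs from the paper's in an instructive way. The paper keeps $u$, deletes only $v$, and trims $u$'s link down to $N(u)\cap N(v)$, producing a graph $G'$ on $n-1$ vertices; it then invokes $\cF(\cov)$ to get a homomorphism edge $e$ with $\multi(u,e)=1$ and obtains $f$ with $\multi(v,f)=1$ by simply substituting $v$ for $u$, which works precisely because the trimmed link of $u$ sits inside $N(v)$. You instead remove both $u$ and $v$ and attach $t\leq m$ fresh clones of the ``common'' vertex, each carrying the link $N(u)\cap N(v)$, with $t$ chosen so $|V(G^*)|$ is divisible by $m$. The dividend is that the divisibility hypothesis baked into the definition of $\cF(\cov)$ is met on the nose; the paper's $G'$ has $n-1\not\equiv 0\pmod m$ vertices, and the paper silently skips this (the divisibility clause is in fact vacuous for the cover property, since one can delete $O(1)$ vertices without disturbing density or $1$-degree, but this is never said). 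The cost of your route is the additional bookkeeping around $W=\phi^{-1}(\{u^*_1,\dots,u^*_t\})$: because $\phi$ may hit several clones, possibly with multiplicity, you must observe that no edge of $G^*$ meets two clones (so $W$ is independent in $F$, and $\phi_u,\phi_v$ stay injective on edges of $F$), and that the shared contribution $|W|$ versus $|W|-1$ at $u$ cancels in $\vn_{e_{\phi_v}}-\vn_{e_{\phi_u}}$. You handle all of this correctly. One small remark on parameters: the surgery from $G$ to $G^*$ alters only $O(n^{k-1})$ edges, so $G^*$ is in fact still $(O(\eps),d)$-uniformly dense with the \emph{same} density parameter $d$; invoking a weaker $d'\ll d$ is harmless given the hierarchy but unnecessary.
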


	\begin{proofclaim}
		Let $G'$ be obtained from $G$ by deleting $v$ and all edges $Y \cup \{u\}$ with $Y \in N(u) \sm N(v)$.
		Note that for all $X_1,\dots, X_k \subset V(G')$, we have
		\[
		e_{G'}(X_1,\dots,X_k) \geq d\, |X_1|\cdots |X_k| - \eps n^k - n^{k-1} \geq d\, |X_1|\cdots |X_k| - 2\eps (n-1)^k  \,.
		\]
		So $G'$ is a $(2\eps,d)$-dense $k$-graph with $\delta_1(G') \geq \rho \binom{n-1}{k-1}$.
		Set $H' = H(F;G')$.
		Since $F \in \cF({\cov})$, there is an $e \in H' \subset H$ with $\multi(u,e)=1$.
		By the definition of $G'$, we may obtain  $f \in H$ with $\multi(v,f)=1$ by replacing $u$ with $v$ in $e$.
		Since $\vn_{v} - \vn_{u} = \vn_{f} - \vn_{e} \in \cL(H)$, we are done.
	\end{proofclaim}
	 
	Define an auxiliary $2$-graph $J$ on $V(G)$ by adding an edge $uv$ whenever $\vn_{v} - \vn_{u} \in \cL$.
	Denote by $\alpha(J)$ the independence number of $J$, that is, the size of a maximum set of pairwise non-adjacent vertices.
	By \cref{cla:quasirandomness-joint-neighbourhood} and the assumption of  $\delta_1(G) \geq \mu \, \binom{n-1}{k-1}$, we have $\alpha(J) \leq 1/\rho$.
	We may therefore find a set $W \subset V(J)$ with $|W|\leq  (2\rho^2/\rho) n \leq 2\rho n$ such that $\delta_1(J-W) \geq \rho^2 n$.
	(Iteratively remove a low-degree vertex with at most $\rho^2 n$ neighbours in the {remainder of $J$ together} with its neighbourhood.
	Since the low-degree vertices form an independent set, this process stops after $\alpha(J)$ steps.)
 
	Now consider $v_1,v_2 \in V(G - W)$.
	We continue to show that $\vn_{v_1} - \vn_{v_2} \in \cL$.
	For $i \in [2]$, let $U_i$ contain the vertices $u \neq v_i$ such that $\vn_{v_i} - \vn_{u}  \in \cL$, and note that $|U_i| \geq \rho^2 n$ by definition of $W$.
	
	\begin{claim}\label{cla:quasirandom-divisibility-vx-U_i}
		For each $i\in [2]$, there is a vertex $u \in U_i$ and an edge $e$ in  $H(F; G[U_1 \cup \{u\}])$  with $\multi(u,e)=1$.
	\end{claim}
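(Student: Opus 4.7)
The plan is to apply the assumption $F \in \cF({\cov})$ to the induced $k$-graph $G' := G[U_1 \cup \{u\}]$ for an appropriately chosen $u \in U_i$. The claim will then follow immediately, since the cover property guarantees that every vertex of $G'$ is on an edge of $H(F;G')$ with multiplicity one; applying this at the distinguished vertex $u$ produces the required edge $e$.

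First I will show that $G[U_1]$ is itself sufficiently quasirandom. Since $|U_1| \geq \rho^2 n$ is a linear fraction of $V(G)$, the $(\eps,d)$-uniform density of $G$ transfers to $G[U_1]$, which is then $(\eps',d)$-uniformly dense for some $\eps \ll \eps' \ll \rho$. Next, applying the uniform density condition with $X_1 = \{w\}$ and $X_2 = \dots = X_k = U_1$ and averaging, all but a negligible fraction of vertices $w \in U_1$ have $(k-1)$-degree into $U_1$ at least $d' |U_1|^{k-1}$ for some $0 < d' < d$. Deleting these exceptional vertices produces a subset $U_1^\ast \subset U_1$ of size at least $(\rho^2/2) n$ on which the induced $k$-graph $G[U_1^\ast]$ has minimum vertex degree at least $\mu' \binom{|U_1^\ast|-1}{k-1}$ for some $\mu' > 0$.

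For $i = 1$, I may take any $u \in U_1^\ast$: the graph $G[U_1^\ast]$ already satisfies the density and minimum vertex degree required by $\cF({\cov})$, so $H(F;G[U_1^\ast]) \in \cov$ and $u$ is covered exactly once by some edge $e$. For $i = 2$, I additionally select $u \in U_2$ with $(k-1)$-degree at least $d'|U_1^\ast|^{k-1}$ into $U_1^\ast$; such a $u$ exists by applying uniform density with $X_1 = U_2$ and $X_2 = \dots = X_k = U_1^\ast$. The augmented host graph $G[U_1^\ast \cup \{u\}]$ remains uniformly dense, has positive minimum vertex degree at $u$ and at every vertex of $U_1^\ast$, and therefore falls under $\cF({\cov})$, yielding an edge $e$ of $H(F; G[U_1^\ast \cup \{u\}]) \subset H(F; G[U_1 \cup \{u\}])$ with $\multi(u,e) = 1$.

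The only genuine obstacle is keeping the constant hierarchy consistent: the parameter $\eps$ in the outer proof of \cref{lem:quasirandom-divisibility} must be chosen small enough that each of $G[U_1^\ast]$ and $G[U_1^\ast \cup \{u\}]$, both of size $\Theta(\rho^2 n)$, meets the quasirandomness and minimum degree thresholds demanded by $F \in \cF({\cov})$ at target parameters $d/2$ and some positive $\mu'$. This requires inserting auxiliary constants between $\rho$ and $\eps$ in the cascade at the start of that proof, and presents no further conceptual difficulty.
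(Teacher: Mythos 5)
Your proof is correct and follows essentially the same route as the paper's. Both arguments delete the small set of low-degree vertices from $U_1$ (you via averaging with $X_1$ a singleton, the paper by applying $(\eps,d)$-density to the bad set $X$ directly over all of $V(G)$) and then invoke $F \in \cF(\cov)$ on the resulting linear-sized quasirandom graph with positive minimum degree; the only cosmetic difference is that the paper handles $i = 1$ and $i = 2$ uniformly by setting $U_i' = U_i \setminus X$, whereas you treat $i = 2$ separately by locating a high-degree $u \in U_2$ via one further density application, which amounts to the same thing.
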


	\begin{proofclaim}
		Set 
		\begin{equation*}
			X = \left\{v \in V(G)\colon \deg_{G[U_1 \cup \{v\}]}(v) < \frac{d}{2} \frac{|U_1|^{k-1}}{(k-1)!}\right\}.
		\end{equation*}
		Since $G$ is $(\eps,d)$-dense, we obtain 
		\[\tfrac{d}{2} \, |X|\,|U_1|^{k-1} > e_G(X,U_1,\dots,U_1) \geq {d\, |X|\,|U_1|^{k-1} - \eps n^k}.\]
		So $\eps n^k \geq (d/2) |X|\,|U_1|^{k-1} \geq |X| \rho^{2k-2}n^{k-1}$, where we used that $|U_1| \geq \rho^2 n$ by definition of $W$.
		It follows that $|X| \leq \sqrt{\eps} |U_1|$.

		For $i\in [2]$ and $U_i' = U_i \sm X$, we have $|U_i'| \geq (\rho^2/2) n$.
		Fix an arbitrary $u \in U_1' \cup U_2'$, and let $G' = G[U_1' \cup \{u\}]$ have order $n'$.
		Set $\eps ' = \eps/(\rho^2/2)^k$.
		Note that for all $X_1,\dots, X_k \subset V(G')$, we have
		\[
		e_{G'}(X_1,\dots,X_k) \geq d\, |X_1|\cdots |X_k| - \eps n^k - n^{k-1} \geq d\, |X_1|\cdots |X_k| - \eps' n'^k\,.
		\] 
		So $G'$ is a $(\eps',d)$-dense $k$-graph with $\delta_1(G') \geq (d/4) \binom{n'-1}{k-1}$.
		Since $F \in \cF({\cov})$, it follows that there is an edge~$e$ in~$H(F; G')$  with $\multi(u,e)=1$.
	\end{proofclaim}
	
	Fix $u \in U_2$.
	By \cref{cla:quasirandom-divisibility-vx-U_i}, there is an edge $e$ in $H(F; G[U_1 \cup \{u\}])$  with $\multi(u,e)=1$.
	We may assume that $V(F) = [m]$ and $e(m)=u$.
	Informally, one can generate $(m-1)\vn_{v_1} + \vn_{v_2}$ in $\cL$ as follows.
	We begin with a surplus at every vertex of $e$.
	Then we send the weight of~$u$ to $v_2$ and the weight of the other vertices of $e$ to $v_1$.
	Formalising this (in reverse order) gives
	\begin{equation*}
		(m-1)\vn_{v_1} + \vn_{v_2} = \vn_{e} +  (\vn_{v_2} - \vn_{u} ) +  \sum_{i=1}^{m-1}   \vn_{v_1} - \vn_{e(i)}  \in \cL\,.
	\end{equation*}
	By \cref{cla:quasirandom-divisibility-vx-U_i}, there is also an edge $f$ in $H(F; G[U_1])$.
	Hence, we obtain
	\begin{equation*}
		m\vn_{v_1} = \vn_{f} +  \sum_{i=1}^{m}  \vn_{v_1} - \vn_{f(i)}   \in \cL \,.
 	\end{equation*}
 	Subtracting one from the other gives $\vn_{v_1}  - \vn_{v_2} \in \cL$ as desired.
	
	Finally, we have to handle the vertices of $W$.
	Consider an element $w \in W$.
	It suffices to show that $\vn_v - \vn_w \in \cL$.
	To see this, let $G'=G-(W \sm \{w\})$.
	Since $|W| \leq 2\rho n$, it follows that $G'$ is still a $(2\eps,d)$-dense $k$-graph with $\delta_1(G') \geq (\mu/2) \binom{n-1}{k-1}$.
	Since $F \in \cF({\cov})$, there is an edge $g$ in $H(F; G')$  with $\multi(w,g)=1$.
	For convenience, we assume that $g(m)=w$.
	For any $v \in V(G-W)$, we may now write
	\begin{equation*}
	 \vn_w +	(m-1)\vn_v    =  \vn_g + \sum_{i=1}^{m-1} \vn_v  - \vn_{g(i)}   \in \cL\,.
	\end{equation*}
	Moreover, by the above we have $m\vn_{v} \in \cL$, which after subtraction yields $\vn_{v}  - \vn_{w} \in \cL$ as desired.
\end{proof}

\section{Conclusion}\label{sec:conclusion}

The framework presented in this paper decomposes perfect tiling problems for dense hypergraphs into three independent subproblems concerning space, divisibility and covering conditions.
While this sheds light on the general nature of the problem, many open questions remain.
In the following, we discuss a few of them together with some remarks.

\subsection*{Minimum degree conditions}

In \cref{thm:k-partite}, we determined the minimum $(k-2)$-degree thresholds of complete $k$-partite $k$-graphs $F$ with $\gcd(F)=1$ for space and divisibility property, and gave an upper bound for the covering property.
A natural open question is to determine the threshold for covering completely.
It would also be interesting to extend these results to non-complete $k$-partite $k$-graphs and $k$-graphs $F$ with $\gcd(F) \geq 2$.
For $k=3$, the latter was achieved by Han, Zang and Zhao~\cite{HZZ17} and a generalisation to $k\geq4$ seems not out of reach.
Solving the problem for non-complete graphs could be more challenging.
Beyond this, it would be important to understand the behaviour of perfect fractional tilings under degree conditions in more depth.
It is conceivable that the bounds arising from the space barriers $G_{n,i,\beta}$ of \cref{cons:space} are tight.
In light of \cref{lem:linkgraph-threshold} one could approach this problem via the following `lopsided' version of the (fractional) Erd\H{o}s Matching Conjecture~\cite[Conjecture 1.4]{AFH+12}.

\begin{conjecture}\label{con:lEMC}
	For every complete $k$-partite $k$-graph $F$ and $0 \leq \beta \leq 1/v(F)$, we have
	\begin{equation*}
		\th_0(\spa_F;\beta) = \max_{i\in [k]}  {\limsup_{n} e(G_{n,i,\beta})/\tbinom{n}{k}}{}\,.
	\end{equation*}
\end{conjecture}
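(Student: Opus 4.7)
The plan is to split the conjecture into its two directions. The lower bound follows directly from the constructions: for each $i \in [k]$, one verifies that the $k$-digraph $H(F; G_{n,i,\beta})$ admits no fractional matching of size larger than $\beta n + o(n)$, so its fractional matching threshold for size $\beta n$ is at least the density of $G_{n,i,\beta}$. This is a finite calculation once the constructions are unpacked: the typical $G_{n,i,\beta}$ should arise by fixing a small set $S \subset V$ and forcing every edge to use at least $i$ vertices from $S$, so that a copy of $F$ consumes at least $i$ vertices from the `bottleneck' and the total number of fractional copies is capped by $|S|/i$.

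For the upper bound, my proposal is induction on the uniformity $k$, using the Link-Graph Lemma (\cref{lem:linkgraph}) as the main reduction tool. Given $G$ with $e(G) > \max_i e(G_{n,i,\beta})/\binom{n}{k} + \mu$, I would first try to locate a vertex $v$ (or more generally a small set $D$) whose link $L_G(D)$ has density exceeding the inductively conjectured threshold for $F' := F \setminus U$, where $U$ is a part of $F$. The Link-Graph Lemma then produces a fractional matching of size $|D|/v(F)$ scaled appropriately. Averaging over all such $D$ should, up to careful bookkeeping, yield a fractional matching of total weight at least $\beta n$ in $H(F;G)$.

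The main obstacle is handling the case where \emph{no} small set has a sufficiently dense link. Here, a stability argument is needed: extremal or near-extremal $G$ with no large fractional matching must structurally resemble one of the $G_{n,i,\beta}$. I would try to implement this via an LP-duality approach. If $H(F;G)$ has no fractional matching of size $\beta n$, then by duality there is a fractional vertex cover $\omega \colon V(G) \to \mathbb{R}_{\geq 0}$ of mass less than $\beta n \cdot v(F)$ that covers every edge of $H(F;G)$ with multiplicities. Thresholding $\omega$ at appropriate levels partitions $V(G)$ into `heavy' and `light' zones; supersaturation forces the heavy zone to have a specific size, and then a Kruskal--Katona-type shifting argument should show that $G$ is approximately contained in one of the $G_{n,i,\beta}$, contradicting the density assumption.

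I expect the induction base ($k=2$) to be essentially due to Grosu--Hladk\'y (\cref{thm:GH}) after some translation, and the matching case (when $F$ is a single edge) to follow from the fractional Erd\H{o}s Matching Conjecture proved in~\cite{AFH+12}. The hard part will be the stability step for general $F$ and $k \geq 3$: the interaction between $F$-tiling rigidity and multiple simultaneous space barriers is delicate, and the current machinery does not seem to immediately yield uniqueness of the extremal configuration. New supersaturation estimates, or a hypergraph removal-type lemma calibrated to the $G_{n,i,\beta}$ family, would likely be required to close this gap.
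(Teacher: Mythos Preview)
The statement you are attempting to prove is a \emph{Conjecture} in the paper, not a theorem: the paper explicitly presents it as an open problem in the concluding section and offers no proof. There is therefore no proof in the paper to compare your proposal against. The paper only notes that the case $k=2$ is confirmed by Grosu--Hladk\'y (\cref{thm:GH}), and that the special case where $F$ consists of two edges sharing $b$ vertices was conjectured by Gan, Han, Sun and Wang and resolved only for $k=3$, $b=2$ by Han, Sun and Wang. Your own proposal acknowledges this difficulty in its final paragraph, so you are not really claiming a proof either; what you have written is a research outline, not a proof.

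Two concrete corrections to your outline. First, you write that the matching case ``follows from the fractional Erd\H{o}s Matching Conjecture proved in~\cite{AFH+12}''. It is not proved there: the paper cites it as \cite[Conjecture~1.4]{AFH+12}, and in fact the present conjecture is framed by the paper as a \emph{lopsided} variant of that still-open conjecture. So even the single-edge case is not available as a black box for general $k$. Second, your lower-bound sketch misreads \cref{cons:space}: the bottleneck set $A$ has size $\lfloor \beta(m_1+\dots+m_i)n\rfloor - 1$, and each homomorphic copy of $F$ (for $k$-partite $F$) must place at least $m_1+\dots+m_i$ vertices in $A$, not merely $i$ vertices; the cap on the fractional matching size is $|A|/(m_1+\dots+m_i) < \beta n$, not $|S|/i$. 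For general (non-$k$-partite) $F$ even this lower-bound direction requires care, since the construction is parametrised by part sizes $m_1 \leq \dots \leq m_k$ whose meaning for arbitrary $F$ is not spelled out.
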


Note that \cref{thm:GH} of Grosu and Hladk{\`y}~\cite{GH12} confirms this for $k=2$.
Moreover, Gan, Han, Sun and Wang \cite{GHS+23} conjectured the same for $k$-graphs $F$ consisting of two edges intersecting in~$b$ vertices, which was subsequently resolved for $k=3$ and $b=2$ by Han, Sun and Wang~\cite{HSW24}.
Chen,  Liu, Sun and Wang~\cite{NXL+25} gave a counterexample to a stronger variant of \cref{con:lEMC} for non-complete graphs that appeared in an earlier version of this paper.
For further progress towards \cref{con:lEMC} and similar problems, see also the work of Hou, Hu, Li, Liu, Yang and Zhang~\cite{HHL+25}.

\subsection*{Transversal setting}

Perfect transversal tilings are determined by the maximum thresholds $\th_d^{}(\til_F)$ and $\th_d^{}(\rmix_F)$ as detailed in \cref{thm:transversal-tiling}.
Under which conditions is one always larger than the other?
For graphs, this is simple to answer (\cref{obs:transversal}).
But what about hypergraphs?
It is easy to see that $\th_{k-1}^{}(\rmix_F)=0$ when $F$ is $k$-partite and $k$-uniform.
Together with \cref{thm:transversal-tiling}, this results in a transversal version of \cref{thm:mycroft}.
A next step would be to determine $\th_{k-2}^{}(\rmix_F)$, which seems more difficult, but not impossible.
The general question is related to the study of covering problems, such as the work of Falgas-Ravry, Markström and Zhao~\cite{FMZ21}, and it is likely that the techniques of this field could apply here as well.
Finally, we note that Bowtell, Kathapurkar, Morrison and Mycroft~\cite{BKM+25} and Liu, Nie, Yang and Zhang~\cite{LNY+25} used \cref{thm:transversal-tiling} to determine minimum codegree thresholds for transversal perfect tilings with generalised triangles.

\subsection*{Ordered graphs}

A natural question is whether one can extend the results for ordered graphs to the hypergraph setting.
In particular, is it possible to generalise \cref{thm:freschi-treglown} to $k$-graphs under minimum codegree conditions?
Another possible application for our framework could be the problem of edge-ordered tilings recently investigated by Araujo, Piga, Treglown and Xiang~\cite{APT+24}.

\subsection*{Dependencies}

The Regularity Lemma of Szemerédi~\cite{Sze76} is the basis of many results in graph and hypergraph tiling theory.
In particular, the original proofs of \cref{thm:komlos,thm:kuhn-osthus,thm:mycroft,thm:montgomery-muyesser-pehova,thm:freschi-treglown,thm:Ding-Han-Sun-Wang-Zhou}  each rely on either the Weak or the Strong Hypergraph Regularity Lemma, often in conjunction with a Blow-up Lemma.
This leads to serious dependencies on the order of the host graphs and, in some cases, rather involved technicalities.
It is therefore noteworthy that except for \cref{thm:Ding-Han-Sun-Wang-Zhou}, our proofs completely avoid these powerful instruments, lending further support to the principle of ``laziness paying off''~\cite{Sze13}.

That being said, the proof of \cref{lem:blow-up-matching} as it is written results in rather poor (tower-type) bounds.
One can overcome this limitation by proving a variant of \cref{lem:larger-matching} in which the complete $s$-partite tuples of constant size are replaced by weakly regular tuples of linear size (defined in \cref{sec:inheritance-lemma-uniformly-dense}).
Such tuples can be found via a density increment argument.
This allows us to cover most vertices of the $s$-graph $P$ in \cref{lem:blow-up-matching} with weakly regular $s$-partite tuples of linear size.
We then finish by applying \cref{thm:erd64} to find an almost perfect $K_s^{(s)}(b)$-tiling in each of these tuples.
Overall this results in somewhat less severe requirements on the order of $P$, but still double-exponential in $s$.
For a formal implementation of this approach see the work of Lang and Sanhueza-Matamala~\cite{LS24a}.

Finally, we mentioned that \rf{lem:covering-with-blow-ups} is of independent interest when combined with the inheritance principle.
The following result illustrates this by the example of minimum degrees (\cref{lem:inheritance-minimum-degree}). 

\begin{corollary}\label{cor:almost-perfect-blow-up-tiling}
	For $ 0 \leq d < k$,  $\mu \gg 1/s \gg 1/b \gg 1/n$, let $G$ be an $n$-vertex $k$-graph with $\delta_d(G)\geq (\delta + \mu) \binom{n-d}{k-d}$.
	Then all but $\mu n$ vertices of $G$ can be covered with pairwise vertex-disjoint blow-ups $R_1(b),\dots,R_\ell(b)$, where each $R_i$ is an $s$-vertex $k$-graph with $\delta_d(R_i)\geq (\delta + \mu/2) \binom{s-d}{k-d}$.
\end{corollary}

It would be interesting to understand whether such a result can be extended to cover all but $o(n^k)$ edges with edge-disjoint blow-ups.

\subsection*{Uniformly dense graphs}

Related to the above discussion, recall that \cref{thm:Ding-Han-Sun-Wang-Zhou} reduces the tiling problem to the covering problem in the setting of uniformly dense graphs.
To show this, we used an Inheritance Lemma for Uniform Density (\cref{lem:inheritance-uniformly-dense}), whose proof relies on the Weak Hypergraph Regularity Lemma.
Can this dependency be removed?
This is true if we restrict the assumption to a stronger type of uniform density in which the density is bounded from both sides (see \cref{sec:inheritance-lemma-uniformly-dense}).
It would be interesting to know whether this could be relaxed.

\subsection*{Connectivity}

Perfect tilings are vertex-spanning, but inherently unconnected structures.
In subsequent work, the ideas introduced in this paper have been applied to obtain vertex-spanning cyclical structures.
Lang and Sanhueza-Matamala~\cite{LS24b,LS24a,LS25} developed a framework for embedding Hamilton cycles (and more complex structures) into hypergraphs.
Illingworth, Lang, Müyesser, Parczyk and Sgueglia~\cite{ILM+25} used \cref{lem:covering-with-blow-ups} to find $k$-uniform spanning spheres in $k$-graphs with large supported codegree.
Finally, Letzter and Ranganathan~\cite{LR25} developed the methods presented here to find exact bounds for Hamilton cycles in hypergraphs under large supported codegree.

\subsection*{Absorption}

{It is possible to prove \cref{thm:framework} without absorption using the strategy outlined at the beginning of \cref{sec:proof-main-result}.
For a detailed implementation of this approach, see the work of Lang and Sanhueza-Matamala~\cite{LS24a}.
It appears however that the strategy runs into difficulties when proving stronger statements such as \cref{thm:framework-exceptional}, and it is therefore not clear whether one can avoid absorption completely.}

\subsection*{Extensions of the framework}

There are several natural directions to extend or modify \cref{thm:framework}.
For instance, it seems plausible that one can still find a  matching that misses only a constant number of vertices after dropping the divisibility condition.
It would also be interesting to better understand the algorithmic aspects of \cref{thm:framework}.
Assuming that an $m$-digraph robustly admits a perfect fractional matching, under which circumstances can we decide in polynomial time whether there exists a perfect matching?
In the setting of minimum degree conditions, this was solved by Han and Treglown~\cite{HT20a} and Gan and Han~\cite{GH22}.
Lastly, we mention that the inheritance principle as presented in \cref{lem:inheritance-minimum-degree} has also found use in the settings of designs~\cite{DHLP25,HP25}, which could indicate further applications in this area.

\section*{Acknowledgments}

The author would like to thank Nicolás Sanhueza-Matamala and Alp Müyesser for many valuable comments on an earlier draft of this paper.
In addition, the author is greatly indebted to two anonymous referees whose comments helped to improve the presentation and correctness of the paper.
The research was supported by DFG (450397222), FAPESP (21/11020-9) and H2020-MSCA (101018431).

\bibliographystyle{abbrv}
\bibliography{/home/richard/Documents/Mathematics/Bibliography/bibliography.bib}

\phantomsection
\addcontentsline{toc}{chapter}{Glossary}
\printnoidxglossary[
type=symbols,
title={Notation},
style=long3col,        
]
\markboth{}{}     

\appendix

\section{Inheritance Lemma for minimum degrees}\label{sec:inheritance-lemma-minimum-degree}

For a $k$-graph $G$ on $n$ vertices, we denote the \emph{relative degree} of a $d$-set $D \subset V(G)$ by $\rdeg_G(D) = \deg_G(D) / \binom{n-d}{k-d}$.
The next lemma implies \cref{lem:inheritance-minimum-degree}.

\begin{lemma}[Degree inheritance] \label{lem:inheritance-degree}
	For every $0 \leq d \leq k$, $\eps >0$, $r\geq 0$, there is $s_0$ such that the following holds for every $s \geq s_0$.
	Suppose $G$ is a $k$-graph on $n\geq s$ vertices.
	Let $P$ be the $s$-graph on $V(G)$ with an edge $S$ whenever every $d$-set $D \subset S$ satisfies $\rdeg_{G[S]}(D) \geq \rdeg_G(D) - \eps$.
	Then $\delta_{r}(P) \geq 1 - \binom{s}{d} 2 \exp(- (\eps/8k)^2 s)$.
\end{lemma}

We show \cref{lem:inheritance-degree} using the following corollary of the Azuma--Hoeffding inequality (see Frieze and Pittel~\cite[Appendix B]{FP04}).
\COMMENT{Our proof follows the exposition of Frieze and Pittel~\cite[Appendix B]{FP04}.}

\begin{lemma}\label{lem:concentration}
	Let $V$ be an $n$-set with a function $h$ from the $s$-sets of $V$ to $\REALS$.
	Suppose that there exists $K \geq 0$ such that $|{h(S)-h(S')}| \le K$
	for any $s$-sets $S, S' \subset V$ with $|S \cap S'| = s-1$.
	Let $S \subset V$ be an $s$-set chosen uniformly at random.
	Then, for any $\ell >0$,
	\begin{equation*}
		{\rm Pr}( |h(S) - \Exp [h(S)]| \geq \ell) \leq 2 \exp\left( -\frac{ \ell^2}{2\min\{s,n-s\} K^2}\right).
	\end{equation*}
\end{lemma}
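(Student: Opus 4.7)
The plan is to apply a standard Doob martingale / Azuma-Hoeffding argument to a filtration obtained by sampling the elements of $S$ without replacement one at a time. By passing to the complement — replacing $h$ with $\tilde h(T) := h(V\setminus T)$, noting that the bounded-differences hypothesis and the distribution of a uniformly random subset are both invariant under complementation — we may assume without loss of generality that $s\leq n-s$, so that $\min\{s,n-s\}=s$.

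Generate $S$ as follows: let $X_1,\dots,X_s$ be chosen by uniform sampling without replacement from $V$, and set $S=\{X_1,\dots,X_s\}$. Define the Doob martingale $Z_i := \Exp[h(S)\mid X_1,\dots,X_i]$ for $0\leq i\leq s$, so that $Z_0=\Exp[h(S)]$ and $Z_s=h(S)$. The bulk of the work is to verify that $|Z_i-Z_{i-1}|\leq K$ almost surely; given this, Azuma-Hoeffding immediately yields
\[
\Pr\bigl(|Z_s-Z_0|\geq\ell\bigr)\ \leq\ 2\exp\!\left(-\frac{2\ell^2}{sK^2}\right),
\]
which is the claimed bound.

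To establish the bounded-differences bound, fix a history $X_1,\dots,X_{i-1}$ and for each admissible $x\in V\setminus\{X_1,\dots,X_{i-1}\}$ set $\psi(x):=\Exp[h(S)\mid X_1,\dots,X_{i-1},X_i=x]$; it suffices to show $|\psi(x)-\psi(x')|\leq K$ for any two admissible $x\neq x'$. The key step, and the main technical obstacle, is to couple the two conditional distributions of $S$ so that the two resulting $s$-sets $S_1$ (under $X_i=x$) and $S_2$ (under $X_i=x'$) satisfy $|S_1\triangle S_2|\leq 2$. For this, write $A:=V\setminus\{X_1,\dots,X_{i-1},x,x'\}$, draw a Bernoulli $B$ with $\Pr(B=1)=(s-i)/(n-i)$, and draw an $(s-i)$-subset $T_0\subset A$ or an $(s-i-1)$-subset $T_1\subset A$ uniformly according to $B=0$ or $B=1$; then set
\[
S_1=\{X_1,\dots,X_{i-1},x\}\cup T_0,\quad S_2=\{X_1,\dots,X_{i-1},x'\}\cup T_0\quad\text{if }B=0,
\]
and $S_1=\{X_1,\dots,X_{i-1},x,x'\}\cup T_1=S_2$ if $B=1$. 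One checks that $S_1,S_2$ have the correct marginal distributions and differ by at most a single-element swap (when $B=0$) or not at all (when $B=1$), so the hypothesis yields $|h(S_1)-h(S_2)|\leq K$ pointwise. Taking expectations gives $|\psi(x)-\psi(x')|\leq K$, and hence $|Z_i-Z_{i-1}|\leq K$, completing the plan.

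The principal difficulty is designing the coupling above so that the two conditional $s$-sets differ by at most one element; once this is arranged, the rest of the proof consists of invoking Azuma-Hoeffding and handling the $s>n-s$ case by complementation.
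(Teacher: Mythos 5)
The paper does not prove this lemma itself --- it is imported verbatim from \cite[Corollary~2.2]{GMK+17} --- so there is no internal proof to compare against; your argument is the standard (and correct) one for this type of statement. Your complementation reduction, the Doob martingale on the without-replacement exposure, and the single-swap coupling showing $|\psi(x)-\psi(x')|\leq K$ are all sound. One small point of precision: the plain Azuma inequality under $|Z_i-Z_{i-1}|\leq K$ only gives $2\exp(-\ell^2/(2sK^2))$; to obtain the stated constant $2$ in the exponent you need the conditional-range (Hoeffding-lemma) form, in which $Z_i-Z_{i-1}$ lies in an $\mathcal{F}_{i-1}$-measurable interval of length at most $K$ --- but this is exactly what your bound $|\psi(x)-\psi(x')|\leq K$ over all admissible pairs $x,x'$ provides, so the proof goes through as written once you invoke that version.
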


\COMMENT{
	\begin{proof}
		Consider a random permutation $\omega$ of $V$.
		Let $S \subset V$ consist of the first $s$ elements of $\omega$, and set $X(\omega) = h(S)$.
		We define the corresponding Doob martingale as follows.
		For a fixed permutation $(x_1,\dots,x_n)$ of $V$ and $0 \leq i \leq n$, let
		\[X_i(x_1,\dots,x_i) = \Exp(X \mid \omega_j = x_j,~ 1\leq j \leq i)\,.\]
		We claim that
		\begin{equation}\label{itm:AH}
			|X_i(x_1,\dots,x_i) - X_i(x_1,\dots,x_{i-1},x_i')| \leq K
		\end{equation}
		for all $i$-tuples $(x_1,\dots,x_i)$ and $(x_1,\dots,x_{i-1},x_i')$ with (internally) distinct entries.
		Indeed, in this case we can conclude by the Azuma--Hoeffding inequality~\cite[Theorem 2.25]{JLR01}.
		Otherwise, we repeat the above argument using the bijection between sets and their complements.
		It remains to show the above claim.
		Consider
		\begin{equation*}
			\Omega_1 = \{\omega \in \Omega \colon \omega_j = x_j,~1\leq j \leq i\}
		\end{equation*}
		and
		\begin{equation*}
			\Omega_1' = \{\omega \in \Omega \colon \omega_j = x_j,~1\leq j \leq i-1,~\omega_i = x_i'\}.
		\end{equation*}
		Define a map $f\colon \Omega_1 \to \Omega_1'$ as follows.
		For $\omega = x_1x_2\ldots x_{i-1}x_iy_{i+1}\ldots y_n$ and $y_j = x'_i$, set
		\[
		f(\omega) = x_1x_2\ldots x_{i-1}x'_iy_{i+1}\ldots y_{j-1}x_iy_{j+1}\ldots y_n.
		\]
		Since $f$ is a bijection, we have
		\[
		|X_i(x_1, x_2, \ldots, x_i) - X_i(x_1, x_2, \ldots, x'_i)| = \left|\frac{\sum_{y_{i+1},\ldots,y_n} (X(\omega) - X(f(\omega)))}{(n-i)!}\right| \leq K,
		\]
		as desired.
	\end{proof}
}

\begin{proof}[Proof of \cref{lem:inheritance-degree}]
	Set $V = V(G)$.
	Let $R \subset V$ be an $r$-set, let $T$ be drawn uniformly among all $(s-r)$-subsets of $V \setminus R$, and set $S = R \cup T$.
	We say that a $d$-set $D \subset S$ is \emph{bad for $S$} if $\rdeg_{G[S]}(D) < \rdeg_G(D) - \eps$.
	Moreover, $S$ is \emph{good} if no $d$-set $D \subset S$ is bad for $S$, and $P$ is the $s$-graph of good sets.
	It suffices to show that $\Pr(\text{$D$ bad} \mid D \subset S) \leq 2\exp(- (\eps/8k)^2 s)$ for each $d$-set $D \subset V$.
	Indeed, since $D \subset S$ if and only if $D \setminus R \subset T$,
	\begin{align*}
		\Pr(\text{$S$ not good}) &\leq \sum_{D \in \binom{V}{d}} \Pr(\text{$D$ bad} \mid D \subset S) \cdot \Pr(D \subset S)
		\\&\leq \binom{s}{d} 2 \exp(- (\eps/8k)^2 s )\,.
	\end{align*}
	
	To obtain the above bound, fix a $d$-set $D \subset V$ with $D \cap R = \emptyset$ and $\delta = \rdeg_G(D) - \eps \geq 0$.
	(When $D \cap R \neq \emptyset$, the same argument applies with $d$ replaced by $|D \setminus R| < d$, giving a stronger bound.)
	Conditioning on $D \subset T$, write $T = D \cup T'$ where $T'$ is a uniform $(s-r-d)$-subset of $V \setminus (R \cup D)$, so that $S = R \cup D \cup T'$.
	Let $h(T') = \deg_{G[S]}(D)$.
	For each edge $e \in G$ with $e \cap (R \cup D) = D$, we have $e \setminus D \subset V \setminus (R \cup D)$, and the probability that $e \setminus D \subset T'$ is
	\begin{equation*}
		\frac{(s-r-d)_{k-d}}{(n-r-d)_{k-d}}
		= \binom{s-r-d}{k-d}\binom{n-r-d}{k-d}^{-1}
		\geq (1 - \eps/8)\binom{s-d}{k-d}\binom{n-d}{k-d}^{-1}.
	\end{equation*}
	There are at least $(\delta+\eps)\binom{n-d}{k-d} - r\binom{n-d-1}{k-d-1} \geq (\delta + 7\eps/8)\binom{n-d}{k-d}$ edges $e \in G$ with $e \cap (R \cup D) = D$, where the error term counts at most $r\binom{n-d-1}{k-d-1}$ edges that also intersect with $R$.
	It follows that $\Exp[h(T')] \geq (\delta + 3\eps/4)\binom{s-d}{k-d}$.
	
	Moreover, for any two $(s-r-d)$-sets $T', T'' \subset V \setminus (R \cup D)$ with $|T' \cap T''| = s-r-d-1$, we have $|h(T') - h(T'')| \leq \binom{s-d-1}{k-d-1} \leq \frac{k}{s}\binom{s-d}{k-d}$.
	So by \cref{lem:concentration} applied with $s-r-d$, $(\eps/4)\binom{s-d}{k-d}$, $\frac{k}{s}\binom{s-d}{k-d}$ playing the roles of $s$, $\ell$ and $K$, we have
	\begin{align*}
		\Pr \left(    h(T') <   \delta\binom{s-d}{k-d} \right)
		&\leq \Pr \left(   \Exp[h(T')] - h(T')  \geq    (\eps/4) \binom{s-d}{k-d} \right) \\
		&\leq 2 \exp \left(-\frac{ (\eps/4)^2 s^2 }{ 2k^2 (s-r-d )}\right)
		\leq 2 \exp(- (\eps/8k)^2 s) \,. \qedhere
	\end{align*}
\end{proof}

\section{Inheritance Lemma for uniform density}\label{sec:inheritance-lemma-uniformly-dense}

To show \cref{lem:inheritance-uniformly-dense}, we introduce the notion of (weak) hypergraph regularity.
For  a $k$-graph  $G$ with pairwise disjoint $V_1,\dots,V_k \subset V(G)$, the \emph{density} of $(V_1,\dots,V_k)$ is 
\begin{equation*}
	d_G(V_1,\dots,V_k) = \frac{e_G(V_1,\dots,V_k)}{|V_1|\dots|V_k|}\,.
\end{equation*}
We say that $(V_1,\dots,V_k)$ is \emph{$\eps$-regular} if for every $X_1\subset V_1,$ $\dots,$ $X_k \subset V_k$ satisfying $|X_1| \geq \eps |V_1|,\,\dots,\,|X_k| \geq \eps |V_k|$, we have
$|	d_G(X_1,\dots,X_k) - 	d_G(V_1,\dots,V_k) | \leq \eps$.
The following Inheritance Lemma was proved by Mubayi and Rödl~\cite{MR04}.

\begin{lemma}\label{lem:inheritance-regular}	
	For $1/k,\, \eps',\,d  \gg 1/s,\, \eps, 1/n$ and $1/r \gg 1/n$.
	Let $G$ be an $n$-vertex $k$-graph with an $\eps$-regular tuple $(V_1,\dots,V_k)$ of density $d$ with $|V_i| \geq n/r$ for each $i \in [k]$.
	Let $S \subset V(G)$ be an $s$-set chosen uniformly at random.
	Then $(V_1 \cap S,\dots,V_k \cap S)$ is $\eps'$-regular with density at least $d-\eps'$ with probability at least $1-\exp(-s^{1/k}/20)$.
\end{lemma}

The original proof of \cref{lem:inheritance-regular} relies on the Weak Hypergraph Regularity Lemma.
Alternatively, one may also combine the work of Conlon, H{\`a}n, Person and Schacht~\cite[Fact 7 and Lemma 10]{CHP+12} and Kohayakawa, Nagle, R{\"o}dl and Schacht~\cite[Lemma 10]{KNR+10}.\footnote{These results show that $\eps$-regularity is equivalent to the number of certain small subgraphs.
	Since the number of small subgraphs is robustly inherited by taking a typical induced subgraph of constant order, one can work out an Inheritance Lemma without applying a Regularity Lemma.
	As a by-product, this improves the error to $\exp(-\gamma s)$ for some $1/k \gg \gamma > 0$.}
This can be used to prove a variant of \cref{lem:inheritance-uniformly-dense} for a stricter notion of uniform density, where the density is bounded from both sides.

A partition $\cV=\{V_1,\dots,V_r\}$ of an $n$-vertex $k$-graph $G$ is \emph{$\eps$-regular} if all but at most $\eps r^k$  tuples in $\cV^k$ are $\eps$-regular.
Moreover, $\cV$ is an \emph{$r$-equipartition} if $|V_1| \leq \dots \leq |V_r| \leq |V_1|+1$.
The next result was  proved implicitly by Szemerédi~\cite{Sze76}.

\begin{theorem}[Weak Hypergraph Regularity Lemma]\label{thm:weak-hypergraph-regularity-lemma}
	For all $k,r_0$ and $\eps >0$, there are~$r_1,n_0$ such that every $k$-graph $G$ on $n\geq n_0$ vertices admits a vertex partition $\cV$ that is $r$-equipartite and $\eps$-regular with $r_0 \leq r \leq r_1$.
\end{theorem}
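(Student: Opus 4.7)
The plan is to prove the Weak Hypergraph Regularity Lemma via the classical partition-refinement argument of Szemerédi, adapted to $k$-uniform hypergraphs. I would define the \emph{index} of a partition $\cP = \{V_1, \ldots, V_r\}$ of $V(G)$ by
\[
\operatorname{ind}(\cP) := \sum_{(i_1, \ldots, i_k) \in [r]^k} \frac{|V_{i_1}| \cdots |V_{i_k}|}{n^k}\, d_G(V_{i_1}, \ldots, V_{i_k})^2,
\]
so that $\operatorname{ind}(\cP) \in [0,1]$ always. The strategy is to iteratively refine an initial $r_0$-equipartition until $\eps$-regularity is achieved, showing that the index strictly increases by a quantity bounded below by a positive function of $k$ and $\eps$ at each step, so that the process terminates in a bounded number of steps.

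Starting from an arbitrary $r_0$-equipartition $\cP_0$, the core technical step would be an \emph{increment lemma}: if a partition $\cP$ into $r$ parts fails to be $\eps$-regular, then it admits a refinement $\cP'$ into at most $r \cdot 2^{k r^{k-1}}$ parts with $\operatorname{ind}(\cP') \geq \operatorname{ind}(\cP) + c(k,\eps)$ for some positive $c(k,\eps)$ that is polynomial in $\eps$. I would construct $\cP'$ by splitting each $V_i$ according to the Venn diagram of all irregularity witnesses $X_{i_1} \subset V_{i_1}, \ldots, X_{i_k} \subset V_{i_k}$ contributed by the irregular tuples in which $V_i$ participates. The index increment would then follow from a defect Cauchy--Schwarz inequality tuple by tuple: splitting the density of an irregular tuple $(V_{i_1}, \ldots, V_{i_k})$ into refined densities raises the weighted mean square by at least the variance of the refined densities, which in each irregular tuple exceeds $\eps^2$ by definition of witness.

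Iterating the increment lemma, the bound $\operatorname{ind}(\cdot) \leq 1$ would force termination after $O_{k,\eps}(1)$ steps at an $\eps$-regular refinement $\cP^\ast$ with $r^\ast$ parts, where $r^\ast$ is bounded by an iterated exponential tower in $1/\eps$ and $k$, seeded at $r_0$. A routine post-processing step would then turn $\cP^\ast$ into an equipartition by splitting each part into blocks of size $\lfloor n/M \rfloor$ for sufficiently large $M$ and folding the leftover vertices into existing blocks; provided $n \geq n_0$ is large enough, this alters each tuple density by at most $O(r^\ast/n)$ and hence preserves $\eps$-regularity after a mild initial inflation of $\eps$. The final upper bound on the number of parts may then be declared $r_1$.

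The hardest part will be the quantitative increment lemma. For graphs ($k=2$) this is Szemerédi's classical defect Cauchy--Schwarz computation; for $k \geq 3$ the argument generalises essentially line by line, but one must carefully track how refining a single $V_i$ simultaneously affects all $r^{k-1}$ tuples in which it appears, and handle witness subsets coming from degenerate tuples with repeated indices (i.e.\ when $i_a = i_b$ for some $a \neq b$). The post-processing into an equipartition will likewise require care to ensure $\eps$-regularity survives passing to balanced blocks, but this is routine.
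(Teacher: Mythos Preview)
The paper does not prove this statement at all; it simply cites it as being implicitly proved by Szemer\'edi~\cite{ERT79} and uses it as a black box in the proof of \cref{lem:grabbing-uniformly-dense}. Your sketch is the standard energy-increment argument and is correct in outline; there is nothing to compare against in the paper itself.
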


\begin{proof}[Proof of \cref{lem:inheritance-uniformly-dense}]
	Introduce $\eps_1,\,\eps_2,\,r_0,\, r_1$ and $d,\, 1/k,\, \eps'  \gg \eps_1 \gg \eps_2 \gg 1/r_0 \gg 1/r_1 \gg 1/s$.
	Let $G$ be an $n$-vertex  $(\eps,d)$-uniformly dense graph.
	We show that the property $s$-graph $P =\PG{G}{\DenF{ \eps'}{d}}{s}$ satisfies $\delta_{q}(P) \geq \left(1-\exp(- s^{1/(2k)} ) \right)  \tbinom{n-q}{s-q}$.
	For convenience, we focus on the case of $q=0$.
	The other cases follow in the same way.
	By \cref{thm:weak-hypergraph-regularity-lemma}, there is an $\eps_2$-regular $r$-equipartition  $\cV = \{V_1,\dots,V_r\}$ of $V(G)$ with $r_0 \leq r \leq r_1$.
	Let $R$ be the weighted $k$-graph whose vertices are the indices $[r]$ of $\cV$ with an edge of weight $d'$ if the corresponding parts form an $\eps_2$-regular tuple of density $d'$.
	
	Now consider a uniformly chosen $s$-set $S \subset V(G)$.
	Let $\cV'= \{V_1',\dots,V_r'\}$ with $V_i' = V_i \cap S$ for every $i\in[r]$.
	It follows by Chernoff's bound that with probability at least $1-\exp(- \eps'  s)$, we have $|V_i'|/s = (1 \pm \eps_1)|V_i|/n$ a fixed $i \in V(R)$.
	Moreover, for every edge $\{i_1,\dots,i_k\} \in R$ of weight $d'$, it follows by \cref{lem:inheritance-regular} that $(V_{i_1}',\dots,V_{i_k}')$ is $\eps_1$-regular with density $d' \pm \eps_1$ with probability at least $1-\exp(-  s^{1/k}/20)$.
	Fix an $S$, which satisfies these properties.
	Since $R$ has $[r]$ vertices and at most $r^k$ edges, the above statements hold for all vertices and edges of $R$ simultaneously with probability at least $1 - r  \exp(-  \eps' s)  -  r^k \exp(-  s^{1/k}/20) \geq 1- \exp(- s^{1/(2k)} )$.
	We may therefore finish by showing that $G' = G[S]$ is $(\eps',d)$-uniformly dense.

	To this end, consider $X_1,\dots, X_k \subset S$.
	We have to show that
	\[e_{G'}(X_1,\dots,X_k) \geq d |X_1|\dots|X_k|- \eps's^k\,.\]
	This is trivial if one of $|X_1|,\dots,|X_k|$ is smaller than $\eps' s$.
	So let us assume otherwise.
	For every $j\in [k]$, let $Y_j \subset V(G)$ be a set such that $ {|Y_j \cap V_{i}|}/{|V_i|} =  {|X_j \cap V_{i}'|}/{|V_i'| \pm 1/|V_i|}  $ for every $i \in [r]$.
	It follows by $\eps_1$-regularity that
	\begin{equation*}
		d_{G'}(X_1 \cap V_{i_1}', \dots, X_k \cap V_{i_k}') \geq (1-2\eps_1)\, d_G(Y_1 \cap V_{i_1}, \dots, Y_k \cap V_{i_k})
	\end{equation*}
	for every $\{i_1,\dots,i_k\} \in R$.
	Therefore, we have
	\begin{align*}
		d_{G'}(X_1,\dots,X_k)
		&\geq (1- 4\eps_1) \sum_{(i_1,\dots,i_k)\colon \{i_1,\dots,i_k\} \in R} d_G(X_1 \cap V_{i_1}',\dots,X_k\cap V_{i_k}')   \\
		&\geq (1- 8\eps_1) \sum_{(i_1,\dots,i_k)\colon \{i_1,\dots,i_k\} \in R} d_G(Y_1 \cap V_{i_1},\dots,Y_k\cap V_{i_k}) \\
		&\geq  (1- 16\eps_1) \, d_{G}(Y_1,\dots,Y_k)\,.
	\end{align*}
	So in particular,
		\begin{align*}
		e_{G'}(X_1,\dots,X_k) 
		&\geq (1- 16\eps_1) \, d_{G}(Y_1,\dots,Y_k) \, |X_1|\dots|X_k| \\
		&\geq (1- 16\eps_1) \, \left(d- \frac{ \eps n^k}{|Y_1|\dots|Y_k|}\right) \, |X_1|\dots|X_k| \\
		&\geq d \, |X_1|\dots|X_k| - \eps' s^k
	\end{align*}
	 as desired.
\end{proof}

\section{Constructions}\label{sec:constructions}

In the following, we present three constructions adapted from the work of Han, Zang and Zhao~\cite{HZZ17}, which show the lower bounds of \cref{thm:mycroft,thm:k-partite}.

\begin{construction}[Cover barrier]\label{constr:local}
	Suppose $\{A_1,A_2,B,\{v\}\}$ is a partition of an $n$-set $V$ with $|A_1| = |A_2| = \lceil (\sqrt{2}-1) n \rceil$.
	Consider the $3$-graph $G$ on $V(G)=V$ whose edges are formed by
	\begin{itemize}[-]
		\item all triples $v a_1 a_2$ with $a_1 \in A_1$ and $a_2 \in A_2$,
		\item all triples $a c c'$ with $a\in A_i$ and $c,\,c' \in A_i \cup B$ for $i \in [2]$.
	\end{itemize}
\end{construction}

Let $G$ be a $3$-graph as in \cref{constr:local} for a large $n$.
To obtain the lower bound of \cref{thm:k-partite}, consider a complete $3$-partite $F$ that is not a cone.
We claim that there is no (directed) edge $g \in H(F;G)$ with $\multi(v,g)=1$.
Indeed, suppose otherwise.
By assumption on~$F$, there must be edges $e,\,f \in G$ with $v \in f$, $v \notin e$ an $|e \cap f| = 2$.
By construction, $f = v a_1 a_2 $ for some $a_1 \in A_1$ and $a_2 \in A_2$.
But then $e =  a_1 a_2 w $ with $w \in V(G - v)$ in violation of the construction.

To compute the minimum $1$-degree of $G$, let $\alpha = \sqrt{2}-1$.
Since  $2 \alpha^2 = (1-\alpha)^2$, we have 
\begin{itemize}[-]
	\item $\deg(v) =  |A_1||A_2| \approx 2\alpha^2 \binom{n}{2} = (1-\alpha)^2 \binom{n}{2}$,
	\item $\deg(a) \approx \binom{|A_i \cup B|}{2}\approx (1-\alpha)^2 \binom{n}{2}$ for $a\in A_i$ with $i\in [2]$,
	\item $\deg(b) \approx |A_1||A_1 \cup B| + |A_2||A_2 \cup B| \approx 4\alpha (1-\alpha) \binom{n}{2}$ for $b \in B$.
\end{itemize}
So the first two types of vertices have approximately the same degree, and the degree of the last type is much larger anyway.
It follows that ${\deg_{1}(G) \geq 2(\sqrt{2}-1)^2 \binom{n}{2} - o(n^2)}$.

\begin{construction}[Space barrier]\label{cons:space}
	Let $1 \leq {m}_1 \leq \dots \leq {m}_k$ be integers with ${m} = {{m}_1 + \dots + {m}_k}$.
	Let $V$ be a set of size $n$.
	For $1 \leq i \leq k$ and $0 \leq \beta \leq 1/{m}$, let $A \subset V$ be a set of size $\lfloor \beta ({m}_1+\dots+{m}_i) n \rfloor -1$.
	Let $G_{n,\,i,\,\beta}$ be the $n$-vertex $k$-graph on $V$ whose edges consist of all $k$-sets with at least $i$ vertices in $A$.
\end{construction}

Let $F$ be the complete $k$-partite $k$-graph  with parts of size ${m}_1,\dots,{m}_k$.
We claim that~$G_{n,\,i,\,\beta}$ does not contain an $F$-tiling of size $\beta n$.
To see this, observe that for each copy of $F$ in $G_{n,\,i,\,\beta}$, at least $i$ parts of $F$ are subsets of $A$.
Thus $F$ has at least $m_1 + \dots +m_i$ vertices in $A$.
Since $|A| < \beta(m_1 + \dots +m_i)n$, no $F$-tiling may have size $\beta n$ or larger.

Note that the case $\beta=1/m$ corresponds to a perfect tiling.
A straightforward calculation shows that
\begin{align*}
	\delta_{k-1}(G_{n,\,1,\,1/m}) &\geq  \frac{m_1}{m} n -o(n)\,,
	\\ \delta_{k-2}(G_{n,\,1,\,1/m}) &\geq \left(1-\left(1- \frac{m_1}{m} \right)^{2}\right) \binom{n}{2} -o(n^2)\,,
	\\ \delta_{k-2}(G_{n,\,2,\,1/m}) &\geq \left(\frac{m_1 + m_2}{m}\right)^2 \binom{n}{2}-o(n^2)\,.
\end{align*}
This implies the lower bound of the space threshold of \cref{pro:mycroft-thresholds,thm:k-partite}.

\begin{construction}[Divisibility barrier]\label{const:divisibility}
	Let $A$ and $B$ be sets of size $\lfloor n/2\rfloor+1$ and~$n-|A|$, respectively.
	Let $G$ be a $k$-graph formed by the union of two cliques on $A$ and $B$.
\end{construction}

To see how this implies the lower bound of the divisibility threshold of \cref{thm:k-partite}, note that the $k$-graph $G$ in \cref{const:divisibility} contains no  perfect $F$-tiling when $n$ is divisible by $v(F)$.
Moreover, $\delta_1(G) \approx 2^{-(k-1)} \binom{n-1}{k-1}$.

\end{document}